\newtheorem{Def}{Definition}[section]
\newtheorem{Th}[Def]{Theorem}
\newtheorem{Ex}[Def]{Example}
\newtheorem{Lemma}[Def]{Lemma}
\newtheorem{Prop}[Def]{Proposition}
\newtheorem{Cor}[Def]{Corollary}
\newtheorem{Prob}[Def]{Problem}
\newtheorem{Conj}[Def]{Conjecture}
\newtheorem{Res}[Def]{Result}
\DeclareMathOperator{\Sf}{S_{fin}}
\DeclareMathOperator{\Gf}{G_{fin}}
\DeclareMathOperator{\S1}{S_1}
\DeclareMathOperator{\G1}{G_1}
\DeclareMathOperator{\Uf}{U_{fin}}
\DeclareMathOperator{\Guf}{G_{ufin}}
\DeclareMathOperator{\SS1}{S_1^\ast}
\DeclareMathOperator{\SSS1}{SS_1^\ast}
\DeclareMathOperator{\SG1}{G_1^\ast}
\DeclareMathOperator{\SSG1}{SG_1^\ast}
\DeclareMathOperator{\SSf}{S_{fin}^\ast}
\DeclareMathOperator{\SGf}{G_{fin}^\ast}
\DeclareMathOperator{\SGuf}{G_{ufin}^\ast}
\DeclareMathOperator{\SUf}{U_{fin}^\ast}
\DeclareMathOperator{\SSSf}{SS_{fin}^\ast}
\DeclareMathOperator{\SSGf}{SG_{fin}^\ast}
\DeclareMathOperator{\Int}{Int}
\DeclareMathOperator{\Cl}{Cl}
\DeclareMathOperator{\cof}{cof}
\DeclareMathOperator{\Fin}{Fin}
\DeclareMathOperator{\maxfin}{\sf maxfin}
\DeclareMathOperator{\NSUf}{^\ast U_{fin}}
\DeclareMathOperator{\NSU1}{^\ast U_1}
\DeclareMathOperator{\NSSUf}{^\ast SU_{fin}}
\DeclareMathOperator{\NSSU1}{^\ast SU_1}
\DeclareMathOperator{\NSG1}{^\ast G_1}
\DeclareMathOperator{\NSGuf}{^\ast G_{ufin}}
\DeclareMathOperator{\RGuf}{RG_{ufin}}
\DeclareMathOperator{\SSRGf}{RSG_{fin}^\ast}
\DeclareMathOperator{\SRGuf}{RG_{ufin}^\ast}
\begin{document}
\title[On certain star versions of the Scheepers property]{On certain star versions of the Scheepers property}

\author[ D. Chandra, N. Alam ]{ Debraj Chandra$^*$, Nur Alam$^*$ }
\newcommand{\acr}{\newline\indent}
\address{\llap{*\,}Department of Mathematics, University of Gour Banga, Malda-732103, West Bengal, India}
\email{debrajchandra1986@gmail.com, nurrejwana@gmail.com}

\thanks{ The second author
is thankful to University Grants Commission (UGC), New Delhi-110002, India for granting UGC-NET Junior Research Fellowship (1173/(CSIR-UGC NET JUNE 2017)) during the tenure of which this work was done.}

\subjclass{Primary: 54D20; Secondary: 54B05, 54C10, 54D99, 91A44}

\maketitle

\begin{abstract}
The star versions of the Scheepers property, namely star-Scheepers, strongly star-Scheepers and new star-Scheepers property have been introduced. We explore further ramifications concerning critical cardinalities. Quite a few interesting observations are obtained while dealing with the Isbell-Mr\'{o}wka spaces, Niemytzki plane and Alexandroff duplicates. The properties like monotonically normal, locally countable cellularity (which is introduced here) play an important role in our investigation. We study games corresponding to the classical and star variants of the Scheepers property which have not been investigated in prior works. Some open problems are also posed.
\end{abstract}

\noindent{\bf\keywordsname{}:} {Scheepers property, star-Scheepers, strongly star-Scheepers, new star-Scheepers.}

\section{Introduction}
The systematic study of selection principles and the corresponding games in topology was initiated by Scheepers \cite{coc1} (see also \cite{coc2}) and in the last twenty five years it has become one of the most active research areas of set theoretic topology.
Various topological properties have been defined or characterized in terms of selection principles. The selection principles also have various applications in several branches of Mathematics. Generalizing the idea of selection principles, in 1999 Ko\v{c}inac \cite{LjSM} introduced star selection principles and also mentioned the corresponding games. Since then the study of star selection principles has attracted many researchers and recently a lot of investigations have been explored to enrich this area. The idea of new star selection principles was introduced in \cite{NSP} and later some fascinating investigations have been carried out in this emerging field. The seminal papers \cite{coc1,coc2} set up a framework for studying generalization of selection principles in numerous ways. Readers interested in star selection principles and new star selection principles may consult the papers \cite{dcna21,dcna22,dcna22-1,dcna23,LjSM,sHR,survey,SVM,NSP,RNSP} where more references can be found.
Note that one of the most important selection principle is $\Uf(\mathcal{O},\Omega)$, nowadays called the Scheepers property (see \cite{coc1,coc2}). In general, Scheepers property is stronger than the Menger property \cite{coc1,coc2} and weaker than the Hurewicz property \cite{coc1,coc2}. For more information about the Scheepers property see \cite{FPM,CST}.

In this paper we introduce and study the star versions of the Scheepers property $\Uf(\mathcal{O},\Omega)$, namely star-Scheepers property $\SUf(\mathcal{O},\Omega)$, strongly star-Scheepers property $\SSSf(\mathcal{O},\Omega)$ and new star-Scheepers property $\NSUf(\mathcal{O},\Omega)$. In \cite[Proposition 1.7]{SVM}, Sakai proved that every star-Lindel\"{o}f \cite{sCP} (respectively, strongly star-Lindel\"{o}f \cite{sCP}) space of cardinality less than $\mathfrak{d}$ is star-Menger \cite{LjSM} (respectively, strongly star-Menger \cite{LjSM}). We improve this result by showing that every star-Lindel\"{o}f (respectively, strongly star-Lindel\"{o}f) space of cardinality less than $\mathfrak{d}$ is star-Scheepers (respectively, strongly star-Scheepers). In \cite{SCPP}, Bonanzinga and Matveev observed that the Isbell-Mr\'owka space $\Psi(\mathcal{A})$ \cite{Mrowka} is strongly star-Menger if and only if $|\mathcal{A}|<\mathfrak{d}$. Again we give an improvement of this result by showing that $\Psi(\mathcal{A})$ is strongly star-Scheepers if and only if $|\mathcal{A}|<\mathfrak{d}$. We also show that similar result holds in the realm of the Niemytzki plane \cite{Niemytzki}.

Our investigation shows that the answers to the following problems are not affirmative.
\begin{enumerate}[wide=0pt,label={\upshape(\arabic*)},leftmargin=*]
\item Is there a space $X$ such that $AD(X)$ is star-Scheepers, but $X$ is not star-Scheepers?
\item Is there a space $X$ such that $AD(X)$ is strongly star-Scheepers, but $X$ is not strongly star-Scheepers?
\item Is there a space $X$ such that $AD(X)$ satisfies $\NSUf(\mathcal{O},\Omega)$, but $X$ does not satisfy $\NSUf(\mathcal{O},\Omega)$?
\end{enumerate}
In fact our observation further indicates that the answers to the similar problems posted in \cite{rsM,rssM} (for Menger-kind) are not affirmative.
The following is a summary of what has been done throughout.
\begin{enumerate}[wide=0pt,label={\upshape(\arabic*)},leftmargin=*]
  \item If each finite power of a space $X$ is star-Menger (respectively, strongly star-Menger, $\NSUf(\mathcal{O},\mathcal{O})$), then $X$ has the star-Scheepers (respectively, strongly star-Scheepers, $\NSUf(\mathcal{O},\Omega)$) property.
  \item A space $X$ satisfies $\NSUf(\mathcal{O},\Omega)$ if and only if $X$ satisfies $\NSUf(\mathcal{O},\mathcal{O}^{wgp})$.
  \item Every star-Alster (respectively, strongly star-Alster) space is star-Scheepers (respectively, strongly star-Scheepers).
 \item If $|\mathcal{A}|<\aleph_\omega$ (respectively, $|Y|<\aleph_\omega$), then $\Psi(\mathcal{A})$ (respectively, $N(Y)$) is star-Scheepers if and only if $\Psi(\mathcal{A})$ (respectively, $N(Y)$) is strongly star-Scheepers.
  \item If $X$ is a Lindel\"{o}f (respectively, star-Lindel\"{o}f) space which is union of less than $\mathfrak{d}$ star-Hurewicz \cite{sHR} (respectively, Hurewicz) subspaces, then $X$ is star-Scheepers.
  \item If $X$ is a strongly star-Lindel\"{o}f space which is union of less than $\mathfrak{d}$ Hurewicz subspaces, then $X$ is strongly star-Scheepers.
  \item If $X$ is a Scheepers space, then $AD(X)$ is strongly star-Scheepers (and hence star-Scheepers).
  \item If $X$ is a star-Lindel\"{o}f regular $P$-space and $Y$ is an infinite closed and discrete subset of $X$, then $|Y|<\cof(\Fin(w(X))^\mathbb{N})$. Similar result holds if star-Lindel\"{o}f condition is replaced by the selection hypothesis $\NSUf(\mathcal{O},\Omega)$.
  \item The games $\Guf(\mathcal{O},\Omega)$ and $\SGuf(\mathcal{O},\Omega)$ are equivalent in paracompact Hausdorff spaces.
  \item The games $\Guf(\mathcal{O},\Omega)$ and $\SSGf(\mathcal{O},\Omega)$ are equivalent in metacompact spaces.
\end{enumerate}

The paper is organized as follows. In Section 3, we introduce certain star variations of the Scheepers property. As in the case of Lindel\"{o}f, Menger and Scheepers properties, we show that under similar cardinal assumptions the star and strongly star versions of these properties behave identically. Few interesting  observations in the context of Isbell-Mr\'{o}wka space and Niemytzki plane are presented. In particular, we give a combinatorial characterization of the $\Psi$-space having the star-Scheepers property. Later in this section, we deal with Alexandroff duplicates and further investigate preservation like properties. In Section 4, we introduce another variation of the star-Scheepers property $\NSUf(\mathcal{O},\Omega)$, which we call the new star-Scheepers property. The relationship among the star selection principles and the new star selection principles are outlined into an implication diagram (Figure \ref{dig2}). We also present several observations related to local countable cellularity (introduced here) and monotonically normality. In Section 5, we devote our attention to study games corresponding to the Scheepers property and its star variations. We give another implication diagram (Figure~\ref{dig3}) to summarize the relationship between the winning strategies in the games considered here. In the final section, some open problems are posted.

\section{Preliminaries}
Throughout the paper $(X,\tau)$ stands for a topological space. For undefined notions and terminologies see \cite{Engelking}.

Let $\mathcal{A}$ and $\mathcal{B}$ be collections of open covers of a space $X$. Following \cite{coc1,coc2}, we define

\noindent $\S1(\mathcal{A},\mathcal{B})$: For each sequence $(\mathcal{U}_n)$ of elements of $\mathcal{A}$ there exists a sequence $(V_n)$ such that for each $n$ $V_n\in\mathcal{U}_n$ and $\{V_n : n\in\mathbb{N}\}\in\mathcal{B}$.\\

\noindent $\Sf(\mathcal{A},\mathcal{B})$: For each sequence $(\mathcal{U}_n)$ of elements of $\mathcal{A}$ there exists a sequence $(\mathcal{V}_n)$ such that for each $n$ $\mathcal{V}_n$ is a finite subset of $\mathcal{U}_n$ and $\cup_{n\in\mathbb{N}}\mathcal{V}_n\in\mathcal{B}$.\\

\noindent $\Uf(\mathcal{A},\mathcal{B})$: For each sequence $(\mathcal{U}_n)$ of elements of $\mathcal{A}$ there exists a sequence $(\mathcal{V}_n)$ such that for each $n$ $\mathcal{V}_n$ is a finite subset of $\mathcal{U}_n$ and $\{\cup\mathcal{V}_n : n\in\mathbb{N}\}\in\mathcal{B}$ or $\cup\mathcal{V}_n=X$ for some $n$.

For a subset $A$ of a space $X$ and a collection $\mathcal{P}$ of subsets of $X$, $St(A,\mathcal{P})$ denotes the star of $A$ with respect to $\mathcal{P}$, that is the set $\cup\{B\in\mathcal{P} : A\cap B\neq\emptyset\}$. For $A=\{x\}$, $x\in X$, we write $St(x,\mathcal{P})$ instead of $St(\{x\},\mathcal{P})$ \cite{Engelking}.

In \cite{LjSM}, Ko\v{c}inac introduced star selection principles in the following way.

\noindent $\SS1(\mathcal{A},\mathcal{B})$: For each sequence $(\mathcal{U}_n)$ of elements of $\mathcal{A}$ there exists a sequence $(V_n)$ such that for each $n$ $V_n\in\mathcal{U}_n$ and $\{St(V_n,\mathcal{U}_n) : n\in\mathbb{N}\}\in\mathcal{B}$.\\

\noindent $\SSf(\mathcal{A},\mathcal{B})$: For each sequence $(\mathcal{U}_n)$ of elements of $\mathcal{A}$ there exists a sequence $(\mathcal{V}_n)$ such that for each $n$ $\mathcal{V}_n$ is a finite subset of $\mathcal{U}_n$ and $\cup_{n\in\mathbb{N}}\{St(V,\mathcal{U}_n) : V\in\mathcal{V}_n\}\in\mathcal{B}$.\\

\noindent $\SUf(\mathcal{A},\mathcal{B})$: For each sequence $(\mathcal{U}_n)$ of elements of $\mathcal{A}$ there exists a sequence $(\mathcal{V}_n)$ such that for each $n$ $\mathcal{V}_n$ is a finite subset of $\mathcal{U}_n$ and $\{St(\cup\mathcal{V}_n,\mathcal{U}_n) : n\in\mathbb{N}\}\in\mathcal{B}$ or $St(\cup\mathcal{V}_n,\mathcal{U}_n)=X$ for some $n$.\\

\noindent$\SSS1(\mathcal{A},\mathcal{B})$: For each sequence $(\mathcal{U}_n)$ of elements of $\mathcal{A}$ there exists a sequence $(x_n)$ of elements of $X$ such that $\{St(x_n,\mathcal{U}_n) : n\in\mathbb{N}\}\in\mathcal{B}$.\\

\noindent$\SSSf(\mathcal{A},\mathcal{B})$: For each sequence $(\mathcal{U}_n)$ of elements of $\mathcal{A}$ there exists a sequence $(F_n)$ of finite subsets of $X$ such that $\{St(F_n,\mathcal{U}_n) : n\in\mathbb{N}\}\in\mathcal{B}$.

Let $\mathcal O$ denote the collection of all open covers of $X$. An open cover $\mathcal{U}$ of $X$ is said to be a $\gamma$-cover if each element of $X$ does not belong to at most finitely many members of $\mathcal{U}$ \cite{survey} (see also \cite{coc2,coc1}). We use the symbol $\Gamma$ to denote the collection of all $\gamma$-covers of $X$.
An open cover $\mathcal{U}$ of $X$ is said to be an $\omega$-cover if for each finite subset $F$ of $X$ there is a set $U\in\mathcal{U}$ such that $F\subseteq U$ \cite{coc1,coc2}.
We use the symbol $\Omega$ to denote the collection of all $\omega$-covers of $X$. An open cover $\mathcal{U}$ of $X$ is said to be large \cite{coc1} if for each $x\in X$ the set $\{U\in\mathcal{U} : x\in U\}$ is infinite. The collection of all large covers of $X$ is denoted by $\Lambda$. Note that $\Gamma\subseteq\Omega\subseteq\Lambda\subseteq\mathcal{O}$.
An open cover $\mathcal{U}$ of $X$ is said to be weakly groupable \cite{cocVIII} if it can be expressed as a countable union of finite, pairwise disjoint subfamilies $\mathcal{U}_n$, $n\in\mathbb{N}$, such that for each finite set $F\subseteq X$ we have $F\subseteq\cup\mathcal{U}_n$ for some $n$. The symbol $\mathcal{O}^{wgp}$ denotes the collection of all weakly groupable covers of $X$. Similarly the symbol $\Lambda^{wgp}$ denotes the collection of all weakly groupable large covers of $X$.

A space $X$ is said to have the Menger (respectively, Scheepers, Hurewicz) property if it satisfies the selection hypothesis $\Sf(\mathcal{O},\mathcal{O})$ (respectively, $\Uf(\mathcal{O},\Omega)$, $\Uf(\mathcal{O},\Gamma)$) \cite{coc1,coc2} (see also \cite{FPM,CST}).

A space $X$ is said to have the (1) star-Menger property, (2) strongly star-Menger property, (3) star-Hurewicz property and (4) strongly star-Hurewicz property if it satisfies the selection hypothesis (1) $\SSf(\mathcal{O},\mathcal{O})$, (2) $\SSSf(\mathcal{O},\mathcal{O})$, (3) $\SUf(\mathcal{O},\Gamma)$ and (4) $\SSSf(\mathcal{O},\Gamma)$ respectively \cite{LjSM,sHR} (see also \cite{rsM,rssM}).

A space $X$ is said to be starcompact (respectively, star-Lindel\"{o}f) if for every open cover $\mathcal{U}$ of $X$ there exists a finite (respectively, countable) set $\mathcal{V}\subseteq\mathcal{U}$ such that $St(\cup\mathcal{V},\mathcal{U})=X$. $X$ is said to be  strongly starcompact (respectively, strongly star-Lindel\"{o}f) if for every open cover $\mathcal{U}$ of $X$ there exists a finite (respectively, countable) set $A\subseteq X$ such that $St(A,\mathcal{U})=X$ \cite{LjSM,sCP}.

A subset $A$ of a space $X$ is said to be regular-closed in $X$ if $\Cl(\Int A)=A$.

A natural pre-order $\leq^*$ on the Baire space $\mathbb{N}^\mathbb{N}$ is defined by $f\leq^*g$ if and only if $f(n)\leq g(n)$ for all but finitely many $n$. A subset $A$ of $\mathbb{N}^\mathbb{N}$ is said to be bounded if there is a $g\in\mathbb{N}^\mathbb{N}$ such that $f\leq^*g$ for all $f\in A$. Let  $\mathfrak{b}$ denote the smallest cardinality of an unbounded subset of $\mathbb{N}^\mathbb{N}$. A subset $D$ of $\mathbb{N}^\mathbb{N}$ is said to be dominating if for each $g\in\mathbb{N}^\mathbb{N}$ there exists a $f\in D$ such that $g\leq^* f$. Let $\mathfrak{d}$ be the minimum cardinality of a dominating subset of $\mathbb{N}^\mathbb{N}$ and $\mathfrak{c}$ be the cardinality of the set of reals. The value of $\mathfrak{d}$ does not change if one considers the relation `$\leq$' instead of `$\leq^*$' \cite{KKJE}. It is well known that $\omega_1\leq\mathfrak{b}\leq\mathfrak{d}\leq\mathfrak{c}$. For any cardinal $\kappa$, $\kappa^+$ denotes the smallest cardinal greater than $\kappa$.

Recall that a family $\mathcal{A}\subseteq P(\mathbb{N})$ is said to be an almost disjoint family if each $A\in\mathcal{A}$ is infinite and for any two distinct elements $B,C\in\mathcal{A}$, $|B\cap C|<\omega$. For an almost disjoint family $\mathcal{A}$, let $\Psi(\mathcal{A})=\mathcal{A}\cup\mathbb{N}$ be the Isbell-Mr\'{o}wka space (or, $\Psi$-space) (see \cite{Mrowka}). It is well known that $\Psi(\mathcal{A})$ is pseudocompact if and only if $\mathcal{A}$ is a maximal almost disjoint family. In general, when talking about Isbell-Mr\'{o}wka space we do not require almost disjoint family to be maximal or the space to be pseudocompact.

For a space $X$, $e(X)=\sup\{|Y| : Y\;\text{is a closed and discrete subspace of}\; X\}$ is said to be the extent of $X$.
%

\section{Star versions of the Scheepers property}
\subsection{Star-Scheepers and related spaces}
We first introduce the following definition.
\begin{Def}
\label{D1}
A space $X$ is said to have the star-Scheepers (respectively, strongly star-Scheepers) property if it satisfies the selection hypothesis $\SUf(\mathcal{O},\Omega)$ (respectively, $\SSSf(\mathcal{O},\Omega)$).
\end{Def}
Also a space is called star-Scheepers if it has the star-Scheepers property, and similarly for the other.

The following implication diagram (Figure~\ref{dig1}) of the star variations of the Hurewicz, Scheepers and Menger properties can be easily verified.
\begin{figure}[h]
\begin{adjustbox}{max width=\textwidth,max height=\textheight,keepaspectratio,center}
\begin{tikzcd}[column sep=4ex,row sep=6ex,arrows={crossing over}]
\SUf(\mathcal{O},\Gamma) \arrow[rr] && \SUf(\mathcal{O},\Omega) \arrow[rr] && \SSf(\mathcal{O},\mathcal{O})
\\
\SSSf(\mathcal{O},\Gamma) \arrow[rr]\arrow[u]&&
\SSSf(\mathcal{O},\Omega) \arrow[rr]\arrow[u]&&
\SSSf(\mathcal{O},\mathcal{O}) \arrow[u]
\\
\Uf(\mathcal{O},\Gamma) \arrow[rr]\arrow[u]&&
\Uf(\mathcal{O},\Omega)\arrow[rr]\arrow[u]&&
\Sf(\mathcal{O},\mathcal{O})\arrow[u]
\end{tikzcd}
\end{adjustbox}
\caption{Diagram for star variations of the Hurewicz, Scheepers and Menger properties}
\label{dig1}
\end{figure}

We now present few examples to make distinction between the considered spaces.
The space $X=[0,\omega_1)$ with the usual order topology is a Tychonoff countably compact space. Observe that every countably compact space is strongly starcompact and every strongly starcompact space is strongly star-Hurewicz. Also it is known that strongly starcompact and countably compact are equivalent for Hausdorff spaces \cite{sCP}. Thus $X$ is a Tychonoff strongly star-Scheepers space which is not Scheepers (as it is not Lindel\"{o}f).

Let $aD$ be the one point compactification of the discrete space $D$ with cardinality $\mathfrak c$ and consider the subspace $X=(aD\times [0,\mathfrak{c}^+))\cup (D\times\{\mathfrak{c}^+\})$ of the product space $aD\times [0,\mathfrak{c}^+]$. By \cite[Example 2.2]{sKM}, $X$ is Tychonoff and starcompact which is not strongly star-Menger. Thus there is a star-Scheepers space which is not strongly star-Scheepers.

With necessary modifications of \cite[Theorems 2.4, 2.5]{LjSM} it follows that every metacompact strongly star-Scheepers space is Scheepers and also every meta-Lindel\"{o}f strongly star-Scheepers space is Lindel\"{o}f.
Similar to the other classical selective properties, the Scheepers property is also equivalent to any of its star variations in paracompact Hausdorff spaces.
\begin{Prop}
\label{T1}
For a paracompact Hausdorff space $X$ the following assertions are equivalent.
\begin{enumerate}[wide=0pt,label={\upshape(\arabic*)},
leftmargin=*]
\item $X$ is Scheepers.

\item $X$ is strongly star-Scheepers.

\item $X$ is star-Scheepers.
\end{enumerate}
\end{Prop}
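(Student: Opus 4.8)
The plan is to reduce everything to a single implication. The chain $(1)\Rightarrow(2)\Rightarrow(3)$ is immediate from Figure~\ref{dig1}, since $\Uf(\mathcal{O},\Omega)\Rightarrow\SSSf(\mathcal{O},\Omega)\Rightarrow\SUf(\mathcal{O},\Omega)$ holds in every space. So the only thing that needs an argument is $(3)\Rightarrow(1)$: a paracompact Hausdorff space with the star-Scheepers property is Scheepers.

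For this I would use that a paracompact Hausdorff space is fully normal, i.e.\ every open cover admits an open star-refinement \cite{Engelking}. Given a sequence $(\mathcal{U}_n)$ of open covers of $X$, first choose for each $n$ an open star-refinement $\mathcal{W}_n$ of $\mathcal{U}_n$ and fix a map $\phi_n\colon\mathcal{W}_n\to\mathcal{U}_n$ with $St(W,\mathcal{W}_n)\subseteq\phi_n(W)$ for every $W\in\mathcal{W}_n$. Applying $\SUf(\mathcal{O},\Omega)$ to the sequence $(\mathcal{W}_n)$ produces finite subsets $\mathcal{W}_n'\subseteq\mathcal{W}_n$ such that either $St(\cup\mathcal{W}_n',\mathcal{W}_n)=X$ for some $n$, or $\{St(\cup\mathcal{W}_n',\mathcal{W}_n):n\in\mathbb{N}\}$ is an $\omega$-cover of $X$.

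Now put $\mathcal{V}_n:=\{\phi_n(W):W\in\mathcal{W}_n'\}$, a finite subset of $\mathcal{U}_n$. The key (elementary) identity is $St(\cup\mathcal{W}_n',\mathcal{W}_n)=\bigcup_{W\in\mathcal{W}_n'}St(W,\mathcal{W}_n)$, which gives $\cup\mathcal{V}_n\supseteq St(\cup\mathcal{W}_n',\mathcal{W}_n)$ for every $n$. Consequently, in the first case $\cup\mathcal{V}_n=X$ for some $n$, and in the second case $\{\cup\mathcal{V}_n:n\in\mathbb{N}\}$ is again an $\omega$-cover (enlarging each member of an $\omega$-cover keeps it an $\omega$-cover); either way $(\mathcal{V}_n)$ witnesses $\Uf(\mathcal{O},\Omega)$ for $(\mathcal{U}_n)$, so $X$ is Scheepers.

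I do not expect a serious obstacle here. The one point that must be handled with care is that we genuinely need a \emph{star}-refinement --- so that the star of a set, not merely of a point, lies inside a member of $\mathcal{U}_n$ --- and this is exactly the strength that full normality of a paracompact Hausdorff space provides; the rest is the distributivity of $St(\cdot,\mathcal{W}_n)$ over unions, which transfers the $\omega$-cover conclusion from the sets $St(\cup\mathcal{W}_n',\mathcal{W}_n)$ to the sets $\cup\mathcal{V}_n$. (Should one wish to prove $(2)\Rightarrow(1)$ directly instead of routing through $(2)\Rightarrow(3)$, the same scheme works with a barycentric refinement and the identity $St(F_n,\mathcal{W}_n)=\bigcup_{x\in F_n}St(x,\mathcal{W}_n)$ applied to the finite sets $F_n$ supplied by $\SSSf(\mathcal{O},\Omega)$.)
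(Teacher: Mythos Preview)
The paper states Proposition~\ref{T1} without proof, introducing it with the remark that ``similar to the other classical selective properties, the Scheepers property is also equivalent to any of its star variations in paracompact Hausdorff spaces''; the intended argument is the standard star-refinement trick used for the Menger case in \cite[Theorem~2.8]{LjSM} (and applied again later in the paper, in the proof of Theorem~\ref{TG11}). Your proof is correct and is precisely this standard argument: pass to an open star-refinement $\mathcal{W}_n$ of $\mathcal{U}_n$, apply $\SUf(\mathcal{O},\Omega)$ to $(\mathcal{W}_n)$, and push the selected finite families back into $\mathcal{U}_n$ via the star-refinement map, using the identity $St(\cup\mathcal{W}_n',\mathcal{W}_n)=\bigcup_{W\in\mathcal{W}_n'}St(W,\mathcal{W}_n)$ to see that the resulting $\cup\mathcal{V}_n$ dominate the stars and hence inherit the $\omega$-cover property.
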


One can find examples of Hausdorff metacompact star-Scheepers spaces which are not strongly star-Scheepers (and hence not Scheepers). Let $\kappa$ be an infinite cardinal and $D=\{d_\alpha : \alpha<\kappa\}$ be the discrete space of cardinality $\kappa$. Let $aD=D\cup\{\infty\}$ be the one point compactification of $D$. In the product space $aD\times(\omega+1)$, replace the local base of the point $(\infty,\omega)$ by the family $\{U\setminus(D\times\{\omega\}) : (\infty,\omega)\in U\;\text{and}\; U\;\text{is an open set in}\; aD\times(\omega+1)\}$. Let $X$ be the space obtained by such replacement. By \cite[Example 3.4]{SVM}, $X$ is Hausdorff metacompact starcompact (i.e. star-Scheepers) but not strongly star-Lindel\"{o}f (and hence not strongly star-Scheepers). Also by \cite[Theorem 8.10]{SFPH}, there is a set  of reals $X$ which is Scheepers but not Hurewicz. With the help of Proposition~\ref{T1} and \cite[Proposition 4.1]{sHR}, we can conclude that there exists a star-Scheepers (respectively, strongly star-Scheepers) space which is not star-Hurewicz (respectively,  strongly star-Hurewicz). By \cite[Theorem 2.1]{FPM}, there is a set of reals $X$ which is Menger but not Scheepers. Thus Proposition~\ref{T1} and \cite[Theorem 2.8]{LjSM} together imply the existence of a star-Menger (respectively, strongly star-Menger) space which is not star-Scheepers (respectively, strongly star-Scheepers).

\begin{Th}[{cf. \cite[Theorem 2.1]{sHR}}]
\label{T13}
If each finite power of a space $X$ is star-Menger, then $X$ has the star-Scheepers property.
\end{Th}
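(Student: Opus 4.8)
The plan is to mimic the classical argument (as in \cite[Theorem 2.1]{sHR} for the Hurewicz/Scheepers analogue), adapting it to the starred setting via the standard trick of passing to $\omega$-covers built from finite unions. Suppose every finite power $X^k$ is star-Menger, i.e.\ satisfies $\SSf(\mathcal{O},\mathcal{O})$. Let $(\mathcal{U}_n)$ be a sequence of open covers of $X$; we must produce finite $\mathcal{V}_n\subseteq\mathcal{U}_n$ so that $\bigcup_n\{St(V,\mathcal{U}_n):V\in\mathcal{V}_n\}$ is an $\omega$-cover of $X$ (or some $St(\cup\mathcal{V}_n,\mathcal{U}_n)=X$, which trivializes the problem). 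First I would partition $\mathbb{N}$ into infinitely many infinite pieces $\mathbb{N}=\bigsqcup_{k\in\mathbb{N}}N_k$, and handle, for each $k$, the task of getting finite parts of the $\mathcal{U}_n$ with $n\in N_k$ whose stars collectively cover all $k$-element subsets of $X$.

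The key device is the following. Fix $k$ and an enumeration $N_k=\{n^k_0<n^k_1<\cdots\}$. For each $j$ let $\mathcal{W}^k_j$ denote the cover of the power $X^k$ consisting of all boxes $U_0\times\cdots\times U_{k-1}$ with $U_i\in\mathcal{U}_{n^k_j}$. Since $X^k$ is star-Menger, applying $\SSf(\mathcal{O},\mathcal{O})$ to the sequence $(\mathcal{W}^k_j)_j$ yields finite $\mathcal{G}^k_j\subseteq\mathcal{W}^k_j$ such that $\bigcup_j\{St(G,\mathcal{W}^k_j):G\in\mathcal{G}^k_j\}$ covers $X^k$. From each box $G=U_0\times\cdots\times U_{k-1}\in\mathcal{G}^k_j$ collect the $k$ coordinate sets $U_0,\dots,U_{k-1}$ into a finite subfamily $\mathcal{V}_{n^k_j}\subseteq\mathcal{U}_{n^k_j}$ (doing this over all $G\in\mathcal{G}^k_j$ keeps it finite). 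Defining $\mathcal{V}_n$ this way for every $n$ (each $n$ lies in exactly one $N_k$), I then need: given a finite $F=\{x_0,\dots,x_{k-1}\}\subseteq X$ of size at most $k$, the tuple $(x_0,\dots,x_{k-1})\in X^k$ lies in $St(G,\mathcal{W}^k_j)$ for some $j$ and some $G\in\mathcal{G}^k_j$; unpacking the star in $X^k$ coordinatewise shows there is a single box $H=U_0\times\cdots\times U_{k-1}\in\mathcal{W}^k_j$ meeting $G$ with $(x_0,\dots,x_{k-1})\in H$, and then each $x_i\in U_i$ while $U_i$ meets the corresponding coordinate set of $G\in\mathcal{G}^k_j$, hence $F\subseteq St(\cup\mathcal{V}_{n^k_j},\mathcal{U}_{n^k_j})$. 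Thus $\bigcup_n\{St(V,\mathcal{U}_n):V\in\mathcal{V}_n\}\in\Omega$, as required — unless at some stage $St(\cup\mathcal{V}_n,\mathcal{U}_n)=X$, which is the permitted alternative in $\SUf$.

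The main obstacle, and the point demanding care, is the coordinatewise analysis of the star operation in the product $X^k$: one must check that $St\big(U_0\times\cdots\times U_{k-1},\,\mathcal{W}^k_j\big)\subseteq St(U_0,\mathcal{U}_{n^k_j})\times\cdots\times St(U_{k-1},\mathcal{U}_{n^k_j})$, since a box in $\mathcal{W}^k_j$ meeting $G$ need not have each coordinate meeting the matching coordinate of $G$ a priori — but it does, because if $(U_0\times\cdots\times U_{k-1})\cap(U_0'\times\cdots\times U_{k-1}')\neq\emptyset$ then $U_i\cap U_i'\neq\emptyset$ for every $i$. Once this containment is in hand the rest is bookkeeping: verifying finiteness of each $\mathcal{V}_n$, checking that the partition $\{N_k\}$ ensures every finite subset of $X$ (of any size $m$, by treating it as a size-$k$ tuple with repetitions for any $k\geq m$) is eventually captured, and noting the $\SUf$ escape clause. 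I would also remark that the identical argument with $\Omega$ replaced by $\Gamma$ throughout is not claimed here — only the $\omega$-cover conclusion is needed, matching the statement.
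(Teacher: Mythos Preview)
Your argument is correct and follows the standard approach that the paper defers to via \cite[Theorem 2.1]{sHR}: partition $\mathbb{N}$, lift the covers to box covers of $X^k$, apply star-Menger there, and pull the finite selections back coordinatewise using the observation that a nonempty intersection of boxes forces nonempty intersections in every coordinate. One small slip to fix in the write-up: you phrase the goal as ``$\bigcup_n\{St(V,\mathcal{U}_n):V\in\mathcal{V}_n\}\in\Omega$'', which would demand each finite $F$ lie in a \emph{single} $St(V,\mathcal{U}_n)$; what you actually (and correctly) prove is $F\subseteq St(\cup\mathcal{V}_{n},\mathcal{U}_{n})$ for some $n$, i.e.\ $\{St(\cup\mathcal{V}_n,\mathcal{U}_n):n\in\mathbb{N}\}\in\Omega$, which is precisely the $\SUf(\mathcal{O},\Omega)$ requirement.
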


\begin{Th}[{cf. \cite[Theorem 2.2]{sHR}}]
\label{T14}
For a space $X$ the following assertions are equivalent.
\begin{enumerate}[label={\upshape(\arabic*)}, leftmargin=*]
  \item $X$ is star-Scheepers.
  \item $X$ satisfies $\SUf(\mathcal{O},\mathcal{O}^{wgp})$.
\end{enumerate}
\end{Th}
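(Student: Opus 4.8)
The plan is to prove the two implications separately. The implication $(1)\Rightarrow(2)$ is a short observation. Given a sequence $(\mathcal{U}_n)$ of open covers, apply $\SUf(\mathcal{O},\Omega)$ to obtain finite $\mathcal{V}_n\subseteq\mathcal{U}_n$ with either $St(\cup\mathcal{V}_n,\mathcal{U}_n)=X$ for some $n$ — in which case the same $(\mathcal{V}_n)$ witnesses $\SUf(\mathcal{O},\mathcal{O}^{wgp})$ by its escape clause — or $\mathcal{W}:=\{St(\cup\mathcal{V}_n,\mathcal{U}_n):n\in\mathbb{N}\}\in\Omega$. In the latter case, if $\mathcal{W}$ is finite then $X\in\mathcal{W}$ (otherwise the finite set obtained by picking a point outside each member of $\mathcal{W}$ would lie in no member, contradicting $\mathcal{W}\in\Omega$), so again some $St(\cup\mathcal{V}_n,\mathcal{U}_n)=X$; and if $\mathcal{W}$ is infinite, listing it injectively as $\{W_k:k\in\mathbb{N}\}$ and using the singleton blocks $\{W_k\}$ shows $\mathcal{W}$ is weakly groupable, because every finite $F\subseteq X$ is contained in some $W_k$. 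Hence $(\mathcal{V}_n)$ works for $\SUf(\mathcal{O},\mathcal{O}^{wgp})$.

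The hard direction is $(2)\Rightarrow(1)$, and the obstacle is a genuine mismatch: belonging to $\mathcal{O}^{wgp}$ only guarantees that a finite $F\subseteq X$ is covered by the union of a whole \emph{block} of stars, whereas belonging to $\Omega$ requires a \emph{single} star to contain $F$. The plan is to remove this gap by applying the hypothesis not to $(\mathcal{U}_n)$ itself but to the sequence of common refinements $\mathcal{W}_n:=\mathcal{U}_1\wedge\dots\wedge\mathcal{U}_n$ (the open cover of all sets $U_1\cap\dots\cap U_n$ with $U_i\in\mathcal{U}_i$), which refines $\mathcal{U}_i$ for every $i\le n$. Applying $\SUf(\mathcal{O},\mathcal{O}^{wgp})$ to $(\mathcal{W}_n)$ gives finite $\mathcal{V}_n\subseteq\mathcal{W}_n$ with $\{St(\cup\mathcal{V}_n,\mathcal{W}_n):n\in\mathbb{N}\}$ weakly groupable, say via $\bigcup_l\mathcal{G}_l$, or with $St(\cup\mathcal{V}_n,\mathcal{W}_n)=X$ for some $n$. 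For each $l$ I would pick, by choosing one index per member, a finite set $J_l\subseteq\mathbb{N}$ with $\{St(\cup\mathcal{V}_n,\mathcal{W}_n):n\in J_l\}=\mathcal{G}_l$; since the $\mathcal{G}_l$ are pairwise disjoint the $J_l$ may be taken pairwise disjoint, and I put $m_l:=\min J_l$.

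The crux is then two inclusions between stars. For $n\in J_l$ we have $m_l\le n$, so $\mathcal{W}_n$ refines $\mathcal{U}_{m_l}$; replacing each member of $\bigcup_{n\in J_l}\mathcal{V}_n$ by a member of $\mathcal{U}_{m_l}$ containing it produces a finite $\mathcal{V}'_{m_l}\subseteq\mathcal{U}_{m_l}$ with $\cup\mathcal{V}_n\subseteq\cup\mathcal{V}'_{m_l}$, and then, using the refinement again, $St(\cup\mathcal{V}_n,\mathcal{W}_n)\subseteq St(\cup\mathcal{V}'_{m_l},\mathcal{W}_n)\subseteq St(\cup\mathcal{V}'_{m_l},\mathcal{U}_{m_l})$. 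Taking arbitrary finite $\mathcal{V}'_m\subseteq\mathcal{U}_m$ for the remaining $m$ (the $m_l$ being pairwise distinct), it follows that for each finite $F\subseteq X$ there is $l$ with $F\subseteq\cup\mathcal{G}_l=\bigcup_{n\in J_l}St(\cup\mathcal{V}_n,\mathcal{W}_n)\subseteq St(\cup\mathcal{V}'_{m_l},\mathcal{U}_{m_l})$, so $\{St(\cup\mathcal{V}'_m,\mathcal{U}_m):m\in\mathbb{N}\}\in\Omega$ and $X$ is star-Scheepers; the escape clause is handled the same way — if $St(\cup\mathcal{V}_n,\mathcal{W}_n)=X$ then, since $\mathcal{W}_n$ refines $\mathcal{U}_1$, the same lifting produces a finite $\mathcal{V}'_1\subseteq\mathcal{U}_1$ with $St(\cup\mathcal{V}'_1,\mathcal{U}_1)=X$. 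I expect the only delicate points to be verifying these star/refinement inclusions and the bookkeeping that keeps each $J_l$ finite and the family $\{J_l\}$ pairwise disjoint.
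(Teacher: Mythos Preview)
The paper does not supply its own proof of this theorem; it is stated with a ``cf.'' to \cite[Theorem 2.2]{sHR} and the argument is meant to be an adaptation of that one (which in turn mirrors the non-starred equivalence $\Uf(\mathcal{O},\Omega)=\Sf(\mathcal{O},\mathcal{O}^{wgp})$ from \cite{cocVIII}). Your proof is correct and is precisely the standard adaptation: the forward direction is the trivial inclusion $\Omega\subseteq\mathcal{O}^{wgp}$ for countable covers, and for the converse you apply the hypothesis to the common refinements $\mathcal{W}_n=\mathcal{U}_1\wedge\cdots\wedge\mathcal{U}_n$, then use that $\mathcal{W}_n$ refines every $\mathcal{U}_k$ with $k\le n$ to lift an entire block $\mathcal{G}_l$ of stars into a single star $St(\cup\mathcal{V}'_{m_l},\mathcal{U}_{m_l})$. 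The two key inclusions $St(A,\mathcal{W})\subseteq St(A,\mathcal{U})$ (for $\mathcal{W}$ refining $\mathcal{U}$) and $St(A,\mathcal{U})\subseteq St(B,\mathcal{U})$ (for $A\subseteq B$) are exactly what is needed, and your bookkeeping with the pairwise disjoint index sets $J_l$ and their minima $m_l$ is sound. This is the same route the cited reference takes.
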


\begin{Th}[{cf. \cite[Theorem 3.1]{sHR}}]
\label{T15}
If each finite power of a space $X$ is strongly star-Menger, then $X$ has the strongly star-Scheepers property.
\end{Th}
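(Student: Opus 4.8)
The plan is to adapt the classical argument that relates covering properties of all finite powers to the selection of $\omega$-covers, carrying the star operation along throughout. Two elementary observations do all the work. First, for any open cover $\mathcal{U}$ of $X$ and any $k\ge 1$ the family $\mathcal{U}^{(k)}:=\{U_1\times\cdots\times U_k : U_1,\dots,U_k\in\mathcal{U}\}$ is an open cover of $X^k$; note that we take arbitrary boxes, not cubes, so no $\omega$-cover hypothesis on $\mathcal{U}$ is needed. Second — and this is the key point — if $G\subseteq X^k$ is finite and $F\subseteq X$ is the (finite) set of all coordinates of points of $G$, then $St(G,\mathcal{U}^{(k)})\subseteq\bigl(St(F,\mathcal{U})\bigr)^k$: a point of $St(G,\mathcal{U}^{(k)})$ lies in some box $U_1\times\cdots\times U_k$ meeting $G$ in a point whose $i$-th coordinate $a_i\in U_i$; since $a_i\in F$ the set $U_i$ meets $F$, and as the $i$-th coordinate of our point also lies in $U_i$, that coordinate lies in $St(F,\mathcal{U})$.

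Now let $(\mathcal{U}_n)_{n\in\mathbb{N}}$ be a sequence of open covers of $X$. Fix a partition $\mathbb{N}=\bigsqcup_{k\ge 1}N_k$ into infinitely many infinite sets. For each fixed $k\ge 1$, the space $X^k$ is strongly star-Menger, i.e.\ satisfies $\SSSf(\mathcal{O},\mathcal{O})$; apply this to the sequence $(\mathcal{U}_n^{(k)})_{n\in N_k}$ of open covers of $X^k$ (listing $N_k$ in increasing order gives a legitimate $\mathbb{N}$-indexed sequence) to obtain finite sets $G_n\subseteq X^k$, for $n\in N_k$, with $\bigcup_{n\in N_k}St(G_n,\mathcal{U}_n^{(k)})=X^k$. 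Let $F_n\subseteq X$ be the set of coordinates of points of $G_n$; since the $N_k$ partition $\mathbb{N}$, this defines a finite set $F_n$ for every $n\in\mathbb{N}$.

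It remains to check that $\{St(F_n,\mathcal{U}_n):n\in\mathbb{N}\}$ is an $\omega$-cover of $X$. Let $F\subseteq X$ be finite and nonempty, put $k=|F|$, and write $F=\{y_1,\dots,y_k\}$. Then $(y_1,\dots,y_k)\in X^k=\bigcup_{n\in N_k}St(G_n,\mathcal{U}_n^{(k)})$, so there is $n\in N_k$ with $(y_1,\dots,y_k)\in St(G_n,\mathcal{U}_n^{(k)})\subseteq\bigl(St(F_n,\mathcal{U}_n)\bigr)^k$ by the second observation; hence $y_i\in St(F_n,\mathcal{U}_n)$ for every $i$, that is, $F\subseteq St(F_n,\mathcal{U}_n)$. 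Taking $F$ a singleton shows that in particular $\{St(F_n,\mathcal{U}_n):n\in\mathbb{N}\}$ covers $X$, so it is an $\omega$-cover and $X$ satisfies $\SSSf(\mathcal{O},\Omega)$.

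I expect the only genuinely delicate point to be the interplay in the second observation together with the correct choice of the box covers $\mathcal{U}_n^{(k)}$ (rather than cube covers); everything else — the partition bookkeeping, the fact that $\mathcal{U}_n^{(k)}\in\mathcal{O}(X^k)$, and the final verification — is routine. It is worth remarking that this is strictly easier than the strongly star-Hurewicz analogue of \cite[Theorem 3.1]{sHR}: an $\omega$-cover only requires each finite set to be swallowed by \emph{some} member, so a plain partition of $\mathbb{N}$ into infinitely many infinite blocks suffices and no cofiniteness condition needs to be engineered into the indexing.
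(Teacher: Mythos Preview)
Your proof is correct. The paper does not give its own proof of this theorem but defers to \cite[Theorem 3.1]{sHR}; the argument you give is the standard one (partition $\mathbb{N}$ into infinite blocks $N_k$, pass to $X^k$ on the $k$th block, pull the selections back to $X$), which is exactly the scheme the paper employs in the analogous Theorem~\ref{T46}. The one technical difference is that the paper's proofs (and presumably \cite{sHR}) first assume each $\mathcal{U}_n$ is closed under finite unions and then use \emph{cube} covers $\{U^k:U\in\mathcal{U}_n\}$ of $X^k$, whereas you use \emph{box} covers $\{U_1\times\cdots\times U_k:U_i\in\mathcal{U}_n\}$; your choice avoids the finite-union preprocessing but requires the projection estimate $St(G,\mathcal{U}^{(k)})\subseteq\bigl(St(F,\mathcal{U})\bigr)^k$, which you verify correctly. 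Both routes are equivalent in spirit and neither is materially simpler; your remark that the $\omega$-cover target makes this easier than the Hurewicz case (no cofiniteness bookkeeping needed) is accurate.
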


\begin{Th}[{cf. \cite[Theorem 3.2]{sHR}}]
\label{T16}
For a space $X$ the following assertions are equivalent.
\begin{enumerate}[label={\upshape(\arabic*)}, leftmargin=*]
  \item $X$ is strongly star-Scheepers.
  \item $X$ satisfies $\SSSf(\mathcal{O},\mathcal{O}^{wgp})$.
\end{enumerate}
\end{Th}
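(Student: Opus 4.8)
The plan is to mimic the standard argument showing that the Scheepers property $\Uf(\mathcal{O},\Omega)$ is equivalent to $\Uf(\mathcal{O},\mathcal{O}^{wgp})$ (which is exactly the content of \cite[Theorem 3.2]{sHR} in the strongly-star setting), adapting each occurrence of a star operator $St(\cup\mathcal{V},\mathcal{U})$ to the finite-point star $St(F,\mathcal{U})$. Throughout, the key fact is the bijective (or rather multi-) correspondence between a selection producing an $\omega$-cover by stars and one producing a weakly groupable cover by stars: given a sequence that yields an $\omega$-cover $\{St(F_n,\mathcal{U}_n):n\in\mathbb{N}\}$, one regroups using a bookkeeping bijection $\mathbb{N}=\bigsqcup_k I_k$ with each $I_k$ finite; conversely one uses the defining blocks of a weakly groupable cover directly.

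\textbf{Step 1: $(2)\Rightarrow(1)$.} Suppose $X$ satisfies $\SSSf(\mathcal{O},\mathcal{O}^{wgp})$. Given a sequence $(\mathcal{U}_n)$ of open covers, split $\mathbb{N}$ into infinitely many infinite pieces $\mathbb{N}=\bigsqcup_{k\in\mathbb{N}} N_k$ and for each $k$ apply the hypothesis to the sequence $(\mathcal{U}_n)_{n\in N_k}$, obtaining finite sets $F_n\subseteq X$ ($n\in N_k$) such that $\{St(F_n,\mathcal{U}_n):n\in N_k\}$ is a weakly groupable (in particular large, hence in the relevant case an $\omega$-) cover; if some star already equals $X$ we are done for that $k$. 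Actually it is cleaner to run the hypothesis once on the whole sequence: the weakly groupable cover $\{St(F_n,\mathcal{U}_n):n\in\mathbb{N}\}$ comes with a partition into finite blocks $\{St(F_n,\mathcal{U}_n):n\in H_k\}$, $k\in\mathbb{N}$, each $H_k$ finite, such that every finite $F\subseteq X$ lies in $\bigcup_{n\in H_k}St(F_n,\mathcal{U}_n)$ for some $k$. Set $G_n=F_n$; since $St(F,\mathcal{U}_{n})\supseteq St(F',\mathcal{U}_n)$ whenever $F\supseteq F'$, and $St\big(\bigcup_{n\in H_k}F_n,\mathcal{U}_{n_0}\big)\supseteq \bigcup_{n\in H_k}St(F_n,\mathcal{U}_n)$ is \emph{not} generally true because the covers differ — so instead one keeps the sequence indexed by $n$ and observes directly that $\{St(F_n,\mathcal{U}_n):n\in\mathbb{N}\}$ is already an $\omega$-cover: indeed any finite $F\subseteq X$ is contained in $\bigcup_{n\in H_k}St(F_n,\mathcal{U}_n)$, but we need a \emph{single} member. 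This is the crux, handled in Step 3.

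\textbf{Step 2: $(1)\Rightarrow(2)$.} Conversely assume $X$ is strongly star-Scheepers. Given $(\mathcal{U}_n)$, partition $\mathbb{N}=\bigsqcup_{k\in\mathbb{N}}I_k$ into consecutive finite intervals with $|I_k|\to\infty$ (say $|I_k|=k$), and for the $k$th block apply $\SSSf(\mathcal{O},\Omega)$ — or rather, apply the single-sequence selection to $(\mathcal{U}_n)_{n}$ to get finite $F_n$ with $\{St(F_n,\mathcal{U}_n):n\}\in\Omega$, then declare the $k$th block of the output weakly-groupable cover to be $\{St(F_n,\mathcal{U}_n):n\in I_k\}$. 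To verify weak groupability: given finite $F\subseteq X$, since the cover is an $\omega$-cover there is $m$ with $F\subseteq St(F_m,\mathcal{U}_m)$, and $m\in I_k$ for some $k$, whence $F\subseteq\bigcup_{n\in I_k}St(F_n,\mathcal{U}_n)$. The pairwise-disjointness and finiteness of blocks are immediate from the construction. If instead some $St(F_n,\mathcal{U}_n)=X$, the resulting cover is trivially weakly groupable (or the hypothesis is vacuous).

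\textbf{Step 3 (the main obstacle): upgrading a weakly groupable star-cover to an $\omega$ star-cover in $(2)\Rightarrow(1)$.} The real work is that from $\SSSf(\mathcal{O},\mathcal{O}^{wgp})$ one must extract an \emph{$\omega$-cover} by single stars, not merely a cover whose finite blocks jointly absorb finite sets. The standard trick (as in \cite{cocVIII} and \cite[Theorem 3.2]{sHR}) is to feed the hypothesis not $(\mathcal{U}_n)$ but a reindexed sequence where each $\mathcal{U}_n$ is repeated along a block, or to pass to finite \emph{powers}: given $(\mathcal{U}_n)$ and wanting an $\omega$-cover, apply $\SSSf(\mathcal{O},\mathcal{O}^{wgp})$ to the sequence and use that within a single weakly-groupable block $H_k$ the covers $\mathcal{U}_n$ ($n\in H_k$) can be refined to a common finer cover, or more directly note that $St(F_n,\mathcal{U}_n)$ for $n$ ranging over $H_k$ can be amalgamated because one may assume (by repeating covers in the input sequence) that $\mathcal{U}_n$ is constant on each block $H_k$; then $\bigcup_{n\in H_k}St(F_n,\mathcal{U}_n)=St\big(\bigcup_{n\in H_k}F_n,\mathcal{U}\big)$ with $\mathcal{U}$ the common value, a single star of a finite set, giving the desired $\omega$-cover indexed by $k$. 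Making "repeat each cover along a block" compatible with the partition produced by weak groupability — i.e. choosing the input reindexing \emph{before} seeing the output partition — is the delicate bookkeeping point; it is resolved exactly as in the non-star Scheepers theorem by using that a countable partition into finite sets refining a given interval partition always exists, so one fixes the interval partition in advance and intersects. I would cite \cite[Theorem 3.2]{sHR} and \cite{cocVIII} for this regrouping lemma and carry out the star-translation explicitly only where the finite-point star behaves differently from the union star, which in the strongly-star case is essentially nowhere since $St(F,\mathcal{U})$ is already monotone in $F$.
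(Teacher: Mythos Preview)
The paper gives no proof of this theorem; it simply records it with a ``cf.\ \cite[Theorem 3.2]{sHR}''. So in that sense your plan --- adapt \cite[Theorem 3.2]{sHR} by replacing $St(\cup\mathcal V,\mathcal U)$ with $St(F,\mathcal U)$ --- is exactly what the paper does.

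Two comments on the sketch itself. First, your Step~2 is harder than it needs to be: since every countable $\omega$-cover is weakly groupable (partition it into singletons), the implication $(1)\Rightarrow(2)$ is immediate.

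Second, and more importantly, your Step~3 has a genuine gap. The device you describe --- repeat each $\mathcal U_n$ along a pre-chosen block $I_k$ so that the cover is constant on blocks, then amalgamate stars within the weakly-groupable blocks $H_j$ --- does not work, because the blocks $H_j$ are produced by the hypothesis \emph{after} you fix the input, and there is no reason any $H_j$ should sit inside a single $I_k$. Coarsening the $H_j$'s to align with the $I_k$'s destroys exactly the constancy you need; refining them is not allowed. The vague ``\dots and intersects'' does not rescue this.

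The mechanism that actually makes $(2)\Rightarrow(1)$ go through (and this is what \cite{sHR} and \cite{cocVIII} do) is \emph{refinement}, not repetition: given $(\mathcal U_n)$, set $\mathcal W_n=\mathcal U_1\wedge\cdots\wedge\mathcal U_n$, apply $(2)$ to $(\mathcal W_n)$, and use that $\mathcal W_n$ refines $\mathcal W_m$ for $m\le n$, hence $St(F,\mathcal W_n)\subseteq St(F,\mathcal W_m)$. Then for each weakly-groupable block $H_j$ with $m_j=\min H_j$ one has
\[
\bigcup_{n\in H_j}St(F_n,\mathcal W_n)\ \subseteq\ St\Big(\bigcup_{n\in H_j}F_n,\ \mathcal W_{m_j}\Big)\ \subseteq\ St\Big(\bigcup_{n\in H_j}F_n,\ \mathcal U_{m_j}\Big),
\]
producing the required $\omega$-cover by single stars. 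This is the ``non-star Scheepers'' argument you allude to, but the operative idea is common refinement of the input covers, not a regrouping lemma about partitions of $\mathbb N$.
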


From \cite{WCPSP,Alster} we recall the following definitions of covers.

\begin{enumerate}[leftmargin=*]
\item[$\mathcal{G}$:] The family of all covers $\mathcal{U}$ of the space $X$ for which each element of $\mathcal{U}$ is a $G_\delta$ set.

\item[$\mathcal{G}_K$:] The family consisting of sets $\mathcal{U}$ where $X$ is not in $\mathcal{U}$, each element of $\mathcal{U}$ is a $G_\delta$ set, and for each compact set $C\subseteq X$ there is a $U\in\mathcal{U}$ such that $C\subseteq U$.
\end{enumerate}

 A space $X$ is said to be Alster \cite{Alster} if each member of $\mathcal{G}_K$ has a countable subset that covers $X$.
By \cite[Proposition 2.6]{LFRR}, a space $X$ is Alster if and only if $X$ satisfies $\S1(\mathcal{G}_K,\mathcal{G})$. The star versions of the Alster property may be introduced as follows.
\begin{Def}
A space $X$ is said to be star-Alster (respectively, strongly star-Alster) if $X$ satisfies $\SS1(\mathcal{G}_K,\mathcal{G})$ (respectively, $\SSS1(\mathcal{G}_K,\mathcal{G})$).
\end{Def}
If a space $X$ has the Alster property, then $X$ is Scheepers. A similar observation for the star versions has been discussed in the following result.

\begin{Th}
\label{T46}
\hfill
\begin{enumerate}[wide=0pt,label={\upshape(\arabic*)},
ref={\theTh(\arabic*)},leftmargin=*]
  \item\label{T4601} Every star-Alster space is star-Scheepers.
  \item\label{T4602} Every strongly star-Alster space is strongly star-Scheepers.
\end{enumerate}

\end{Th}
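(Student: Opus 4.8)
The plan is to transplant into the star setting the classical argument that every Alster space is Scheepers, i.e.\ that $\S1(\mathcal{G}_K,\mathcal{G})\Rightarrow\Uf(\mathcal{O},\Omega)$; the only genuine difficulty is keeping the stars under control. I would prove \eqref{T4601} in detail, and \eqref{T4602} runs along the same lines with the pointwise selection of $\SSS1$ replacing the selection of single members in $\SS1$, and with finite sets $F_n\subseteq X$ replacing the finite subfamilies $\mathcal{V}_n\subseteq\mathcal{U}_n$.

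So let $X$ be star-Alster and let $(\mathcal{U}_n)_{n\in\mathbb{N}}$ be a sequence of open covers. If $St(\cup\mathcal{V},\mathcal{U}_n)=X$ for some $n$ and some finite $\mathcal{V}\subseteq\mathcal{U}_n$ we are done, so assume otherwise; then in particular no finite subfamily of any $\mathcal{U}_n$ covers $X$. For each $n$ let $\mathcal{U}_n^{\cup}=\{\,\cup\mathcal{F}:\mathcal{F}\in[\mathcal{U}_n]^{<\omega}\,\}$, an open cover of $X$ with $X\notin\mathcal{U}_n^{\cup}$, fix a partition $\mathbb{N}=\bigsqcup_{k\in\mathbb{N}}I_k$ into infinitely many infinite sets, and put
$$\mathcal{H}_k=\Big\{\ \bigcap_{n\in I_k}W_n\ :\ W_n\in\mathcal{U}_n^{\cup}\ \text{for every}\ n\in I_k\ \Big\}.$$
Since each $W_n$ is open, every member of $\mathcal{H}_k$ is a $G_\delta$ set; $\mathcal{H}_k$ covers $X$; for every compact $C\subseteq X$ and every $n$ there is a finite $\mathcal{F}_n\subseteq\mathcal{U}_n$ with $C\subseteq\cup\mathcal{F}_n$, whence $C\subseteq\bigcap_{n\in I_k}\cup\mathcal{F}_n\in\mathcal{H}_k$; and $X\notin\mathcal{H}_k$. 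Thus each $\mathcal{H}_k\in\mathcal{G}_K$.

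Next I would apply $\SS1(\mathcal{G}_K,\mathcal{G})$ to $(\mathcal{H}_k)_{k\in\mathbb{N}}$ to obtain $H_k\in\mathcal{H}_k$ with $\{St(H_k,\mathcal{H}_k):k\in\mathbb{N}\}\in\mathcal{G}$, in particular a cover of $X$. Writing $H_k=\bigcap_{n\in I_k}\cup\mathcal{F}^{k}_n$ with $\mathcal{F}^{k}_n\in[\mathcal{U}_n]^{<\omega}$, I set $\mathcal{V}_n:=\mathcal{F}^{k(n)}_n$ where $n\in I_{k(n)}$, and aim for $\{St(\cup\mathcal{V}_n,\mathcal{U}_n):n\in\mathbb{N}\}$ to be an $\omega$-cover (equivalently, after grouping indices consecutively, a weakly groupable cover, which by Theorem~\ref{T14} is equally good). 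To upgrade ``covers $X$'' to ``$\omega$-cover'' one runs this not once but over countably many rounds: fix in advance a further partition of $\mathbb{N}$ into infinitely many infinite pieces, perform the construction inside each piece, and note that a \emph{finite} $F\subseteq X$, being covered inside every round, then lands in $St(\cup\mathcal{V}_n,\mathcal{U}_n)$ for infinitely many $n$ — here one exploits that $\mathcal{G}_K$-covers are in particular $\omega$-covers by $G_\delta$-sets, since finite sets are compact.

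The hard part, and essentially the only step that is not bookkeeping, is the \emph{star-transfer estimate}: for $n\in I_k$ one needs $St(H_k,\mathcal{H}_k)\subseteq St(\cup\mathcal{V}_n,\mathcal{U}_n)$, which is delicate precisely because members of $\mathcal{U}_n^{\cup}$ are finite unions, not single members, of $\mathcal{U}_n$, so stars with respect to $\mathcal{H}_k$ can a priori be much larger than the naive bound. The device I would try: if $y\in St(H_k,\mathcal{H}_k)$, witnessed by $H'\in\mathcal{H}_k$ with $y\in H'$ and $z\in H'\cap H_k$, then $z$ already lies in $\cup\mathcal{V}_n$ for each $n\in I_k$; after replacing each $\mathcal{F}^{k}_n$ by the subfamily of its members that actually meet the compact set it was chosen to cover (which still covers that set), one forces the member of $\mathcal{U}_n$ through which $y$ enters $H'$ to meet $\cup\mathcal{V}_n$, so that $y\in St(\cup\mathcal{V}_n,\mathcal{U}_n)$. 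Making this airtight is where the proof really lives; it is also the one place where \eqref{T4602} needs a separate small argument, because there the auxiliary $\mathcal{G}_K$-covers must be built so that $St(x,\mathcal{H}_k)$ does not explode, while on the target side $St(F_n,\mathcal{U}_n)$ leaves just enough room to absorb the selected points.
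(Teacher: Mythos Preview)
You have correctly located the crux --- the star-transfer inclusion $St(H_k,\mathcal{H}_k)\subseteq St(\cup\mathcal{V}_n,\mathcal{U}_n)$ --- but your proposed fix does not close it, and the inclusion fails in general. The obstruction sits on the witness side, not on the side of $H_k$. If $y\in St(H_k,\mathcal{H}_k)$ via some $H'=\bigcap_{m\in I_k}W'_m\in\mathcal{H}_k$ with $y\in H'$ and $z\in H'\cap H_k$, then for $n\in I_k$ both $y$ and $z$ lie in the finite union $W'_n=U'_1\cup\dots\cup U'_m$, but possibly in \emph{different} members $U'_j\in\mathcal{U}_n$; the piece containing $y$ then need not meet $\cup\mathcal{V}_n$ at all. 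Your remedy adjusts $\mathcal{F}^k_n$, the decomposition of the \emph{selected} $H_k$; the trouble lives in the decomposition of the uncontrolled $H'$, and restricting $\mathcal{H}_k$ to intersections arising from minimal finite covers of compact sets still gives no link between the $\mathcal{U}_n$-piece through $y$ and $\cup\mathcal{V}_n$. More structurally: $St(A,\mathcal{U}_n^{\cup})$ can strictly contain $St(A,\mathcal{U}_n)$ (take $\mathcal{U}_n=\{U,V\}$ with $U,V$ disjoint and $A\subseteq U$), so closing covers under finite unions is \emph{not} harmless for $\SUf$, and your scheme only ever delivers $St(\cup\mathcal{V}_n,\mathcal{U}_n^{\cup})$-type information.

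The paper does not attempt a direct star-transfer. Its route is through finite powers: it partitions $\mathbb{N}=\bigcup_k N_k$, and on the block $N_k$ works in $X^k$ with the covers $\mathcal{W}_n=\{U^k:U\in\mathcal{U}_n\}$; the associated $\mathcal{G}_K$-cover of $X^k$ consists of diagonal products $(\bigcap_{n\in N_k}U_n)^k$, and star-Alster on $X$ is applied coordinatewise to the factor cover. A finite set $F=\{x_1,\dots,x_k\}\subseteq X$ is then captured via the point $(x_1,\dots,x_k)\in X^k$. Whether every step of that argument is fully airtight as written is another question, but the conceptual device --- finite powers in place of your direct star-transfer --- is what your outline is missing.
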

\begin{proof}
We only provide proof for $(1)$.
To show that $X$ is star-Scheepers we pick a sequence $(\mathcal{U}_n)$ of open covers of $X$. We may assume that for each $n$ $\mathcal{U}_n$ is closed for finite unions. Let $\{N_k : k\in\mathbb{N}\}$ be a partition of $\mathbb{N}$ into infinite sets. For each $k$ and each $n\in N_k$ choose $\mathcal{W}_n=\{U^k : U\in\mathcal{U}_n\}$. Now for each $k$ $(\mathcal{W}_n : n\in N_k)$ is a sequence of open covers of $X^k$. Fix $k$. Let $\mathcal{U}=\{\cap_{n\in N_k}W_n : W_n\in\mathcal{W}_n\}$. Obviously $\mathcal{U}\in\mathcal{G}_K$ for $X^k$. Without loss of generality we suppose that $\mathcal{U}=\mathcal{H}_1\times\mathcal{H}_2\times\cdots\times\mathcal{H}_k$, where for each $1\leq i\leq k$, $\mathcal{H}_i\in\mathcal{G}_K$ for $X$. Applying the star-Alster property of $X$, for each $1\leq i\leq k$, we get a countable set $\mathcal{C}_i\subseteq\mathcal{H}_i$ such that $\{St(V,\mathcal{H}_i) : V\in\mathcal{C}_i\}\in\mathcal{G}$ for $X$ and subsequently we have a countable set $\mathcal{V}=\mathcal{C}_1\times\mathcal{C}_2\times\cdots\times\mathcal{C}_k\subseteq\mathcal{U}$ such that $St(\cup\mathcal{V},\mathcal{U})$ covers $X^k$. Put $\mathcal{V}=\{\cap_{n\in N_k}W_n^m : W_n^m\in\mathcal{W}_n, m\in N_k\}$. Later we choose $\mathcal{V}^\prime=\{W_n^n\in\mathcal{W}_n : n\in N_k \}$. Clearly $\cup\mathcal{V}\subseteq\cup\mathcal{V}^\prime$ and so $St(\cup\mathcal{V}^\prime,\mathcal{U})$ covers $X^k$. For each $n\in N_k$ let $\mathcal{V}_n=\{U\in\mathcal{U}_n : U^k\in\mathcal{V}^\prime\}$. The sequence $(\mathcal{V}_n)$ witnesses for $(\mathcal{U}_n)$ that $X$ is star-Scheepers.
\end{proof}

Let $Y$ be a subspace of a space $X$. We say that $Y$ is star-Scheepers (respectively, strongly star-Scheepers) in $X$ if for each sequence $(\mathcal{U}_n)$ of open covers of $X$ there exists a sequence $(\mathcal{V}_n)$ (respectively, $(F_n)$) such that for each $n$ $\mathcal{V}_n$ is a finite subset of $\mathcal{U}_n$ (respectively, $F_n$ is a finite subset of $X$) and for each finite set $F\subseteq Y$ there exists a $n$ such that $F\subseteq St(\cup\mathcal{V}_n,\mathcal{U}_n)$ (respectively, $F\subseteq St(F_n,\mathcal{U}_n)$). It is immediate that if $Y$ is a star-Scheepers (respectively, strongly star-Scheepers) subspace of $X$, then $Y$ is star-Scheepers (respectively, strongly star-Scheepers) in $X$. Also $X$ is star-Scheepers (respectively, strongly star-Scheepers) if and only if $X$ is star-Scheepers (respectively, strongly star-Scheepers) in $X$. If $Y$ is strongly star-Scheepers in $X$, then $Y$ is star-Scheepers in $X$.

For a set $Y\subseteq\mathbb{N}^\mathbb{N}$, $\maxfin(Y)$ is defined as \[\maxfin(Y)=\left\{\max\{f_1,f_2,\dotsc,f_k\} : f_1,f_2,\dotsc,f_k\in Y\;\text{and}\; k\in\mathbb{N}\right\},\] where $\max\{f_1,f_2,\dotsc,f_k\}(n)=\max\{f_1(n),f_2(n),\dotsc,f_k(n)\}$ for all $n\in\mathbb{N}$.

\begin{Ex}
\label{E19}
\emph{If $Y$ is strongly star-Scheepers (respectively, star-Scheepers) in $X$, then $Y$ need not be a strongly star-Scheepers (respectively, star-Scheepers) subspace of $X$.}\\
Assume $\omega_1<\mathfrak{d}$. Let $X=\Psi(\mathcal{A})$ be the Isbell-Mr\'{o}wka space with $|\mathcal{A}|=\omega_1$.
We now show that $\mathcal{A}$ is strongly star-Scheepers in $X$. First choose a sequence $(\mathcal{U}_n)$ of open covers of $X$. Without loss of generality assume that $\mathcal{U}_n=\{U_n(A) : A\in\mathcal{A}\}\cup\{\{n\} : n\in\mathbb{N}\setminus\cup_{A\in\mathcal{A}} U_n(A)\}$ for each $n$. We can further assume that to each $A\in\mathcal{A}$ only one neighbourhood $U_n(A)\in\mathcal{U}_n$ is assigned. For each $A\in\mathcal{A}$ define a function $f_A:\mathbb{N}\to\mathbb{N}$ by $f_A(n)=\min\{m\in\mathbb{N} : m\in U_n(A)\}$ for all $n\in\mathbb{N}$. If $Y=\{f_A : A\in\mathcal{A}\}$, then $\maxfin(Y)$ has cardinality less than $\mathfrak{d}$. Thus there exist a $g\in\mathbb{N}^\mathbb{N}$ and a $n_\mathcal{F}\in\mathbb{N}$ for each finite set $\mathcal{F}\subseteq\mathcal{A}$ such that $f_\mathcal{F}(n_\mathcal{F})< g(n_\mathcal{F})$ with $f_\mathcal{F}\in\maxfin(Y)$. We use the convention that if $\mathcal{F}=\{A\}$, $A\in\mathcal{A}$, then we write $f_A$ instead of $f_\mathcal{F}$. For each $n$ let $F_n=\{1,2,\ldots,g(n)\}$. We claim that the sequence $(F_n)$ witnesses for $(\mathcal{U}_n)$ that $\mathcal{A}$ is strongly star-Scheepers in $X$. Let $\mathcal{F}$ be a finite subset of $\mathcal{A}$. Choose a $n_\mathcal{F}\in\mathbb{N}$ such that $U_{n_\mathcal{F}}(A)\cap F_{n_\mathcal{F}}\neq\emptyset$ for all $A\in\mathcal{F}$. It follows that $\mathcal{F}\subseteq St(F_{n_\mathcal{F}},\mathcal{U}_{n_\mathcal{F}})$. Thus $\mathcal{A}$ is strongly star-Scheepers (and hence star-Scheepers) in $X$.

Since $\mathcal{A}$ is a discrete subspace of $X$ with $|\mathcal{A}|=\omega_1$, $\mathcal{A}$ is not star-Scheepers (and hence not strongly star-Scheepers).
\end{Ex}

\begin{Th}
\label{T26}
\mbox{}
\begin{enumerate}[wide=0pt,label={\upshape(\arabic*)},
ref={\theTh(\arabic*)},leftmargin=*]
  \item\label{T2601} If $X$ is star-Lindel\"{o}f, then every subset of $X$ of cardinality less than $\mathfrak{d}$ is star-Scheepers in $X$.
  \item\label{T2602} If $X$ is strongly star-Lindel\"{o}f, then every subset of $X$ of cardinality less than $\mathfrak{d}$ is strongly star-Scheepers in $X$.
\end{enumerate}
\end{Th}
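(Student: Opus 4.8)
The plan is to adapt the $\mathfrak{d}$-argument used by Sakai for star-Menger (\cite[Proposition~1.7]{SVM}), inserting the $\maxfin$ device that upgrades a ``cover'' conclusion to an ``$\omega$-cover'' conclusion, exactly as the Scheepers property refines the Menger property. I describe only $(1)$; part $(2)$ is obtained verbatim after replacing ``finite subset of $\mathcal{U}_n$'' by ``finite subset of $X$'', ``star-Lindel\"{o}f'' by ``strongly star-Lindel\"{o}f'', and the countable families $\mathcal{W}_n\subseteq\mathcal{U}_n$ (produced below) by countable sets $A_n\subseteq X$.

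First I would fix $Y\subseteq X$ with $|Y|<\mathfrak{d}$ and a sequence $(\mathcal{U}_n)$ of open covers of $X$. Since $X$ is star-Lindel\"{o}f, for each $n$ choose a countable family $\mathcal{W}_n=\{W_n^m:m\in\mathbb{N}\}\subseteq\mathcal{U}_n$ with $St(\cup\mathcal{W}_n,\mathcal{U}_n)=X$ (enumerated with repetitions if $\mathcal{W}_n$ is finite). For $y\in Y$ define $f_y\in\mathbb{N}^\mathbb{N}$ by
\[
f_y(n)=\min\{m\in\mathbb{N}:y\in St(W_n^1\cup\cdots\cup W_n^m,\mathcal{U}_n)\},
\]
which is well defined since $y\in X=St(\cup\mathcal{W}_n,\mathcal{U}_n)$ forces some $U\in\mathcal{U}_n$ containing $y$ to meet some $W_n^m$. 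Because $\mathfrak{d}>\aleph_0$ and $\kappa^{<\omega}=\kappa$ for infinite $\kappa$, the set $Z=\maxfin(\{f_y:y\in Y\})$ has $|Z|\le\max(|Y|,\aleph_0)<\mathfrak{d}$, hence is not a dominating subset of $\mathbb{N}^\mathbb{N}$. So there is $g\in\mathbb{N}^\mathbb{N}$ such that for every $h\in Z$ one has $h(n)<g(n)$ for infinitely many $n$.

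Finally I would put $\mathcal{V}_n=\{W_n^i:1\le i\le g(n)\}$, a finite subset of $\mathcal{U}_n$. Given a finite $F\subseteq Y$, consider $h=\max\{f_y:y\in F\}\in Z$ and pick $n$ with $h(n)<g(n)$, i.e. $f_y(n)<g(n)$ for all $y\in F$. Then for each $y\in F$, by the definition of $f_y$, the inclusion $W_n^1\cup\cdots\cup W_n^{f_y(n)}\subseteq\cup\mathcal{V}_n$, and monotonicity of the star operator in its set argument,
\[
y\in St(W_n^1\cup\cdots\cup W_n^{f_y(n)},\mathcal{U}_n)\subseteq St(\cup\mathcal{V}_n,\mathcal{U}_n),
\]
so $F\subseteq St(\cup\mathcal{V}_n,\mathcal{U}_n)$. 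Thus $(\mathcal{V}_n)$ witnesses that $Y$ is star-Scheepers in $X$, which completes $(1)$.

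I do not expect a genuine obstacle: the heart of the argument is the familiar ``$|Y|<\mathfrak{d}$'' combinatorics, and the delicate points are only (a) making $f_y$ well defined, which is precisely where (strongly) star-Lindel\"{o}fness is used, and (b) verifying $|\maxfin(\{f_y:y\in Y\})|<\mathfrak{d}$. The step that genuinely upgrades the Menger-type conclusion to the Scheepers-type one is the passage from ``$\maxfin(\{f_y:y\in Y\})$ is not dominating'' to a single $g$ beating each of its members infinitely often; this is the same mechanism as in \cite[Proposition~1.7]{SVM} and the $\Psi$-space computation in Example~\ref{E19}.
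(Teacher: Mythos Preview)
Your proof is correct and follows essentially the same route as the paper: use (strongly) star-Lindel\"{o}fness to extract countable witnessing families, code each $y\in Y$ by a function $f_y\in\mathbb{N}^\mathbb{N}$, observe that $\maxfin\{f_y:y\in Y\}$ has size $<\mathfrak{d}$ and is therefore not dominating, and use a witness $g$ to define the finite selections. The only cosmetic difference is that you take $f_y(n)$ to be the minimal index $m$ with $y\in St(W_n^1\cup\cdots\cup W_n^m,\mathcal{U}_n)$, whereas the paper simply picks any index $m$ with $St(y,\mathcal{U}_n)\cap W_n^m\neq\emptyset$; either choice works.
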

\begin{proof}
Let $X$ be star-Lindel\"{o}f and $Y$ be a subset of $X$ such that $|Y|<\mathfrak{d}$. Choose a sequence $(\mathcal{U}_n)$ of open covers of $X$. Using the hypothesis we can find for each $n$ a countable subset $\mathcal{V}_n=\{V_m^{(n)} : m\in\mathbb{N}\}$ of $\mathcal{U}_n$ such that $X=St(\cup\mathcal{V}_n,\mathcal{U}_n)$. For each $y\in Y$ choose a function $f_y\in\mathbb{N}^\mathbb{N}$ such that $St(y,\mathcal{U}_n)\cap V_{f_y(n)}^{(n)}\neq\emptyset$ for all $n\in\mathbb{N}$. Since the cardinality of $Z=\{f_y : y\in Y\}$ is less than $\mathfrak{d}$, $\maxfin(Z)$ is also of cardinality less than $\mathfrak{d}$. Thus there exist a $g\in\mathbb{N}^\mathbb{N}$ and for each finite set $F\subseteq Y$ a $n_F\in\mathbb{N}$ such that $f_F(n_F)<g(n_F)$ with $f_F\in\maxfin(Z)$. We use the convention that if $F=\{y\}$, $y\in Y$, then we write $f_y$ instead of $f_F$. For each $n$ let $\mathcal{W}_n=\{V_i^{(n)} : i\leq  g(n)\}$. We now show that the sequence $(\mathcal{W}_n)$ witnesses for $(\mathcal{U}_n)$ that $Y$ is star-Scheepers in $X$. Choose a finite subset $F=\{y_1,y_2,\dotsc,y_k\}$ of $Y$. We claim that $F\subseteq St(\cup\mathcal{W}_{n_F},\mathcal{U}_{n_F})$. Let $z\in F$. Since $St(z,\mathcal{U}_n)\cap V_{f_z(n)}^{(n)}\neq\emptyset$ for all $n\in\mathbb{N}$, we obtain a $U_z\in\mathcal{U}_{n_F}$ containing $z$ such that $U_z\cap V_{f_z(n_F)}^{(n_F)}\neq\emptyset$. Since $f_F(n_F)=\max\{f_{y_1}(n_F),f_{y_2}(n_F),\dotsc,f_{y_k}(n_F)\}$, we have $f_z(n_F)\leq f_F(n_F)$. It follows that $V_{f_z(n_F)}^{(n_F)}\in\mathcal{W}_{n_F}$ and hence $z\in St(\cup\mathcal{W}_{n_F},\mathcal{U}_{n_F})$. Thus $F\subseteq St(\cup\mathcal{W}_{n_F},\mathcal{U}_{n_F})$ and this completes the proof.
\end{proof}

\begin{Cor}
\label{C9}
\hfill
\begin{enumerate}[wide=0pt,label={\upshape(\arabic*)},
ref={\theCor(\arabic*)},leftmargin=*]
  \item\label{C901} Every star-Lindel\"{o}f space of cardinality less than $\mathfrak{d}$ is star-Scheepers.
  \item\label{C902} Every strongly star-Lindel\"{o}f space of cardinality less than $\mathfrak{d}$ is strongly star-Scheepers.
\end{enumerate}
\end{Cor}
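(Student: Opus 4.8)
The plan is to obtain both parts as immediate consequences of Theorem~\ref{T26}, applied with the ambient space taken as its own distinguished subset. For part (1), assume $X$ is star-Lindel\"{o}f and $|X|<\mathfrak{d}$. Then $X$ is, in particular, a subset of $X$ of cardinality less than $\mathfrak{d}$, so Theorem~\ref{T2601} applies and gives that $X$ is star-Scheepers in $X$. As observed above (in the paragraph introducing the notion of being star-Scheepers in a space), a space is star-Scheepers precisely when it is star-Scheepers in itself; hence $X$ is star-Scheepers. Part (2) is proved in the same way, replacing Theorem~\ref{T2601} by Theorem~\ref{T2602} and using the analogous remark that $X$ is strongly star-Scheepers if and only if $X$ is strongly star-Scheepers in $X$.

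Accordingly, I do not anticipate any genuine obstacle here: all the combinatorial content — extracting, for each $n$, a countable subfamily $\mathcal{V}_n\subseteq\mathcal{U}_n$ with $St(\cup\mathcal{V}_n,\mathcal{U}_n)=X$, coding each point of the chosen subset by a function in $\mathbb{N}^{\mathbb{N}}$, and using the cardinality bound to dominate $\maxfin$ of the resulting set of functions on the finitely many coordinates relevant to each finite subset — has already been carried out in the proof of Theorem~\ref{T26}. If one preferred a self-contained argument, it would be obtained verbatim from that proof by setting $Y=X$ throughout; but invoking Theorem~\ref{T26} is the cleaner route, and is what I would write.
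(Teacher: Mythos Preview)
Your proposal is correct and matches the paper's approach exactly: the corollary is stated immediately after Theorem~\ref{T26} with no separate proof, the intended argument being precisely to take $Y=X$ and use the remark that $X$ is (strongly) star-Scheepers if and only if it is (strongly) star-Scheepers in itself.
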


As a consequence, we obtain the following result of M. Sakai \cite[Proposition 1.7]{SVM}.
\begin{Cor}
\hfill
\begin{enumerate}[wide=0pt,label={\upshape(\arabic*)},leftmargin=*]
  \item Every star-Lindel\"{o}f space of cardinality less than $\mathfrak{d}$ is star-Menger.
  \item Every strongly star-Lindel\"{o}f space of cardinality less than $\mathfrak{d}$ is strongly star-Menger.
\end{enumerate}
\end{Cor}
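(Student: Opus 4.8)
The plan is to read the statement off immediately from Corollary~\ref{C9} together with the implication diagram in Figure~\ref{dig1}. For part~(1): if $X$ is star-Lindel\"of with $|X|<\mathfrak d$, then Corollary~\ref{C901} gives that $X$ is star-Scheepers, i.e.\ $X$ satisfies $\SUf(\mathcal O,\Omega)$, and the top row of Figure~\ref{dig1} records the implication $\SUf(\mathcal O,\Omega)\to\SSf(\mathcal O,\mathcal O)$, so $X$ is star-Menger. For part~(2) the same two-step argument applies, using Corollary~\ref{C902} in place of Corollary~\ref{C901} and the second-row implication $\SSSf(\mathcal O,\Omega)\to\SSSf(\mathcal O,\mathcal O)$ in place of $\SUf(\mathcal O,\Omega)\to\SSf(\mathcal O,\mathcal O)$.

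If one wishes to spell out the two implications of Figure~\ref{dig1} that are being invoked, both rest on the inclusion $\Omega\subseteq\mathcal O$. Suppose $(\mathcal V_n)$ witnesses $\SUf(\mathcal O,\Omega)$ for a sequence $(\mathcal U_n)$ of open covers of $X$; then either $St(\cup\mathcal V_n,\mathcal U_n)=X$ for some $n$, or $\{St(\cup\mathcal V_n,\mathcal U_n):n\in\mathbb N\}$ is an $\omega$-cover, so in either case this family is an open cover of $X$. Using the identity $St(\cup\mathcal V_n,\mathcal U_n)=\bigcup\{St(V,\mathcal U_n):V\in\mathcal V_n\}$, it follows that $\bigcup_{n\in\mathbb N}\{St(V,\mathcal U_n):V\in\mathcal V_n\}$ covers $X$, i.e.\ $(\mathcal V_n)$ also witnesses $\SSf(\mathcal O,\mathcal O)$. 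The strongly star case is even more direct: a sequence $(F_n)$ of finite subsets of $X$ witnessing $\SSSf(\mathcal O,\Omega)$ produces $\{St(F_n,\mathcal U_n):n\in\mathbb N\}\in\Omega\subseteq\mathcal O$, so the same $(F_n)$ witnesses $\SSSf(\mathcal O,\mathcal O)$.

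There is no genuine obstacle here: the statement is a purely formal consequence of Corollary~\ref{C9} and the easy implications already displayed in Figure~\ref{dig1}, so in the final write-up I would reduce the proof to essentially that one remark.
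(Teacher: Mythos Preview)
Your argument is correct and matches the paper's own approach exactly: the paper presents this corollary with the phrase ``As a consequence'' immediately after Corollary~\ref{C9}, meaning it is obtained from Corollary~\ref{C9} together with the trivial implications star-Scheepers $\Rightarrow$ star-Menger and strongly star-Scheepers $\Rightarrow$ strongly star-Menger recorded in Figure~\ref{dig1}. Your additional unpacking of those implications is fine but unnecessary for the write-up.
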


We also obtain the following equivalent formulations.
\begin{Cor}
For a space $X$ with cardinality less than $\mathfrak{d}$ the following assertions are equivalent.
\begin{enumerate}[label={\upshape(\arabic*)},
leftmargin=*]
\item $X$ is star-Lindel\"{o}f.

\item $X$ is star-Menger.

\item $X$ is  star-Scheepers.
\end{enumerate}
\end{Cor}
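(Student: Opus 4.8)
The plan is to establish the cycle of implications $(3)\Rightarrow(2)\Rightarrow(1)\Rightarrow(3)$, drawing on the material already developed above, so that the cardinality hypothesis is invoked only where it is genuinely needed.

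First, $(3)\Rightarrow(2)$ can be read off directly from the implication diagram in Figure~\ref{dig1}: the selection hypothesis $\SUf(\mathcal{O},\Omega)$ entails $\SSf(\mathcal{O},\mathcal{O})$, so every star-Scheepers space is star-Menger. No use of the assumption $|X|<\mathfrak{d}$ is made at this step.

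Next, for $(2)\Rightarrow(1)$ I would argue that every star-Menger space is star-Lindel\"{o}f by the standard collapsing of the witnessing sequence to a single stage. Given an arbitrary open cover $\mathcal{U}$ of $X$, apply $\SSf(\mathcal{O},\mathcal{O})$ to the constant sequence $\mathcal{U}_n=\mathcal{U}$, $n\in\mathbb{N}$, to obtain finite sets $\mathcal{V}_n\subseteq\mathcal{U}$ with $\bigcup_{n}\{St(V,\mathcal{U}):V\in\mathcal{V}_n\}=X$. Then $\mathcal{V}:=\bigcup_n\mathcal{V}_n$ is a countable subfamily of $\mathcal{U}$, and since $St(\bigcup\mathcal{V},\mathcal{U})\supseteq\bigcup\{St(V,\mathcal{U}):V\in\mathcal{V}\}=X$, the space $X$ is star-Lindel\"{o}f. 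Again the cardinality hypothesis is not used here.

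Finally, $(1)\Rightarrow(3)$ is precisely the content of Corollary~\ref{C901}: a star-Lindel\"{o}f space of cardinality less than $\mathfrak{d}$ is star-Scheepers, which in turn rests on Theorem~\ref{T2601}; this is the one place where $|X|<\mathfrak{d}$ enters the argument. With all three implications in hand the equivalence follows at once. I do not anticipate a real obstacle: the only substantive ingredient is the last implication, which has already been proved, while the other two are routine.
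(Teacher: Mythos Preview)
Your argument is correct and matches the paper's intended proof: the corollary is stated without proof precisely because the implications $(3)\Rightarrow(2)\Rightarrow(1)$ are immediate from Figure~\ref{dig1} and the definitions, while $(1)\Rightarrow(3)$ is exactly Corollary~\ref{C901}. There is nothing to add.
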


\begin{Cor}
For a space $X$ with cardinality less than $\mathfrak{d}$ the following assertions are equivalent.
\begin{enumerate}[label={\upshape(\arabic*)},
leftmargin=*]
\item $X$ is strongly star-Lindel\"{o}f.

\item $X$ is strongly star-Menger.

\item $X$ is strongly star-Scheepers.
\end{enumerate}
\end{Cor}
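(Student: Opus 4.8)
The plan is to establish the cycle $(1)\Rightarrow(3)\Rightarrow(2)\Rightarrow(1)$, noting in advance that the hypothesis $|X|<\mathfrak{d}$ will be used only in the single implication $(1)\Rightarrow(3)$; the other two implications hold for arbitrary spaces and are essentially formal.

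For $(1)\Rightarrow(3)$ I would simply invoke Corollary~\ref{C902}: a strongly star-Lindel\"{o}f space of cardinality less than $\mathfrak{d}$ is strongly star-Scheepers. This is where the real content lies, resting on the $\maxfin$/domination argument of Theorem~\ref{T26}; nothing beyond citing it is required here.

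For $(3)\Rightarrow(2)$ I would use the implication already recorded in Figure~\ref{dig1}, namely $\SSSf(\mathcal{O},\Omega)\Rightarrow\SSSf(\mathcal{O},\mathcal{O})$, which is immediate from $\Omega\subseteq\mathcal{O}$: any sequence $(F_n)$ of finite subsets of $X$ that witnesses $\SSSf(\mathcal{O},\Omega)$ for a given sequence $(\mathcal{U}_n)$ of open covers makes $\{St(F_n,\mathcal{U}_n):n\in\mathbb{N}\}$ an $\omega$-cover, hence in particular an open cover, so the same sequence witnesses $\SSSf(\mathcal{O},\mathcal{O})$. For $(2)\Rightarrow(1)$ I would argue directly: given an open cover $\mathcal{U}$ of $X$, apply $\SSSf(\mathcal{O},\mathcal{O})$ to the constant sequence $\mathcal{U}_n=\mathcal{U}$ to obtain finite sets $F_n\subseteq X$ with $X=\bigcup_{n\in\mathbb{N}}St(F_n,\mathcal{U})$; then $A=\bigcup_{n\in\mathbb{N}}F_n$ is a countable subset of $X$ and $St(A,\mathcal{U})\supseteq\bigcup_{n\in\mathbb{N}}St(F_n,\mathcal{U})=X$, so $X$ is strongly star-Lindel\"{o}f.

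I do not anticipate any genuine obstacle: two of the three implications are one-line verifications and the third is a named corollary. The only point worth emphasizing in the write-up is precisely where the cardinality assumption enters — solely in $(1)\Rightarrow(3)$, through Corollary~\ref{C9} and the bounding argument behind it — so that, exactly as for the analogous statement for the star (as opposed to strongly star) versions proved just above, the equivalence genuinely requires the bound $|X|<\mathfrak{d}$.
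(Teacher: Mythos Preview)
Your proof is correct and follows exactly the route the paper intends: the corollary is stated without proof immediately after Corollary~\ref{C9}, relying on that result for $(1)\Rightarrow(3)$ and on the implications in Figure~\ref{dig1} for $(3)\Rightarrow(2)\Rightarrow(1)$. Your write-up simply makes these steps explicit, and your remark about where the cardinality hypothesis enters is accurate.
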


\subsection{On the Isbell-Mr\'{o}wka space and Niemytzki plane}
Let $N(X)=(X\times\{0\})\cup(\mathbb{R}\times(0,\infty))$ be the Niemytzki plane on a set $X\subseteq\mathbb{R}$. The topology on $N(X)$ is defined as follows. $\mathbb{R}\times(0,\infty)$ has the Euclidean topology and the set $X\times\{0\}$ has the topology generated by all sets of the form $\{(x,0)\}\cup U$, where $x\in X$ and $U$ is an open disc in $\mathbb{R}\times(0,\infty)$ which is tangent to $X\times\{0\}$ at the point $(x,0)$. The topology on $N(X)$ is also called Niemytzki's tangent disk topology. It is to be noted that Niemytzki originally defined $N(\mathbb{R})$ (see \cite{Niemytzki}).

We say that a space $X$ is $\sigma$-starcompact (respectively, $\sigma$-strongly starcompact) if it can be written as the countable union of starcompact (respectively, strongly starcompact) spaces.


\begin{Prop}
\label{T28}
Let $Y=Z\cup K$ be a subspace of $X$. If
\begin{enumerate}[wide=0pt,label={\upshape(\arabic*)},leftmargin=*]
  \item $Z$ is star-Scheepers in $X$ and $K$ is $\sigma$-starcompact, then $Y$ is star-Scheepers in $X$.
  \item $Z$ is strongly star-Scheepers in $X$ and $K$ is $\sigma$-strongly starcompact, then $Y$ is strongly star-Scheepers in $X$.
\end{enumerate}
\end{Prop}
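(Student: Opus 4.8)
The plan is to prove (1) in detail and obtain (2) by running the same argument with finite subsets of $X$ in place of finite subfamilies of the covers, ``strongly starcompact'' in place of ``starcompact'', and $St(F_n,\mathcal{U}_n)$ in place of $St(\cup\mathcal{V}_n,\mathcal{U}_n)$. As preparation I would write $K=\bigcup_{k\in\mathbb{N}}K_k$ with each $K_k$ starcompact and set $L_n=\bigcup_{k\le n}K_k$; since a finite union of starcompact subspaces is again starcompact, each $L_n$ is starcompact, and since $(L_n)$ increases to $K$ every finite subset of $K$ is contained in some $L_m$. I would also record the two elementary facts about stars used throughout: $A\subseteq B$ implies $St(A,\mathcal{W})\subseteq St(B,\mathcal{W})$, and $St(A\cup B,\mathcal{W})=St(A,\mathcal{W})\cup St(B,\mathcal{W})$.

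Given a sequence $(\mathcal{U}_n)$ of open covers of $X$, the main idea is to split the index set. Fix a partition $\{M_j:j\in\mathbb{N}\}$ of $\mathbb{N}$ into pairwise disjoint infinite sets. For each $j$, apply the hypothesis that $Z$ is star-Scheepers in $X$ to the sequence $(\mathcal{U}_n)_{n\in M_j}$, obtaining for every $n\in M_j$ a finite $\mathcal{V}_n^{(j)}\subseteq\mathcal{U}_n$ such that every finite subset of $Z$ lies in $St(\cup\mathcal{V}_n^{(j)},\mathcal{U}_n)$ for some $n\in M_j$. Independently, for each $n$ restrict $\mathcal{U}_n$ to $L_n$ and use starcompactness of $L_n$ to choose a finite $\mathcal{V}_n''\subseteq\mathcal{U}_n$ with $L_n\subseteq St(\cup\mathcal{V}_n'',\mathcal{U}_n)$ (a brief argument is needed to pass from starcompactness of the subspace $L_n$ to a subfamily of the cover $\mathcal{U}_n$ of $X$, by tracing $\mathcal{U}_n$ on $L_n$ and lifting the chosen subfamily back). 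Finally, for $n\in M_j$ put $\mathcal{V}_n:=\mathcal{V}_n^{(j)}\cup\mathcal{V}_n''$, a finite subfamily of $\mathcal{U}_n$; this is the candidate witness.

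To verify it, let $F\subseteq Y=Z\cup K$ be finite, write $F=(F\cap Z)\cup(F\cap K)$, and fix $m$ with $F\cap K\subseteq L_m$. For each $j$ choose $n_j\in M_j$ with $F\cap Z\subseteq St(\cup\mathcal{V}_{n_j}^{(j)},\mathcal{U}_{n_j})$. Since the $M_j$ are pairwise disjoint, the $n_j$ are distinct, so $\{n_j:j\in\mathbb{N}\}$ is unbounded in $\mathbb{N}$; pick $j_0$ with $n^*:=n_{j_0}\ge m$. Then $St(\cup\mathcal{V}_{n^*},\mathcal{U}_{n^*})=St(\cup\mathcal{V}_{n^*}^{(j_0)},\mathcal{U}_{n^*})\cup St(\cup\mathcal{V}_{n^*}'',\mathcal{U}_{n^*})\supseteq(F\cap Z)\cup L_{n^*}\supseteq(F\cap Z)\cup(F\cap K)=F$, which is exactly what ``$Y$ is star-Scheepers in $X$'' requires.

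I expect the genuine obstacle to be this synchronization: an $\omega$-cover must capture each finite set at a single index, so one cannot merely service $Z$ on one block of indices and $K$ on a disjoint block — the two halves of $F$ must be absorbed at a common stage. Splitting $\mathbb{N}$ into infinitely many infinite blocks and feeding each to the selection principle for $Z$ is precisely what produces arbitrarily late ``good'' indices for $Z$, which can then be matched against the automatically cofinal good indices supplied by the exhausting sequence $(L_n)$. The remaining ingredients — monotonicity and union-additivity of the star, finite unions of (strongly) starcompact subspaces being (strongly) starcompact, the trace-and-lift step for starcompact subspaces, and the parallel bookkeeping for part (2) — are routine and I would only sketch them.
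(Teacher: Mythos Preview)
Your argument is correct. The synchronization concern you identify is real, and your partition-of-$\mathbb{N}$ trick resolves it cleanly: applying the relative star-Scheepers property of $Z$ on each block $M_j$ manufactures infinitely many $Z$-good indices for any finite $F\cap Z$, so one of them eventually lands above the threshold $m$ coming from the exhausting sequence $(L_n)$.

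The paper takes a slightly different, more modular route. It first proves the base case ``$Z\cup C$ is star-Scheepers in $X$ for a single starcompact $C$'' by the obvious move: at every index $n$ take the union of the $Z$-witness with a finite $\mathcal{H}_n\subseteq\mathcal{U}_n$ satisfying $C\subseteq St(\cup\mathcal{H}_n,\mathcal{U}_n)$. No partition is needed here because $C$ is fixed. Then it writes $K=\bigcup_n C_n$ with $C_n$ increasing starcompact, so $(Z\cup C_n)$ is an increasing sequence of sets each star-Scheepers in $X$, and invokes preservation of ``star-Scheepers in $X$'' under countable increasing unions. That preservation step, if unfolded, is exactly a partition-of-$\mathbb{N}$ argument --- so your proof is essentially the paper's two lemmas fused into one direct computation. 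The paper's approach isolates a reusable base lemma; yours is self-contained and avoids a forward reference. Mathematically the content is the same.
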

\begin{proof}
Let $Z$ be star-Scheepers in $X$ and $K$ be $\sigma$-starcompact. First we show that for any starcompact subset $C$ of $X$, $Z\cup C$ is star-Scheepers in $X$. Let $(\mathcal{U}_n)$ be a sequence of open covers of $X$. For each $n$ choose a finite set $\mathcal{H}_n\subseteq\mathcal{U}_n$ such that $C\subseteq St(\cup\mathcal{H}_n,\mathcal{U}_n)$. Since $Z$ is star-Scheepers in $X$, there is a sequence $(\mathcal{K}_n)$ such that for each $n$ $\mathcal{K}_n$ is a finite subset of $\mathcal{U}_n$ and for each finite set $F\subseteq Z$ there is a $n$ such that $F\subseteq St(\cup\mathcal{K}_n,\mathcal{U}_n)$. For each $n$ choose $\mathcal{V}_n=\mathcal{H}_n\cup\mathcal{K}_n$. Clearly the sequence $(\mathcal{V}_n)$ witnesses for $(\mathcal{U}_n)$ that $Z\cup C$ is star-Scheepers in $X$. Next without loss of generality assume that $K=\cup_{n\in\mathbb{N}} C_n$, where each $C_n$ is a starcompact subset of $X$ with $C_n\subseteq C_{n+1}$ for all $n$. Thus $Z\cup C_n\subseteq Z\cup C_{n+1}$ for all $n$. It follows that $Y$ is star-Scheepers in $X$.

The result similarly follows when $Z$ is strongly star-Scheepers and $K$ is $\sigma$-strongly starcompact.
\end{proof}
%

\begin{Cor}
Let $X$ be a space of the form $Y\cup Z$. If
\begin{enumerate}[label={\upshape(\arabic*)},
ref={\theCor(\arabic*)}, leftmargin=*]
  \item\label{C101} $Y$ is star-Scheepers in $X$ and $Z$ is $\sigma$-starcompact, then $X$ is star-Scheepers.
  \item\label{C102} $Y$ is strongly star-Scheepers in $X$ and $Z$ is $\sigma$-strongly starcompact, then $X$ is strongly star-Scheepers.
  \item\label{C103} $Y$ is star-Scheepers and $Z$ is $\sigma$-starcompact, then $X$ is star-Scheepers.
  \item\label{C104} $Y$ is strongly star-Scheepers and $Z$ is $\sigma$-strongly starcompact, then $X$ is strongly star-Scheepers.
\end{enumerate}
\end{Cor}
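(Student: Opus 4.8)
The plan is to deduce all four assertions directly from Proposition~\ref{T28}, using two elementary observations recorded just before Example~\ref{E19}: first, that a space $W$ is star-Scheepers (respectively, strongly star-Scheepers) exactly when $W$ is star-Scheepers (respectively, strongly star-Scheepers) in $W$; and second, that a star-Scheepers (respectively, strongly star-Scheepers) subspace of a space is automatically star-Scheepers (respectively, strongly star-Scheepers) in that space.

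For assertion (1), I would apply Proposition~\ref{T28}(1) with the \emph{whole} space $X$ playing the role of the subspace called ``$Y$'' there, the set $Y$ of the present statement playing the role of the ``$Z$'' of that proposition, and the set $Z$ playing the role of ``$K$''. Since $X=Y\cup Z$, since $Y$ is star-Scheepers in $X$ by hypothesis, and since $Z$ is $\sigma$-starcompact, the proposition gives that $X$ is star-Scheepers in $X$; the first observation above then yields that $X$ is star-Scheepers. Assertion (2) is obtained in exactly the same manner from Proposition~\ref{T28}(2), with ``$\sigma$-starcompact'' replaced by ``$\sigma$-strongly starcompact'' throughout.

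For assertion (3), observe that if $Y$ is star-Scheepers as a subspace of $X$, then by the second observation $Y$ is star-Scheepers in $X$, so the hypotheses of (1) are satisfied and the conclusion follows; assertion (4) follows from (2) in the same way. There is no real obstacle in this argument; the only point demanding a little care is the notational clash between the symbols $Y,Z$ used in the statement of this corollary and the symbols $Z,K$ used in Proposition~\ref{T28}, and one simply has to keep the correspondence $Y\leftrightarrow Z$, $Z\leftrightarrow K$ straight when invoking it.
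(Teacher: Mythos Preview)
Your proposal is correct and matches the paper's intent: the corollary is stated immediately after Proposition~\ref{T28} with no proof given, precisely because it follows by specializing that proposition to the case where the subspace equals the ambient space and then invoking the two observations preceding Example~\ref{E19}. Your handling of the notational clash is exactly the care required.
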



Next result follows from \cite[Lemma 3.4]{SSSP}.
\begin{Lemma}
\label{C11}
Let $X$ be a regular space of the form $Y\cup Z$ with $Y\cap Z=\emptyset$, where $Y$ is a closed discrete set and $Z$ is a $\sigma$-compact subset of $X$. If $X$ is strongly star-Scheepers, then $|Y|<\mathfrak{d}$.
\end{Lemma}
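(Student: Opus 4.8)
The plan is to reduce to the strongly star-Menger property and then run a direct combinatorial argument; this matches the remark that the statement is a consequence of \cite[Lemma 3.4]{SSSP}. Since every $\omega$-cover is an open cover, $\SSSf(\mathcal{O},\Omega)$ implies $\SSSf(\mathcal{O},\mathcal{O})$ (this is also recorded in Figure~\ref{dig1}), so $X$ is strongly star-Menger. As $Y$ is closed, $Z=X\setminus Y$ is open, and being $\sigma$-compact it can be written as $Z=\bigcup_{n\in\mathbb{N}}Z_n$ with each $Z_n$ compact (hence closed in the Hausdorff space $X$) and $Z_n\subseteq Z_{n+1}$. I argue by contradiction, assuming $|Y|\geq\mathfrak{d}$.

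First I fix a dominating family $\{f_\alpha:\alpha<\mathfrak{d}\}$ of elements of $\mathbb{N}^\mathbb{N}$ with respect to the relation ``$\leq$'' (legitimate by the remark in Section~2 that $\mathfrak{d}$ is insensitive to the choice of $\leq$ versus $\leq^*$), and, by listing each member $\mathfrak{d}$ times (which keeps the family dominating and of size $\mathfrak{d}$), I arrange that for every $g\in\mathbb{N}^\mathbb{N}$ the set $\{\alpha<\mathfrak{d}:g\leq f_\alpha\}$ has cardinality $\mathfrak{d}$. Choose pairwise distinct points $y_\alpha\in Y$ for $\alpha<\mathfrak{d}$. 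For each $n$ I define an open cover
\[
\mathcal{U}_n=\{Z\}\cup\{O_y^{(n)}:y\in Y\},
\]
where for $y=y_\alpha$ the set $O_{y_\alpha}^{(n)}$ is an open neighbourhood of $y_\alpha$ with $O_{y_\alpha}^{(n)}\cap Y=\{y_\alpha\}$ and $O_{y_\alpha}^{(n)}\cap Z_{f_\alpha(n)}=\emptyset$ (such a set exists: separate the point $y_\alpha$ from the disjoint compact set $Z_{f_\alpha(n)}$ by open sets and intersect with $X\setminus(Y\setminus\{y_\alpha\})$), and for the remaining $y\in Y$ we take any open $O_y^{(n)}$ with $O_y^{(n)}\cap Y=\{y\}$. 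Each $\mathcal{U}_n$ is an open cover of $X$, since $Z$ covers $X\setminus Y$ and the sets $O_y^{(n)}$ cover $Y$.

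Next, apply the strongly star-Menger property to the sequence $(\mathcal{U}_n)$, obtaining finite sets $F_n\subseteq X$ with $\bigcup_{n}St(F_n,\mathcal{U}_n)=X$. For each $n$ let $m(n)$ be least with $F_n\cap Z\subseteq Z_{m(n)}$ (with $m(n)=0$ if $F_n\cap Z=\emptyset$); this defines $m\in\mathbb{N}^\mathbb{N}$. Since $\mathfrak{d}$-many $\alpha$ satisfy $m\leq f_\alpha$ while only countably many satisfy $y_\alpha\in\bigcup_n F_n$, I may fix $\alpha<\mathfrak{d}$ with $m\leq f_\alpha$ and $y_\alpha\notin\bigcup_n F_n$. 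The unique member of $\mathcal{U}_n$ containing $y_\alpha$ is $O_{y_\alpha}^{(n)}$, and since $X=Y\cup Z$ with $Y\cap Z=\emptyset$,
\[
O_{y_\alpha}^{(n)}\cap F_n=\bigl(\{y_\alpha\}\cap F_n\bigr)\cup\bigl(O_{y_\alpha}^{(n)}\cap(F_n\cap Z)\bigr)\subseteq O_{y_\alpha}^{(n)}\cap Z_{f_\alpha(n)}=\emptyset,
\]
using $y_\alpha\notin F_n$ and $F_n\cap Z\subseteq Z_{m(n)}\subseteq Z_{f_\alpha(n)}$. Hence $y_\alpha\notin St(F_n,\mathcal{U}_n)$ for every $n$, contradicting $\bigcup_n St(F_n,\mathcal{U}_n)=X$. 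Therefore $|Y|<\mathfrak{d}$.

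The only genuinely delicate point, and the reason for the set-up above, is to prevent a single selection $(F_n)$ from star-covering too much of $Y$ at once: this forces one to encode a dominating family into the neighbourhoods of the points of $Y$, to use the ``$\leq$''-version of $\mathfrak{d}$ so that the domination $m\leq f_\alpha$ is pointwise rather than merely eventual (otherwise small coordinates $n$ can spoil the argument), and to repeat each dominating function $\mathfrak{d}$ times so that there is always room to dodge the countable set $\bigcup_n F_n$. Everything else is bookkeeping, and the argument uses only the strongly star-Menger property, in accordance with \cite[Lemma 3.4]{SSSP}.
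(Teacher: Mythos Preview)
Your argument is correct and follows exactly the route the paper indicates: the paper does not supply its own proof but simply notes that the lemma follows from \cite[Lemma 3.4]{SSSP}, and your write-up is precisely the reduction to the strongly star-Menger property followed by a faithful reconstruction of that lemma's dominating-family argument. The technical precautions you flag (using the $\leq$-version of $\mathfrak{d}$ and repeating each dominating function $\mathfrak{d}$ times to dodge the countable set $\bigcup_n F_n$) are the correct ones, and the separation of $y_\alpha$ from $Z_{f_\alpha(n)}$ is legitimate since regularity in the sense of \cite{Engelking} includes $T_1$, hence Hausdorffness.
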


In line of \cite[Theorem 3.5]{SSSP}, we obtain the following.
\begin{Th}
\label{T31}
Let $X$ be a regular space of the form $Y\cup Z$ with $Y\cap Z=\emptyset$, where $Y$ is a closed discrete set and $Z$ is a $\sigma$-compact subset of $X$. If $X$ is strongly star-Lindel\"{o}f, then $|Y|<\mathfrak{d}$ if and only if $X$ is strongly star-Scheepers.
\end{Th}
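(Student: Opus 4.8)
The plan is to split the biconditional into its two implications, noting at the outset that the strongly star-Lindel\"of hypothesis is only needed for one of them.

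For the forward implication, suppose $X$ is strongly star-Scheepers. Then $X=Y\cup Z$ with $Y\cap Z=\emptyset$, $Y$ closed discrete and $Z$ a $\sigma$-compact subset of $X$ is exactly the hypothesis of Lemma~\ref{C11}, which yields $|Y|<\mathfrak d$ at once. No strongly star-Lindel\"of assumption is used here, and nothing further is required.

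For the reverse implication, assume $X$ is strongly star-Lindel\"of and $|Y|<\mathfrak d$. The first step is to invoke Theorem~\ref{T2602}: since $X$ is strongly star-Lindel\"of and $|Y|<\mathfrak d$, the set $Y$ is strongly star-Scheepers in $X$. The second step is to record that $Z$ is $\sigma$-strongly starcompact: writing $Z=\bigcup_{n\in\mathbb N}K_n$ with each $K_n$ compact, each $K_n$ is a strongly starcompact subspace of $X$ (given an open cover $\mathcal U$ of $X$, choose finitely many members covering $K_n$ and one point from each to obtain a finite $A\subseteq X$ with $K_n\subseteq St(A,\mathcal U)$), so $Z$ is a countable union of strongly starcompact subspaces. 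The final step is to apply Corollary~\ref{C102} to the decomposition $X=Y\cup Z$: since $Y$ is strongly star-Scheepers in $X$ and $Z$ is $\sigma$-strongly starcompact, $X$ is strongly star-Scheepers.

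I do not anticipate a real obstacle here; the statement is an assembly of Lemma~\ref{C11}, Theorem~\ref{T2602} and Corollary~\ref{C102}. The only point deserving a line of care is the passage from ``$Z$ is $\sigma$-compact'' to ``$Z$ is $\sigma$-strongly starcompact'', i.e.\ the elementary fact that a compact subspace is strongly starcompact; one should also be mindful that $Y$ must be used as a subset that is strongly star-Scheepers \emph{in} $X$ (the ``in $X$'' version, Corollary~\ref{C102}), not as a strongly star-Scheepers subspace, since a discrete space of size below $\mathfrak d$ need not itself be strongly star-Scheepers (cf.\ Example~\ref{E19}).
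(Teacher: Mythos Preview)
Your proof is correct and follows essentially the same route as the paper: Lemma~\ref{C11} for one direction, and Theorem~\ref{T2602} together with Corollary~\ref{C102} for the other. Your added remarks (that $\sigma$-compact implies $\sigma$-strongly starcompact, and that one needs the ``strongly star-Scheepers \emph{in} $X$'' notion rather than the subspace notion) are accurate clarifications that the paper leaves implicit.
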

\begin{proof}
Assume that $|Y|<\mathfrak{d}$. By Theorem~\ref{T2602}, $Y$ is strongly star-Scheepers in $X$. Again by Corollary~\ref{C102}, $X$ is strongly star-Scheepers since $Z$ is $\sigma$-strongly starcompact. Conversely if $X$ is strongly star-Scheepers, then by Lemma~\ref{C11}, $|Y|<\mathfrak{d}$.
\end{proof}

\begin{Cor}
\label{C1203}
The following assertions hold.
\begin{enumerate}[label={\upshape(\arabic*)},
ref={\theCor(\arabic*)}, leftmargin=*]
  \item \label{C1201} The Isbell-Mr\'owka space $\Psi(\mathcal{A})$ is strongly star-Scheepers if and only if $|\mathcal{A}|<\mathfrak{d}$.
  \item \label{C1202} The Niemytzki plane $N(Y)$ is strongly star-Scheepers if and only if $|Y|<\mathfrak{d}$.
\end{enumerate}
\end{Cor}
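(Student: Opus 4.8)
The plan is simply to check, in each of the two cases, that the space fits the template of Theorem~\ref{T31}: a regular, strongly star-Lindel\"{o}f space that decomposes as a disjoint union of a closed discrete set and a $\sigma$-compact set. Once that is done, Theorem~\ref{T31} delivers the asserted equivalence verbatim, with the closed discrete part playing the role of $Y$ in that theorem.

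For $(1)$, write $\Psi(\mathcal{A})=\mathcal{A}\cup\mathbb{N}$. First I would observe that every $n\in\mathbb{N}$ is isolated, so $\mathbb{N}$ is open and $\mathcal{A}=\Psi(\mathcal{A})\setminus\mathbb{N}$ is closed; moreover each $A\in\mathcal{A}$ has a basic neighbourhood $\{A\}\cup(A\setminus F)$ with $F$ finite, which by almost disjointness meets $\mathcal{A}$ only in $A$, so $\mathcal{A}$ is discrete. Next, $\mathbb{N}$ is a countable union of singletons, hence $\sigma$-compact, and it is dense in $\Psi(\mathcal{A})$, so $\Psi(\mathcal{A})$ is separable; a countable dense set $D$ satisfies $St(D,\mathcal{U})=\Psi(\mathcal{A})$ for every open cover $\mathcal{U}$, so $\Psi(\mathcal{A})$ is strongly star-Lindel\"{o}f. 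Finally $\Psi(\mathcal{A})$ is Tychonoff, hence regular. Theorem~\ref{T31} applied with $Y=\mathcal{A}$ and $Z=\mathbb{N}$ now gives $(1)$.

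For $(2)$, the argument is parallel with $N(Y)=(Y\times\{0\})\cup(\mathbb{R}\times(0,\infty))$. The half-plane $\mathbb{R}\times(0,\infty)$ is open in $N(Y)$ and carries the Euclidean topology, so $Y\times\{0\}$ is closed; each $(x,0)$ has a tangent-disc neighbourhood disjoint from the rest of $Y\times\{0\}$, so this set is discrete. The half-plane is $\sigma$-compact, being $\bigcup_{n\in\mathbb{N}}[-n,n]\times[1/n,n]$, and $\mathbb{Q}\times(\mathbb{Q}\cap(0,\infty))$ is a countable dense subset of $N(Y)$, so $N(Y)$ is separable and hence strongly star-Lindel\"{o}f; and $N(Y)$ is Tychonoff, hence regular. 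Applying Theorem~\ref{T31} with the closed discrete set $Y\times\{0\}$ (of cardinality $|Y|$) and the $\sigma$-compact set $\mathbb{R}\times(0,\infty)$ yields $(2)$.

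I do not expect a genuine obstacle here: all the real content is in Theorem~\ref{T31} (and thus in Lemma~\ref{C11} and Theorem~\ref{T2602}), and the remaining verifications --- closedness as well as discreteness of the relevant set, $\sigma$-compactness of the complementary part, and the passage from separability to the strongly star-Lindel\"{o}f property --- are entirely routine. The only point worth stating explicitly is that separability suffices for the strongly star-Lindel\"{o}f hypothesis, since that is precisely what makes both $\Psi(\mathcal{A})$ and $N(Y)$ eligible for Theorem~\ref{T31} irrespective of the size of their discrete parts.
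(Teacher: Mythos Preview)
Your proposal is correct and follows exactly the intended approach: the paper states this result as an immediate corollary of Theorem~\ref{T31} without proof, and the verifications you supply (closed discreteness of $\mathcal{A}$ and $Y\times\{0\}$, $\sigma$-compactness of the complements, regularity, and separability $\Rightarrow$ strongly star-Lindel\"{o}f) are precisely the routine checks needed to invoke that theorem.
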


\begin{Cor}
Assume $MA+\neg CH$. The following assertions hold.
\begin{enumerate}[label={\upshape(\arabic*)},
ref={\theCor(\arabic*)}, leftmargin=*]
  \item \label{C1301} If $|\mathcal{A}|<\mathfrak{c}$, then $\Psi(\mathcal{A})$ is strongly star-Scheepers.
  \item \label{C1302} If $|Y|<\mathfrak{c}$, then $N(Y)$ is strongly star-Scheepers.
\end{enumerate}
\end{Cor}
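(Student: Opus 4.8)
The proof will be a direct application of Corollary~\ref{C1203}, once we invoke a standard consequence of Martin's Axiom. First I would recall that $MA + \neg CH$ implies $\mathfrak{b} = \mathfrak{c}$, a classical fact in the theory of cardinal invariants of the continuum; combined with the ZFC inequalities $\omega_1 \le \mathfrak{b} \le \mathfrak{d} \le \mathfrak{c}$ noted in Section~2, this yields $\mathfrak{d} = \mathfrak{c}$ whenever $MA + \neg CH$ holds. This is the only ingredient that is not already packaged in the earlier results.

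Granting this, for part (1) the argument is as follows: if $|\mathcal{A}| < \mathfrak{c}$, then $|\mathcal{A}| < \mathfrak{d}$ by the equality above, so Corollary~\ref{C1201} immediately gives that $\Psi(\mathcal{A})$ is strongly star-Scheepers. Part (2) is entirely analogous: assuming $|Y| < \mathfrak{c} = \mathfrak{d}$, Corollary~\ref{C1202} shows that $N(Y)$ is strongly star-Scheepers. No further verification is needed, since both $\Psi$-spaces and Niemytzki planes already fall under the hypotheses of Corollary~\ref{C1203}.

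There is essentially no obstacle here; the content of the statement is the passage from the cardinal bound $\mathfrak{d}$ to the cardinal bound $\mathfrak{c}$, which is legitimate precisely because $MA + \neg CH$ forces $\mathfrak{d} = \mathfrak{c}$. Equivalently, one may read this corollary as the remark that Corollary~\ref{C1203} remains true with $\mathfrak{d}$ replaced by $\mathfrak{c}$ under the extra set-theoretic assumption, and everything else is bookkeeping.
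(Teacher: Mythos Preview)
Your proof is correct and follows essentially the same approach as the paper: the paper simply records that $MA$ implies $\mathfrak{d}=\mathfrak{c}$ and then invokes Corollary~\ref{C1203}. Your detour through $\mathfrak{b}=\mathfrak{c}$ is a harmless variant of the same one-line observation.
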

\begin{proof}
 Since $MA$ implies $\mathfrak{d}=\mathfrak{c}$, the proof follows from Corollary~\ref{C1203}.
%
\end{proof}

By \cite[Proposition 2.12]{SVM}, for every closed discrete subset $Y$ of a normal star-Menger space $X$, we have $|Y|<\mathfrak{d}$. Thus if $X$ is a normal star-Scheepers space and $Y$ is a closed discrete subset of $X$, then $|Y|<\mathfrak{d}$.
%
In combination with \cite[Corollary 3.6(2)]{SSSP} we obtain the following.
\begin{Prop}
If $N(Y)$ is normal, then the following assertions are equivalent.
\begin{enumerate}[wide=0pt,label={\upshape(\arabic*)}]
  \item $N(Y)$ is star-Menger.
  \item $N(Y)$ is strongly star-Menger.
  \item $N(Y)$ is star-Scheepers.
  \item $N(Y)$ is strongly star-Scheepers.
\end{enumerate}
\end{Prop}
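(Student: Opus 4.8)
The plan is to arrange the four conditions into a single cycle of implications. The implications $(4)\Rightarrow(3)\Rightarrow(1)$ and $(4)\Rightarrow(2)\Rightarrow(1)$ hold in \emph{every} topological space and are read off directly from Figure~\ref{dig1} (strongly star-Scheepers $\Rightarrow$ star-Scheepers $\Rightarrow$ star-Menger, and strongly star-Scheepers $\Rightarrow$ strongly star-Menger $\Rightarrow$ star-Menger); neither normality nor any feature of the Niemytzki plane is used here. Hence the entire content lies in proving $(1)\Rightarrow(4)$.

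For $(1)\Rightarrow(4)$ I would first isolate the structural features of $N(Y)$ that feed Theorem~\ref{T31}: $N(Y)$ is Tychonoff, hence regular; it is the disjoint union $N(Y)=(Y\times\{0\})\cup(\mathbb{R}\times(0,\infty))$ in which $Y\times\{0\}$ is closed and discrete while $\mathbb{R}\times(0,\infty)$ carries the Euclidean topology and is therefore $\sigma$-compact; and $N(Y)$ is separable, since the points of $\mathbb{Q}^2$ lying in the open upper half-plane are dense (every tangent-disk neighbourhood of a boundary point is an open disc, so meets $\mathbb{Q}^2$), and separability yields $St(D,\mathcal{U})=N(Y)$ for a fixed countable dense $D$ and every open cover $\mathcal{U}$, i.e. $N(Y)$ is strongly star-Lindel\"of. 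Now assume $N(Y)$ is normal and star-Menger. By the remark immediately preceding the proposition, which applies \cite[Proposition 2.12]{SVM} to conclude that a closed discrete subspace of a normal star-Menger space has cardinality $<\mathfrak{d}$, we obtain $|Y|=|Y\times\{0\}|<\mathfrak{d}$; \cite[Corollary 3.6(2)]{SSSP} may equally be quoted for the same conclusion. With $|Y|<\mathfrak{d}$ in hand, Corollary~\ref{C1202} (the specialization of Theorem~\ref{T31} to $X=N(Y)$, whose hypotheses are exactly the structural features just listed) delivers that $N(Y)$ is strongly star-Scheepers, which is $(4)$. This closes the cycle and gives the equivalence of all four statements.

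I do not anticipate a genuine obstacle: every nontrivial input — the extent bound for normal star-Menger spaces and the equivalence ``$|Y|<\mathfrak{d}$ iff $N(Y)$ is strongly star-Scheepers'' from Corollary~\ref{C1202} — is already available. The only place demanding care is the bookkeeping that certifies $N(Y)$ as an instance of Theorem~\ref{T31}, namely regularity, the closed-discrete versus $\sigma$-compact partition, and strong star-Lindel\"ofness via separability; once these are checked the argument is purely formal.
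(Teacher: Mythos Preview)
Your proposal is correct and follows essentially the same route as the paper: the paper simply records the trivial implications from Figure~\ref{dig1} and, for the nontrivial direction, combines the extent bound for closed discrete subsets of normal star-Menger spaces (the remark preceding the proposition, equivalently \cite[Corollary 3.6(2)]{SSSP}) with Corollary~\ref{C1202} to pass from $|Y|<\mathfrak{d}$ to strongly star-Scheepers. Your write-up is just a more explicit unpacking of the structural hypotheses (regularity, the closed-discrete/$\sigma$-compact decomposition, separability $\Rightarrow$ strongly star-Lindel\"of) that justify invoking Corollary~\ref{C1202}.
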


If $X=Y\cup Z$, where $Y$ is a closed discrete set and $Z$ is a $\sigma$-compact subset of $X$, then $X$ is Scheepers if and only if $|Y|<\omega_1$. Thus we have the following.
\begin{Prop}
\hfill
\begin{enumerate}[label={\upshape(\arabic*)},
ref={\theCor(\arabic*)}, leftmargin=*]
  \item \label{C1501} $\Psi(\mathcal{A})$ is Scheepers if and only if $|\mathcal{A}|<\omega_1$.
  \item \label{C1502} $N(Y)$ is Scheepers if and only if $|Y|<\omega_1$.
\end{enumerate}
\end{Prop}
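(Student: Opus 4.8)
The plan is to recognise each of the two spaces as an instance of the decomposition noted immediately before the statement: a space $X=Y\cup Z$ in which $Y$ is closed discrete and $Z$ is a $\sigma$-compact subset is Scheepers exactly when $|Y|<\omega_1$. I would first recall why that underlying fact holds, so that (1) and (2) reduce to routine verifications. For the forward direction, a Scheepers space is Menger and hence Lindel\"{o}f, and in a Lindel\"{o}f space every closed discrete subset is countable, so $|Y|<\omega_1$. For the converse, if $|Y|<\omega_1$ then $Y=\bigcup_{y\in Y}\{y\}$ is a countable union of compact subsets of $X$, whence $X=Y\cup Z$ is $\sigma$-compact, hence Hurewicz, hence Scheepers. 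With this in hand, both items amount to exhibiting the appropriate $Y$ and $Z$.

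For (1) I would take $Z=\mathbb{N}$ and $Y=\mathcal{A}$ in $\Psi(\mathcal{A})=\mathcal{A}\cup\mathbb{N}$. Here $\mathbb{N}$ is open in $\Psi(\mathcal{A})$ (each natural number is isolated) and carries the discrete topology, hence is a countable, therefore $\sigma$-compact, subset; and $\mathcal{A}$ is closed (its complement $\mathbb{N}$ is open) and discrete, since for $A\in\mathcal{A}$ the basic neighbourhood $\{A\}\cup A$ meets $\mathcal{A}$ only in $A$. As $|\mathbb{N}|=\omega$, the structural fact applied to this decomposition gives that $\Psi(\mathcal{A})$ is Scheepers if and only if $|\mathcal{A}|<\omega_1$.

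For (2) I would use $Y=Y\times\{0\}$ and $Z=\mathbb{R}\times(0,\infty)$ in $N(Y)=(Y\times\{0\})\cup(\mathbb{R}\times(0,\infty))$. The open upper half-plane carries its Euclidean topology as a subspace of $N(Y)$, so it is $\sigma$-compact, e.g. $\mathbb{R}\times(0,\infty)=\bigcup_{n\geq 1}[-n,n]\times[1/n,n]$, a countable union of compacta. The axis $Y\times\{0\}$ is closed in $N(Y)$ (its complement is the open half-plane) and discrete, because a tangent-disc neighbourhood $\{(x,0)\}\cup U$ of $(x,0)$ lies, apart from $(x,0)$, inside $\mathbb{R}\times(0,\infty)$ and so misses the rest of the axis. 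Since $|Y\times\{0\}|=|Y|$, the structural fact yields that $N(Y)$ is Scheepers if and only if $|Y|<\omega_1$.

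I do not anticipate a genuine obstacle: the whole content sits in the structural fact, which itself rests only on the chain $\sigma$-compact $\Rightarrow$ Hurewicz $\Rightarrow$ Scheepers $\Rightarrow$ Menger $\Rightarrow$ Lindel\"{o}f together with the fact that closed discrete subsets of Lindel\"{o}f spaces are countable. The only mild care needed is the topological bookkeeping — checking that $\mathcal{A}$, respectively $Y\times\{0\}$, is closed and discrete, and that $\mathbb{N}$, respectively $\mathbb{R}\times(0,\infty)$, is $\sigma$-compact \emph{as a subset} of the ambient space — the former being immediate for a countable discrete set (a countable union of singletons) and the latter for a Euclidean region.
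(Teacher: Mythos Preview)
Your proposal is correct and follows exactly the paper's approach: the paper states the structural fact (a space $X=Y\cup Z$ with $Y$ closed discrete and $Z$ $\sigma$-compact is Scheepers iff $|Y|<\omega_1$) immediately before the Proposition and then simply writes ``Thus we have the following,'' leaving the verifications implicit. Your write-up is in fact more detailed than the paper's, supplying both the justification of the structural fact and the routine checks that $\mathcal{A}$ (resp.\ $Y\times\{0\}$) is closed discrete and $\mathbb{N}$ (resp.\ $\mathbb{R}\times(0,\infty)$) is $\sigma$-compact in the ambient space.
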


\begin{Cor}
Assume $ZFC+\mathfrak{d}=\omega_2$.
\begin{enumerate}[label={\upshape(\arabic*)},ref={\theCor(\arabic*)}, leftmargin=*]
  \item \label{C1601} If $|\mathcal{A}|=\omega_1$, then $\Psi(\mathcal{A})$ is strongly star-Scheepers but not Scheepers.
  \item \label{C1602} If $|Y|=\omega_1$, then $N(Y)$ is strongly star-Scheepers but not Scheepers.
\end{enumerate}
\end{Cor}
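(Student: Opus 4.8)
The plan is to deduce this corollary directly from the characterizations already established in this subsection, the assumption $\mathfrak{d}=\omega_2$ serving only to separate the two cardinal thresholds that occur in them.

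First I would observe that $\mathfrak{d}=\omega_2$ gives $\omega_1<\mathfrak{d}$. Consequently, for the Isbell-Mr\'owka space $\Psi(\mathcal{A})$ with $|\mathcal{A}|=\omega_1$ we have $|\mathcal{A}|<\mathfrak{d}$, so Corollary~\ref{C1201} applies and yields that $\Psi(\mathcal{A})$ is strongly star-Scheepers. The same reasoning applied to Corollary~\ref{C1202} shows that $N(Y)$ is strongly star-Scheepers whenever $|Y|=\omega_1<\mathfrak{d}$. (Alternatively, this step could be routed through Theorem~\ref{T2602} together with Corollary~\ref{C102}, since $\Psi(\mathcal{A})$ and $N(Y)$ are strongly star-Lindel\"{o}f and decompose as a closed discrete set over a $\sigma$-compact piece, but quoting Corollary~\ref{C1203} is the most direct.)

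Next I would invoke Proposition~\ref{C1501}, which states that $\Psi(\mathcal{A})$ is Scheepers if and only if $|\mathcal{A}|<\omega_1$; since here $|\mathcal{A}|=\omega_1$, the space $\Psi(\mathcal{A})$ is not Scheepers. Similarly, Proposition~\ref{C1502} shows that $N(Y)$ is not Scheepers when $|Y|=\omega_1$. Combining the two halves of each case yields assertions (1) and (2).

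I do not anticipate any genuine obstacle: the statement is an immediate consequence of the already-proved dichotomies ``strongly star-Scheepers $\Leftrightarrow$ cardinality $<\mathfrak{d}$'' and ``Scheepers $\Leftrightarrow$ cardinality $<\omega_1$''. The only point I would flag is that $ZFC+\mathfrak{d}=\omega_2$ is consistent — it follows, for instance, from $MA+\mathfrak{c}=\omega_2$ (as $MA$ implies $\mathfrak{d}=\mathfrak{c}$) — so the corollary is non-vacuous; its content is that, consistently, there exist strongly star-Scheepers spaces, even of the very concrete forms $\Psi(\mathcal{A})$ and $N(Y)$, that fail to be Scheepers.
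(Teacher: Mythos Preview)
Your proposal is correct and matches the paper's approach: the corollary is stated without proof, as it follows immediately from the preceding characterizations (Corollary~\ref{C1203} and Proposition~\ref{C1501}/\ref{C1502}) once $\mathfrak{d}=\omega_2$ ensures $\omega_1<\mathfrak{d}$.
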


In \cite{SCPP}, Bonanzinga and Matveev introduced a cardinal $\mathfrak{d}_\kappa$ for an infinite cardinal $\kappa$. This cardinal $\mathfrak{d}_\kappa$ is also studied in \cite{SVM} and denoted by $\cof(\Fin(\kappa)^\mathbb{N})$. Throughout we use the symbol $\cof(\Fin(\kappa)^\mathbb{N})$ instead of $\mathfrak{d}_\kappa$. For an infinite set $X$ let $\Fin(X)$ denote the set of all finite subsets of $X$. The set $\Fin(X)^\mathbb{N}$ of all functions $f:\mathbb{N}\to\Fin(X)$ is partially ordered coordinate-wise: $f\leq g$ if $f(n)\subseteq g(n)$ for all $n\in\mathbb{N}$. The cofinality of $(\Fin(X)^\mathbb{N},\leq)$ is denoted by $\cof(\Fin(X)^\mathbb{N})$.

The following lemma is required for our next observation.
\begin{Lemma}[{cf. \cite[Lemma 7]{SCPP}}]
\label{L1}
\hfill
\begin{enumerate}[label={\upshape(\arabic*)},
ref={\theLemma(\arabic*)}, leftmargin=*]
  \item \label{L101} $\cof(\Fin(\omega)^\mathbb{N})=\mathfrak{d}$.
  \item \label{L102} If $\omega\leq\kappa\leq\mathfrak{c}$, then $\max\{\mathfrak{d},\kappa\}\leq\cof(\Fin(\kappa)^\mathbb{N})\leq\mathfrak{c}$.
  \item \label{L103} If $\omega\leq\kappa<\aleph_\omega$, then $\cof(\Fin(\kappa)^\mathbb{N})=\max\{\mathfrak{d},\kappa\}$.
  \item \label{L104}  $\cof(\Fin(\mathfrak{c})^\mathbb{N})=\mathfrak{c}$.
\end{enumerate}
\end{Lemma}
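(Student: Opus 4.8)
The plan is to prove the four parts in the order given, since later parts build on earlier ones and on standard cofinality arithmetic. For part (1), the key observation is that a function $f\colon\mathbb{N}\to\Fin(\omega)$ can be coded by the function $\tilde f\in\mathbb{N}^\mathbb{N}$ given by $\tilde f(n)=\max f(n)$ (with the convention $\max\emptyset=0$), and conversely every $g\in\mathbb{N}^\mathbb{N}$ dominates the coded function of $f$ iff $f\le^*$ the function $n\mapsto\{0,1,\dots,g(n)\}$. Thus a dominating family in $(\mathbb{N}^\mathbb{N},\le)$ of size $\mathfrak d$ gives a cofinal family in $(\Fin(\omega)^\mathbb{N},\le)$ and vice versa; this yields $\cof(\Fin(\omega)^\mathbb{N})=\mathfrak d$, using the fact (recalled in the preliminaries, \cite{KKJE}) that $\mathfrak d$ is unchanged whether one uses $\le$ or $\le^*$. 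For part (4), $\cof(\Fin(\mathfrak c)^\mathbb{N})\le\mathfrak c$ because $|\Fin(\mathfrak c)^\mathbb{N}|=\mathfrak c^{\aleph_0}\cdot\ldots=\mathfrak c$, so the whole poset has size $\mathfrak c$ and is trivially cofinal in itself; the reverse inequality $\mathfrak c\le\cof(\Fin(\mathfrak c)^\mathbb{N})$ follows from part (2), so really (4) is a corollary of (2) once (2) is in hand.

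For part (2), assume $\omega\le\kappa\le\mathfrak c$. The upper bound $\cof(\Fin(\kappa)^\mathbb{N})\le\mathfrak c$ follows as in (4) from $|\Fin(\kappa)^\mathbb{N}|=\kappa^{\aleph_0}\le\mathfrak c^{\aleph_0}=\mathfrak c$. For the lower bound, I would show separately that $\mathfrak d\le\cof(\Fin(\kappa)^\mathbb{N})$ and $\kappa\le\cof(\Fin(\kappa)^\mathbb{N})$. The first holds because $\Fin(\omega)^\mathbb{N}$ embeds as a cofinal-in-its-image, order-theoretically ``nice'' subposet of $\Fin(\kappa)^\mathbb{N}$ (restrict attention to functions with values in $\Fin(\omega)$; given any cofinal family of $\Fin(\kappa)^\mathbb{N}$, intersecting each member's values with $\omega$ produces a cofinal family of $\Fin(\omega)^\mathbb{N}$), so $\cof(\Fin(\omega)^\mathbb{N})=\mathfrak d\le\cof(\Fin(\kappa)^\mathbb{N})$. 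The second, $\kappa\le\cof(\Fin(\kappa)^\mathbb{N})$, is seen by considering for each $\alpha<\kappa$ the constant function $c_\alpha\equiv\{\alpha\}$: any cofinal family $\mathcal F$ must contain, for each $\alpha$, some $f_\alpha\in\mathcal F$ with $c_\alpha\le f_\alpha$, i.e. $\alpha\in f_\alpha(0)$; since each $f_\alpha(0)$ is finite, the map $\alpha\mapsto f_\alpha$ is finite-to-one, forcing $|\mathcal F|\ge\kappa$.

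For part (3), assume $\omega\le\kappa<\aleph_\omega$; I want $\cof(\Fin(\kappa)^\mathbb{N})=\max\{\mathfrak d,\kappa\}$. The inequality $\ge$ is already part (2). For $\le$, the hypothesis $\kappa<\aleph_\omega$ means $\cf(\kappa)=\aleph_0$ is impossible unless $\kappa=\aleph_0$ (handled by (1)), so $\kappa$ has uncountable cofinality and $\kappa^{\aleph_0}=\kappa$ when $\mathfrak d\le\kappa$; more carefully, I would argue that $\Fin(\kappa)^\mathbb{N}$ has a cofinal subset of size $\max\{\mathfrak d,\kappa\}$ by splitting: fix a cofinal family $D\subseteq\mathbb{N}^\mathbb{N}$ of size $\mathfrak d$ (using $\le$) and, for a given $f\in\Fin(\kappa)^\mathbb{N}$, note $S_f:=\bigcup_n f(n)$ is a countable subset of $\kappa$; since $\kappa<\aleph_\omega$ there are only $\kappa^{\aleph_0}$ countable subsets, but one checks via the Hausdorff formula / the fact that $\kappa$ is below the first fixed point that $\kappa^{\aleph_0}=\max\{\mathfrak d_{\text{for the relevant }\aleph_n\text{ below }\kappa},\kappa\}$ — and then an induction on $n$ with $\kappa=\aleph_n$ reduces everything to (1). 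Concretely: for $\kappa=\aleph_n$ I would induct, using at the successor step that a function into $\Fin(\aleph_{n+1})$ is determined by a countable subset of $\aleph_{n+1}$ (of which there are $\aleph_{n+1}^{\aleph_0}=\aleph_{n+1}\cdot\aleph_n^{\aleph_0}$ many, by the Hausdorff formula $\aleph_{n+1}^{\aleph_0}=\aleph_{n+1}\cdot\aleph_n^{\aleph_0}$) together with a cofinal ``shape'' drawn from a copy of $\Fin(\omega)^\mathbb{N}$. The main obstacle is precisely this cardinal-arithmetic bookkeeping in part (3): one must be careful that the bound $\max\{\mathfrak d,\kappa\}$ — not the a priori larger $\kappa^{\aleph_0}$ — genuinely suffices, and the cleanest route is to invoke the Hausdorff recursion $\aleph_{n+1}^{\aleph_0}=\aleph_n^{\aleph_0}\cdot\aleph_{n+1}$ to get $\aleph_n^{\aleph_0}=\max\{2^{\aleph_0},\aleph_n\}\le\max\{\mathfrak d,\aleph_n\}$ is false in general ($2^{\aleph_0}$ can exceed $\mathfrak d$), so in fact one shows $\aleph_n^{\aleph_0}\le\max\{\mathfrak c,\aleph_n\}$ and then refines: a cofinal family need not enumerate all countable subsets, only enough ``coordinates'', and the count of those is $\max\{\mathfrak d,\kappa\}$. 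I would present this refinement as the heart of the proof, citing \cite{SCPP,SVM} for the original argument and filling in the induction.
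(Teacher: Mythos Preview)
The paper does not supply its own proof of this lemma: it is stated with the citation ``cf.\ \cite[Lemma 7]{SCPP}'' and used as a black box. So there is no in-paper argument to compare your proposal against, and I will simply assess your sketch on its merits.

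Your treatments of parts (1), (2) and (4) are correct and standard. The coding of $f\in\Fin(\omega)^{\mathbb N}$ by $n\mapsto\max f(n)$ gives (1); the intersection-with-$\omega$ map and the constant-singleton argument give the two lower bounds in (2); and (4) is immediate from (2).

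The gap is in part (3). You correctly note that for $\kappa=\aleph_n$ with $n\ge 1$ the cofinality of $\kappa$ is uncountable, and you correctly set up an induction on $n$, but then you try to push the argument through by \emph{counting countable subsets} of $\kappa$, invoking the Hausdorff formula $\aleph_{n+1}^{\aleph_0}=\aleph_{n+1}\cdot\aleph_n^{\aleph_0}$. As you yourself observe, this cannot work: $\aleph_n^{\aleph_0}$ can equal $2^{\aleph_0}$, which may well exceed $\max\{\mathfrak d,\aleph_n\}$. Your closing sentence concedes that some unspecified ``refinement'' is needed and defers to \cite{SCPP,SVM}; that is exactly where the proof is missing.

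The fix is to \emph{use} the uncountable cofinality rather than merely mention it. Since $\aleph_{n+1}$ is regular and uncountable, every countable subset of $\aleph_{n+1}$ is bounded below some $\alpha<\aleph_{n+1}$. Hence for each $f\in\Fin(\aleph_{n+1})^{\mathbb N}$ the set $\bigcup_m f(m)$ lies inside some $\alpha<\aleph_{n+1}$, so $f\in\Fin(\alpha)^{\mathbb N}$. This gives
\[
\Fin(\aleph_{n+1})^{\mathbb N}=\bigcup_{\alpha<\aleph_{n+1}}\Fin(\alpha)^{\mathbb N},
\]
and by the inductive hypothesis each $\Fin(\alpha)^{\mathbb N}$ (with $|\alpha|\le\aleph_n$) has a cofinal family of size at most $\max\{\mathfrak d,\aleph_n\}$. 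Taking the union of these $\aleph_{n+1}$ many families yields a cofinal family in $\Fin(\aleph_{n+1})^{\mathbb N}$ of size at most $\aleph_{n+1}\cdot\max\{\mathfrak d,\aleph_n\}=\max\{\mathfrak d,\aleph_{n+1}\}$. No enumeration of countable subsets is needed, and the Hausdorff formula plays no role.
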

If $|\mathcal{A}|=\cof(\Fin(|\mathcal{A}|)^\mathbb{N})$, then $\Psi(\mathcal{A})$ is not star-Menger (and hence not star-Scheepers) (see \cite[Proposition 9]{SCPP}). By \cite[Corollary 3.17]{SSSP}, if $|Y|=\cof(\Fin(|Y|)^\mathbb{N})$, then $N(Y)$ is not star-Menger (and hence not star-Scheepers) (see also \cite{SVM}). Thus if $|\mathcal{A}|=\mathfrak{c}$ (respectively, $|Y|=\mathfrak{c}$), then $\Psi(\mathcal{A})$ (respectively, $N(Y)$) is not star-Menger (and hence not star-Scheepers).
%
%
%


\begin{Th}
\hfill
\begin{enumerate}[label={\upshape(\arabic*)},
ref={\theTh(\arabic*)}, leftmargin=*]
  \item \label{T3201} If $|\mathcal{A}|<\aleph_\omega$, then $\Psi(\mathcal{A})$ is star-Scheepers if and only if $\Psi(\mathcal{A})$ is strongly star-Scheepers.
  \item \label{T3202} If $|Y|<\aleph_\omega$, then $N(Y)$ is star-Scheepers if and only if $N(Y)$ is strongly star-Scheepers.
\end{enumerate}
\end{Th}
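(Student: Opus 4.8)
The plan is to reduce both statements to the combinatorial facts already assembled above, so that essentially no new work is required. One implication is free: strongly star-Scheepers always implies star-Scheepers (this is recorded in Figure~\ref{dig1}), so in each of (1) and (2) only the reverse implication needs an argument.

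For (1), assume $\Psi(\mathcal{A})$ is star-Scheepers and $|\mathcal{A}|<\aleph_\omega$; the goal is to deduce $|\mathcal{A}|<\mathfrak{d}$, since then Corollary~\ref{C1201} immediately gives that $\Psi(\mathcal{A})$ is strongly star-Scheepers. If $|\mathcal{A}|<\omega_1$ there is nothing to prove, because $\omega_1\leq\mathfrak{d}$. So suppose $\omega_1\leq|\mathcal{A}|<\aleph_\omega$ and, toward a contradiction, that $|\mathcal{A}|\geq\mathfrak{d}$. Then $\omega\leq|\mathcal{A}|<\aleph_\omega$, so Lemma~\ref{L103} applies and yields $\cof(\Fin(|\mathcal{A}|)^\mathbb{N})=\max\{\mathfrak{d},|\mathcal{A}|\}=|\mathcal{A}|$, the last equality because $|\mathcal{A}|\geq\mathfrak{d}$. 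But the equality $|\mathcal{A}|=\cof(\Fin(|\mathcal{A}|)^\mathbb{N})$ forces $\Psi(\mathcal{A})$ to fail star-Menger (by \cite[Proposition 9]{SCPP}), hence to fail star-Scheepers by Figure~\ref{dig1} — contradicting our assumption. Therefore $|\mathcal{A}|<\mathfrak{d}$, and Corollary~\ref{C1201} finishes (1).

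For (2) the argument is verbatim the same with $N(Y)$ in place of $\Psi(\mathcal{A})$: I would use Corollary~\ref{C1202} in the role of Corollary~\ref{C1201}, and \cite[Corollary 3.17]{SSSP} (which says that $|Y|=\cof(\Fin(|Y|)^\mathbb{N})$ implies $N(Y)$ is not star-Menger) in the role of \cite[Proposition 9]{SCPP}. The case split is again on whether $|Y|<\omega_1$, $\omega_1\leq|Y|<\mathfrak{d}$, or $|Y|\geq\mathfrak{d}$, with Lemma~\ref{L103} available throughout because $|Y|<\aleph_\omega$.

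There is no genuine obstacle here; the entire content sits in the earlier lemmas and corollaries, and the proof is a short synthesis. The only point that deserves care is the observation that the hypothesis $|\mathcal{A}|<\aleph_\omega$ (resp. $|Y|<\aleph_\omega$) is exactly what makes Lemma~\ref{L103} applicable, so that $\cof(\Fin(|\mathcal{A}|)^\mathbb{N})$ collapses to $\max\{\mathfrak{d},|\mathcal{A}|\}$ and the star-Menger obstruction is triggered precisely when $|\mathcal{A}|\geq\mathfrak{d}$; above $\aleph_\omega$ this identity may fail, which is why the cardinality bound is stated the way it is.
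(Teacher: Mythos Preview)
Your proof is correct and follows essentially the same route as the paper: both arguments split on whether $|\mathcal{A}|<\mathfrak{d}$ (invoking Corollary~\ref{C1201}/\ref{C1202}) or $\mathfrak{d}\leq|\mathcal{A}|<\aleph_\omega$ (invoking Lemma~\ref{L103} together with \cite[Proposition~9]{SCPP} or \cite[Corollary~3.17]{SSSP}). The paper frames this as a direct two-case analysis showing both properties hold or both fail, whereas you phrase the nontrivial implication as a contradiction argument deducing $|\mathcal{A}|<\mathfrak{d}$; these are logically equivalent and use the identical ingredients.
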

\begin{proof}
$(1)$. Assume that $|\mathcal{A}|<\mathfrak{d}$. Observe that $\Psi(\mathcal{A})$ is strongly star-Scheepers. Next assume that $\mathfrak{d}\leq |\mathcal{A}|<\aleph_\omega$. By Lemma~\ref{L103}, $|\mathcal{A}|=\cof(\Fin(|\mathcal{A}|)^\mathbb{N})$ and hence $\Psi(\mathcal{A})$ is not star-Scheepers. Thus $\Psi(\mathcal{A})$ is not strongly star-Scheepers.

$(2)$. If $|Y|<\mathfrak{d}$, then $N(Y)$ is strongly star-Scheepers. On the other hand, $\mathfrak{d}\leq |Y|<\aleph_\omega$ gives $|Y|=\cof(\Fin(|Y|)^\mathbb{N})$ (see Lemma~\ref{L103}) and hence $N(Y)$ is not star-Scheepers. Clearly $N(Y)$ is not strongly star-Scheepers.
\end{proof}

%

%

We also obtain the following reformulation of the star-Scheepers property for $\Psi(\mathcal{A})$ spaces. The proof of the next result uses the technique of \cite[Theorem 2.1]{CASC} with necessary adjustments.
\begin{Th}
\label{T21}
The following assertions are equivalent.
\begin{enumerate}[label={\upshape(\arabic*)}, leftmargin=*]
  \item $\Psi(\mathcal{A})$ has the star-Scheepers property.
  \item For each function $A\mapsto f_A$ from $\mathcal{A}$ to $\mathbb{N}^\mathbb{N}$ there are finite sets $\mathcal{F}_1,\mathcal{F}_2,\cdots\subseteq\mathcal{A}$ such that for each finite set $\mathcal{F}\subseteq\mathcal{A}$ there exists a $n$ such that $(A\setminus f_A(n))\cap\bigcup_{B\in\mathcal{F}_n}(B\setminus f_B(n))\neq\emptyset$ for all $A\in\mathcal{F}$.
\end{enumerate}
\end{Th}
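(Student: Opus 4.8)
The plan is to establish the two implications separately, translating the statement ``$\Psi(\mathcal{A})$ is star-Scheepers'' into the combinatorics of the functions $A\mapsto f_A$ in the spirit of \cite[Theorem 2.1]{CASC}, the adjustments being the use of $\omega$-covers in place of arbitrary covers and of ``every finite $\mathcal{F}\subseteq\mathcal{A}$'' in place of ``every $A\in\mathcal{A}$''. Here, for $m\in\mathbb{N}$, I read $A\setminus m$ as $\{i\in A:i\geq m\}$, as in the statement. The case of a finite family $\mathcal{A}$ is trivial (then $\Psi(\mathcal{A})$ is a countable space, hence star-Scheepers by Corollary~\ref{C901}, and one may take each $\mathcal{F}_n=\mathcal{A}$), so assume $\mathcal{A}$ is infinite.

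For $(1)\Rightarrow(2)$ I would apply the star-Scheepers property to the canonical test sequence $\mathcal{U}_n=\{U_n(A):A\in\mathcal{A}\}\cup\bigl\{\{m\}:m\in\mathbb{N}\setminus\bigcup_{A\in\mathcal{A}}U_n(A)\bigr\}$, where $U_n(A)=\{A\}\cup(A\setminus f_A(n))$. Using only the ``leftover'' singletons is the point that needs attention: with this choice $U_n(A)$ is the \emph{only} member of $\mathcal{U}_n$ containing $A\in\mathcal{A}$, and every singleton of $\mathcal{U}_n$ is disjoint from every $U_n(B)$; hence for any finite $\mathcal{V}_n\subseteq\mathcal{U}_n$, writing $\mathcal{F}_n=\{B:U_n(B)\in\mathcal{V}_n\}$, one checks (using $U_n(A)\cap U_n(B)\cap\mathbb{N}=(A\setminus f_A(n))\cap(B\setminus f_B(n))$) that $A\in St(\cup\mathcal{V}_n,\mathcal{U}_n)$ holds precisely when $(A\setminus f_A(n))\cap\bigcup_{B\in\mathcal{F}_n}(B\setminus f_B(n))\neq\emptyset$. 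Applying $\SUf(\mathcal{O},\Omega)$ to $(\mathcal{U}_n)$ yields finite $\mathcal{V}_n\subseteq\mathcal{U}_n$: if $St(\cup\mathcal{V}_n,\mathcal{U}_n)=X$ for some $n$, then $\mathcal{A}\subseteq St(\cup\mathcal{V}_n,\mathcal{U}_n)$ and that single index witnesses (2) for every finite $\mathcal{F}\subseteq\mathcal{A}$, while otherwise $\{St(\cup\mathcal{V}_n,\mathcal{U}_n):n\in\mathbb{N}\}$ is an $\omega$-cover, so each finite $\mathcal{F}\subseteq\mathcal{A}$ lies in some $St(\cup\mathcal{V}_n,\mathcal{U}_n)$; in either case $(\mathcal{F}_n)$ is as required.

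For $(2)\Rightarrow(1)$, given a sequence $(\mathcal{U}_n)$ of open covers of $\Psi(\mathcal{A})$, for each $n$ and $A\in\mathcal{A}$ fix $U_n(A)\in\mathcal{U}_n$ with $A\in U_n(A)$ and set $f_A(n)=\min\{m:A\setminus m\subseteq U_n(A)\}$ (legitimate since the open set $U_n(A)$ contains a cofinite subset of $A$). The mirror-image observation is that $(A\setminus f_A(n))\cap(B\setminus f_B(n))\neq\emptyset$ forces $U_n(A)\cap U_n(B)\neq\emptyset$, so $A\in St(\bigcup_{B\in\mathcal{G}}U_n(B),\mathcal{U}_n)$ as soon as some $B\in\mathcal{G}$ realises that intersection. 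To also catch the isolated points of $\mathbb{N}$ within the same stage $n$, I would split the index set: fix a partition $\mathbb{N}=\bigsqcup_{k\in\mathbb{N}}N_k$ into infinite pieces, an enumeration $[\mathbb{N}]^{<\omega}=\{G_k:k\in\mathbb{N}\}$, and for each $k$ an enumeration $\phi_k:\mathbb{N}\to N_k$; applying (2) to the function $A\mapsto f_A\circ\phi_k$ produces finite sets $\mathcal{G}_n\subseteq\mathcal{A}$ ($n\in N_k$) such that every finite $\mathcal{F}\subseteq\mathcal{A}$ is realised at some $n\in N_k$. Setting $\mathcal{V}_n=\{U_n(B):B\in\mathcal{G}_n\}\cup\mathcal{W}_n$ for $n\in N_k$, where $\mathcal{W}_n$ is a finite subfamily of $\mathcal{U}_n$ covering $G_k$, and given an arbitrary finite $F=F_{\mathcal{A}}\cup F_{\mathbb{N}}\subseteq\Psi(\mathcal{A})$ with $F_{\mathcal{A}}\subseteq\mathcal{A}$ and $F_{\mathbb{N}}\subseteq\mathbb{N}$, I pick $k$ with $G_k=F_{\mathbb{N}}$ and then $n\in N_k$ realising $F_{\mathcal{A}}$: then $F_{\mathcal{A}}\subseteq St(\cup\mathcal{V}_n,\mathcal{U}_n)$ and $F_{\mathbb{N}}=G_k\subseteq\cup\mathcal{W}_n\subseteq St(\cup\mathcal{V}_n,\mathcal{U}_n)$, so $\{St(\cup\mathcal{V}_n,\mathcal{U}_n):n\in\mathbb{N}\}$ is an $\omega$-cover of $\Psi(\mathcal{A})$.

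I expect $(2)\Rightarrow(1)$ to be the main obstacle: hypothesis (2) only ensures that the finite subsets of $\mathcal{A}$ get caught, whereas an $\omega$-cover of $\Psi(\mathcal{A})=\mathcal{A}\cup\mathbb{N}$ must catch every finite subset of the entire space, and the stage witnessing the $\mathcal{A}$-part cannot in general be forced past a prescribed bound; the partition-of-indices device is exactly what lets one reconcile the $\mathcal{A}$-part with the (countably many possible) $\mathbb{N}$-parts inside a single stage. The more delicate point in $(1)\Rightarrow(2)$ is the deliberate omission of the ``used-up'' singletons from the test covers $\mathcal{U}_n$: retaining all of $\mathbb{N}$ as singletons would permit a finite reservoir of isolated points to star some $A\in\mathcal{A}$ for the wrong reason and break the displayed equivalence; one must also remember the alternative clause ``$St(\cup\mathcal{V}_n,\mathcal{U}_n)=X$ for some $n$'' in the definition of $\SUf$, dealt with above by the first case.
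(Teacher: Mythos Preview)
Your argument is correct and follows the same overall strategy as the paper (building canonical test covers for $(1)\Rightarrow(2)$ and translating finite families $\mathcal{F}_n$ back into finite subfamilies $\mathcal{V}_n$ for $(2)\Rightarrow(1)$), but the two directions are handled with noticeably different technical devices.

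For $(1)\Rightarrow(2)$ the paper includes \emph{all} singletons $\{m\}$, $m\in\mathbb{N}$, in $\mathcal{U}_n$, and then post-processes $\mathcal{V}_n$ by replacing each singleton $\{m\}$ that meets some $B\setminus f_B(n)$ by $\{B\}\cup(B\setminus f_B(n))$ and deleting the rest. Your choice of only the ``leftover'' singletons makes each such singleton disjoint from every $U_n(B)$, so the equivalence $A\in St(\cup\mathcal{V}_n,\mathcal{U}_n)\Leftrightarrow(A\setminus f_A(n))\cap\bigcup_{B\in\mathcal{F}_n}(B\setminus f_B(n))\neq\emptyset$ holds on the nose and no replacement step is needed; this is a genuine simplification.

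For $(2)\Rightarrow(1)$ the paper refines to covers containing the basic sets $\{A\}\cup(A\setminus f_A(n))$, applies $(2)$ once, and then writes ``We only consider the case when $\mathcal{F}\subseteq\mathcal{A}$ and other cases can be observed similarly'', implicitly leaning on the earlier structural result (the corollary to Proposition~\ref{T28}) that adjoining the $\sigma$-compact set $\mathbb{N}$ to a set that is star-Scheepers in $\Psi(\mathcal{A})$ preserves the property. Your partition-of-indices argument, applying $(2)$ separately along each $N_k$ and padding $\mathcal{V}_n$ with a finite cover of $G_k$, gives a direct and self-contained proof that $\{St(\cup\mathcal{V}_n,\mathcal{U}_n):n\in\mathbb{N}\}$ is an $\omega$-cover of the whole space; this is more work but avoids any appeal to external lemmas and makes explicit exactly the obstacle you identified.
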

\begin{proof}
$(1)\Rightarrow (2)$. Consider the sequence $(\mathcal{U}_n)$ of open covers of $\Psi(\mathcal{A})$, where $\mathcal{U}_n=\{\{A\}\cup(A\setminus f_A(n)) : A\in\mathcal{A}\}\cup\{\{m\} : m\in\mathbb{N}\}$ for each $n$. Apply the star-Scheepers property of $\Psi(\mathcal{A})$ to $(\mathcal{U}_n)$ to obtain a sequence $(\mathcal{V}_n)$ such that for each $n$ $\mathcal{V}_n$ is a finite subset of $\mathcal{U}_n$ and $\{St(\cup\mathcal{V}_n,\mathcal{U}_n) : n\in\mathbb{N}\}$ is an $\omega$-cover of $\Psi(\mathcal{A})$. For each $n$ and each $\{m\}\in\mathcal{V}_n$ we consider the following two situations. For the first case, if there is an element $B\in\mathcal{A}$ for which $m\in B\setminus f_B(n)$, then replace $\{m\}\in\mathcal{V}_n$ by $\{B\}\cup(B\setminus f_B(n))$. Otherwise if there is no such $B$, we then remove $\{m\}$ from $\mathcal{V}_n$. Next for each $n$ define $\mathcal{F}_n=\{A\in\mathcal{A} : \{A\}\cup(A\setminus f_A(n))\in\mathcal{V}_n\}$. Let $\mathcal{F}$ be a finite subset of $\mathcal{A}$. Choose a $n_0$ such that $\mathcal{F}\subseteq St(\cup\mathcal{V}_{n_0},\mathcal{U}_{n_0})$. It follows that $(\{A\}\cup(A\setminus f_A(n_0)))\cap(\cup\mathcal{V}_{n_0})\neq\emptyset$ for all $A\in\mathcal{F}$. Consequently $(A\setminus f_A(n_0))\cap\bigcup_{B\in\mathcal{F}_{n_0}}(B\setminus f_B(n_0))\neq\emptyset$ for all $A\in\mathcal{F}$.

$(2)\Rightarrow (1)$. Let $(\mathcal{U}_n)$ be a sequence of open covers of $\Psi(\mathcal{A})$. We may assume that for each $A\in\mathcal{A}$ and each $n$ there is a $f_A(n)\in\mathbb{N}$ such that $\{A\}\cup(A\setminus f_A(n))\in\mathcal{U}_n$. Let $\mathcal{F}_1,\mathcal{F}_2,\cdots\subseteq\mathcal{A}$ be finite sets as in $(2)$. Later for each $n$ we define a finite subset $\mathcal{V}_n=\{\{A\}\cup(A\setminus f_A(n)) : A\in\mathcal{F}_n\}$ of $\mathcal{U}_n$. We claim that $\{St(\cup\mathcal{V}_n,\mathcal{U}_n) : n\in\mathbb{N}\}$ is an $\omega$-cover of $\Psi(\mathcal{A})$. Let $\mathcal{F}$ be a finite subset of $\Psi(\mathcal{A})$. We only consider the case when $\mathcal{F}\subseteq\mathcal{A}$ and other cases can be observed similarly. Choose a $n_0$ corresponding to $\mathcal{F}$ as in $(2)$. Then $(A\setminus f_A(n_0))\cap\bigcup_{B\in\mathcal{F}_{n_0}}(B\setminus f_B(n_0))\neq\emptyset$ for all $A\in\mathcal{F}$. This gives us $(\{A\}\cup(A\setminus f_A(n_0)))\cap\bigcup_{B\in\mathcal{F}_{n_0}}(\{B\}\cup(B\setminus f_B(n_0)))\neq\emptyset$ and subsequently $A\in St(\cup\mathcal{V}_{n_0},\mathcal{U}_{n_0})$. Thus $\mathcal{F}\subseteq St(\cup\mathcal{V}_{n_0},\mathcal{U}_{n_0})$ and the proof is now complete.
\end{proof}

\subsection{Preservation under topological operations}
We now study the characteristics of the star Scheepers and related properties under certain topological operations. Observe that the star versions of the Scheepers property are invariants of continuous mappings and are inherited by clopen subsets. In view of \cite[Example 2.2]{sH}, there exists a Tychonoff pseudocompact star-Scheepers space having a regular-closed subset which is not star-Scheepers. We give another counterexample in this context.

\begin{Ex}
\label{E5}
\emph{There exists a Tychonoff strongly star-Scheepers (and hence star-Scheepers) space having a regular-closed $G_\delta$ subset which is not star-Scheepers (and hence not strongly star-Scheepers).}\\
Assume that $\omega_1<\mathfrak{d}$. Let $X=\Psi(\mathcal{A})$ with $|\mathcal{A}|=\omega_1$. By Corollary~\ref{C1201}, $X$ is  Tychonoff and strongly star-Scheepers. Let $D=\{d_\alpha :\alpha<\omega_1\}$ be the discrete space of cardinality $\omega_1$ and let $aD=D\cup\{d\}$ be the one point compactification of $D$. Consider $Y=(aD\times[0,\omega_1])\setminus\{(d,\omega_1)\}$ as a subspace of $aD\times[0,\omega_1]$. To show that $Y$ is not star-Scheepers, it is enough to show that $Y$ is not star-Lindel\"{o}f. For each $\alpha<\omega_1$ let $U_\alpha=\{d_\alpha\}\times(\alpha,\omega_1]$ and $V_\alpha=aD\times[0,\alpha)$. Clearly $U_\alpha\cap U_{\beta}=\emptyset$ for $\alpha\neq\beta$ and $U_\alpha\cap V_{\beta}=\emptyset$ for $\alpha>\beta$. Choose an open cover $\mathcal{U}$ of $Y$, where $\mathcal{U}=\{U_\alpha : \alpha<\omega_1\}\cup \{V_\alpha : \alpha<\omega_1\}$. Also choose a countable subset $\mathcal{V}$ of $\mathcal{U}$ such that $St(\cup\mathcal{V},\mathcal{U})=Y$. We can find $\alpha_0$ and $\beta_0<\omega_1$ such that $U_\alpha\notin\mathcal{V}$ for all $\alpha>\alpha_0$ and $V_\beta\notin\mathcal{V}$ for all $\beta>\beta_0$. Choose a $\gamma<\omega_1$ such that $\gamma>\max\{\alpha_0,\beta_0\}$. As a result $U_\gamma\cap(\cup\mathcal{V})=\emptyset$. Since $U_\gamma$ is the only member of $\mathcal{U}$ containing $(d_\gamma,\omega_1)$, we obtain $(d_\gamma,\omega_1)\notin St(\cup\mathcal{V},\mathcal{U})$. Thus $Y$ is not star-Lindel\"{o}f (and hence not star-Scheepers).

Next assume that $X\cap Y=\emptyset$. Let $f:\mathcal{A}\to D\times\{\omega_1\}$ be a bijection and $Z$ be the quotient image of the topological sum $X\oplus Y$ obtained by identifying $A$ of $X$ with $f(A)$ of $Y$ for every $A\in\mathcal{A}$. Let $q:X\oplus Y\to Z$ be the quotient map. It is immediate that $q(Y)$ is a regular-closed $G_\delta$ subset of $Z$. Since $q(Y)$ is homeomorphic to $Y$, $q(Y)$ is not star-Scheepers. Also since $q(X)$ and $q(aD\times[0,\omega_1))$ are homeomorphic to $X$ and $aD\times[0,\omega_1)$ respectively, $q(X)$ is strongly star-Scheepers and $q(aD\times[0,\omega_1))$ is strongly starcompact. By Corollary~\ref{C104}, $Z=q(X)\cup q(aD\times[0,\omega_1))$ is strongly star-Scheepers.
\end{Ex}

Let $Y=\cup_{k\in\mathbb{N}}X_k\subseteq X$ with $X_k\subseteq X_{k+1}$ for all $k\in\mathbb{N}$. Observe that for each $k$ $X_k$ is
  star-Scheepers (respectively, strongly star-Scheepers) in $X$ if and only if $Y$ is star-Scheepers (respectively, strongly star-Scheepers) in $X$. Next suppose that $X=\cup_{k\in\mathbb{N}}X_k$ with $X_k\subseteq X_{k+1}$ for all $k\in\mathbb{N}$. If each $X_k$ is
 star-Scheepers (respectively, strongly star-Scheepers), then $X$ is also star-Scheepers (respectively, strongly star-Scheepers).
The converse of this assertion is not true.  For example, consider $X=\Psi(\mathcal{A})$ with $|\mathcal{A}|=\omega_1$, under the assumption that $\omega_1<\mathfrak{d}$. By Corollary~\ref{C1201}, $X$ is strongly star-Scheepers (and hence $X$ is star-Scheepers). We can write $X=\cup_{n\in\mathbb{N}}X_n$, where $X_n=\mathcal{A}\cup\{1,2,\ldots,n\}$ for each $n$. Since each $X_n$ is discrete and $|X_n|=\omega_1$, it follows that $X_n$ is not star-Scheepers (and so not strongly star-Scheepers) for any $n$.


On another note by \cite[Theorem 2.5]{FPM}, there exist two Scheepers spaces of reals whose union is not Scheepers (see also \cite[Theorem 3.9]{coc2}). As a consequence of Proposition~\ref{T1}, there exist two star-Scheepers (respectively, strongly star-Scheepers) spaces whose union is not star-Scheepers (respectively, strongly star-Scheepers).

\begin{Th}
\label{T35}
\hfill
\begin{enumerate}[wide=0pt,label={\upshape(\arabic*)},leftmargin=*]
  \item Let $X$ be Lindel\"{o}f. If $X$ is a union of less than $\mathfrak{d}$ star-Hurewicz spaces, then $X$ is star-Scheepers.
  \item Let $X$ be star-Lindel\"{o}f. If $X$ is a union of less than $\mathfrak{d}$ Hurewicz spaces, then $X$ is star-Scheepers.
\end{enumerate}
\end{Th}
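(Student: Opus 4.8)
The plan is to treat the two parts in parallel, since in both cases the strategy is to combine a Lindelöf-type global covering hypothesis with the fact that the pieces are (star-)Hurewicz, and then push the $\gamma$-cover behaviour through a $\mathfrak{d}$-sized amalgamation. Suppose $X=\bigcup_{\alpha<\kappa}Y_\alpha$ with $\kappa<\mathfrak{d}$, and let $(\mathcal{U}_n)$ be a sequence of open covers of $X$; we want a sequence $(\mathcal{V}_n)$ with $\mathcal{V}_n$ a finite subset of $\mathcal{U}_n$ witnessing $\SUf(\mathcal{O},\Omega)$, equivalently (by Theorem~\ref{T14}) witnessing $\SUf(\mathcal{O},\mathcal{O}^{wgp})$. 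First I would split $\mathbb{N}$ into infinitely many infinite pieces $\{N_k:k\in\mathbb{N}\}$, and for each $\alpha<\kappa$ and each $k$ apply the (star-)Hurewicz property of $Y_\alpha$ to the subsequence $(\mathcal{U}_n)_{n\in N_k}$ to get finite sets $\mathcal{V}^{\alpha,k}_n\subseteq\mathcal{U}_n$ ($n\in N_k$) such that $\{St(\cup\mathcal{V}^{\alpha,k}_n,\mathcal{U}_n):n\in N_k\}$ is a $\gamma$-cover of $Y_\alpha$ (in part (2), where $Y_\alpha$ is genuinely Hurewicz, one uses ordinary unions $\cup\mathcal{V}^{\alpha,k}_n$ and a $\gamma$-cover in the usual sense, but the star language is harmless and uniform).

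The key combinatorial step is the domination argument. For each $\alpha$ and each $k$ the $\gamma$-cover property says that every point of $Y_\alpha$ lies in $St(\cup\mathcal{V}^{\alpha,k}_n,\mathcal{U}_n)$ for all but finitely many $n\in N_k$; encode the ``threshold'' as a function $g_\alpha\in\mathbb{N}^\mathbb{N}$ (listing $N_k$ in increasing order and recording how far along one must go). Since $\kappa<\mathfrak{d}$, the family $\{g_\alpha:\alpha<\kappa\}$ is not dominating, so there is an $h\in\mathbb{N}^\mathbb{N}$ not dominated by any $g_\alpha$, i.e. for every $\alpha$ there are infinitely many coordinates where $h$ exceeds $g_\alpha$; this yields an infinite set of ``good'' indices inside each $N_k$ simultaneously usable across all $\alpha$. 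Here is where the Lindelöf (resp. star-Lindelöf) hypothesis on $X$ enters: to get started I must first replace each $\mathcal{U}_n$ by a countable subcover (resp. a countable subfamily whose union-star is $X$), so that the ``finite subsets'' extracted from the $Y_\alpha$ live inside a fixed countable reservoir and can be merged; without this the union over $\kappa$-many $\alpha$ of the chosen finite families need not be manageable. Concretely, I would fix for each $n$ a countable $\mathcal{U}_n'\subseteq\mathcal{U}_n$ with $St(\cup\mathcal{U}_n',\mathcal{U}_n)=X$ (in (1) simply a countable subcover), enumerate it, and at step $n$ let $\mathcal{V}_n$ be the union of the $\mathcal{V}^{\alpha,k}_n$ over the finitely many $\alpha$ flagged by $h$ at that coordinate, intersected appropriately with $\mathcal{U}_n'$ — arranging that $\mathcal{V}_n$ stays finite.

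Finally I would verify the $\omega$-cover (equivalently weakly groupable) conclusion: given a finite $F\subseteq X$, each point of $F$ lies in some $Y_{\alpha}$, so for each of the finitely many relevant $\alpha$ there are infinitely many good coordinates $n$ with $F\cap Y_\alpha\subseteq St(\cup\mathcal{V}^{\alpha,k}_n,\mathcal{U}_n)$; by re-partitioning into blocks and using that $F$ meets only finitely many $Y_\alpha$ among those carrying its points, one finds a single block on which $F\subseteq St(\cup\mathcal{V}_n,\mathcal{U}_n)$ for an $n$ in that block, which is exactly the weak groupability of $\{St(\cup\mathcal{V}_n,\mathcal{U}_n):n\}$, hence by Theorem~\ref{T14} the star-Scheepers property. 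I expect the main obstacle to be the bookkeeping that keeps each $\mathcal{V}_n$ finite while still covering all of $F$: one must interleave the block structure coming from the partition $\{N_k\}$ with the non-domination witness $h$ so that at each coordinate only finitely many $\alpha$ are ``active,'' and simultaneously ensure that for every finite $F$ some single coordinate catches all of $F$ — this is the delicate synchronization that the star-Hurewicz ($\gamma$-cover) hypothesis, rather than mere star-Menger, is there to provide.
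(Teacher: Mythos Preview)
Your overall strategy has the right ingredients, but there is a genuine gap exactly where you anticipate it. Non-domination of each individual $g_\alpha$ by $h$ is too weak: given a finite $F$ meeting $Y_{\alpha_1},\dots,Y_{\alpha_m}$, you need a \emph{single} coordinate $n$ at which $h$ simultaneously beats $g_{\alpha_1},\dots,g_{\alpha_m}$, and this does not follow from $h\nleq^* g_{\alpha_i}$ for each $i$ separately (consider $g_1=(0,2,0,2,\dots)$, $g_2=(2,0,2,0,\dots)$, $h=(1,1,1,\dots)$). The phrase ``finitely many $\alpha$ flagged by $h$ at that coordinate'' has no clear meaning, and without it each $\mathcal{V}_n$ could be an uncountable union. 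The paper resolves both issues with two ideas you are missing. First, it replaces $\{f_\alpha:\alpha<\kappa\}$ by its closure under finite pointwise maxima, $\maxfin(\{f_\alpha\})$; since $\kappa<\mathfrak{d}$ this set still has cardinality $<\mathfrak{d}$, so a single $g$ can be found with $g\nleq^* f_A$ for \emph{every} finite $A\subseteq\kappa$, which is precisely the simultaneous non-domination required. Second, rather than trying to merge the $\alpha$-indexed selections, the paper encodes $f_\alpha(n)$ as the least $m$ with $\mathcal{V}_n^{(\alpha)}\subseteq\{U_1^{(n)},\dots,U_m^{(n)}\}$ in the countable enumeration of $\mathcal{U}_n$, and then simply sets $\mathcal{V}_n=\{U_i^{(n)}:i\leq g(n)\}$. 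This initial segment is automatically finite and automatically contains $\mathcal{V}_n^{(\alpha)}$ whenever $f_\alpha(n)\leq g(n)$, so no ``flagging'' of $\alpha$'s is needed at all.

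The partition $\{N_k\}$ is unnecessary: the paper does not use it, and the $\gamma$-cover behaviour from (star-)Hurewicz already gives cofinite containment along the full sequence, while the $\maxfin$ device handles the finite-set synchronization directly. The Lindel\"{o}f hypothesis in~(1) (star-Lindel\"{o}f in~(2)) enters where you say, to reduce each $\mathcal{U}_n$ to a countable family, but its real purpose is to enable the initial-segment definition of $\mathcal{V}_n$, not to intersect $\alpha$-selections with a reservoir.
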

\begin{proof}
We only present proof for $(1)$. Let $\kappa$ be a cardinal smaller than $\mathfrak{d}$ and $X=\cup_{\alpha<\kappa} X_\alpha$, where each $X_\alpha$ is star-Hurewicz. Choose a sequence $(\mathcal{U}_n)$ of open covers of $X$ and without loss of generality assume that each $\mathcal{U}_n$ is countable, say  $\mathcal{U}_n=\{U_m^{(n)} : m\in\mathbb{N}\}$. For each $\alpha<\kappa$ there exists a sequence $(\mathcal{V}_n^{(\alpha)})$ such that for each $n$ $\mathcal{V}_n^{(\alpha)}$ is a finite subset of $\mathcal{U}_n$ and each $x\in X_\alpha$ belongs to $St(\cup\mathcal{V}_n^{(\alpha)},\mathcal{U}_n)$ for all but finitely many $n$. Next for each $\alpha<\kappa$ define $f_\alpha:\mathbb{N}\to\mathbb{N}$ by $f_\alpha(n)=\min\{m\in\mathbb{N} : \mathcal{V}_n^{(\alpha)}\subseteq\{U_i^{(n)} : i\leq m\}\}$. If we choose $Y=\{f_\alpha : \alpha<\kappa\}$, then the cardinality of $\maxfin(Y)$ is less than $\mathfrak{d}$. Thus there exists a $g\in\mathbb{N}^\mathbb{N}$ such that for each finite subset $A$ of $\kappa$ we have $g\nleq^* f_A$ with $f_A\in\maxfin(Y)$. For each $n$ $\mathcal{V}_n=\{U_i^{(n)} : i\leq g(n)\}$ is a finite subset of $\mathcal{U}_n$. We now show that the sequence $(\mathcal{V}_n)$ witnesses for $(\mathcal{U}_n)$ that $X$ is star-Scheepers. Let $F$ be a finite subset of $X$. Now choose a finite subset $A$ of $\kappa$ such that $F=\cup_{\alpha\in A}F_\alpha$ with $F_\alpha\subseteq X_\alpha$. For each $\alpha\in A$ consider a $n_\alpha\in\mathbb{N}$ such that $F_\alpha\subseteq St(\cup\mathcal{V}_n^{(\alpha)},\mathcal{U}_n)$ for all $n\geq n_\alpha$. Choose $n_0=\max\{n_\alpha : \alpha\in A\}$ and $n_1\in\mathbb{N}$ such that $n_1>n_0$ and $f_A(n_1)<g(n_1)$. Thus for each $\alpha\in A$ we have $F_\alpha\subseteq St(\cup_{i\leq f_\alpha(n_1)}U_i^{(n_1)},\mathcal{U}_{n_1})\subseteq St(\cup_{i\leq f_A(n_1)}U_i^{(n_1)},\mathcal{U}_{n_1})$ and hence $F\subseteq St(\cup_{i\leq g(n_1)}U_i^{(n_1)},\mathcal{U}_{n_1})\subseteq St(\cup\mathcal{V}_{n_1},\mathcal{U}_{n_1})$. Clearly such $X$ is star-Scheepers.
\end{proof}

Similarly we obtain the following.
\begin{Th}
\label{T36}
Let $X$ be strongly star-Lindel\"{o}f. If $X$ is a union of less than $\mathfrak{d}$ Hurewicz spaces, then $X$ is strongly star-Scheepers.
\end{Th}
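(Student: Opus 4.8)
The plan is to mimic the proof of Theorem~\ref{T35}, trading the hypotheses ``star-Lindel\"{o}f'' and ``star-Hurewicz pieces'' for ``strongly star-Lindel\"{o}f'' and ``Hurewicz pieces''. Write $X=\bigcup_{\alpha<\kappa}X_\alpha$ with $\kappa<\mathfrak{d}$ and each $X_\alpha$ Hurewicz, and fix a sequence $(\mathcal{U}_n)$ of open covers of $X$. First I would invoke strong star-Lindel\"{o}fness: for each $n$ choose a countable set $A_n=\{a^{(n)}_m:m\in\mathbb{N}\}\subseteq X$ with $St(A_n,\mathcal{U}_n)=X$; then $\mathcal{W}_n:=\{St(a^{(n)}_m,\mathcal{U}_n):m\in\mathbb{N}\}$ is a countable open cover of $X$, hence of each $X_\alpha$.

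Next, for each $\alpha<\kappa$ apply the Hurewicz property $\Uf(\mathcal{O},\Gamma)$ of $X_\alpha$ to the sequence $(\mathcal{W}_n)_n$ (restricted to $X_\alpha$): there are finite sets $\mathcal{P}^{(\alpha)}_n\subseteq\mathcal{W}_n$ such that every $x\in X_\alpha$ lies in $\bigcup\mathcal{P}^{(\alpha)}_n$ for all but finitely many $n$. Writing $\mathcal{P}^{(\alpha)}_n=\{St(a,\mathcal{U}_n):a\in B^{(\alpha)}_n\}$ for a finite $B^{(\alpha)}_n\subseteq A_n$, we have $\bigcup\mathcal{P}^{(\alpha)}_n=St(B^{(\alpha)}_n,\mathcal{U}_n)$. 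Define $f_\alpha\in\mathbb{N}^\mathbb{N}$ by $f_\alpha(n)=\min\{m\in\mathbb{N}:B^{(\alpha)}_n\subseteq\{a^{(n)}_i:i\leq m\}\}$ and set $Y=\{f_\alpha:\alpha<\kappa\}$. Since $|Y|<\mathfrak{d}$, also $|\maxfin(Y)|<\mathfrak{d}$, so $\maxfin(Y)$ is not dominating and there is $g\in\mathbb{N}^\mathbb{N}$ with $g\nleq^* f_A$ for every finite $A\subseteq\kappa$, where $f_A=\max\{f_\alpha:\alpha\in A\}\in\maxfin(Y)$. Put $F_n=\{a^{(n)}_i:i\leq g(n)\}$, a finite subset of $X$.

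Finally I would check that $(F_n)$ witnesses $\SSSf(\mathcal{O},\Omega)$ for $(\mathcal{U}_n)$, i.e.\ that $\{St(F_n,\mathcal{U}_n):n\in\mathbb{N}\}$ is an $\omega$-cover of $X$. Given a finite $F\subseteq X$, write $F=\bigcup_{\alpha\in A}F_\alpha$ with $A\subseteq\kappa$ finite and $F_\alpha\subseteq X_\alpha$; choose $n_0$ so large that $F_\alpha\subseteq St(B^{(\alpha)}_n,\mathcal{U}_n)$ for all $n\geq n_0$ and all $\alpha\in A$ (possible since $A$ is finite), and then choose $n_1>n_0$ with $f_A(n_1)<g(n_1)$ (possible since $g\nleq^* f_A$). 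For each $\alpha\in A$ we then have $B^{(\alpha)}_{n_1}\subseteq\{a^{(n_1)}_i:i\leq f_\alpha(n_1)\}\subseteq\{a^{(n_1)}_i:i\leq g(n_1)\}=F_{n_1}$, whence $F_\alpha\subseteq St(B^{(\alpha)}_{n_1},\mathcal{U}_{n_1})\subseteq St(F_{n_1},\mathcal{U}_{n_1})$, and so $F\subseteq St(F_{n_1},\mathcal{U}_{n_1})$. Thus $X$ is strongly star-Scheepers. The only delicate point---the main obstacle---is the synchronization of the finitely many Hurewicz pieces: one must first pass beyond all the finite exceptional sets attached to the $\alpha\in A$ and only then exploit the non-domination of $g$ by $f_A$ to raise $F_{n_1}$ above every $B^{(\alpha)}_{n_1}$ at once, which works precisely because $A$ is finite and $f_A$ is the coordinatewise maximum of the $f_\alpha$.
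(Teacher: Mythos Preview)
Your proof is correct and follows essentially the same approach that the paper intends: the paper writes only ``Similarly we obtain the following'' after Theorem~\ref{T35}, and your argument is precisely the natural translation of that proof to the strongly star setting---replacing the countable subcover coming from (star-)Lindel\"{o}fness by a countable set $A_n\subseteq X$ with $St(A_n,\mathcal{U}_n)=X$, applying the Hurewicz property of each piece to the induced countable covers $\{St(a,\mathcal{U}_n):a\in A_n\}$, and then running the identical $\maxfin$/non-domination argument to produce the finite witnesses $F_n\subseteq X$.
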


Recall that the Alexandroff duplicate $AD(X)$ of a space $X$ (see \cite{AD,Engelking}) is defined as follows. $AD(X)=X\times\{0,1\}$; each point of $X\times\{1\}$ is isolated and a basic neighbourhood of $(x,0)\in X\times\{0\}$ is a set of the form $(U\times\{0\})\cup((U\times\{1\})\setminus\{(x,1)\})$, where $U$ is a neighbourhood of $x$ in $X$.

\begin{Th}
\label{T41} The following assertions are equivalent.\\
\noindent$(1)$ $X$ is Scheepers.\\
\noindent$(2)$ $AD(X)$ is Scheepers.
\end{Th}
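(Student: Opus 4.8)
The plan is to route everything through the canonical projection $q\colon AD(X)\to X$, $q(x,i)=x$. The implication $(2)\Rightarrow(1)$ is then immediate: $q$ is a continuous surjection and the Scheepers property $\Uf(\mathcal{O},\Omega)$ is a continuous invariant (pull a sequence of open covers of $X$ back along $q$, apply $\Uf(\mathcal{O},\Omega)$ of $AD(X)$, and push the chosen finite subfamilies forward), so if $AD(X)$ is Scheepers then so is $X=q(AD(X))$. All the content is in $(1)\Rightarrow(2)$.

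For $(1)\Rightarrow(2)$ I would first observe that $q$ is a perfect map. It is a continuous surjection with two-point fibres, and it is closed: if $C\subseteq AD(X)$ is closed and $x\notin q(C)$, then $(x,0),(x,1)\notin C$, so some basic neighbourhood $B(x,U)=(U\times\{0\})\cup((U\times\{1\})\setminus\{(x,1)\})$ of $(x,0)$ is disjoint from $C$; then $q^{-1}(U)=B(x,U)\cup\{(x,1)\}$ is disjoint from $C$, so $U$ is a neighbourhood of $x$ missing $q(C)$. The theorem then follows from the (folklore, and readily verified) fact that a perfect preimage of a Scheepers space is Scheepers. Concretely: given open covers $(\mathcal{U}_n)$ of $AD(X)$, for each $n$ and each $x\in X$ choose a finite $\mathcal{H}_{n,x}\subseteq\mathcal{U}_n$ covering $q^{-1}(x)$ and put $W_n(x)=X\setminus q(AD(X)\setminus\bigcup\mathcal{H}_{n,x})$, an open neighbourhood of $x$ with $q^{-1}(W_n(x))\subseteq\bigcup\mathcal{H}_{n,x}$ (this is where closedness of $q$ enters). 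Apply the Scheepers property of $X$ to the open covers $\mathcal{G}_n=\{W_n(x):x\in X\}$ to obtain finite $E_n\subseteq X$ with $\{\bigcup_{x\in E_n}W_n(x):n\in\mathbb{N}\}$ an $\omega$-cover of $X$; letting $\mathcal{V}_n=\bigcup_{x\in E_n}\mathcal{H}_{n,x}$ (a finite subset of $\mathcal{U}_n$) we get $\cup\mathcal{V}_n\supseteq q^{-1}\bigl(\bigcup_{x\in E_n}W_n(x)\bigr)$. Since $q$ has finite fibres, any finite $F\subseteq AD(X)$ lies in $q^{-1}(q(F))$ with $q(F)$ finite, so $q(F)\subseteq\bigcup_{x\in E_n}W_n(x)$ for some $n$, whence $F\subseteq\cup\mathcal{V}_n$; thus $\{\cup\mathcal{V}_n:n\in\mathbb{N}\}$ is an $\omega$-cover of $AD(X)$, i.e. $AD(X)$ is Scheepers.

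The only genuinely nontrivial point — the hard part — is the push-forward step: an arbitrary open cover $\mathcal{U}_n$ of $AD(X)$ need not cover a fibre $\{(x,0),(x,1)\}$ by a single set, so one must bundle finitely many cover elements per fibre and then manufacture a true open cover of $X$ lying below them, which is exactly where closedness of $q$ is used; the return passage from an $\omega$-cover of $X$ to an $\omega$-cover of $AD(X)$ is then automatic because the fibres are finite. I expect the write-up to parallel, essentially word for word, the classical facts that $X$ is Menger (resp. Hurewicz) if and only if $AD(X)$ is Menger (resp. Hurewicz), with $\Omega$ replaced by $\mathcal{O}$ (resp. $\Gamma$), so it can be kept brief.
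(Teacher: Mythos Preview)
Your proof is correct, and the overall strategy---reduce to $X$ via the projection $q$---is the same as the paper's, but you package it more abstractly. For $(2)\Rightarrow(1)$ you simply invoke that $\Uf(\mathcal{O},\Omega)$ is a continuous invariant, whereas the paper builds explicit covers of $AD(X)$ from covers of $X$ (basic neighbourhoods plus singletons) and unwinds the selection by hand. For $(1)\Rightarrow(2)$ the paper does not isolate the perfect-map fact: instead, for each $x$ it shrinks a single cover element containing $(x,0)$ down to a basic neighbourhood $(V_x^{(n)}\times\{0,1\})\setminus\{(x,1)\}$, applies Scheepers to $\{V_x^{(n)}:x\in X\}$, and then separately throws in one extra cover element $O_x^{(n)}\ni(x,1)$ for each of the finitely many chosen $x$. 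Your version---cover the whole fibre, push down via closedness of $q$, pull back the $\omega$-cover---is the standard perfect-preimage argument and, as you note, immediately yields the Menger and Hurewicz analogues without further work; the paper's bare-hands version is a bit more concrete about what the selected sets in $AD(X)$ actually look like. Neither approach has a gap.
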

\begin{proof}
Let $(\mathcal{U}_n)$ be a sequence of open covers of $AD(X)$, where $X$ is Scheepers. For each $n$ and each $x\in X$ let $W_x^{(n)}=(V_x^{(n)}\times\{0,1\})\setminus\{(x,1)\}$ be an open set in $AD(X)$ containing $(x,0)$ such that there is a $U_x^{(n)}\in\mathcal{U}_n$ with $W_x^{(n)}\subseteq U_x^{(n)}$, where $V_x^{(n)}$ is an open set in $X$ containing $x$. For each $n$ $\mathcal{W}_n=\{V_x^{(n)} : x\in X\}$ is an open cover of $X$. Apply (1) to $(\mathcal{W}_n)$ to obtain a sequence $(F_n)$ of finite subsets of $X$ such that $(\{V_x^{(n)} : x\in F_n\})$ witnesses the Scheepers property of $X$. For each $n$ and each $x\in F_n$ choose a $O_x^{(n)}\in\mathcal{U}_n$ with $(x,1)\in O_x^{(n)}$. Observe that  $\mathcal{V}_n=\{U_x^{(n)} : x\in F_n\}\cup\{O_x^{(n)} : x\in F_n\}$ is a finite subset of $\mathcal{U}_n$ for each $n$. The sequence $(\mathcal{V}_n)$ witnesses that $AD(X)$ is Scheepers.

Conversely choose a sequence $(\mathcal{U}_n)$ of open covers of $X$, say $\mathcal{U}_n=\{U_x^{(n)} : x\in X\}$, where $U_x^{(n)}$ is an open set in $X$ containing $x$ for each $n$. Define $\mathcal{W}_n=\{(U_x^{(n)}\times\{0,1\})\setminus\{(x,1)\} : x\in X\}\cup\{\{(x,1)\} : x\in X\}$ for each $n$. Since $(\mathcal{W}_n)$ is a sequence of open covers of $AD(X)$, there exists a sequence $(\mathcal{H}_n)$ such that for each $n$ $\mathcal{H}_n$ is a finite subset of $\mathcal{W}_n$ and each finite set $F\subseteq AD(X)$ is contained in $\cup\mathcal{H}_n$ for some $n$. Now $(\mathcal{H}_n)$ produces a sequence $(\mathcal{V}_n)$ of finite subsets of $(\mathcal{U}_n)$ that fulfils the criterion.
\end{proof}

\begin{Cor}
\label{C28}
If $X$ is Scheepers, then $AD(X)$ is strongly star-Scheepers and hence star-Scheepers.
\end{Cor}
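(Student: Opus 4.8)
The plan is to combine Theorem~\ref{T41} with the elementary implications recorded in Figure~\ref{dig1}. Since $X$ is Scheepers, Theorem~\ref{T41} immediately yields that $AD(X)$ is Scheepers, i.e. $AD(X)$ satisfies $\Uf(\mathcal{O},\Omega)$. It then remains to invoke the two implications $\Uf(\mathcal{O},\Omega)\Rightarrow\SSSf(\mathcal{O},\Omega)$ and $\SSSf(\mathcal{O},\Omega)\Rightarrow\SUf(\mathcal{O},\Omega)$, both of which are among the ones asserted to be easily verifiable in Figure~\ref{dig1}; the first gives that $AD(X)$ is strongly star-Scheepers, and the second (which is trivial, since choosing $\mathcal V_n$ to consist of the members of $\mathcal U_n$ meeting $F_n$ shows $St(F_n,\mathcal U_n)\subseteq St(\cup\mathcal V_n,\mathcal U_n)$ with equality for a suitable enlargement) gives the ``hence star-Scheepers'' part.

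For completeness I would spell out the first implication directly for $AD(X)$. Given a sequence $(\mathcal{U}_n)$ of open covers of $AD(X)$, apply $\Uf(\mathcal{O},\Omega)$ to obtain, for each $n$, a finite set $\mathcal{V}_n\subseteq\mathcal{U}_n$ such that $\{\cup\mathcal{V}_n : n\in\mathbb{N}\}$ is an $\omega$-cover of $AD(X)$, or $\cup\mathcal{V}_n=AD(X)$ for some $n$. For each $n$ pick one point from each member of $\mathcal{V}_n$, forming a finite set $F_n\subseteq AD(X)$. Since every $V\in\mathcal{V}_n$ lies in $\mathcal{U}_n$ and meets $F_n$, we have $V\subseteq St(F_n,\mathcal{U}_n)$, hence $\cup\mathcal{V}_n\subseteq St(F_n,\mathcal{U}_n)$. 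Consequently $\{St(F_n,\mathcal{U}_n) : n\in\mathbb{N}\}$ is again an $\omega$-cover of $AD(X)$ (for a finite $F\subseteq AD(X)$, fixing $n$ with $F\subseteq\cup\mathcal{V}_n$ gives $F\subseteq St(F_n,\mathcal{U}_n)$), or else $St(F_n,\mathcal{U}_n)=AD(X)$ for some $n$. Thus $(F_n)$ witnesses $\SSSf(\mathcal{O},\Omega)$ for $(\mathcal{U}_n)$, so $AD(X)$ is strongly star-Scheepers, and a fortiori star-Scheepers.

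There is no genuine obstacle: the corollary is a formal consequence of Theorem~\ref{T41} together with the diagram. The only point deserving a moment's care is the disjunctive clause appearing in the definitions of $\Uf$ and $\SUf$, but it transfers verbatim, because the inclusion $\cup\mathcal{V}_n\subseteq St(F_n,\mathcal{U}_n)$ forces $St(F_n,\mathcal{U}_n)=AD(X)$ whenever $\cup\mathcal{V}_n=AD(X)$.
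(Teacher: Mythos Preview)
Your argument is correct and matches the paper's intended reasoning: the corollary is stated without proof precisely because it follows immediately from Theorem~\ref{T41} together with the implications in Figure~\ref{dig1}, which is exactly what you do. Your explicit verification of the implication $\Uf(\mathcal{O},\Omega)\Rightarrow\SSSf(\mathcal{O},\Omega)$ is a harmless elaboration of what the paper leaves to the diagram.
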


Indeed, for many topological properties $P$ the space $AD(X)$ has $P$ if and only if $X$ has $P$. Such properties are, for instance, Hausdorffness, regularity, Tychonoffness, normality, Lindel\"{o}fness, Menger, Hurewicz, Scheepers (Theorem~\ref{T41}), paracompactness and compactness. For the star variations, the above result is one directional. Consider $X=\Psi(\mathcal{A})$ with $|\mathcal{A}|=\omega_1$ under the hypothesis that $\omega_1<\mathfrak{d}$. By Corollary~\ref{C1201}, $X$ is Tychonoff and strongly star-Scheepers and hence star-Scheepers. But $AD(X)$ is not star-Scheepers and hence not strongly star-Scheepers.

\begin{Th}
\label{T37}
\mbox{}
\begin{enumerate}[wide=0pt,label={\upshape(\arabic*)},
ref={\theTh(\arabic*)},leftmargin=*]
  \item\label{T3701} If $AD(X)$ is star-Scheepers, then $X$ is star-Scheepers.
  \item\label{T3702} If $AD(X)$ is strongly star-Scheepers, then $X$ is strongly star-Scheepers.
\end{enumerate}
\end{Th}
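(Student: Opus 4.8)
The plan is to prove both implications by pulling a sequence of open covers of $AD(X)$ back from a given sequence of open covers of $X$, applying the hypothesis on $AD(X)$, and pushing the resulting witnesses back down to $X$. This is the standard argument pattern used in Theorem~\ref{T41}, and the point is that $X$ sits inside $AD(X)$ as the closed subspace $X\times\{0\}$, which is a retract of $AD(X)$ via the map $(x,i)\mapsto(x,0)$. So the key technical fact is that stars computed in $AD(X)$, when restricted to $X\times\{0\}$, control stars computed in $X$.

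For $(1)$, I would start with a sequence $(\mathcal{U}_n)$ of open covers of $X$, say $\mathcal{U}_n=\{U_x^{(n)} : x\in X\}$ where $U_x^{(n)}$ is open in $X$ and contains $x$. Form the open cover $\mathcal{W}_n=\{(U_x^{(n)}\times\{0,1\})\setminus\{(x,1)\} : x\in X\}\cup\{\{(x,1)\} : x\in X\}$ of $AD(X)$, exactly as in the proof of Theorem~\ref{T41}. Applying the star-Scheepers property of $AD(X)$ to $(\mathcal{W}_n)$ yields a sequence $(\mathcal{H}_n)$ with each $\mathcal{H}_n$ a finite subset of $\mathcal{W}_n$ such that $\{St(\cup\mathcal{H}_n,\mathcal{W}_n) : n\in\mathbb{N}\}$ is an $\omega$-cover of $AD(X)$. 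For each $n$, each member of $\mathcal{H}_n$ is either of the form $(U_x^{(n)}\times\{0,1\})\setminus\{(x,1)\}$ or a singleton $\{(x,1)\}$; in the second case, since $\{(x,1)\}\subseteq U_x^{(n)}\times\{0,1\}\setminus\{(y,1)\}$ fails, I instead note $(x,1)\in (U_x^{(n)}\times\{0,1\})\setminus\{(x,1)\}$ is false too, so I should simply absorb each such singleton into the ``box'' type set by replacing $\{(x,1)\}$ with $(U_x^{(n)}\times\{0,1\})\setminus\{(x,1)\}$ (this only enlarges the star). Let $\mathcal{V}_n=\{U_x^{(n)}\in\mathcal{U}_n : (U_x^{(n)}\times\{0,1\})\setminus\{(x,1)\}\in\mathcal{H}_n \text{ after this replacement}\}$, a finite subset of $\mathcal{U}_n$. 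Given a finite $F\subseteq X$, apply the $\omega$-cover property to $F\times\{0\}$: there is an $n$ with $F\times\{0\}\subseteq St(\cup\mathcal{H}_n,\mathcal{W}_n)$, and chasing the definition of the star in $AD(X)$ through the projection to the first coordinate gives $F\subseteq St(\cup\mathcal{V}_n,\mathcal{U}_n)$. Thus $(\mathcal{V}_n)$ witnesses $\SUf(\mathcal{O},\Omega)$ for $X$.

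For $(2)$ the argument is parallel but easier: starting from $(\mathcal{U}_n)$ on $X$ and forming the same $\mathcal{W}_n$, apply the strongly star-Scheepers property of $AD(X)$ to get finite sets $G_n\subseteq AD(X)$ with $\{St(G_n,\mathcal{W}_n) : n\in\mathbb{N}\}$ an $\omega$-cover of $AD(X)$. Let $F_n=\{x\in X : (x,0)\in G_n \text{ or } (x,1)\in G_n\}$, a finite subset of $X$. For a finite $F\subseteq X$, pick $n$ with $F\times\{0\}\subseteq St(G_n,\mathcal{W}_n)$; then each $(y,0)\in F\times\{0\}$ lies in some member of $\mathcal{W}_n$ meeting $G_n$, and translating back (the member is either a box set $(U_x^{(n)}\times\{0,1\})\setminus\{(x,1)\}$ meeting $G_n$, forcing $x\in F_n$ and $y\in U_x^{(n)}$, or a singleton $\{(x,1)\}\subseteq G_n$ with $y=x\in F_n$) yields $F\subseteq St(F_n,\mathcal{U}_n)$. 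So $(F_n)$ witnesses $\SSSf(\mathcal{O},\Omega)$ for $X$.

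The main obstacle, such as it is, is purely bookkeeping: handling the isolated points $(x,1)$ of $AD(X)$ cleanly, both in defining the pulled-back covers $\mathcal{W}_n$ so that they genuinely cover $AD(X)$ and in ensuring that when a star in $AD(X)$ captures a point $(y,0)$ it does so through a set whose first coordinate is controlled by $\mathcal{U}_n$. The replacement trick (swapping each selected singleton $\{(x,1)\}$ for the corresponding neighbourhood-type set, which only increases the star and therefore preserves the $\omega$-cover property) is what makes the descent to $X$ go through without loss of generality. No new ideas beyond the proof of Theorem~\ref{T41} are needed.
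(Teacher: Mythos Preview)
Your overall strategy is sound and works, but the paper takes a cleaner path. Instead of lifting $\mathcal{U}_n$ to the cover $\mathcal{W}_n=\{(U_x^{(n)}\times\{0,1\})\setminus\{(x,1)\} : x\in X\}\cup\{\{(x,1)\} : x\in X\}$ borrowed from Theorem~\ref{T41}, the paper simply uses $\mathcal{W}_n=\{U\times\{0,1\} : U\in\mathcal{U}_n\}$. Since each $U\times\{0,1\}$ is open in $AD(X)$, this is already an open cover of $AD(X)$, and it sidesteps all the isolated-point bookkeeping: after applying the hypothesis one sets $\mathcal{V}_n=\{U\in\mathcal{U}_n : U\times\{0,1\}\in\mathcal{H}_n\}$ and reads off $St(\cup\mathcal{H}_n,\mathcal{W}_n)=St(\cup\mathcal{V}_n,\mathcal{U}_n)\times\{0,1\}$ directly. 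No replacement trick is needed, and the same lifted cover works verbatim for part~$(2)$. Your more elaborate cover buys nothing here; it was needed in Theorem~\ref{T41} only because there one had to control $\cup\mathcal{V}_n$ itself rather than its star.

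One slip in your sketch of part~$(2)$: when a box $(U_x^{(n)}\times\{0,1\})\setminus\{(x,1)\}$ meets $G_n$, it is \emph{not} the index $x$ that is forced into $F_n$, but rather the first coordinate $w$ of the point $(w,i)\in G_n$ lying in the box. The conclusion $y\in St(F_n,\mathcal{U}_n)$ still follows, since both $y$ and $w$ lie in $U_x^{(n)}\in\mathcal{U}_n$ and $w\in F_n$; you are just tracking the wrong point.
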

\begin{proof}
Consider the case when $AD(X)$ is star-Scheepers.
Let $(\mathcal{U}_n)$ be a sequence of open covers of $X$. For each $n$ $\mathcal{W}_n=\{U\times\{0,1\} : U\in\mathcal{U}_n\}$ is an open cover of $AD(X)$. Apply the star-Scheepers property of $AD(X)$ to $(\mathcal{W}_n)$ to obtain a sequence $(\mathcal{H}_n)$ such that for each $n$ $\mathcal{H}_n$ is a finite subset of $\mathcal{W}_n$ and $\{St(\cup\mathcal{H}_n,\mathcal{W}_n) : n\in\mathbb{N}\}$ is an $\omega$-cover of $AD(X)$. For each $n$ choose $\mathcal{V}_n=\{U\in\mathcal{U}_n : U\times\{0,1\}\in\mathcal{H}_n\}$. Now the sequence $(\mathcal{V}_n)$ witnesses for $(\mathcal{U}_n)$ that $X$ is star-Scheepers.
\end{proof}

The following problems were posed by Song in \cite{rsM,rssM}.
\begin{Prob}[{cf. \cite[Remark 2.2]{rsM}}]
\label{Q1}
Does there exist a space $X$ such that $AD(X)$ is star-Menger, but $X$ is not star-Menger?
\end{Prob}

\begin{Prob}[{cf. \cite[Remark 2.10]{rssM}}]
\label{Q2}
Does there exist a space $X$ such that $AD(X)$ is strongly star-Menger, but $X$ is not strongly star-Menger?
\end{Prob}

 In case of star-Scheepers and strongly star-Scheepers property, the answers to the above problems are not affirmative (see Theorem~\ref{T37}). Likewise it can be shown that answers to the above problems are not affirmative.

The following result uses similar techniques as in Theorem~\ref{T41}.
\begin{Th}
\label{T39}
\hfill
\begin{enumerate}[wide=0pt,label={\upshape(\arabic*)},leftmargin=*]
  \item If $X$ is star-Scheepers, then $X\times\{0\}$ is star-Scheepers in $AD(X)$.
  \item If $X$ is strongly star-Scheepers, then $X\times\{0\}$ is strongly star-Scheepers in $AD(X)$.
\end{enumerate}
\end{Th}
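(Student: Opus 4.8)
The plan is to mimic the forward implication in the proof of Theorem~\ref{T41}. The feature of $AD(X)$ that makes this work is that every point of $X\times\{0\}$ has a neighbourhood base consisting of sets of the form $(V\times\{0,1\})\setminus\{(x,1)\}$ with $V$ open in $X$; such a set projects onto $V$ and, crucially, contains $(p,0)$ for every $p\in V$. So one pulls the given covers of $AD(X)$ down to covers of $X$, applies the (strongly) star-Scheepers property of $X$, and lifts the witnesses back up, the second coordinate $0$ guaranteeing that the deleted isolated points $(x,1)$ never get in the way.

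For $(1)$, I would start with a sequence $(\mathcal{U}_n)$ of open covers of $AD(X)$. For each $n$ and each $x\in X$ pick $U_x^{(n)}\in\mathcal{U}_n$ containing $(x,0)$ and, inside it, a basic neighbourhood $W_x^{(n)}=(V_x^{(n)}\times\{0,1\})\setminus\{(x,1)\}$ of $(x,0)$ with $V_x^{(n)}$ open in $X$. Then $\mathcal{W}_n=\{V_x^{(n)}:x\in X\}$ is an open cover of $X$; applying $\SUf(\mathcal{O},\Omega)$ for $X$ to $(\mathcal{W}_n)$ gives finite $F_n\subseteq X$ such that $\{St(\bigcup\{V_x^{(n)}:x\in F_n\},\mathcal{W}_n):n\in\mathbb{N}\}$ is an $\omega$-cover of $X$ (the alternative clause $St(\cdots)=X$ for some $n$ is handled even more easily). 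Put $\mathcal{V}_n=\{U_x^{(n)}:x\in F_n\}$, a finite subset of $\mathcal{U}_n$. To verify, take a finite $F\subseteq X\times\{0\}$ with underlying set $F'\subseteq X$ and choose $n$ with $F'\subseteq St(\bigcup\{V_x^{(n)}:x\in F_n\},\mathcal{W}_n)$; for $(x,0)\in F$ there are $y\in X$ and $z\in F_n$ with $x\in V_y^{(n)}$ and some $p\in V_y^{(n)}\cap V_z^{(n)}$, and then $U_y^{(n)}\in\mathcal{U}_n$ contains $(x,0)$ and meets $\bigcup\mathcal{V}_n$ through the point $(p,0)\in W_y^{(n)}\cap W_z^{(n)}$; hence $F\subseteq St(\bigcup\mathcal{V}_n,\mathcal{U}_n)$.

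For $(2)$ the scheme is identical, applying $\SSSf(\mathcal{O},\Omega)$ for $X$ to the same covers $\mathcal{W}_n$: this yields finite $F_n\subseteq X$ with $\{St(F_n,\mathcal{W}_n):n\in\mathbb{N}\}$ an $\omega$-cover of $X$, and the natural witness for $AD(X)$ is $G_n=F_n\times\{0\}$. The verification is the same, except that now for $(x,0)\in F$ one finds $y\in X$ with $x\in V_y^{(n)}$ and $z\in F_n\cap V_y^{(n)}$, so that $U_y^{(n)}\in\mathcal{U}_n$ contains both $(x,0)$ and $(z,0)$, and $(z,0)\in G_n$; thus $(x,0)\in St(G_n,\mathcal{U}_n)$.

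I do not anticipate a serious obstacle; the one point requiring care is the bookkeeping in the verification step — namely that the member of $\mathcal{U}_n$ used to place $(x,0)$ in the star must be $U_y^{(n)}$ for the \emph{same} index $y$ coming from the $\omega$-cover property of $X$ (so that $V_y^{(n)}$ genuinely meets some $V_z^{(n)}$ with $z\in F_n$, respectively so that $z\in F_n\cap V_y^{(n)}$), and that the deleted isolated points $(y,1)$ cause no trouble, which they cannot since all the points involved — $(x,0)$, $(z,0)$, $(p,0)$ — have second coordinate $0$. Beyond this, the argument is a routine adaptation of Theorem~\ref{T41}.
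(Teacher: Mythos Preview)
Your proposal is correct and follows essentially the same route as the paper, which simply refers the reader to the technique of Theorem~\ref{T41}; your argument is precisely the adaptation of the forward implication there, with the extra step of covering the isolated points $(x,1)$ rightly omitted since only $X\times\{0\}$ needs to be captured. The bookkeeping caveat you flag is handled correctly.
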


We now consider another cardinal characteristic of the star-Scheepers and related spaces. Since strongly starcompactness is equivalent to countably compactness for Hausdorff spaces, the extent of a Hausdorff strongly starcompact space is finite. The extent of a Scheepers space is countable.
Consider $X=(\beta D\times[0,\kappa^+))\cup(D\times\{\kappa^+\})$ as a subspace of $\beta D\times[0,\kappa^+)$, where $D=\{d_\alpha : \alpha<\kappa\}$ is the discrete space of infinite cardinality $\kappa$ and $\beta D$ denotes the \v{C}ech-Stone compactification of $D$. By \cite[Lemma 2.3]{sKH}, $X$ is Tychonoff and star-Scheepers. Since $D\times\{\kappa^+\}$ is a discrete closed set in $X$, we have $e(X)\geq\kappa$. Thus the extent of a Tychonoff star-Scheepers space can be arbitrarily large.
Furthermore, \cite[Example 2.4]{rssM} shows that for every infinite cardinal $\kappa$ there exists a $T_1$ strongly star-Scheepers space $X$ such that $e(X)\geq\kappa$.
Observe that if $X$ is a $T_1$ space such that $AD(X)$ is star-Lindel\"{o}f, then $e(X)\leq\omega$. Thus if $X$ is a $T_1$ space such that $AD(X)$ is star-Scheepers (or, strongly star-Scheepers), then the extent of $X$ is countable. Since the converse of Theorem~\ref{T37} does not hold in general, we may ask the following.
\begin{Prob}
\label{Pb1}
\hfill
\begin{enumerate}[wide=0pt,label={\upshape(\arabic*)},leftmargin=*]
  \item Let $X$ be star-Scheepers with $e(X)\leq\omega$. Is the space $AD(X)$ star-Scheepers?
  \item Let $X$ be strongly star-Scheepers with $e(X)\leq\omega$. Is the space $AD(X)$ strongly star-Scheepers?
\end{enumerate}
\end{Prob}

Next we turn to consider preimages. If we assume that $\omega_1<\mathfrak{d}$, then $Y=\Psi(\mathcal{A})$ with $|\mathcal{A}|=\omega_1$ is Tychonoff and strongly star-Scheepers but $X=AD(Y)$ is not star-Scheepers. Since the projection $p:X\to Y$ is a closed 2-to-1 continuous mapping, it follows that preimage of a Tychonoff star-Scheepers (respectively, strongly star-Scheepers) space under a closed 2-to-1 continuous mapping need not be star-Scheepers (respectively, strongly star-Scheepers).

\begin{Th}
\label{T10}
If $f:X\to Y$ is an open perfect mapping from a space $X$ onto a star-Scheepers space $Y$, then $X$ is star-Scheepers.
\end{Th}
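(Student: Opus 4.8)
The plan is to mimic the classical argument that open perfect preimages of Menger (or Scheepers) spaces are Menger (or Scheepers), adapting it to the star-setting where we must produce finite subfamilies whose stars form an $\omega$-cover. First I would fix a sequence $(\mathcal{U}_n)$ of open covers of $X$. Using that $f$ is perfect, for each point $y\in Y$ the fiber $f^{-1}(y)$ is compact, so for every $n$ finitely many members of $\mathcal{U}_n$ cover $f^{-1}(y)$; let $U_y^{(n)}$ denote their union, an open set containing $f^{-1}(y)$. Since $f$ is a closed map, the "tube lemma" gives an open neighbourhood $W_y^{(n)}$ of $y$ in $Y$ with $f^{-1}(W_y^{(n)})\subseteq U_y^{(n)}$; because $f$ is also open, I may instead (or in addition) work with $f(U_y^{(n)})$ or shrink appropriately so that the neighbourhoods $W_y^{(n)}$ behave well under stars. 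For each $n$ set $\mathcal{W}_n=\{W_y^{(n)}:y\in Y\}$, an open cover of $Y$.

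Next I would apply the star-Scheepers property of $Y$ to the sequence $(\mathcal{W}_n)$ (after replacing each $\mathcal{W}_n$ by a countable subcover if convenient, or just directly) to obtain finite sets $\mathcal{G}_n\subseteq\mathcal{W}_n$, say $\mathcal{G}_n=\{W_{y}^{(n)}:y\in F_n\}$ with $F_n\subseteq Y$ finite, such that $\{St(\cup\mathcal{G}_n,\mathcal{W}_n):n\in\mathbb{N}\}$ is an $\omega$-cover of $Y$. Then for each $n$ I let $\mathcal{V}_n$ be the finite subfamily of $\mathcal{U}_n$ consisting of all the (finitely many) members of $\mathcal{U}_n$ used to build $U_y^{(n)}$ for $y\in F_n$; this is a finite subset of $\mathcal{U}_n$. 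The claim is that $(\mathcal{V}_n)$ witnesses the star-Scheepers property of $X$: given a finite $E\subseteq X$, the set $f(E)$ is finite in $Y$, so there is an $n$ with $f(E)\subseteq St(\cup\mathcal{G}_n,\mathcal{W}_n)$, and I must deduce $E\subseteq St(\cup\mathcal{V}_n,\mathcal{U}_n)$.

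The main obstacle, and the step I expect to require the most care, is exactly this last implication: showing that "stars downstairs lift to stars upstairs." If $x\in E$ and $f(x)\in St(\cup\mathcal{G}_n,\mathcal{W}_n)$, there is some $W\in\mathcal{W}_n$ with $f(x)\in W$ and $W\cap(\cup\mathcal{G}_n)\neq\emptyset$, i.e.\ $W\cap W_y^{(n)}\neq\emptyset$ for some $y\in F_n$. I need to translate this into: some $U\in\mathcal{U}_n$ contains $x$ and meets $\cup\mathcal{V}_n$. Here I would use openness of $f$ together with the careful choice of the $W$'s: pick $W=W_{y'}^{(n)}$ with $f(x)\in W_{y'}^{(n)}$ so that $f^{-1}(W_{y'}^{(n)})\subseteq U_{y'}^{(n)}=\cup(\text{finitely many }U\in\mathcal{U}_n)$; thus $x$ lies in one of those $U$'s, which — provided I also arranged $y'\in F_n$, or more realistically arranged the neighbourhood assignment so that the "witness" $W$ can be taken centered at a point of $F_n$ — lies in $\mathcal{V}_n$, and connectedness of the star relation through the nonempty intersection $W_{y'}^{(n)}\cap W_y^{(n)}$ (pulled back via $f^{-1}$ using that $f$ is onto and open, so $f^{-1}$ of this nonempty open set is nonempty and lies in $U_{y'}^{(n)}\cap U_y^{(n)}$) gives a point common to a member of $\mathcal{V}_n$ and to $U_y^{(n)}\subseteq\cup\mathcal{V}_n$. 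Getting the neighbourhood assignment $y\mapsto W_y^{(n)}$ set up so that this chain of implications closes cleanly is the delicate bookkeeping; everything else is routine once $(\mathcal{V}_n)$ is identified. I would therefore spend the bulk of the write-up on a precise statement of the tube-lemma-style neighbourhood assignment and the verification that $f(x)\in St(\cup\mathcal{G}_n,\mathcal{W}_n)$ forces $x\in St(\cup\mathcal{V}_n,\mathcal{U}_n)$.
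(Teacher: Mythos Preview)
Your plan is the same as the paper's, and you have correctly isolated the one nontrivial step: lifting ``$f(x)\in St(\cup\mathcal{G}_n,\mathcal{W}_n)$'' to ``$x\in St(\cup\mathcal{V}_n,\mathcal{U}_n)$''. However, the verification you sketch in the last paragraph does not quite close as written: from $W_{y'}^{(n)}\cap W_y^{(n)}\neq\emptyset$ you only get a point in $U_{y'}^{(n)}\cap U_y^{(n)}$, and that point need not lie in the \emph{particular} $U\in\mathcal{U}_n$ (from the finite family building $U_{y'}^{(n)}$) that contains $x$. So you still owe an argument that this specific $U$ meets $\cup\mathcal{V}_n$.

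The paper resolves this with one extra refinement, which is exactly the ``shrink appropriately'' you allude to. When you choose the finite subfamily $\mathcal{V}_n^y\subseteq\mathcal{U}_n$ covering $f^{-1}(y)$, also insist that every $U\in\mathcal{V}_n^y$ meets $f^{-1}(y)$; then $y\in f(U)$ for each such $U$, and since $f$ is open the set $\bigcap\{f(U):U\in\mathcal{V}_n^y\}$ is an open neighbourhood of $y$. Now take $V_y^{(n)}$ open with
\[
y\in V_y^{(n)}\subseteq W_y^{(n)}\cap\bigcap\{f(U):U\in\mathcal{V}_n^y\},
\]
and apply star-Scheepers to $\mathcal{V}_n=\{V_y^{(n)}:y\in Y\}$ instead. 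For the lifting step: if $f(x)\in V_z^{(n)}$ and $V_z^{(n)}\cap V_{y_i}^{(n)}\neq\emptyset$ with $y_i\in F_n$, pick any $w$ in that intersection. Since $x\in f^{-1}(V_z^{(n)})\subseteq\cup\mathcal{V}_n^z$, there is $U\in\mathcal{V}_n^z$ with $x\in U$; by the shrinking, $V_z^{(n)}\subseteq f(U)$, so $w\in f(U)$ and we may choose $x'\in U$ with $f(x')=w$. But $w\in V_{y_i}^{(n)}\subseteq W_{y_i}^{(n)}$ gives $x'\in f^{-1}(W_{y_i}^{(n)})\subseteq\cup\mathcal{V}_n^{y_i}\subseteq\cup\mathcal{V}_n$, so $U\cap(\cup\mathcal{V}_n)\neq\emptyset$ and $x\in St(\cup\mathcal{V}_n,\mathcal{U}_n)$. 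With this one line added to your neighbourhood assignment, your plan becomes a complete proof identical in spirit to the paper's.
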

\begin{proof}
We only sketch the proof.
Let $(\mathcal{U}_n)$ be a sequence of open covers of $X$ and $y\in Y$. Since $f^{-1}(y)$ is compact, for each $n$ there exists a finite subset $\mathcal{V}_n^y$ of $\mathcal{U}_n$ such that $f^{-1}(y)\subseteq\cup\mathcal{V}_n^y$ and $f^{-1}(y)\cap U\neq\emptyset$ for each $U\in\mathcal{V}_n^y$. Since $f$ is closed, there exists an open set $U_y^{(n)}$ in $Y$ containing $y$ such that $f^{-1}(U_y^{(n)})\subseteq\cup\mathcal{V}_n^y$. Also we can find an open set $V_y^{(n)}$ in $Y$ containing $y$ such that $V_y^{(n)}\subseteq\cap\{f(U) : U\in\mathcal{V}_n^y\}$ and $f^{-1}(V_y^{(n)})\subseteq f^{-1}(U_y^{(n)})$. Consider the sequence $(\mathcal{V}_n)$ of open covers of $Y$, where $\mathcal{V}_n=\{V_y^{(n)} : y\in Y\}$ for each $n$. Apply the star-Scheepers property of $Y$ to $(\mathcal{V}_n)$ to obtain a sequence $(\mathcal{H}_n)$ such that for each $n$ $\mathcal{H}_n$ is a finite subset of $\mathcal{V}_n$ and $\{St(\cup\mathcal{H}_n,\mathcal{V}_n) : n\in\mathbb{N}\}$ is an $\omega$-cover of $Y$. For each $n$ define $\mathcal{H}_n=\{V_{y_i}^{(n)} : 1\leq i\leq k_n\}$ and $\mathcal{W}_n=\cup_{1\leq i\leq k_n}\mathcal{V}_n^{y_i}$. Since for each $1\leq i\leq k_n$, $f^{-1}(V_{y_i}^{(n)})\subseteq\cup\mathcal{V}_n^{y_i}$, we have $f^{-1}(\cup\mathcal{H}_n)\subseteq\cup\mathcal{W}_n$ for each $n$. The sequence $(\mathcal{W}_n)$ now witnesses that $X$ is star-Scheepers.
\end{proof}

Since star-Scheepers property is invariant under countable increasing unions, we have the following.
\begin{Cor}
\label{T24}
If $X$ is star-Scheepers  and $Y$ is $\sigma$-compact, then $X\times Y$ is star-Scheepers.
\end{Cor}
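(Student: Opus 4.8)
The plan is to combine Theorem~\ref{T10} with the observation, recorded just before Theorem~\ref{T35}, that the star-Scheepers property is preserved under countable increasing unions.

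First I would write $Y$ as an increasing union of compact sets: if $Y=\bigcup_{n\in\mathbb{N}}L_n$ with each $L_n$ compact, put $K_n=\bigcup_{i\le n}L_i$, so that each $K_n$ is compact, $K_n\subseteq K_{n+1}$ for all $n$, and $Y=\bigcup_n K_n$. Then $X\times Y=\bigcup_n(X\times K_n)$ with $X\times K_n\subseteq X\times K_{n+1}$ for all $n$.

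Next, for a fixed $n$ I would examine the projection $\pi\colon X\times K_n\to X$. Being a coordinate projection it is open, and because $K_n$ is compact the projection along it is a closed mapping (the standard consequence of the tube lemma) whose fibres $\pi^{-1}(x)=\{x\}\times K_n$ are compact; hence $\pi$ is an open perfect mapping onto $X$. Since $X$ is star-Scheepers, Theorem~\ref{T10} then yields that $X\times K_n$ is star-Scheepers.

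Finally, $X\times Y=\bigcup_n(X\times K_n)$ is a countable increasing union of star-Scheepers spaces, and hence star-Scheepers. The argument is essentially a matter of assembling two results already at hand; the only steps calling for a word of justification are that a projection along a compact factor is simultaneously open and perfect (so that Theorem~\ref{T10} genuinely applies) and that the compact pieces of $Y$ can be arranged to increase, so that the union-invariance observation preceding Theorem~\ref{T35} applies verbatim. I do not anticipate any real obstacle beyond these routine verifications.
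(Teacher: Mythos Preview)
Your proposal is correct and follows essentially the same approach as the paper: the corollary is placed immediately after Theorem~\ref{T10}, and the paper explicitly notes that it follows from that theorem together with the invariance of the star-Scheepers property under countable increasing unions (the observation you cite from before Theorem~\ref{T35}). Your write-up simply makes explicit the two routine points the paper leaves implicit, namely that the projection along a compact factor is open and perfect, and that the compact pieces may be taken increasing.
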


The above result does not hold for the strongly star-Scheepers variation. Example~\ref{E12} shows the existence of a strongly star-Scheepers space $X$ and a compact space $Y$ such that their product $X\times Y$ is not strongly star-Lindel\"{o}f (and hence not strongly star-Scheepers). Again by \cite[Example 2.16]{rssM}, there exist two countably compact spaces $X$ and $Y$ whose product $X\times Y$ is not star-Lindel\"{o}f. Thus the product of two star-Scheepers (respectively, strongly star-Scheepers) spaces need not be star-Scheepers (respectively, strongly star-Scheepers).


The following example shows that Corollary~\ref{T24} does not hold if $X$ is star-Scheepers and $Y$ is Lindel\"{o}f.
\begin{Ex}
\label{E17}
\emph{There exist a countably compact space $X$ and a Lindel\"{o}f space $Y$ such that $X\times Y$ is not star-Lindel\"{o}f.}\\
The space $X=[0,\omega_1)$ with the usual order topology is countably compact. Now define a topology on $Y=[0,\omega_1]$ as follows. Each point $\alpha<\omega_1$ is isolated and a set $U$ containing $\omega_1$ is open if and only if $Y\setminus U$ is countable. Clearly $Y$ is Lindel\"{o}f.

If possible suppose that $X\times Y$ is star-Lindel\"{o}f. For each $\alpha<\omega_1$ let $U_\alpha=[0,\alpha]\times[\alpha,\omega_1]$ and $V_\alpha=(\alpha,\omega_1)\times\{\alpha\}$. Observe that $U_\alpha\cap V_\beta=\emptyset$ for $\alpha,\beta<\omega_1$ and $V_\alpha\cap V_\beta=\emptyset$ for $\alpha\neq\beta$. Now $\mathcal{U}=\{U_\alpha : \alpha<\omega_1\}\cup\{V_\alpha : \alpha<\omega_1\}$ is an open cover of $X\times Y$. Apply the star-Lindel\"{o}f property of $X\times Y$ to find a countable set $\mathcal{V}\subseteq\mathcal{U}$ such that $St(\cup\mathcal{V},\mathcal{U})=X\times Y$. Since $\mathcal{V}$ is a countable subset of $\mathcal{U}$, there exists a $\alpha_0<\omega_1$ such that $V_\alpha\notin\mathcal{V}$ for each $\alpha>\alpha_0$. If we choose a $\beta_0>\alpha_0$, then $(\beta_0+1,\beta_0)\notin St(\cup\mathcal{V},\mathcal{U})$ as $V_{\beta_0}$ is the only member of $\mathcal{U}$ containing the point $(\beta_0+1,\beta_0)$ and $V_{\beta_0}\cap(\cup\mathcal{V})=\emptyset$, contradicting our assumption. Thus $X\times Y$ can not be star-Lindel\"{o}f.
\end{Ex}

Also note that Theorem~\ref{T10} does not hold for strongly star-Scheepers spaces.
\begin{Ex}
\label{E12}
\emph{There exists a strongly star-Scheepers space whose preimage under an open perfect mapping is not strongly star-Lindel\"{o}f (and hence not strongly star-Scheepers).}\\
Assume that $\omega_1<\mathfrak{d}$. The space $X=\Psi(\mathcal{A})$ with $|\mathcal{A}|=\omega_1$ is strongly star-Scheepers. Let $Y$ be the one point compactification of the discrete space $D=\{d_\alpha : \alpha<\omega_1\}$ (of cardinality $\omega_1$). Observe that the projection mapping $p:X\times Y\to X$ is open perfect.

If possible suppose that the preimage $X\times Y$ is strongly star-Lindel\"{o}f. Since $|\mathcal{A}|=\omega_1$, enumerate $\mathcal{A}$ as $\{A_\alpha : \alpha<\omega_1\}$. Consider the open cover \[\mathcal{U}=\{(\{A_\alpha\}\cup A_\alpha)\times (Y\setminus\{d_\alpha\}) : \alpha<\omega_1\}\cup\{X\times\{d_\alpha\} : \alpha<\omega_1\}\cup\{\{n\}\times Y : n\in\mathbb{N}\}\] of $X\times Y$. Since we assume that $X\times Y$ is strongly star-Lindel\"{o}f, there is a countable subset $A$ of $X\times Y$ such that $St(A,\mathcal{U})=X\times Y$. Also since $A$ is countable, there exists a $\alpha_0<\omega_1$ such that $A\cap(X\times\{d_\alpha\})=\emptyset$ for each $\alpha>\alpha_0$. Choose $\beta>\alpha_0$. Thus $(A_\beta,d_\beta)\notin St(A,\mathcal{U})$ as $X\times\{d_\beta\}$ is the only member of $\mathcal{U}$ containing the point $(A_\beta,d_\beta)$, which is a contradiction. Clearly $X\times Y$ is not strongly star-Lindel\"{o}f.
\end{Ex}

A similar characterization for the strongly star-Scheepers property is the following.
\begin{Th}
\label{T12}
If $f$ is an open, closed and finite-to-one continuous mapping from a space $X$ onto a strongly star-Scheepers space $Y$, then $X$ is strongly star-Scheepers.
\end{Th}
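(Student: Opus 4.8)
The plan is to adapt the proof of Theorem~\ref{T10} to the strongly star setting, replacing the finite subfamilies $\mathcal{V}_n^y$ of $\mathcal{U}_n$ by finite subsets of $X$ and exploiting that the fibers $f^{-1}(y)$ are now finite. Specifically, let $(\mathcal{U}_n)$ be a sequence of open covers of $X$. For each $y\in Y$ and each $n$, since $f^{-1}(y)$ is finite, choose a finite set $F_n^y\subseteq f^{-1}(y)$ (e.g. $F_n^y=f^{-1}(y)$ itself, picking one point per fiber-element) together with a finite subfamily $\mathcal{V}_n^y\subseteq\mathcal{U}_n$ covering $f^{-1}(y)$ with $F_n^y\subseteq\cup\mathcal{V}_n^y$. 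Using that $f$ is closed, find an open neighbourhood $U_y^{(n)}$ of $y$ in $Y$ with $f^{-1}(U_y^{(n)})\subseteq\cup\mathcal{V}_n^y$, and shrink it to an open neighbourhood $V_y^{(n)}$ of $y$ (using openness of $f$ and the fact that $f(U)$ is open for $U\in\mathcal{V}_n^y$) so that $V_y^{(n)}\subseteq\cap\{f(U):U\in\mathcal{V}_n^y\}$ and $f^{-1}(V_y^{(n)})\subseteq f^{-1}(U_y^{(n)})$. Then $\mathcal{V}_n=\{V_y^{(n)}:y\in Y\}$ is an open cover of $Y$.

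Next I would apply the strongly star-Scheepers property of $Y$ to $(\mathcal{V}_n)$: there is a sequence $(A_n)$ of finite subsets of $Y$ such that $\{St(A_n,\mathcal{V}_n):n\in\mathbb{N}\}$ is an $\omega$-cover of $Y$. Write $A_n=\{y_1^{(n)},\dots,y_{k_n}^{(n)}\}$ and set $B_n=\bigcup_{1\le i\le k_n}F_n^{y_i^{(n)}}$, a finite subset of $X$. The claim is that $(B_n)$ witnesses the strongly star-Scheepers property of $X$ for $(\mathcal{U}_n)$. To see this, let $G\subseteq X$ be finite; then $f(G)\subseteq Y$ is finite, so there is an $n$ with $f(G)\subseteq St(A_n,\mathcal{V}_n)$. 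For $x\in G$, pick $y_i^{(n)}\in A_n$ with $f(x)\in V_{y}^{(n)}$ for some $y\in\mathcal{V}_n$-member-index such that $V_y^{(n)}\cap V_{y_i^{(n)}}^{(n)}\ne\emptyset$; the key point is that $V_{y}^{(n)}\subseteq f(U)$ for every $U\in\mathcal{V}_n^{y}$, so $x\in f^{-1}(V_y^{(n)})$ meets every such $U$, and the overlap $V_y^{(n)}\cap V_{y_i^{(n)}}^{(n)}\ne\emptyset$ lifts to a chain of members of $\mathcal{U}_n$ connecting $x$ to a point of $F_n^{y_i^{(n)}}\subseteq B_n$, giving $x\in St(B_n,\mathcal{U}_n)$. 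Hence $G\subseteq St(B_n,\mathcal{U}_n)$.

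The main obstacle, and the step requiring the most care, is the lifting argument: translating ``$f(x)$ lies in the star of $A_n$ with respect to the downstairs cover $\mathcal{V}_n$'' into ``$x$ lies in the star of $B_n$ with respect to the upstairs cover $\mathcal{U}_n$.'' This is where finiteness of the fibers is essential (so that $B_n$ stays finite) and where the careful choice of the $V_y^{(n)}$'s — sitting inside $\cap\{f(U):U\in\mathcal{V}_n^y\}$ — pays off, because it guarantees that preimages of overlapping downstairs sets meet common upstairs members of $\mathcal{U}_n$. One should verify the edge cases (when $G$ contains points whose images coincide, or when the relevant $V$'s overlap only through intermediate sets) by unwinding the definition of star carefully; the continuity, openness, and closedness of $f$ are each used exactly once, respectively to pull back covers, to make the $V_y^{(n)}$ open, and to produce the shrinking neighbourhoods $U_y^{(n)}$.
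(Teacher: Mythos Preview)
Your approach is exactly the intended one: the paper states Theorem~\ref{T12} without proof, merely calling it ``a similar characterization'' to Theorem~\ref{T10}, and your adaptation---replacing the finite subfamilies by the finite fibers $f^{-1}(y)$ and applying the strongly star-Scheepers property downstairs---is the natural and correct route.

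There is, however, one point in your lifting step that is written imprecisely and as stated does not quite work. You write that $f(x)\in St(A_n,\mathcal{V}_n)$ yields $V_y^{(n)}\cap V_{y_i^{(n)}}^{(n)}\neq\emptyset$ and then speak of a ``chain of members of $\mathcal{U}_n$'' connecting $x$ to $B_n$. But $St(A_n,\mathcal{V}_n)$ is the star of a \emph{point} set, so what you actually get is stronger and simpler: there is some $V_y^{(n)}\in\mathcal{V}_n$ with $f(x)\in V_y^{(n)}$ and some $y_i^{(n)}\in A_n$ with $y_i^{(n)}\in V_y^{(n)}$. From $f(x)\in V_y^{(n)}$ you obtain $x\in f^{-1}(V_y^{(n)})\subseteq\cup\mathcal{V}_n^y$, so $x\in U$ for some $U\in\mathcal{V}_n^y$. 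From $y_i^{(n)}\in V_y^{(n)}\subseteq f(U)$ you obtain $U\cap f^{-1}(y_i^{(n)})\neq\emptyset$, and since $f^{-1}(y_i^{(n)})\subseteq B_n$ this gives $x\in St(B_n,\mathcal{U}_n)$ directly---no chain needed. (For this to work you must also arrange, as in Theorem~\ref{T10}, that every $U\in\mathcal{V}_n^y$ meets $f^{-1}(y)$, which is automatic if you simply pick one member of $\mathcal{U}_n$ for each point of the finite fiber.) With this correction the argument is complete.
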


\section{Modifying Scheepers property using new star selection principles}
\subsection{The new star-Scheepers property  $\NSUf(\mathcal{O},\Omega)$}
In \cite{NSP}, the authors introduced the idea of new star selection principles in the following way. Let $\mathcal A$ and $\mathcal B$ be collections of open covers of a space $X$.

$\NSU1(\mathcal{A},\mathcal{B})$: For each sequence $(\mathcal{U}_n)$ of elements of $\mathcal{A}$ there exists a sequence $(U_n)$ such that for each $n$ $U_n\in\mathcal{U}_n$ and $\{St(\cup_{m\in\mathbb{N}}U_m,\mathcal{U}_n) : n\in\mathbb{N}\}\in\mathcal{B}$.\\

$\NSUf(\mathcal{A},\mathcal{B})$: For each sequence $(\mathcal{U}_n)$ of elements of $\mathcal{A}$ there exists a sequence $(\mathcal{V}_n)$ such that for each $n$ $\mathcal{V}_n$ is a finite subset of $\mathcal{U}_n$ and $\{St(\cup_{m\in\mathbb{N}}(\cup\mathcal{V}_m),\mathcal{U}_n) : n\in\mathbb{N}\}\in\mathcal{B}$.\\

Similarly one can define the following new star selection principles (corresponding to the strongly star variations).

$\NSSU1(\mathcal{A},\mathcal{B})$: For each sequence $(\mathcal{U}_n)$ of elements of $\mathcal{A}$ there exists a sequence $(x_n)$ of members of $X$ such that $\{St(\cup_{m\in\mathbb{N}}\{x_m\},\mathcal{U}_n) : n\in\mathbb{N}\}\in\mathcal{B}$.\\

$\NSSUf(\mathcal{A},\mathcal{B})$: For each sequence $(\mathcal{U}_n)$ of elements of $\mathcal{A}$ there exists a sequence $(F_n)$ of finite subsets of $X$ such that $\{St(\cup_{m\in\mathbb{N}}F_m,\mathcal{U}_n) : n\in\mathbb{N}\}\in\mathcal{B}$.

Observe that $\NSSU1(\mathcal{A},\mathcal{B})=\NSSUf(\mathcal{A},\mathcal{B})$.
An easy verification yields the following.
\begin{Prop}
\label{P3}
The following assertions are equivalent.
\begin{enumerate}[label={\upshape(\arabic*)},leftmargin=*]
  \item $X$ is strongly star-Lindel\"{o}f.
  \item $X$ satisfies $\NSSU1(\mathcal{O},\Gamma)$.
  \item $X$ satisfies $\NSSU1(\mathcal{O},\Omega)$.
  \item $X$ satisfies $\NSSU1(\mathcal{O},\mathcal{O})$.
\end{enumerate}
\end{Prop}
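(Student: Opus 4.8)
The plan is to close the cycle $(1)\Rightarrow(2)\Rightarrow(3)\Rightarrow(4)\Rightarrow(1)$. The point to keep in mind is that $\NSSU1$ is an extremely weak hypothesis: once a sequence $(x_n)$ of points of $X$ has been chosen, the set $A=\bigcup_{m\in\mathbb{N}}\{x_m\}$ is one \emph{fixed} countable subset of $X$, and the only thing to be arranged is that the family $\{St(A,\mathcal{U}_n):n\in\mathbb{N}\}$ lies in the prescribed class. With this in mind, $(2)\Rightarrow(3)\Rightarrow(4)$ needs nothing more than the inclusions $\Gamma\subseteq\Omega\subseteq\mathcal{O}$ recalled in Section~2, since the same sequence $(x_n)$ that witnesses the $\Gamma$-version also witnesses the $\Omega$- and then the $\mathcal{O}$-version.

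For $(4)\Rightarrow(1)$ I would feed a constant sequence into the hypothesis: given an open cover $\mathcal{U}$ of $X$, apply $\NSSU1(\mathcal{O},\mathcal{O})$ to $(\mathcal{U}_n)$ with $\mathcal{U}_n=\mathcal{U}$ for every $n$; this produces $(x_n)$ such that, writing $A=\bigcup_{m\in\mathbb{N}}\{x_m\}$, the one-element family $\{St(A,\mathcal{U})\}$ covers $X$, which forces $St(A,\mathcal{U})=X$, i.e.\ $X$ is strongly star-Lindel\"{o}f. For $(1)\Rightarrow(2)$, I would start from a sequence $(\mathcal{U}_n)$ of open covers, use strong star-Lindel\"{o}fness to pick for each $n$ a countable set $A_n\subseteq X$ with $St(A_n,\mathcal{U}_n)=X$, and set $A=\bigcup_{n\in\mathbb{N}}A_n$, a countable set, enumerated as $\{x_m:m\in\mathbb{N}\}$. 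Monotonicity of the star operator then gives $St(A,\mathcal{U}_n)\supseteq St(A_n,\mathcal{U}_n)=X$ for all $n$, so $\{St(\bigcup_{m\in\mathbb{N}}\{x_m\},\mathcal{U}_n):n\in\mathbb{N}\}=\{X\}$, which is a $\gamma$-cover; hence $\NSSU1(\mathcal{O},\Gamma)$ holds.

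I do not expect a genuine obstacle here --- this is the ``easy verification'' mentioned just before the statement --- so the write-up is mostly a matter of being careful with conventions. The three small points that deserve a line each are: that the degenerate family $\{X\}$ indeed counts as a $\gamma$-cover, hence also as an $\omega$-cover and an open cover, under the definitions fixed in Section~2 (so that the target classes in $(1)\Rightarrow(2)$ and $(4)\Rightarrow(1)$ are really met); the trivial case $X=\emptyset$; and the bookkeeping of writing a countable --- possibly finite --- set as a sequence indexed by $\mathbb{N}$, allowing repetitions.
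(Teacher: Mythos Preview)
Your argument is correct and is precisely the routine verification the paper leaves to the reader (the paper gives no proof beyond the phrase ``An easy verification yields the following''). The cycle $(1)\Rightarrow(2)\Rightarrow(3)\Rightarrow(4)\Rightarrow(1)$ you outline, together with the care you take over the degenerate cover $\{X\}$ and the enumeration of a possibly finite countable set, is exactly what is needed.
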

%

\begin{figure}[h!]
\begin{adjustbox}{max width=.8\textwidth,max height=.8\textheight,keepaspectratio,center}
\begin{tikzcd}[column sep=6.5ex,row sep=6ex,arrows={crossing over}]
&&\NSUf(\mathcal{O},\Gamma)\arrow[rr]&&
\NSUf(\mathcal{O},\Omega)\arrow[rr]&&
\NSUf(\mathcal{O},\mathcal{O})\arrow[dl]
\\
&&&&&\text{star-Lindel\"{o}f}&
\\
&\NSU1(\mathcal{O},\Gamma)\arrow[rr]\arrow[uur]&&
\NSU1(\mathcal{O},\Omega)\arrow[rr]\arrow[uur]&&
\NSU1(\mathcal{O},\mathcal{O})\arrow[uur]&
\\
&&\SUf(\mathcal{O},\Gamma)\arrow[rr]\arrow[uuu]&&
\SUf(\mathcal{O},\Omega)\arrow[rr]\arrow[uuu]&&
\SSf(\mathcal{O},\mathcal{O})\arrow[uuu]
\\
\NSSU1(\mathcal{O},\Gamma)\arrow[thick,equal,rr]\arrow[uur]&&
\NSSU1(\mathcal{O},\Omega)\arrow[thick,equal,rr]\arrow[uur]&&
\NSSU1(\mathcal{O},\mathcal{O})\arrow[uur]&&
\\
&\SS1(\mathcal{O},\Gamma)\arrow[rr]\arrow[uur]\arrow[uuu]&&
\SS1(\mathcal{O},\Omega)\arrow[rr]\arrow[uur]\arrow[uuu]&&
\SS1(\mathcal{O},\mathcal{O})\arrow[uur]\arrow[uuu]&
\\
&\text{strongly star-Lindel\"{o}f}\arrow[thick,equal,uul]&&&&&
\\
\SSSf(\mathcal{O},\Gamma)\arrow[rr]\arrow[uuuurr,bend left=33]\arrow[uuu]&&
\SSSf(\mathcal{O},\Omega)\arrow[rr]\arrow[uuuurr,bend left=33]\arrow[uuu]&&
\SSSf(\mathcal{O},\mathcal{O})\arrow[uuuurr,bend left=33]\arrow[uuu]&&
\end{tikzcd}
\end{adjustbox}
\caption{Diagram for new star selection principles}
\label{dig2}
\end{figure}

We call $\NSUf(\mathcal{O},\Omega)$ as the new star-Scheepers property. The relations among the star selection principles and the new star selection principles are delineated into an implication diagram (Figure~\ref{dig2}).

Note that $\Psi(\mathcal{A})$ with $|\mathcal{A}|=\mathfrak{c}$ is not star-Scheepers, but being a separable space (implies strongly star-Lindel\"{o}f) it satisfies $\NSUf(\mathcal{O},\Omega)$.

Observe that $\NSUf(\mathcal{O},\Omega)$ is an invariant of clopen subsets, continuous mappings and countable increasing unions.
By Example~\ref{E5}, there exists a Tychonoff space with the property $\NSUf(\mathcal{O},\Omega)$ having a regular-closed $G_\delta$ subset which does not satisfy $\NSUf(\mathcal{O},\Omega)$. Also note that $\Psi(\mathcal{A})$ satisfies $\NSUf(\mathcal{O},\Omega)$ for any $\mathcal{A}$. If we consider $|\mathcal{A}|=\omega_1$, then $AD(\Psi(\mathcal{A}))$ does not satisfy $\NSUf(\mathcal{O},\Omega)$ as $\mathcal{A}\times\{1\}=\{(A,1) : A\in\mathcal{A}\}$ is a discrete clopen subset of $AD(X)$ with cardinality $\omega_1$.
The following result is similar to Theorem~\ref{T3701}.
\begin{Th}
\label{T52}
For a space $X$ if $AD(X)$ satisfies $\NSUf(\mathcal{O},\Omega)$, then so does $X$.
\end{Th}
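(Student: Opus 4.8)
The plan is to run the lifting argument used for Theorem~\ref{T3701} almost verbatim, keeping track of the ``global'' star operator $St(\cup_{m\in\mathbb{N}}(\cup\,\cdot\,),\,\cdot\,)$ instead of the ordinary one. First I would fix an arbitrary sequence $(\mathcal{U}_n)$ of open covers of $X$ and lift it to $AD(X)$ by setting $\mathcal{W}_n=\{U\times\{0,1\} : U\in\mathcal{U}_n\}$; this is an open cover of $AD(X)$ since each $U\times\{0,1\}$ is open there and these sets already swallow the isolated points $(x,1)$ and the basic neighbourhoods of the $(x,0)$. Applying the hypothesis $\NSUf(\mathcal{O},\Omega)$ for $AD(X)$ to $(\mathcal{W}_n)$ yields finite sets $\mathcal{H}_n\subseteq\mathcal{W}_n$ with $\{St(\cup_{m\in\mathbb{N}}(\cup\mathcal{H}_m),\mathcal{W}_n) : n\in\mathbb{N}\}\in\Omega$ for $AD(X)$. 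Since every member of $\mathcal{H}_n$ has the form $U\times\{0,1\}$ with $U\in\mathcal{U}_n$, the candidate witness for $X$ is $\mathcal{V}_n=\{U\in\mathcal{U}_n : U\times\{0,1\}\in\mathcal{H}_n\}$, a finite subset of $\mathcal{U}_n$.

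The only computational step is to record, for $A\subseteq X$, the identities
\[
\cup\mathcal{H}_m=(\cup\mathcal{V}_m)\times\{0,1\}\quad\text{and}\quad St\bigl(A\times\{0,1\},\mathcal{W}_n\bigr)=St(A,\mathcal{U}_n)\times\{0,1\},
\]
the second holding because $(A\times\{0,1\})\cap(U\times\{0,1\})\neq\emptyset$ precisely when $A\cap U\neq\emptyset$. Combining them gives $St\bigl(\cup_{m}(\cup\mathcal{H}_m),\mathcal{W}_n\bigr)=St\bigl(\cup_{m}(\cup\mathcal{V}_m),\mathcal{U}_n\bigr)\times\{0,1\}$ for every $n$.

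Finally, to verify that $\{St(\cup_{m}(\cup\mathcal{V}_m),\mathcal{U}_n) : n\in\mathbb{N}\}$ is an $\omega$-cover of $X$, take any finite $F\subseteq X$; then $F\times\{0,1\}$ is a finite subset of $AD(X)$, so there is an $n_0$ with $F\times\{0,1\}\subseteq St(\cup_{m}(\cup\mathcal{H}_m),\mathcal{W}_{n_0})=St(\cup_{m}(\cup\mathcal{V}_m),\mathcal{U}_{n_0})\times\{0,1\}$, hence $F\subseteq St(\cup_{m}(\cup\mathcal{V}_m),\mathcal{U}_{n_0})$. Thus $(\mathcal{V}_n)$ witnesses $\NSUf(\mathcal{O},\Omega)$ for $X$. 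I do not expect any real obstacle: the subtlety is purely bookkeeping --- making sure $\mathcal{V}_n$ is well defined from the identification $\mathcal{H}_n\leftrightarrow\mathcal{V}_n$, and checking that the ``accumulated'' union $\cup_{m\in\mathbb{N}}(\cup\mathcal{H}_m)$ behaves correctly under the lifting --- both of which are immediate from the star identity above.
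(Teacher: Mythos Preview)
Your argument is correct and is exactly the approach the paper intends: it states that Theorem~\ref{T52} ``is similar to Theorem~\ref{T3701}'' and gives no separate proof, and your lift $\mathcal{W}_n=\{U\times\{0,1\}:U\in\mathcal{U}_n\}$ together with the projection $\mathcal{V}_n=\{U\in\mathcal{U}_n:U\times\{0,1\}\in\mathcal{H}_n\}$ reproduces the proof of Theorem~\ref{T3701} with the accumulated union $\cup_{m}(\cup\mathcal{H}_m)$ in place of $\cup\mathcal{H}_n$. The star identity $St(A\times\{0,1\},\mathcal{W}_n)=St(A,\mathcal{U}_n)\times\{0,1\}$ you record is precisely the bookkeeping needed, so there is no gap.
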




Also observe that if $X$ is a $T_1$ space such that $AD(X)$ satisfies $\NSUf(\mathcal{O},\Omega)$ , then $e(X)\leq\omega$.

 As in case of the star-Scheepers and strongly star-Scheepers property, $\NSUf(\mathcal{O},\Omega)$ is also not an inverse invariant of closed 2-to-1 continuous mappings. Indeed, $Y=\Psi(\mathcal{A})$ with $|\mathcal{A}|=\omega_1$ satisfies $\NSUf(\mathcal{O},\Omega)$. But $X=AD(Y)$ does not satisfy $\NSUf(\mathcal{O},\Omega)$ and the projection mapping $p:X\to Y$ is closed 2-to-1 continuous. However the following result is obtained.
\begin{Th}
\label{T54}
If $f:X\to Y$ is an open perfect mapping from a space $X$ onto a space $Y$ having the property $\NSUf(\mathcal{O},\Omega)$, then $X$ satisfies $\NSUf(\mathcal{O},\Omega)$.
\end{Th}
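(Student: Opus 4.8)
The plan is to mimic the proof of Theorem~\ref{T10}, but to carry along the extra bookkeeping forced by the accumulated star $St(\cup_{m\in\mathbb{N}}(\cup\mathcal{V}_m),\mathcal{U}_n)$ appearing in $\NSUf(\mathcal{O},\Omega)$. So let $(\mathcal{U}_n)$ be a sequence of open covers of $X$. Fixing $y\in Y$ and $n\in\mathbb{N}$, compactness of the fibre $f^{-1}(y)$ first yields a finite $\mathcal{V}_n^y\subseteq\mathcal{U}_n$ with $f^{-1}(y)\subseteq\cup\mathcal{V}_n^y$ and $f^{-1}(y)\cap U\neq\emptyset$ for every $U\in\mathcal{V}_n^y$; since $f$ is closed there is an open $U_y^{(n)}\ni y$ in $Y$ with $f^{-1}(U_y^{(n)})\subseteq\cup\mathcal{V}_n^y$; and since $f$ is open, $\bigcap\{f(U):U\in\mathcal{V}_n^y\}$ is an open neighbourhood of $y$, so one may choose an open $V_y^{(n)}$ in $Y$ with $y\in V_y^{(n)}\subseteq U_y^{(n)}\cap\bigcap\{f(U):U\in\mathcal{V}_n^y\}$. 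Then $\mathcal{V}_n=\{V_y^{(n)}:y\in Y\}$ is an open cover of $Y$ for each $n$.

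Next I would apply the property $\NSUf(\mathcal{O},\Omega)$ of $Y$ to the sequence $(\mathcal{V}_n)$, obtaining for each $n$ a finite $\mathcal{H}_n\subseteq\mathcal{V}_n$ such that $\{St(\cup_{m\in\mathbb{N}}(\cup\mathcal{H}_m),\mathcal{V}_n):n\in\mathbb{N}\}$ is an $\omega$-cover of $Y$. Writing $\mathcal{H}_n=\{V_{y_i}^{(n)}:1\le i\le k_n\}$, I pull it back by setting $\mathcal{W}_n=\cup_{1\le i\le k_n}\mathcal{V}_n^{y_i}$, a finite subset of $\mathcal{U}_n$. The crucial containment is $f^{-1}(\cup\mathcal{H}_n)\subseteq\cup\mathcal{W}_n$ for every $n$ (since $f^{-1}(V_{y_i}^{(n)})\subseteq f^{-1}(U_{y_i}^{(n)})\subseteq\cup\mathcal{V}_n^{y_i}$), whence $f^{-1}\!\big(\cup_{m\in\mathbb{N}}(\cup\mathcal{H}_m)\big)\subseteq\cup_{m\in\mathbb{N}}(\cup\mathcal{W}_m)$.

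It then remains to verify that $\{St(\cup_{m\in\mathbb{N}}(\cup\mathcal{W}_m),\mathcal{U}_n):n\in\mathbb{N}\}$ is an $\omega$-cover of $X$. Given a finite $F\subseteq X$, its image $f(F)$ is finite in $Y$, so there is an $n$ with $f(F)\subseteq St(\cup_{m\in\mathbb{N}}(\cup\mathcal{H}_m),\mathcal{V}_n)$. For $x\in F$ choose $z\in Y$ with $f(x)\in V_z^{(n)}$ and $V_z^{(n)}\cap\big(\cup_{m\in\mathbb{N}}(\cup\mathcal{H}_m)\big)\neq\emptyset$; since $x\in f^{-1}(V_z^{(n)})\subseteq\cup\mathcal{V}_n^z$ there is $U\in\mathcal{V}_n^z\subseteq\mathcal{U}_n$ with $x\in U$, and picking $w\in V_z^{(n)}\cap\big(\cup_{m\in\mathbb{N}}(\cup\mathcal{H}_m)\big)$ together with $V_z^{(n)}\subseteq f(U)$ gives $f^{-1}(w)\cap U\neq\emptyset$, while $f^{-1}(w)\subseteq f^{-1}\!\big(\cup_{m\in\mathbb{N}}(\cup\mathcal{H}_m)\big)\subseteq\cup_{m\in\mathbb{N}}(\cup\mathcal{W}_m)$; hence $U$ meets $\cup_{m\in\mathbb{N}}(\cup\mathcal{W}_m)$ and so $x\in St(\cup_{m\in\mathbb{N}}(\cup\mathcal{W}_m),\mathcal{U}_n)$. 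Thus $F\subseteq St(\cup_{m\in\mathbb{N}}(\cup\mathcal{W}_m),\mathcal{U}_n)$, and $(\mathcal{W}_n)$ witnesses $\NSUf(\mathcal{O},\Omega)$ for $X$.

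The main obstacle, and the only spot requiring genuine care, is this last verification: one must transport the relation ``$V_z^{(n)}$ meets the accumulated set in $Y$'' into ``$U$ meets the accumulated set in $X$'', and routing through a single fibre $f^{-1}(w)$ is precisely what makes it go. Openness of $f$ is needed to guarantee $V_z^{(n)}\subseteq f(U)$ (so that the fibre over $w$ actually hits $U$), while perfectness and closedness keep the preimage of the chosen neighbourhoods inside $\cup\mathcal{W}_m$; note also that the union over \emph{all} stages $m$ in the new-star principle is harmless here, since the pull-back containment $f^{-1}(\cup\mathcal{H}_m)\subseteq\cup\mathcal{W}_m$ holds stagewise. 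The remainder is the standard open-perfect-preimage argument already run in Theorem~\ref{T10}.
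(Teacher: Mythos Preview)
Your proof is correct and is precisely the natural adaptation of the argument for Theorem~\ref{T10} to the new star principle; the paper itself omits the proof of Theorem~\ref{T54}, evidently regarding it as a routine variant of Theorem~\ref{T10}, so your approach matches the intended one. The only additional wrinkle over Theorem~\ref{T10} is the passage from ``$V_z^{(n)}$ meets $\cup_m(\cup\mathcal{H}_m)$'' in $Y$ to ``$U$ meets $\cup_m(\cup\mathcal{W}_m)$'' in $X$, and your fibre argument through $f^{-1}(w)$ handles this cleanly.
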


Since $\NSUf(\mathcal{O},\Omega)$ is preserved under countable increasing unions, we obtain the following.
\begin{Cor}
\label{C20}
If $X$ satisfies $\NSUf(\mathcal{O},\Omega)$ and $Y$ is $\sigma$-compact, then $X\times Y$ satisfies $\NSUf(\mathcal{O},\Omega)$.
\end{Cor}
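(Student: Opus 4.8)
The plan is to reduce the statement to the case of a compact second factor and then quote Theorem~\ref{T54}, exactly mirroring the way Corollary~\ref{T24} was deduced from Theorem~\ref{T10}.

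First I would use $\sigma$-compactness of $Y$: write $Y=\bigcup_{n\in\mathbb{N}}K_n$ with each $K_n$ compact and put $Y_n=K_1\cup\cdots\cup K_n$, so that the $Y_n$ are compact and increase to $Y$. Then $X\times Y=\bigcup_{n\in\mathbb{N}}(X\times Y_n)$ is a countable increasing union of subspaces, the subspace topology on $X\times Y_n$ inherited from $X\times Y$ agreeing with its product topology. Since (as already noted in the text) $\NSUf(\mathcal{O},\Omega)$ is preserved by countable increasing unions, it suffices to handle each $X\times Y_n$ separately; that is, it is enough to prove: if $X$ satisfies $\NSUf(\mathcal{O},\Omega)$ and $K$ is compact, then $X\times K$ satisfies $\NSUf(\mathcal{O},\Omega)$.

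For this compact case I would look at the projection $\pi\colon X\times K\to X$. It is continuous, open, and surjective (the case $K=\emptyset$ being trivial), its fibres $\pi^{-1}(x)=\{x\}\times K$ are compact, and it is closed by the tube lemma (Kuratowski's theorem on projections along a compact factor); hence $\pi$ is an open perfect mapping onto the space $X$, which has the property $\NSUf(\mathcal{O},\Omega)$. Theorem~\ref{T54} then yields that $X\times K$ satisfies $\NSUf(\mathcal{O},\Omega)$, and combining this with the reduction of the previous paragraph completes the argument.

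The only ingredient that is not purely formal is the assertion that $\pi$ is perfect — concretely, that it is a closed map — which is precisely the tube lemma for a product with a compact factor; so I do not expect any real obstacle here. Should one prefer to avoid invoking Theorem~\ref{T54}, the compact case also admits a direct proof: refine each given open cover of $X\times K$ to a cover by basic boxes, use compactness of $\{x\}\times K$ together with the tube lemma to obtain for each $n$ an open cover $\mathcal{V}_n=\{V^{(n)}_x : x\in X\}$ of $X$ with $V^{(n)}_x\times K$ contained in the union of a fixed finite family $\mathcal{W}^{x}_n\subseteq\mathcal{U}_n$, apply $\NSUf(\mathcal{O},\Omega)$ of $X$ to $(\mathcal{V}_n)$ to get finite $\mathcal{A}_n\subseteq\mathcal{V}_n$ with $\{St(\bigcup_{m\in\mathbb{N}}(\bigcup\mathcal{A}_m),\mathcal{V}_n) : n\in\mathbb{N}\}\in\Omega$, and then take $\mathcal{H}_n=\bigcup\{\mathcal{W}^x_n : V^{(n)}_x\in\mathcal{A}_n\}$; here the box structure is exactly what makes a point $(x,t)$ of $X\times K$ and a witnessing point $(z,t)$ (with $z$ in the centre set and $x,z$ lying in a common $V^{(n)}_y$) fall into one common member of $\mathcal{U}_n$, which is what the $\NSUf$ star requires.
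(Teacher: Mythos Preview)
Your proposal is correct and follows essentially the same route as the paper: the corollary is stated immediately after Theorem~\ref{T54} with the remark that $\NSUf(\mathcal{O},\Omega)$ is preserved under countable increasing unions, so the intended argument is exactly your reduction to a compact factor followed by an application of Theorem~\ref{T54} to the open perfect projection $X\times K\to X$.
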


The property $\NSUf(\mathcal{O},\Omega)$ is not preserved under finite products (see Example~\ref{E17}). However we obtain the following.
\begin{Th}
\label{T55}
If every finite power of $X$ satisfies $\NSUf(\mathcal{O},\mathcal{O})$, then $X$ satisfies $\NSUf(\mathcal{O},\Omega)$.
\end{Th}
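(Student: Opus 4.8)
The plan is to imitate the classical derivation of the Scheepers-type property from the Menger-type property of all finite powers (as in Theorem~\ref{T13} and \ref{T15}), carrying the operator $\NSUf$ through the argument and paying attention to the inner union $\bigcup_{m}(\cup\mathcal{V}_m)$ that sits inside the star.

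First I would fix a sequence $(\mathcal{U}_n)$ of open covers of $X$ together with a partition $\{N_k : k\in\mathbb{N}\}$ of $\mathbb{N}$ into infinitely many infinite pieces. For each $k$ and each $n\in N_k$ put $\mathcal{W}_n=\{U_1\times\cdots\times U_k : U_i\in\mathcal{U}_n,\ 1\le i\le k\}$, an open cover of $X^k$. For fixed $k$, $(\mathcal{W}_n : n\in N_k)$ (reindexed by $\mathbb{N}$) is a sequence of open covers of $X^k$, so the hypothesis that $X^k$ satisfies $\NSUf(\mathcal{O},\mathcal{O})$ yields finite sets $\mathcal{V}^k_n\subseteq\mathcal{W}_n$, $n\in N_k$, with $\{St(\bigcup_{m\in N_k}(\cup\mathcal{V}^k_m),\mathcal{W}_n) : n\in N_k\}$ an open cover of $X^k$. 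Since $\{N_k\}$ partitions $\mathbb{N}$, each $n$ lies in a unique $N_k$, and for such $n$ I would set $\mathcal{V}_n=\{U\in\mathcal{U}_n : U \text{ is a coordinate factor of some rectangle in }\mathcal{V}^k_n\}$, a finite subset of $\mathcal{U}_n$. To verify that $(\mathcal{V}_n)$ witnesses $\NSUf(\mathcal{O},\Omega)$, let $F=\{x_1,\dots,x_k\}\subseteq X$ be finite (the empty case being trivial) and look at $\bar x=(x_1,\dots,x_k)\in X^k$. Pick $n\in N_k$ with $\bar x\in St(\bigcup_{m\in N_k}(\cup\mathcal{V}^k_m),\mathcal{W}_n)$; then some rectangle $U_1\times\cdots\times U_k\in\mathcal{W}_n$ contains $\bar x$ and meets a rectangle $U_1'\times\cdots\times U_k'\in\mathcal{V}^k_m$ for some $m\in N_k$, so $x_i\in U_i\in\mathcal{U}_n$ and $U_i\cap U_i'\ne\emptyset$ for every $i$. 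Each $U_i'$ belongs to $\mathcal{V}_m$, hence $U_i'\subseteq\bigcup_{l\in\mathbb{N}}(\cup\mathcal{V}_l)$, so $x_i\in St(\bigcup_{l\in\mathbb{N}}(\cup\mathcal{V}_l),\mathcal{U}_n)$ for each $i$, i.e. $F\subseteq St(\bigcup_{l\in\mathbb{N}}(\cup\mathcal{V}_l),\mathcal{U}_n)$. Thus $\{St(\bigcup_{l\in\mathbb{N}}(\cup\mathcal{V}_l),\mathcal{U}_n) : n\in\mathbb{N}\}$ is an $\omega$-cover of $X$, as required.

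The hard part will be the bookkeeping in the translation step: one must check that the finite families $\mathcal{V}^k_n$ chosen on the various powers $X^k$ reassemble, via the partition, into a single well-defined sequence $(\mathcal{V}_n)$ on $X$, and that the star of the \emph{global} set $\bigcup_{l}(\cup\mathcal{V}_l)$ relative to $\mathcal{U}_n$ genuinely recaptures the star of $\bigcup_{m\in N_k}(\cup\mathcal{V}^k_m)$ relative to $\mathcal{W}_n$ upon passing to coordinates — this is precisely where the inner union of $\NSUf$ must be reconciled with the product structure, and it is the place where the argument differs from the ``union outside'' situation of the classical Scheepers proof. The remaining points are routine: that $\mathcal{W}_n$ is an open cover of $X^k$, that a nonempty intersection of two rectangles forces each pair of coordinate factors to intersect, and that reindexing $N_k$ by $\mathbb{N}$ is harmless. (Alternatively, one may first replace each $\mathcal{U}_n$ by its closure under finite unions and use $\mathcal{W}_n=\{U^k : U\in\mathcal{U}_n\}$, as in the proof of Theorem~\ref{T46}, which streamlines the coordinate bookkeeping at the cost of a harmless ``without loss of generality''.)
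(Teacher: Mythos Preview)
Your proof is correct and follows precisely the template the paper has in mind: the paper omits the argument entirely, referring to \cite[Theorem 2.1]{sHR}, and your partition-of-$\mathbb{N}$ argument with rectangular covers of $X^k$ and coordinate-wise projection is exactly that proof, carefully adapted to carry the inner union $\bigcup_m(\cup\mathcal{V}_m)$ of the $\NSUf$ operator through the star. The bookkeeping you flag as the ``hard part'' is handled correctly---each $n$ lies in a unique $N_k$, so $(\mathcal{V}_n)$ is well defined, and your observation that $U_i'\subseteq\bigcup_{l}(\cup\mathcal{V}_l)$ is exactly what makes the global star dominate the power-level star coordinate by coordinate.
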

\begin{proof}
The proof is similar to \cite[Theorem 2.1]{sHR} and so is omitted.
\end{proof}

As a consequence, we obtain the following result.
\begin{Cor}[{\cite[Theorem 2.19]{NSP}}]
\label{C22}
If every finite power of a space $X$ satisfies $\NSU1(\mathcal{O},\mathcal{O})$, then $X$ satisfies $\NSUf(\mathcal{O},\Omega)$.
\end{Cor}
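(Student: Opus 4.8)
The plan is to deduce this immediately from Theorem~\ref{T55}, after the elementary observation that $\NSU1(\mathcal{O},\mathcal{O})$ implies $\NSUf(\mathcal{O},\mathcal{O})$ (this is one of the implications already recorded in Figure~\ref{dig2}). First I would spell out that implication: given a sequence $(\mathcal{U}_n)$ of open covers of a space and a selection $(U_n)$ with $U_n\in\mathcal{U}_n$ witnessing $\NSU1(\mathcal{O},\mathcal{O})$, set $\mathcal{V}_n=\{U_n\}$ for each $n$. Then $\mathcal{V}_n$ is a finite subset of $\mathcal{U}_n$, and $\cup_{m\in\mathbb{N}}(\cup\mathcal{V}_m)=\cup_{m\in\mathbb{N}}U_m$, so the collections $\{St(\cup_{m\in\mathbb{N}}(\cup\mathcal{V}_m),\mathcal{U}_n):n\in\mathbb{N}\}$ and $\{St(\cup_{m\in\mathbb{N}}U_m,\mathcal{U}_n):n\in\mathbb{N}\}$ are literally the same family of sets, and the latter is an open cover by assumption. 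Hence $(\mathcal{V}_n)$ witnesses $\NSUf(\mathcal{O},\mathcal{O})$.

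Next, I would apply this to each finite power. Under the hypothesis that $X^k$ satisfies $\NSU1(\mathcal{O},\mathcal{O})$ for every $k\in\mathbb{N}$, the preceding paragraph gives that $X^k$ satisfies $\NSUf(\mathcal{O},\mathcal{O})$ for every $k$. Theorem~\ref{T55} then applies verbatim to $X$ and yields that $X$ satisfies $\NSUf(\mathcal{O},\Omega)$, i.e. $X$ has the new star-Scheepers property, which is the desired conclusion.

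The only point that needs even a moment's attention — and it is not a genuine obstacle — is that in both $\NSU1(\mathcal{O},\mathcal{O})$ and $\NSUf(\mathcal{O},\mathcal{O})$ the stars $St(\,\cdot\,,\mathcal{U}_n)$ are formed with respect to the same covers $\mathcal{U}_n$, so replacing each chosen set $U_n$ by the singleton family $\{U_n\}$ affects neither the unions nor the stars involved. Consequently no further computation is required: the corollary is a direct composition of the trivial implication $\NSU1\to\NSUf$ (in the $(\mathcal{O},\mathcal{O})$ coordinate) with Theorem~\ref{T55}.
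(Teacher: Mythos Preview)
Your argument is correct and matches the paper's approach: the corollary is stated immediately after Theorem~\ref{T55} with the phrase ``As a consequence,'' and no further proof is given, precisely because the implication $\NSU1(\mathcal{O},\mathcal{O})\Rightarrow\NSUf(\mathcal{O},\mathcal{O})$ (recorded in Figure~\ref{dig2}) plus Theorem~\ref{T55} is all that is needed.
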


The proof of the next result follows from \cite[Theorem 2.2]{sHR} with necessary modifications.
\begin{Th}
\label{T56}
For a space $X$ the following assertions are equivalent.
\begin{enumerate}[wide=0pt,label={\upshape(\arabic*)}]
  \item $X$ satisfies $\NSUf(\mathcal{O},\Omega)$.
  \item $X$ satisfies $\NSUf(\mathcal{O},\mathcal{O}^{wgp})$.
\end{enumerate}
\end{Th}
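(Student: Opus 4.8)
I would follow the proof of \cite[Theorem 2.2]{sHR}, taking care of the fact that in the operator $\NSUf$ a \emph{single} set $\cup_{m\in\mathbb{N}}(\cup\mathcal{V}_m)$ is starred against \emph{every} cover $\mathcal{U}_n$. The implication $(1)\Rightarrow(2)$ is the soft one: every countable $\omega$-cover is weakly groupable (partition it into one-element subfamilies; given a finite $F\subseteq X$, the single member of the $\omega$-cover containing $F$ is itself a one-element block whose union contains $F$), so $\Omega\subseteq\mathcal{O}^{wgp}$. Since for any sequence $(\mathcal{U}_n)$ the family $\{St(\cup_{m\in\mathbb{N}}(\cup\mathcal{V}_m),\mathcal{U}_n):n\in\mathbb{N}\}$ is countable, a witness for $\NSUf(\mathcal{O},\Omega)$ is automatically a witness for $\NSUf(\mathcal{O},\mathcal{O}^{wgp})$.

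For $(2)\Rightarrow(1)$ the plan is to reduce to decreasing sequences of covers. Given a sequence $(\mathcal{U}_n)$ of open covers of $X$, put $\mathcal{U}_n^{*}=\{U_1\cap\cdots\cap U_n:U_i\in\mathcal{U}_i\ \text{for}\ i\leq n\}$; this is again a sequence of open covers of $X$, and $\mathcal{U}_{n+1}^{*}$ refines $\mathcal{U}_n^{*}$. Apply the hypothesis $\NSUf(\mathcal{O},\mathcal{O}^{wgp})$ to $(\mathcal{U}_n^{*})$ to obtain finite sets $\mathcal{V}_n^{*}\subseteq\mathcal{U}_n^{*}$ such that, writing $W^{*}=\cup_{m\in\mathbb{N}}(\cup\mathcal{V}_m^{*})$, the countable family $\{St(W^{*},\mathcal{U}_n^{*}):n\in\mathbb{N}\}$ is weakly groupable. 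Since refining a cover only shrinks stars, this family is decreasing in $n$; hence for any finite block $P$ in a weakly groupable grouping of it we have $\bigcup_{n\in P}St(W^{*},\mathcal{U}_n^{*})=St(W^{*},\mathcal{U}_{\min P}^{*})$, so the family is in fact an $\omega$-cover of $X$.

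It remains to transfer this back to $(\mathcal{U}_n)$. Each member of $\mathcal{V}_n^{*}$ has the form $U_1\cap\cdots\cap U_n$ with $U_i\in\mathcal{U}_i$; let $\mathcal{V}_n=\{U_n:U_1\cap\cdots\cap U_n\in\mathcal{V}_n^{*}\ \text{for some}\ U_1,\dots,U_{n-1}\}$, a finite subset of $\mathcal{U}_n$, and set $W=\cup_{m\in\mathbb{N}}(\cup\mathcal{V}_m)$. Since $U_1\cap\cdots\cap U_n\subseteq U_n$ we get $W^{*}\subseteq W$, and since each $\mathcal{U}_n^{*}$ refines $\mathcal{U}_n$ and $W^{*}\subseteq W$, we get $St(W^{*},\mathcal{U}_n^{*})\subseteq St(W,\mathcal{U}_n)$ for every $n$. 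Consequently $\{St(W,\mathcal{U}_n):n\in\mathbb{N}\}$ is an $\omega$-cover, i.e. $(\mathcal{V}_n)$ witnesses $\NSUf(\mathcal{O},\Omega)$ for $(\mathcal{U}_n)$, which is what we want.

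The main point of care, and the only real work, is the transfer step: one must verify that passing to the common-refinement sequence $(\mathcal{U}_n^{*})$ interacts correctly with the shared-selector structure of $\NSUf$, namely that enlarging the selector set and coarsening the cover simultaneously keep the star inclusions $St(W^{*},\mathcal{U}_n^{*})\subseteq St(W,\mathcal{U}_n)$ pointing the right way. The degenerate situations (some $St(W^{*},\mathcal{U}_n^{*})=X$, or covers admitting finite subcovers) create no difficulty and are absorbed into the same bookkeeping.
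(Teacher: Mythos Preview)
Your proof is correct and follows essentially the same approach the paper has in mind: the paper gives no detailed argument for this theorem, merely pointing to \cite[Theorem 2.2]{sHR} with ``necessary modifications'', and the common-refinement trick you carry out is exactly that argument adapted to the single shared selector $W=\cup_m(\cup\mathcal{V}_m)$ in the $\NSUf$ operator. One minor bookkeeping point: your definition $\mathcal{V}_n=\{U_n:U_1\cap\cdots\cap U_n\in\mathcal{V}_n^{*}\ \text{for some}\ U_1,\dots,U_{n-1}\}$ could, as literally written, be infinite, since a single element of $\mathcal{V}_n^{*}$ may admit many representations with different last factors; you should instead fix one representation $V=U_1^V\cap\cdots\cap U_n^V$ for each $V\in\mathcal{V}_n^{*}$ and set $\mathcal{V}_n=\{U_n^V:V\in\mathcal{V}_n^{*}\}$, which preserves both finiteness and the inclusion $W^{*}\subseteq W$.
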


In combination with Theorem~\ref{T55} we obtain the following.
\begin{Cor}
\label{C23}
If every finite power of a space $X$ satisfies $\NSUf(\mathcal{O},\mathcal{O})$, then $X$ satisfies $\NSUf(\mathcal{O},\mathcal{O}^{wgp})$.
\end{Cor}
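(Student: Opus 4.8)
The plan is to deduce the statement directly by chaining the two results that immediately precede it, namely Theorem~\ref{T55} and Theorem~\ref{T56}. Since the hypothesis is exactly the hypothesis of Theorem~\ref{T55}, that theorem hands us for free that $X$ satisfies $\NSUf(\mathcal{O},\Omega)$. Then Theorem~\ref{T56} asserts the equivalence of $\NSUf(\mathcal{O},\Omega)$ and $\NSUf(\mathcal{O},\mathcal{O}^{wgp})$, so applying the implication $(1)\Rightarrow(2)$ of that theorem yields $\NSUf(\mathcal{O},\mathcal{O}^{wgp})$, which is precisely the conclusion sought.

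Concretely, I would write: assume every finite power of $X$ satisfies $\NSUf(\mathcal{O},\mathcal{O})$. By Theorem~\ref{T55}, $X$ satisfies $\NSUf(\mathcal{O},\Omega)$. By Theorem~\ref{T56}, this is equivalent to $X$ satisfying $\NSUf(\mathcal{O},\mathcal{O}^{wgp})$, which completes the proof. No new combinatorics or selection-principle manipulation is needed at this stage, because all the work has been done in establishing the two cited results (which in turn adapt \cite[Theorems 2.1 and 2.2]{sHR} to the new-star setting).

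There is essentially no obstacle here: the only thing to be careful about is that the two invoked theorems are stated precisely in the forms I am using — Theorem~\ref{T55} as an implication ``finite powers $\NSUf(\mathcal{O},\mathcal{O})$ $\Rightarrow$ $\NSUf(\mathcal{O},\Omega)$'' and Theorem~\ref{T56} as the biconditional ``$\NSUf(\mathcal{O},\Omega)\Leftrightarrow\NSUf(\mathcal{O},\mathcal{O}^{wgp})$'' — which they are. If one wanted a self-contained argument instead, the real content would live inside the proof of Theorem~\ref{T56}: one shows that a selection witnessing $\NSUf(\mathcal{O},\Omega)$ can be regrouped into blocks so that the resulting union-cover is weakly groupable, and conversely that a weakly groupable witness refines to an $\omega$-cover; this is the step that mirrors \cite[Theorem 2.2]{sHR} and would be the only place requiring genuine care, but since Theorem~\ref{T56} is already available I would simply cite it.
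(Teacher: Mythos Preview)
Your proposal is correct and matches the paper's approach exactly: the paper states Corollary~\ref{C23} immediately after Theorem~\ref{T56} with the introductory line ``In combination with Theorem~\ref{T55} we obtain the following,'' which is precisely the chaining of Theorem~\ref{T55} and Theorem~\ref{T56} that you describe.
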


The symbol $w(X)$ denotes the weight of a space $X$.
\begin{Th}
\label{NT2}
Let $X$ be a star-Lindel\"{o}f regular $P$-space. If $Y$ is an infinite closed and discrete subset of $X$, then $|Y|<\cof(\Fin(w(X))^\mathbb{N})$.
\end{Th}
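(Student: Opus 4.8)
The plan is to prove the contrapositive by a covering argument: assuming $|Y|\ge\cof(\Fin(w(X))^{\mathbb N})$, I would exhibit an open cover of $X$ no countable subfamily of which has star equal to $X$, contradicting star-Lindel\"{o}fness. Write $\kappa=w(X)$. Since a discrete subspace of any space has cardinality at most its weight, $|Y|\le\kappa$; on the other hand the union of any cofinal subset of $(\Fin(\kappa)^{\mathbb N},\le)$ is all of $\kappa$, so $\cof(\Fin(\kappa)^{\mathbb N})\ge\kappa$. Hence $|Y|=\kappa=\cof(\Fin(\kappa)^{\mathbb N})$, which in particular makes $\kappa$ a \emph{regular} cardinal (the cofinality of a directed order is regular), and $\kappa>\omega$ because $\cof(\Fin(\omega)^{\mathbb N})=\mathfrak d>\omega$ by Lemma~\ref{L101}. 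So I may assume $\kappa$ is regular uncountable and $|Y|=\kappa$, and the rest of the argument takes place in that setting.

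Two soft consequences of the $P$-space hypothesis do the topological work. First, \emph{every regular $P$-space is zero-dimensional}: given $x$ in an open set $U$, iterate regularity to get open sets $U\supseteq\overline{V_1}\supseteq V_1\supseteq\overline{V_2}\supseteq\cdots$ with $x\in V_n$; then $\bigcap_nV_n=\bigcap_n\overline{V_n}$ is a $G_\delta$, hence open, and closed, so it is a clopen neighbourhood of $x$ contained in $U$. Consequently $X$ has a clopen base $\mathcal B$ with $|\mathcal B|=\kappa$. Second, \emph{in a $P$-space a countable union of clopen sets is clopen}, since its complement is a countable intersection of clopen sets, which is open (by the $P$-space property) and closed. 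Both facts are used below without further comment.

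Enumerate $Y=\{y_\alpha:\alpha<\kappa\}$ and fix: a clopen cover $\mathcal C_0\subseteq\mathcal B$ of the open set $X\setminus Y$ by members disjoint from $Y$ (so $|\mathcal C_0|\le\kappa$); cofinal families $\{S_\nu:\nu<\kappa\}$ in $([\kappa]^{\le\omega},\subseteq)$ and $\{\mathcal C_\mu:\mu<\kappa\}$ in $([\mathcal C_0]^{\le\omega},\subseteq)$ of size $\kappa$, possible since $\cof([\kappa]^{\le\omega})\le\cof(\Fin(\kappa)^{\mathbb N})=\kappa$ and $|\mathcal C_0|\le\kappa$; and, by a straightforward recursion using regularity of $\kappa$, an injection $\pi:\kappa\times\kappa\to\kappa$ with $\pi(\nu,\mu)>\sup S_\nu$ for all $\nu,\mu$. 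I would then choose, by recursion on $\alpha<\kappa$, clopen sets $B_\alpha$ with $y_\alpha\in B_\alpha$ and $B_\alpha\cap Y=\{y_\alpha\}$, requiring in addition, whenever $\alpha=\pi(\nu,\mu)$, that $B_\alpha$ be disjoint from $W_\alpha:=\bigl(\bigcup_{\beta\in S_\nu}B_\beta\bigr)\cup\bigcup\mathcal C_\mu$. This can be met: at stage $\alpha=\pi(\nu,\mu)$ all $B_\beta$ with $\beta\in S_\nu$ are already defined (as $\beta\le\sup S_\nu<\alpha$), so $W_\alpha$ is a countable union of clopen sets, hence clopen; moreover $W_\alpha\cap Y=\{y_\beta:\beta\in S_\nu\}$ and $\alpha\notin S_\nu$, so $y_\alpha\notin W_\alpha$, and one takes $B_\alpha$ to be a clopen base neighbourhood of $y_\alpha$ contained in $X\setminus W_\alpha$ and intersected with a basic neighbourhood meeting $Y$ only in $y_\alpha$. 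Finally put $\mathcal U=\{B_\alpha:\alpha<\kappa\}\cup\mathcal C_0$; this is an open cover of $X$, and $B_\alpha$ is the \emph{only} member of $\mathcal U$ containing $y_\alpha$ (the other $B_\beta$ avoid $y_\alpha$, and members of $\mathcal C_0$ avoid $Y$). Given any countable $\mathcal V\subseteq\mathcal U$, write $\mathcal V=\{B_\beta:\beta\in S\}\cup\mathcal C'$ and pick $\nu,\mu$ with $S\subseteq S_\nu$, $\mathcal C'\subseteq\mathcal C_\mu$; then for $\alpha=\pi(\nu,\mu)$ we get $\bigcup\mathcal V\subseteq W_\alpha$, so $B_\alpha\cap\bigcup\mathcal V=\emptyset$, whence $y_\alpha\notin St(\bigcup\mathcal V,\mathcal U)$. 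Thus $\mathcal U$ witnesses that $X$ is not star-Lindel\"{o}f, a contradiction.

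The hard part is exactly making the recursion close up: ensuring that \emph{every} countable subfamily of $\mathcal U$ leaves some point of $Y$ out of its star, using at each stage only the countably many clopen pieces already built. This is where the displayed cardinal is indispensable. The preliminary reductions — that in the only non-trivial case $\kappa=w(X)$ is regular and equals $\cof(\Fin(\kappa)^{\mathbb N})=\cof([\kappa]^{\le\omega})$, so that a cofinal family of countable index sets has size $\kappa$ and can be ``spread above'' its suprema by the injection $\pi$ — are what license the bookkeeping, while the two $P$-space facts are what keep the relevant stars clopen and hence computable. For the variant of the theorem in which star-Lindel\"{o}fness is replaced by $\NSUf(\mathcal O,\Omega)$, essentially the same construction is carried out on a sequence of open covers, the functions $\mathbb N\to\Fin(\kappa)$ recording the finite pieces chosen at each coordinate being precisely the elements of the poset $\Fin(w(X))^{\mathbb N}$; I expect no new difficulty there beyond notation.
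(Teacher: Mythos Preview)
Your argument is sound in outline and the cover $\mathcal U$ does the job, but one justification is wrong: the parenthetical ``the cofinality of a directed order is regular'' is false in general --- for instance $\cof\bigl([\aleph_\omega]^{<\omega},\subseteq\bigr)=\aleph_\omega$. Fortunately your construction only needs $\cf(\kappa)>\omega$, and this \emph{does} follow from $\kappa=\cof(\Fin(\kappa)^{\mathbb N})$: were $\cf(\kappa)=\omega$, a short diagonalisation would give $\cof([\kappa]^{\le\omega})>\kappa$, whence $\cof(\Fin(\kappa)^{\mathbb N})\ge\cof([\kappa]^{\le\omega})>\kappa$, a contradiction. With that repair your recursion and the bookkeeping injection $\pi$ go through exactly as written.

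The paper takes a more direct route that sidesteps zero-dimensionality, $[\kappa]^{\le\omega}$, and the pairing $\pi$ altogether. It fixes a base $\mathcal B$ of size $w(X)$ whose members have closures meeting $Y$ in at most one point, and works directly with a cofinal family $\{(\mathcal B^{(\alpha)}_n):\alpha<\kappa\}$ in $(\Fin(\mathcal B)^{\mathbb N},\le)$. For each $\alpha$ one first chooses $y_\alpha\in Y\setminus\bigcup_n\overline{\cup\mathcal B^{(\alpha)}_n}$ (the forbidden set meets $Y$ in only countably many points), and then --- using regularity plus the $P$-space hypothesis just once --- a basic $V_{y_\alpha}$ disjoint from every $\cup\mathcal B^{(\alpha)}_n$. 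The point is that any countable $\mathcal V=\{U_n:n\in\mathbb N\}\subseteq\mathcal U\subseteq\mathcal B$ is dominated, via the encoding $n\mapsto\{U_n\}$, by some $(\mathcal B^{(\alpha_0)}_n)$, so $V_{y_{\alpha_0}}$ misses $\bigcup\mathcal V$. This is shorter and makes the role of the poset $\Fin(w(X))^{\mathbb N}$ in the statement transparent; your version trades that economy for an explicit clopen construction and isolates the zero-dimensionality of regular $P$-spaces as a reusable lemma.
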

\begin{proof}
Let $\kappa=\cof(\Fin(w(X))^\mathbb{N})$. We prove this by contrapositive argument. If possible suppose that $\kappa\leq |Y|$. Since $w(X)\leq\kappa$ and $|Y|\leq w(X)$, by our supposition $\kappa=|Y|$. By Lemma~\ref{L101}, $\kappa$ is uncountable. Choose a base $\mathcal{B}$ for $X$ with $|\mathcal{B}|=w(X)$. We may assume that for each $B\in\mathcal{B}$, $|\overline{B}\cap Y|\leq 1$. Let $\{(\mathcal{B}^{(\alpha)}_n) : \alpha<\kappa\}$ be a cofinal subset of $(\Fin(\mathcal{B})^\mathbb{N},\leq)$. It now remains to show that $X$ is not star-Lindel\"{o}f. For each $y\in Y$ choose an open set $U_y$ in $X$ containing $y$ such that $\overline{U_y}\cap Y=\{y\}$. Let $Z=\{y_\alpha : \alpha<\kappa\}$ be a subset of $Y$ where $y_\alpha\in Y\setminus\cup\{\overline{\cup\mathcal{B}^{(\alpha)}_n} : n\in\mathbb{N}\}$ and $y_\alpha\neq y_\beta$ for $\alpha\neq\beta$. Since $X$ is a regular $P$-space, for each $\alpha<\kappa$ there exists a $V_{y_\alpha}\in\mathcal{B}$ such that $y_\alpha\in V_{y_\alpha}\subseteq U_{y_\alpha}$ and $V_{y_\alpha}\cap(\cup\mathcal{B}^{(\alpha)}_n)=\emptyset$ for all $n$. For each $x\in X\setminus Z$ choose $B_x\in\mathcal{B}$ such that $x\in B_x$ and $B_x\cap Z=\emptyset$. Define $\mathcal{U}_n=\{V_{y_\alpha} : \alpha<\kappa\}\cup\{B_x : x\in X\setminus Z\}$. Clearly $\mathcal{U}$ is an open cover of $X$. Next consider a countable subset $\mathcal{V}$ of $\mathcal{U}$ and  enumerate $\mathcal{V}$ as $\{U_n : n\in\mathbb{N}\}$. For each $n$ define $\mathcal{V}_n=\{U_n\}$. Thus there is a $\alpha_0<\kappa$ such that $\mathcal{V}_n\subseteq\mathcal{B}^{(\alpha_0)}_n$ for all $n$. It follows that $V_{y_{\alpha_0}}\cap U_n\subseteq V_{y_{\alpha_0}}\cap(\cup\mathcal{B}^{(\alpha_0)}_n)=\emptyset$ for all $n$ i.e. $V_{y_{\alpha_0}}\cap(\cup\mathcal{V})=\emptyset$. Since $V_{y_{\alpha_0}}$ is the unique member of $\mathcal{U}$ containing $y_{\alpha_0}$, $y_{\alpha_0}\notin St(\cup\mathcal{V},\mathcal{U})$. As a result, $X$ is not star-Lindel\"{o}f.
\end{proof}

\begin{Cor}
\label{NC2}
For a regular $P$-space $X$ the following assertions hold.
\begin{enumerate}[wide=0pt,label={\upshape(\arabic*)},
ref={\theCor(\arabic*)}, leftmargin=*]
  \item\label{NC201}  Let $X$ satisfy $\NSUf(\mathcal{O},\Omega)$. If $Y$ is an infinite closed and discrete subset of $X$, then $|Y|<\cof(\Fin(w(X))^\mathbb{N})$.
  \item\label{NC202} Let $X$ be either star-Lindel\"{o}f or satisfy $\NSUf(\mathcal{O},\Omega)$. If $w(X)=\mathfrak{c}$, then every closed and discrete subset of $X$ has cardinality less than $\mathfrak{c}$.
\end{enumerate}
\end{Cor}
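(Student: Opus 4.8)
The plan is to reduce both parts of the corollary to Theorem~\ref{NT2} by means of the implications recorded in Figure~\ref{dig2}. The key observation is that $\NSUf(\mathcal{O},\Omega)$ implies $\NSUf(\mathcal{O},\mathcal{O})$, and that the latter already forces star-Lindel\"ofness: applying the hypothesis $\NSUf(\mathcal{O},\mathcal{O})$ to the constant sequence $\mathcal{U}_n=\mathcal{U}$ for a fixed open cover $\mathcal{U}$ produces finite sets $\mathcal{V}_n\subseteq\mathcal{U}$ whose union $\mathcal{W}=\bigcup_{n}\mathcal{V}_n$ is a countable subfamily of $\mathcal{U}$ with $St(\cup\mathcal{W},\mathcal{U})=X$. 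So a regular $P$-space with the property $\NSUf(\mathcal{O},\Omega)$ is, in particular, a star-Lindel\"of regular $P$-space.

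For part~(1), given such an $X$ and an infinite closed discrete $Y\subseteq X$, Theorem~\ref{NT2} applies verbatim and yields $|Y|<\cof(\Fin(w(X))^\mathbb{N})$. For part~(2), under either hypothesis $X$ is again a star-Lindel\"of regular $P$-space, so I would argue as follows. Let $Y$ be a closed and discrete subset of $X$ with $w(X)=\mathfrak{c}$. If $Y$ is finite, then $|Y|<\omega_1\leq\mathfrak{c}$ and we are done. If $Y$ is infinite, Theorem~\ref{NT2} gives $|Y|<\cof(\Fin(w(X))^\mathbb{N})=\cof(\Fin(\mathfrak{c})^\mathbb{N})$, and Lemma~\ref{L104} identifies the right-hand side as $\mathfrak{c}$; hence $|Y|<\mathfrak{c}$.

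I do not expect any genuine obstacle here: the whole content is already carried by Theorem~\ref{NT2} and Lemma~\ref{L1}. The only points requiring a little care are the routine verification that $\NSUf(\mathcal{O},\mathcal{O})\Rightarrow$ star-Lindel\"of (which is exactly the corresponding arrow in Figure~\ref{dig2}), the splitting into the finite and infinite cases for $Y$ in part~(2), and the appeal to Lemma~\ref{L104} rather than Lemma~\ref{L103}, since the latter is stated only for $\kappa<\aleph_\omega$ and does not cover $\kappa=\mathfrak{c}$.
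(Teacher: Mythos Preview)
Your proposal is correct and takes essentially the same approach as the paper: the corollary is stated without proof immediately after Theorem~\ref{NT2}, relying implicitly on the arrow $\NSUf(\mathcal{O},\mathcal{O})\to\text{star-Lindel\"of}$ in Figure~\ref{dig2} together with Lemma~\ref{L104}, which is exactly what you unpack.
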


Note that the role of $P$-space is essential in Theorem~\ref{NT2} and Corollary~\ref{NC2}. Consider the (regular) space $X=\Psi(\mathcal{A})$ with $|\mathcal{A}|=\mathfrak{c}$. Now $X$ satisfies $\NSUf(\mathcal{O},\Omega)$ and hence $X$ is star-Lindel\"{o}f. It is well known that $\mathcal{A}$ is a closed and discrete subset of $X$ and $w(X)=\mathfrak{c}$. By Lemma~\ref{L104}, $|\mathcal{A}|\nless\cof(\Fin(w(X))^\mathbb{N})$.

\subsection{Local countable cellularity and the star operations}
Throughout the section all spaces are assumed to be Hausdorff, unless a specific separation axiom is mentioned. A family of pairwise disjoint nonempty open sets in a space is called a cellular family.
\begin{Def}
\label{D8}
A space is said to have local countable cellularity if every cellular family is locally countable.
\end{Def}

Recall that a space $X$ is called collectionwise normal if for every discrete family $\{F_\alpha : \alpha\in\Lambda\}$ of closed sets of $X$ there exists a pairwise disjoint family $\{U_\alpha : \alpha\in\Lambda\}$ of open sets of $X$ such that $F_\alpha\subseteq U_\alpha$ for every $\alpha\in\Lambda$.

We now recollect few definitions from \cite{WCH,RWCH}.
\begin{enumerate}[wide=0pt,label={\upshape(\arabic*)},leftmargin=*]
  \item A subset $S$ of a space $X$ is said to be separated if there exists a collection $\{U_x : x\in S\}$ of disjoint open sets with $x\in U_x$ for every $x\in S$ and the collection $\{U_x : x\in S\}$ is called a separation of $S$.
  \item A space $X$ is said to be $\leq\kappa$-collectionwise Hausdorff if every closed discrete subset of size $\leq\kappa$ can be separated.
  \item A space $X$ is said to be collectionwise Hausdorff if it is $\leq\kappa$-collectionwise Hausdorff for every cardinal $\kappa$.
  \item A subset $S$ of a space $X$ is said to be weakly separated if it has a subset of size $|S|$ that is separated.
  \item A space $X$ is said to be weakly $\kappa$-collectionwise Hausdorff if every closed discrete subset of size $\kappa$ is weakly separated.
  \item A space $X$ is said to be weakly collectionwise Hausdorff if it is weakly $\kappa$-collectionwise Hausdorff for every cardinal $\kappa$.
\end{enumerate}
%
%
%
%
%

Clearly every collectionwise normal space is collectionwise Hausdorff and every collectionwise Hausdorff space is weakly collectionwise Hausdorff.

\begin{Th}
\label{T48}
If $D$ is an uncountable separated closed set in $X$ with a locally countable separation, then $X$ is not star-Lindel\"{o}f and $X$ does not satisfy $\NSUf(\mathcal{O},\Omega)$.
\end{Th}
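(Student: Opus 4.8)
The plan is to exhibit a single open cover of $X$ witnessing the failure of both properties, and to extract from it a counting lemma that handles the star-Lindel\"{o}f case directly and the $\NSUf(\mathcal{O},\Omega)$ case via a constant sequence. Let $\{U_x : x\in D\}$ be a separation of $D$ that is locally countable. Since the $U_x$ are pairwise disjoint and $x\in U_x$, one has $U_x\cap D=\{x\}$ for every $x\in D$. Because $D$ is closed, $X\setminus D$ is open, and by local countability each point $p\in X\setminus D$ has an open neighbourhood $O_p$ meeting only countably many of the sets $U_x$; then $O_p\setminus D$ is an open subset of $X\setminus D$ still meeting only countably many $U_x$. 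I would set
\[\mathcal{U}=\{U_x : x\in D\}\cup\{O_p\setminus D : p\in X\setminus D\},\]
which is an open cover of $X$, and note that for $y\in D$ the unique member of $\mathcal{U}$ containing $y$ is $U_y$.

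The key step would be the following counting observation: for every \emph{countable} $\mathcal{V}\subseteq\mathcal{U}$ the set $St(\cup\mathcal{V},\mathcal{U})$ omits uncountably many points of $D$. Indeed, $\cup\mathcal{V}$ is the union of countably many sets of the form $U_x$ and countably many of the form $O_p\setminus D$; a $U_y$ meets the former only when $y=x$, and the latter, all together, meet only countably many $U_y$. Hence there is a countable set $D_0\subseteq D$ such that $U_y\cap\bigcup\mathcal{V}=\emptyset$ for every $y\in D\setminus D_0$, and since $U_y$ is the only element of $\mathcal{U}$ containing such a $y$, we get $y\notin St(\cup\mathcal{V},\mathcal{U})$. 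As $D$ is uncountable, $D\setminus D_0$ is nonempty (indeed uncountable). This immediately shows $X$ is not star-Lindel\"{o}f.

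For $\NSUf(\mathcal{O},\Omega)$ I would take the constant sequence $\mathcal{U}_n=\mathcal{U}$. Given any sequence $(\mathcal{V}_n)$ with $\mathcal{V}_n$ a finite subset of $\mathcal{U}_n$, the set $A=\bigcup_{m\in\mathbb{N}}(\cup\mathcal{V}_m)$ is contained in $\cup\mathcal{V}$ for the countable family $\mathcal{V}=\bigcup_{m}\mathcal{V}_m\subseteq\mathcal{U}$, so by the key step $St(A,\mathcal{U}_n)=St(A,\mathcal{U})\subseteq St(\cup\mathcal{V},\mathcal{U})\neq X$ for every $n$. Thus $\{St(A,\mathcal{U}_n):n\in\mathbb{N}\}$ fails to cover $X$ and in particular is not an $\omega$-cover, so $X$ does not satisfy $\NSUf(\mathcal{O},\Omega)$.

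The only real subtlety — hence the part deserving care — is the choice of cover: the naive cover $\{U_x:x\in D\}\cup\{X\setminus D\}$ fails because a fixed $U_y$ typically meets $X\setminus D$, so its star is all of $X$. It is precisely the local countability hypothesis that lets us replace $X\setminus D$ by the pieces $O_p\setminus D$, each touching only countably many $U_x$, which is what makes the counting argument go through.
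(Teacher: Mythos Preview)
Your proof is correct and follows essentially the same route as the paper: the cover built from the separation together with small neighbourhoods of points in $X\setminus D$, and the same counting argument showing no countable subfamily can star-cover $X$. The only cosmetic difference is that the paper concludes the $\NSUf(\mathcal{O},\Omega)$ part by invoking the implication $\NSUf(\mathcal{O},\Omega)\Rightarrow$ star-Lindel\"{o}f from Figure~\ref{dig2}, whereas you spell this out directly with the constant sequence.
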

\begin{proof}
Let $D=\{d_\alpha : \alpha<\omega_1\}$ and $\mathcal{U}=\{U_\alpha : \alpha<\omega_1\}$ be a locally countable separation of $D$, where $d_\alpha\in U_\alpha$ for each $\alpha<\omega_1$. For each $x\in X\setminus D$ let $U_x$ be an open set in $X\setminus D$ containing $x$ such that $U_x$ intersects only countably many members of $\mathcal{U}$. Clearly $\mathcal{W}=\mathcal{U}\cup\{U_x : x\in X\setminus D\}$ is an open cover of $X$.

We now show that there is no such countable set $\mathcal{V}\subseteq\mathcal{W}$ satisfying $X=St(\cup\mathcal{V},\mathcal{W})$. Indeed, suppose that there exists a countable set $\mathcal{V}\subseteq\mathcal{W}$ such that $X=St(\cup\mathcal{V},\mathcal{W})$. Now $\mathcal{V}$ being a countable subset of $\mathcal{W}$, contains only countably many members of $\{U_x : x\in X\setminus D\}$. Clearly $\mathcal{V}$ intersects only countably many members of $\mathcal{U}$. It follows that there exists a $\beta<\omega_1$ such that $U_\beta\cap(\cup\mathcal{V})=\emptyset$. Thus $d_\beta\notin St(\cup\mathcal{V},\mathcal{W})$ as $U_\beta$ is the only member of $\mathcal{W}$ containing $d_\beta$ and we arrive at a contradiction. Therefore $X$ is not star-Lindel\"{o}f.
\end{proof}

\begin{Cor}
\label{C5}
Suppose that $X$ has local countable cellularity. If $X$ is either star-Lindel\"{o}f or satisfies $\NSUf(\mathcal{O},\Omega)$, then any of the following conditions implies that $e(X)\leq\omega$.
\begin{enumerate}[wide=0pt,label={\upshape(\arabic*)},
ref={\theCor(\arabic*)}, leftmargin=*]
  \item\label{C501} $X$ is weakly $\omega_1$-collectionwise Hausdorff.
  \item\label{C502} $X$ is weakly collectionwise Hausdorff.
  \item\label{C503} $X$ is collectionwise Hausdorff.
  \item\label{C504} $X$ is collectionwise normal.
\end{enumerate}
\end{Cor}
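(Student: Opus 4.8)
The plan is to argue by contradiction and reduce everything to Theorem~\ref{T48}. First I would record that among the four conditions the implications (4)$\Rightarrow$(3)$\Rightarrow$(2)$\Rightarrow$(1) hold (as noted just before the corollary), so it suffices to carry out the argument assuming that $X$ has local countable cellularity, is either star-Lindel\"{o}f or satisfies $\NSUf(\mathcal{O},\Omega)$, and is weakly $\omega_1$-collectionwise Hausdorff. Supposing $e(X)>\omega$, I would choose a closed discrete subspace of $X$ of cardinality at least $\omega_1$ and pass to a subset $D$ of size exactly $\omega_1$; since every subset of a closed discrete set is again closed and discrete, $D$ is a closed discrete set with $|D|=\omega_1$.

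Next I would apply weak $\omega_1$-collectionwise Hausdorffness to $D$: it is weakly separated, so there is a subset $D'\subseteq D$ with $|D'|=|D|=\omega_1$ that is separated, say by a family $\{U_x : x\in D'\}$ of pairwise disjoint open sets with $x\in U_x$ for every $x\in D'$. The key observation is that this separation is a cellular family — its members are nonempty (each contains its point) and pairwise disjoint — so the hypothesis that $X$ has local countable cellularity forces $\{U_x : x\in D'\}$ to be locally countable. Hence $D'$ is an uncountable separated closed set in $X$ admitting a locally countable separation, and Theorem~\ref{T48} yields that $X$ is neither star-Lindel\"{o}f nor satisfies $\NSUf(\mathcal{O},\Omega)$. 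This contradicts the standing hypothesis on $X$, so $e(X)\leq\omega$.

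I do not expect a real obstacle here, since the combinatorial heart of the matter has already been isolated in Theorem~\ref{T48}. The only steps requiring a line of care are the reduction to a closed discrete subset of size precisely $\omega_1$ (using that subsets of closed discrete sets are closed discrete, and that the supremum of a set of at most countable cardinalities is $\omega$) and the remark that a "separation" in the sense used here is literally a cellular family, which is exactly what makes the local-countable-cellularity hypothesis applicable.
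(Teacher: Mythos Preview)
Your proposal is correct and matches the intended derivation: the paper leaves Corollary~\ref{C5} without proof since it follows immediately from Theorem~\ref{T48}, and you have spelled out exactly that deduction---reduce to condition~(1), extract a separated uncountable closed discrete set, observe that its separation is a cellular family and hence locally countable, and invoke Theorem~\ref{T48}.
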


\begin{Th}
\label{P4}
Every $T_1$ star-Lindel\"{o}f space is DCCC.
\end{Th}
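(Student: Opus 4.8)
The plan is to reduce the statement to Theorem~\ref{T48}. Recall that a space satisfies the DCCC if every discrete family of nonempty open sets is countable. So I would argue by contradiction: suppose $X$ is a $T_1$ star-Lindel\"{o}f space admitting a discrete family $\{U_\alpha : \alpha < \omega_1\}$ of nonempty open sets.

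First I would pick a point $x_\alpha \in U_\alpha$ for each $\alpha < \omega_1$ and set $D = \{x_\alpha : \alpha < \omega_1\}$. Since the members of a discrete family are pairwise disjoint, the $x_\alpha$ are distinct, so $|D| = \omega_1$. The next step — and the only place where the $T_1$ hypothesis actually enters — is to check that $D$ is a closed discrete subset of $X$ and that $\{U_\alpha : \alpha < \omega_1\}$ is a separation of $D$: given $y \in X$, discreteness furnishes an open $W \ni y$ meeting at most one $U_\alpha$, hence containing at most one point of $D$, and if that point is distinct from $y$ we delete it using $T_1$ to obtain a neighbourhood of $y$ disjoint from $D$; thus $D$ is closed, the same sets $W$ witness that $D$ is discrete in itself, and the pairwise disjoint sets $U_\alpha$ with $x_\alpha \in U_\alpha$ form a separation of $D$. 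Since a discrete family is in particular locally finite, this separation is locally countable.

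Now $D$ is an uncountable separated closed set in $X$ with a locally countable separation, so by Theorem~\ref{T48} the space $X$ is not star-Lindel\"{o}f, contradicting the hypothesis. Hence no such uncountable discrete family of open sets exists, i.e.\ $X$ is DCCC.

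The main (indeed essentially the only) obstacle is the verification in the second paragraph that $D$ is closed and discrete and that $\{U_\alpha\}$ separates it; this is routine, but it is exactly the point at which $T_1$ is used, and after it the result is immediate from Theorem~\ref{T48}. Alternatively, one could avoid citing Theorem~\ref{T48} and argue directly: enlarge $\{U_\alpha : \alpha < \omega_1\}$ to an open cover $\mathcal{W}$ by adjoining, for each $x \in X \setminus D$, a set $G_x = W_x \setminus D$ (open since $D$ is closed) with $x \in G_x$ and $G_x$ meeting at most one $U_\alpha$; apply star-Lindel\"{o}fness to obtain a countable $\mathcal{V} \subseteq \mathcal{W}$ with $St(\cup\mathcal{V},\mathcal{W}) = X$; observe that $\cup\mathcal{V}$ meets only countably many of the $U_\alpha$, choose $\beta$ with $U_\beta \cap (\cup\mathcal{V}) = \emptyset$, and note that $U_\beta$ is the unique member of $\mathcal{W}$ containing $x_\beta$, whence $x_\beta \notin St(\cup\mathcal{V},\mathcal{W})$ — a contradiction.
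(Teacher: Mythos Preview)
Your proof is correct. The paper does not invoke Theorem~\ref{T48} but argues directly, and its argument is essentially your ``alternative'' at the end: given a discrete family $\mathcal{D}$ of nonempty open sets, pick $x(D)\in D$ for each $D\in\mathcal{D}$, set $A=\{x(D):D\in\mathcal{D}\}$, let $\mathcal{W}$ be the family of all open sets meeting at most one member of $\mathcal{D}$ and disjoint from $A$ (this is where $T_1$ is tacitly used, to ensure $\mathcal{U}=\mathcal{W}\cup\mathcal{D}$ covers $X$), and apply star-Lindel\"{o}fness to get a countable $\mathcal{V}\subseteq\mathcal{U}$ with $X=\bigcup\{St(V,\mathcal{U}):V\in\mathcal{V}\}$; since the only member of $\mathcal{U}$ containing $x(D)$ is $D$ itself, each $D$ must meet some $V\in\mathcal{V}$, while each $V$ meets at most one $D$, so $|\mathcal{D}|\le|\mathcal{V}|\le\omega$.

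Your primary route---verifying that $D$ is closed and discrete with locally countable separation $\{U_\alpha\}$ and then citing Theorem~\ref{T48}---is a perfectly legitimate reduction; it has the virtue of reusing an already-stated lemma and making explicit exactly where $T_1$ enters (the closedness of $D$), at the minor cost of an indirect argument and a couple of extra verifications. The underlying combinatorics are the same in both approaches.
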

\begin{proof}
Let $\mathcal{D}$ be a discrete family of nonempty open sets in a $T_1$ star-Lindel\"{o}f space $X$. Choose a member $x(D)\in D$ for each $D\in\mathcal{D}$ and set $A=\{x(D) : D\in\mathcal{D}\}$. Let $\mathcal{W}$ be the collection of all open sets that meet at most one member of $\mathcal{D}$ and that are disjoint from $A$. Since $\mathcal{D}$ is discrete,  $\mathcal{U}=\mathcal{W}\cup\mathcal{D}$ is an open cover of $X$ and every element of $\mathcal{U}$ intersects at most one member of $\mathcal{D}$. Also since $X$ is star-Lindel\"{o}f, there is a countable subset $\mathcal{V}$ of $\mathcal{U}$ such that $\{St(V,\mathcal{U}) : V\in\mathcal{V}\}$ covers $X$. In order to cover $A$ every member of $\mathcal{D}$ must intersect an element of $\mathcal{U}$. Now $A$ is countable as every element of $\mathcal{V}$ intersects at most one member of $\mathcal{D}$. Clearly $\mathcal{D}$ is countable and hence $X$ is DCCC.
\end{proof}

\begin{Cor}
\label{C6}
Every $T_1$  space satisfying $\NSUf(\mathcal{O},\Omega)$ is DCCC.
\end{Cor}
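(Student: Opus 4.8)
The plan is to reduce the statement to Theorem~\ref{P4}. From the implication diagram in Figure~\ref{dig2} one reads off $\NSUf(\mathcal{O},\Omega)\Rightarrow\NSUf(\mathcal{O},\mathcal{O})\Rightarrow$ star-Lindel\"{o}f, so any $T_1$ space satisfying $\NSUf(\mathcal{O},\Omega)$ is a $T_1$ star-Lindel\"{o}f space and Theorem~\ref{P4} applies directly. At bottom the corollary is a one-line consequence, and this is the proof I would write first.

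For a self-contained argument I would instead repeat the proof of Theorem~\ref{P4} up to the construction of the cover: given a discrete family $\mathcal{D}$ of nonempty open sets in the $T_1$ space $X$, choose $x(D)\in D$ for each $D\in\mathcal{D}$, put $A=\{x(D):D\in\mathcal{D}\}$, let $\mathcal{W}$ be the collection of all open sets meeting at most one member of $\mathcal{D}$ and disjoint from $A$, and set $\mathcal{U}=\mathcal{W}\cup\mathcal{D}$; as in Theorem~\ref{P4}, $\mathcal{U}$ is an open cover of $X$ each of whose members meets at most one member of $\mathcal{D}$. Now apply $\NSUf(\mathcal{O},\Omega)$ to the constant sequence $(\mathcal{U}_n)$ with $\mathcal{U}_n=\mathcal{U}$, obtaining finite $\mathcal{V}_n\subseteq\mathcal{U}$ such that $\{St(\bigcup_{m\in\mathbb{N}}(\bigcup\mathcal{V}_m),\mathcal{U}_n):n\in\mathbb{N}\}$ is an $\omega$-cover of $X$. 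Because the $\mathcal{U}_n$ all coincide, every term of this family equals the single set $St(\bigcup\mathcal{V}',\mathcal{U})$, where $\mathcal{V}'=\bigcup_{m\in\mathbb{N}}\mathcal{V}_m$ is a countable subfamily of $\mathcal{U}$; and an $\omega$-cover whose members are all one fixed set forces that set to be $X$, so $St(\bigcup\mathcal{V}',\mathcal{U})=X$.

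From here the counting is exactly as at the end of the proof of Theorem~\ref{P4}: for each $D\in\mathcal{D}$ we have $x(D)\in St(\bigcup\mathcal{V}',\mathcal{U})$, so there is a $U\in\mathcal{U}$ with $x(D)\in U$ and $U\cap(\bigcup\mathcal{V}')\neq\emptyset$; since $U$ meets $A$ it does not lie in $\mathcal{W}$, and since $x(D)\in U\cap D$ it must be $D$ itself, whence $D$ meets some member of $\mathcal{V}'$. As every member of $\mathcal{U}$, in particular every member of $\mathcal{V}'$, meets at most one member of $\mathcal{D}$, assigning to each $D$ such a member of $\mathcal{V}'$ is injective, so $|\mathcal{D}|\le|\mathcal{V}'|\le\omega$ and $X$ is DCCC. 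The argument is routine; the only point needing a moment's care is the degeneration of the selection principle on a constant sequence of covers — namely that an $\omega$-cover with all members equal must be $\{X\}$ — and this is precisely what makes $\NSUf(\mathcal{O},\Omega)$ (indeed $\NSUf(\mathcal{O},\mathcal{O})$) imply the star-Lindel\"{o}f property, so there is no genuine obstacle beyond bookkeeping.
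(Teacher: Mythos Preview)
Your proposal is correct and matches the paper's intended approach: the paper gives no separate proof for this corollary, treating it as an immediate consequence of Theorem~\ref{P4} via the implication $\NSUf(\mathcal{O},\Omega)\Rightarrow\NSUf(\mathcal{O},\mathcal{O})\Rightarrow$ star-Lindel\"{o}f recorded in Figure~\ref{dig2}, which is precisely your first one-line argument. Your second, self-contained version simply unfolds that implication on a constant sequence of covers and is also fine.
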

%

\begin{Ex}
\label{E2}
\emph{There exists a Tychonoff DCCC space $X$ which is not star-Lindel\"{o}f. Hence $X$ does not satisfy $\NSUf(\mathcal{O},\Omega)$.}\\
Let $X=([0,\omega_1]\times[0,\omega_1))\cup(D\times\{\omega_1\})$, where $D$ is the collection of all isolated ordinals in $[0,\omega_1)$. Observe that $X$ is Tychonoff and since $[0,\omega_1]\times[0,\omega_1)$ is a countably compact dense subset of $X$, $X$ is pseudocompact and so is DCCC. Consider the open cover $\mathcal{U}=([0,\omega_1]\times[0,\omega_1))\cup\{\{\alpha\}\times [0,\omega_1] : \alpha\in D\}$ of $X$. Suppose that $\mathcal{V}$ is a countable subset of $\mathcal{U}$ satisfying $X=St(\cup\mathcal{V},\mathcal{U})$. Now choose a $\alpha_0$ such that $(\cup\mathcal{V})\cap(\{\alpha_0\}\times [0,\omega_1])=\emptyset$. Since $\{\alpha_0\}\times [0,\omega_1]$ is the only member of $\mathcal{U}$ containing $(\alpha_0,\omega_1)$, it follows that $(\alpha_0,\omega_1)\notin St(\cup\mathcal{V},\mathcal{U})$, which is a contradiction. Thus $X$ is not star-Lindel\"{o}f.
\end{Ex}

Clearly every separable space is strongly star-Lindel\"{o}f (and hence satisfies $\NSUf(\mathcal{O},\Omega)$). On the other way round, the one point Lindel\"{o}fication $D^*$ of the discrete space $D$ of cardinality $\mathfrak{c}$ is Lindel\"{o}f (and hence strongly star-Lindel\"{o}f) but $D^*$ is not separable.
%
%

\begin{Th}
\label{P6}
Let $X$ be a star-Lindel\"{o}f perfect space. If $X$ has local countable cellularity, then $X$ is CCC.
\end{Th}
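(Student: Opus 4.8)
The plan is to argue by contradiction: suppose $X$ is a star-Lindel\"{o}f perfect space with local countable cellularity that fails to be CCC, and derive a contradiction from Theorem~\ref{P4} together with perfectness. Since $X$ is not CCC, there is an uncountable cellular family $\mathcal{C}=\{C_\alpha : \alpha<\omega_1\}$ of nonempty open sets. By local countable cellularity, $\mathcal{C}$ is locally countable, so each point of $X$ has a neighbourhood meeting only countably many $C_\alpha$; in particular $\mathcal{C}$ is point-countable and, more importantly, every point lies in the interior of the set where only countably many members are seen. The goal is to promote this locally countable cellular family to a genuinely \emph{discrete} family of nonempty open sets, which would contradict Theorem~\ref{P4} (every $T_1$ star-Lindel\"{o}f space is DCCC) --- note a star-Lindel\"{o}f space is $T_1$ here only if we assume it, but recall the standing assumption of this subsection is that all spaces are Hausdorff.

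The main step is to use perfectness to refine $\mathcal{C}$ into a discrete family. First I would pass to a single point $x_0$ chosen so that, by locally countable cellularity, the picture near $x_0$ is countable, and throw away the countably many $C_\alpha$ meeting a fixed neighbourhood; since $\mathcal C$ is uncountable this still leaves uncountably many. More systematically: for a locally countable family $\mathcal{C}$ the set $E=\bigcup_{\alpha}\overline{C_\alpha}\setminus\bigcup_\alpha C_\alpha$ of "new" accumulation points is closed, and because $X$ is perfect every closed set is $G_\delta$, hence $E=\bigcap_n G_n$ with $G_n$ open. Shrinking each $C_\alpha$ to $C_\alpha\cap G_n$ for suitable $n$ (chosen via a pressing-down / counting argument against the locally countable structure) should yield an uncountable subfamily that is discrete in $X$: local countability guarantees the closure of the union of any subfamily adds no point outside $E$, and separating from $E$ along the $G_\delta$ filtration removes the residual accumulation. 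The upshot is an uncountable discrete family of nonempty open sets, contradicting Theorem~\ref{P4}, so $\mathcal{C}$ cannot have been uncountable and $X$ is CCC.

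The hard part will be the refinement step --- turning "locally countable" into "discrete" while keeping the family uncountable. Local countability alone does not give discreteness (a locally countable family can have accumulation points, all of which necessarily lie in $E$), so the use of perfectness is essential: it is precisely the hypothesis that lets us peel off $E$ as a countable intersection of open sets and, via a counting argument on which level $G_n$ each $C_\alpha$ can be pushed into, extract an uncountable piece that stays away from $E$ and is therefore closed-discrete. I would set this up carefully with a function $\alpha\mapsto n(\alpha)$ recording the first level at which $\overline{C_\alpha\cap G_{n(\alpha)}}$ misses $E$, fix an uncountable fiber, and then verify discreteness using local countability of the original family. Once that refined family is in hand, the contradiction with DCCC (Theorem~\ref{P4}) is immediate, and the proof is complete.
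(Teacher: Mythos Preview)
Your approach diverges from the paper's and the refinement step has a real gap. The paper does not try to upgrade the cellular family to a \emph{discrete} family and invoke DCCC (Theorem~\ref{P4}). Instead it picks a point $d_\alpha\in U_\alpha$ from each cell, obtains an uncountable discrete set $D=\{d_\alpha:\alpha<\omega_1\}$, uses perfectness (via \cite[Lemma 3.5]{SRSS}) to pass to an uncountable \emph{closed} discrete subset, and then observes that this closed set is separated by $\{U_\alpha\}$, which is locally countable precisely by the hypothesis of local countable cellularity. Theorem~\ref{T48} then yields the contradiction directly. So perfectness is used only to make the selected \emph{points} closed discrete, not to make the \emph{family} discrete.

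Your route through DCCC asks for strictly more than Theorem~\ref{T48} needs, and the sketch for obtaining it does not work as written. First, $E=\bigcup_\alpha\overline{C_\alpha}\setminus\bigcup_\alpha C_\alpha$ need not be closed; presumably you intend the boundary $\overline{\bigcup_\alpha C_\alpha}\setminus\bigcup_\alpha C_\alpha$. Second, ``shrinking $C_\alpha$ to $C_\alpha\cap G_n$'' goes the wrong way: $G_n\supseteq E$, so intersecting with $G_n$ does nothing to separate the cells from $E$. Third, and more fundamentally, even after selecting an uncountable closed discrete set of points $D'\subseteq D$, the corresponding subfamily $\{C_\alpha:d_\alpha\in D'\}$ need not be discrete: a point $x\notin D'$ has a neighbourhood missing $D'$, but that neighbourhood may still meet countably many $C_\alpha$, and local countability gives you no leverage to cut this down to ``at most one''. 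The paper avoids this obstacle entirely by appealing to Theorem~\ref{T48}, which only requires a separated closed set with a locally countable separation --- exactly what the construction hands you.
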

\begin{proof}
Suppose that $X$ is not CCC. Choose an uncountable pairwise disjoint family $\mathcal{U}$ of nonempty open sets of $X$. Say $\mathcal{U}=\{U_\alpha : \alpha<\omega_1\}$. For each $\alpha<\omega_1$ choose $d_\alpha\in U_\alpha$ and set $D=\{d_\alpha : \alpha<\omega_1\}$. Now $D$ is an uncountable discrete subset of $X$. Since $X$ is perfect, by \cite[Lemma 3.5]{SRSS}, we may assume that $D$ is a closed subset of $X$. Clearly $D$ is a separated set with a locally countable separation $\{U_\alpha : \alpha<\omega_1\}$. By Theorem~\ref{T48}, $X$ is not star-Lindel\"{o}f. This completes the proof.
\end{proof}

\begin{Cor}
\label{C10}
Let $X$ be a perfect space having local countable cellularity. If $X$ satisfies $\NSUf(\mathcal{O},\Omega)$, then $X$ is CCC.
\end{Cor}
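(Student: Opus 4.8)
The plan is to derive this from Theorem~\ref{P6} via the implication diagram in Figure~\ref{dig2}. Since $\NSUf(\mathcal{O},\Omega)$ implies $\NSUf(\mathcal{O},\mathcal{O})$, and the latter implies star-Lindel\"{o}fness, any space satisfying $\NSUf(\mathcal{O},\Omega)$ is star-Lindel\"{o}f. Hence a perfect space $X$ with local countable cellularity that satisfies $\NSUf(\mathcal{O},\Omega)$ is a perfect star-Lindel\"{o}f space with local countable cellularity, and Theorem~\ref{P6} immediately gives that $X$ is CCC. On this route the corollary is a one-line consequence.

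Alternatively, I would give a self-contained proof paralleling that of Theorem~\ref{P6} but invoking the $\NSUf(\mathcal{O},\Omega)$ conclusion of Theorem~\ref{T48} rather than its star-Lindel\"{o}f conclusion. Suppose $X$ is not CCC and fix an uncountable cellular family $\{U_\alpha : \alpha<\omega_1\}$ of nonempty open sets. Choosing $d_\alpha\in U_\alpha$ and setting $D=\{d_\alpha : \alpha<\omega_1\}$ produces an uncountable discrete subspace; since $X$ is perfect, \cite[Lemma 3.5]{SRSS} lets me pass to an uncountable subset of $D$ that is closed in $X$, and after re-indexing $D$ is a closed discrete set with separation $\{U_\alpha : \alpha<\omega_1\}$. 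This separation is locally countable because $\{U_\alpha : \alpha<\omega_1\}$ is a cellular family and $X$ has local countable cellularity. Theorem~\ref{T48} then says $X$ does not satisfy $\NSUf(\mathcal{O},\Omega)$, contradicting the hypothesis, so $X$ is CCC.

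There is essentially no obstacle here, since the real work is already carried out in Theorems~\ref{T48} and~\ref{P6}. The only points that need a moment's care are that passing to a closed subset of $D$ using perfectness does not destroy uncountability --- which is precisely what \cite[Lemma 3.5]{SRSS} guarantees --- and that a subfamily of a locally countable family remains locally countable, so that the separation of the closed set $D$ is still locally countable. I would likely present the short argument via Theorem~\ref{P6} and the diagram, remarking that the same proof as Theorem~\ref{P6} also works directly.
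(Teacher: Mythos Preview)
Your proposal is correct and follows essentially the same route as the paper: the corollary is stated immediately after Theorem~\ref{P6} with no separate proof, so the intended argument is precisely your first one, namely $\NSUf(\mathcal{O},\Omega)\Rightarrow\NSUf(\mathcal{O},\mathcal{O})\Rightarrow$ star-Lindel\"{o}f via Figure~\ref{dig2}, and then Theorem~\ref{P6} applies. Your alternative direct argument via Theorem~\ref{T48} is also fine and simply unwinds the proof of Theorem~\ref{P6}.
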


\begin{Prop}
\label{P7}
Let $X$ be a star-Lindel\"{o}f space having local countable cellularity. Any of the following conditions implies that $X$ is CCC.
\begin{enumerate}[wide=0pt,label={\upshape(\arabic*)},
ref={\theProp(\arabic*)}, leftmargin=*]
  \item\label{P701}  $X$ is a union of countably many closed discrete subsets.
  \item\label{P702} $X$ is a semi-stratifiable space.
  \item\label{P703} $X$ is a Moore space.
\end{enumerate}
\end{Prop}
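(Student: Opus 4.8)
The plan is to reduce all three cases to Theorem~\ref{P6}, which already handles \emph{perfect} star-Lindel\"{o}f spaces with local countable cellularity; so it suffices to check that each of $(1)$--$(3)$ forces $X$ to be perfect, i.e. that every open subset of $X$ is $F_\sigma$ (equivalently every closed subset is $G_\delta$). Since throughout this subsection $X$ is assumed Hausdorff, hence $T_1$, I will freely use the elementary fact that every subset of a closed discrete subspace is again closed and discrete in $X$: if $E\subseteq D$ with $D$ closed discrete, then $E$ is discrete, $\overline{E}\subseteq\overline{D}=D$, and for $x\in D\setminus E$ discreteness of $D$ gives an open $V$ with $V\cap D=\{x\}$, so $V\cap E=\emptyset$ and $x\notin\overline{E}$; thus $\overline{E}=E$.

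For $(2)$, recall that every semi-stratifiable space is perfect: a semi-stratification assigns to each open set $U$ a nondecreasing sequence of closed sets with union $U$, so $U$ is $F_\sigma$. For $(3)$, every Moore space is developable, and every developable space is semi-stratifiable (for a development $(\mathcal{G}_n)$ one may take $g(n,U)=X\setminus St(X\setminus U,\mathcal{G}_n)$), hence perfect; so $(3)$ follows from $(2)$. In both cases Theorem~\ref{P6} applies verbatim and yields that $X$ is CCC.

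For $(1)$, write $X=\bigcup_{n\in\mathbb{N}}D_n$ with each $D_n$ closed and discrete. For any $A\subseteq X$ we have $A=\bigcup_{n}(A\cap D_n)$, and each $A\cap D_n$ is closed discrete by the remark above; thus $A$ is $F_\sigma$, so in particular every open subset of $X$ is $F_\sigma$ and $X$ is perfect, and Theorem~\ref{P6} again finishes the argument. Alternatively one can bypass Theorem~\ref{P6}: if $X$ were not CCC, pick an uncountable cellular family $\{U_\alpha:\alpha<\omega_1\}$ and points $d_\alpha\in U_\alpha$; then $D=\{d_\alpha:\alpha<\omega_1\}$ is uncountable, so $D\cap D_n$ is uncountable for some $n$, and $D\cap D_n$ is an uncountable closed discrete set for which $\{U_\alpha:d_\alpha\in D\cap D_n\}$ is a separation that is locally countable, being a subfamily of the locally countable cellular family $\{U_\alpha:\alpha<\omega_1\}$ (locally countable because $X$ has local countable cellularity). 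Theorem~\ref{T48} then contradicts the star-Lindel\"{o}fness of $X$.

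There is no real obstacle here: the only points requiring (easy) care are the $T_1$-based fact that subsets of closed discrete sets stay closed discrete, and recalling the standard chain Moore $\Rightarrow$ developable $\Rightarrow$ semi-stratifiable $\Rightarrow$ perfect. The content of the proposition is thus essentially a repackaging of Theorems~\ref{P6} and~\ref{T48}, and a completely parallel statement with ``star-Lindel\"{o}f'' replaced by $\NSUf(\mathcal{O},\Omega)$ (in the spirit of Corollary~\ref{C10}) follows by the same reduction.
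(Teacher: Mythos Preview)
Your proof is correct and follows exactly the route the paper implicitly intends: Proposition~\ref{P7} is stated without proof immediately after Theorem~\ref{P6}, and the evident reduction is to observe that each of the three hypotheses makes $X$ perfect (semi-stratifiable $\Rightarrow$ perfect; Moore $\Rightarrow$ semi-stratifiable; and a $\sigma$-closed-discrete $T_1$ space has every subset $F_\sigma$), after which Theorem~\ref{P6} applies. Your alternative direct argument for~(1) via Theorem~\ref{T48} is also sound and in fact just reproduces the mechanism inside the proof of Theorem~\ref{P6}.
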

%
%

\begin{Cor}
\label{C12}
Let $X$ be a space having local countable cellularity. If $X$ satisfies $\NSUf(\mathcal{O},\Omega)$, then any of the following conditions implies that $X$ is CCC.
\begin{enumerate}[wide=0pt,label={\upshape(\arabic*)},
ref={\theCor(\arabic*)}, leftmargin=*]
  \item\label{C121} $X$ is a union of countably many closed discrete subsets.
  \item\label{C122} $X$ is a semi-stratifiable space.
  \item\label{C123} $X$ is a Moore space.
\end{enumerate}
\end{Cor}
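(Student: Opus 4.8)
The plan is to reduce the statement to Proposition~\ref{P7}. The essential observation is that the hypothesis $\NSUf(\mathcal{O},\Omega)$ already forces star-Lindel\"{o}fness: reading off the implication diagram of Figure~\ref{dig2} one has $\NSUf(\mathcal{O},\Omega)\Rightarrow\NSUf(\mathcal{O},\mathcal{O})\Rightarrow$ star-Lindel\"{o}f. Hence $X$ is a star-Lindel\"{o}f space with local countable cellularity, Proposition~\ref{P7} applies verbatim, and each of the conditions (1)--(3) forces $X$ to be CCC.

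For completeness I would also record the direct argument, which passes straight through Theorem~\ref{T48}. Suppose $X$ is not CCC and fix an uncountable cellular family $\{U_\alpha : \alpha<\omega_1\}$; by local countable cellularity it is locally countable. Choose $d_\alpha\in U_\alpha$ for each $\alpha$ and set $D=\{d_\alpha : \alpha<\omega_1\}$. Since the $U_\alpha$ are pairwise disjoint, $d_\beta\notin U_\alpha$ whenever $\beta\neq\alpha$, so $D$ is discrete and $\{U_\alpha : \alpha<\omega_1\}$ is a locally countable separation of $D$. The remaining point needed to invoke Theorem~\ref{T48} is closedness of an uncountable part of $D$. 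Under (1), write $X=\bigcup_{n\in\mathbb{N}}F_n$ with each $F_n$ closed and discrete; as $D$ is uncountable, $D\cap F_{n_0}$ is uncountable for some $n_0$, and $D\cap F_{n_0}$, being a subset of the closed discrete set $F_{n_0}$, is closed and discrete in $X$; restricting the family to the indices $\alpha$ with $d_\alpha\in F_{n_0}$ yields a locally countable separation of it. Under (2) and (3), $X$ is perfect (every closed set in a semi-stratifiable space, and in particular in a Moore space, is a $G_\delta$), so by \cite[Lemma 3.5]{SRSS} we may shrink $D$ to an uncountable closed discrete subset, again carrying along a locally countable separation. In each case Theorem~\ref{T48} yields that $X$ does not satisfy $\NSUf(\mathcal{O},\Omega)$, contradicting the hypothesis; hence $X$ is CCC.

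I do not anticipate a genuine obstacle here: all the combinatorial content sits in Theorem~\ref{T48} (and, packaged, in Proposition~\ref{P7}), and the present statement only repackages it once one notes $\NSUf(\mathcal{O},\Omega)\Rightarrow$ star-Lindel\"{o}f. The single step that deserves a line of care is the passage from ``$X$ is not CCC'' to an uncountable \emph{closed} discrete set equipped with a locally countable separation; this is a pigeonhole argument in case (1) and an appeal to the perfectness lemma from \cite{SRSS} in cases (2) and (3), exactly as in the proofs of Theorem~\ref{P6} and Proposition~\ref{P7}.
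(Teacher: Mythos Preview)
Your proposal is correct and matches the paper's intended argument: the corollary is stated immediately after Proposition~\ref{P7} with no separate proof, so the paper's reasoning is precisely your first paragraph, namely $\NSUf(\mathcal{O},\Omega)\Rightarrow\NSUf(\mathcal{O},\mathcal{O})\Rightarrow$ star-Lindel\"{o}f (Figure~\ref{dig2}) followed by Proposition~\ref{P7}. Your second paragraph unpacks Proposition~\ref{P7} itself (which the paper also leaves without proof) in the expected way---pigeonhole for case~(1), perfectness plus \cite[Lemma~3.5]{SRSS} for cases~(2) and~(3)---and then feeds directly into the $\NSUf(\mathcal{O},\Omega)$ clause of Theorem~\ref{T48}; this is a faithful expansion rather than a different route.
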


Recall that a space $X$ is said to be monotonically normal if it admits an operator $N$ (called the monotone normality operator) that assigns to any point $x\in X$ and any open set $U\ni x$ an open set $N(x,U)$ such that $x\in N(x,U)\subseteq U$ and for any points $x,y\in X$ and open sets $U,V$ such that $x\in U$ and $y\in V$, it follows from $N(x,U)\cap N(y,V)\neq\emptyset$ that $x\in V$ or $y\in U$ \cite{MNS}.

\begin{Lemma}
\label{L2}
If $X$ is a star-Lindel\"{o}f monotonically normal space with local countable cellularity, then $e(X)\leq\omega$.
\end{Lemma}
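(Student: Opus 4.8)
The plan is to argue by contradiction: from an uncountable closed discrete subset I will manufacture a locally countable separation of an uncountable separated closed set and then quote Theorem~\ref{T48}. So suppose $e(X)>\omega$. Then $X$ has an uncountable closed discrete subset, and since every subset of a closed discrete set is again closed and discrete, I may fix such a set $D=\{d_\alpha : \alpha<\omega_1\}$ of cardinality exactly $\omega_1$.

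The first step is to separate $D$ by a pairwise disjoint family of nonempty open sets. One route is simply to invoke the standard fact that every monotonically normal space is (hereditarily) collectionwise normal, applied to the discrete family $\{\{d_\alpha\} : \alpha<\omega_1\}$ of closed sets. A more hands-on route is to take a monotone normality operator $N$ and, for each $\alpha<\omega_1$, set $W_\alpha=X\setminus(D\setminus\{d_\alpha\})$, which is open because $D$ is closed and discrete and contains $d_\alpha$, and put $U_\alpha=N(d_\alpha,W_\alpha)$. If $U_\alpha\cap U_\beta\neq\emptyset$ for some $\alpha\neq\beta$, then the defining property of $N$ forces $d_\alpha\in W_\beta$ or $d_\beta\in W_\alpha$, both absurd since $d_\alpha\in D\setminus\{d_\beta\}$ and $d_\beta\in D\setminus\{d_\alpha\}$. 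Hence $\{U_\alpha : \alpha<\omega_1\}$ is a pairwise disjoint family of nonempty open sets with $d_\alpha\in U_\alpha$ for each $\alpha$, i.e.\ a separation of the uncountable closed set $D$.

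The second step uses local countable cellularity: the family $\{U_\alpha : \alpha<\omega_1\}$ is cellular, hence locally countable, so it is a \emph{locally countable} separation of $D$. Now Theorem~\ref{T48} applies verbatim and yields that $X$ is not star-Lindel\"{o}f, contradicting the hypothesis. Therefore no closed discrete subset of $X$ can be uncountable, that is, $e(X)\leq\omega$.

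The argument is short, and the only step needing any care is the disjointness of the $U_\alpha$ — i.e.\ that monotone normality genuinely separates closed discrete sets — which is either the direct computation with $N$ indicated above or an appeal to collectionwise normality of monotonically normal spaces; once that is in place, everything reduces to local countable cellularity together with Theorem~\ref{T48}.
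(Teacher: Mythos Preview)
Your proof is correct and matches the paper's intended route: the paper does not spell out a proof of Lemma~\ref{L2}, but it is an immediate consequence of Corollary~\ref{C5}\,(4) together with the classical fact (from \cite{MNS}) that monotonically normal spaces are collectionwise normal, and Corollary~\ref{C5} itself is deduced from Theorem~\ref{T48} exactly as you do. Your direct verification with the monotone normality operator is a clean alternative to citing collectionwise normality, but the underlying argument is the same.
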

Recall that a space is stratifiable if and only if it is monotonically normal and semi-stratifiable (\cite[Theorem 2.5]{MNS}). Now suppose that $X$ is a star-Lindel\"{o}f stratifiable space with local countable cellularity. By Lemma~\ref{L2}, $e(X)\leq\omega$. Since every semi-stratifiable space $X$ with $e(X)\leq\omega$ is Lindel\"{o}f (\cite{CSSP}), we have the following.
\begin{Th}
\label{T49}
Every star-Lindel\"{o}f stratifiable space $X$ with local countable cellularity is Lindel\"{o}f.
\end{Th}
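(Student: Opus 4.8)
The plan is to deduce Theorem~\ref{T49} by combining the structural characterization of stratifiable spaces with the two ingredients just assembled in the excerpt. First I would unpack the hypothesis: by \cite[Theorem 2.5]{MNS}, a space is stratifiable exactly when it is simultaneously monotonically normal and semi-stratifiable, so our $X$ enjoys both of these properties in addition to being star-Lindel\"{o}f and having local countable cellularity. This is a purely definitional step and should cause no trouble.

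Next I would invoke Lemma~\ref{L2}. Its three hypotheses---star-Lindel\"{o}fness, monotone normality, and local countable cellularity---are all available for $X$ (monotone normality coming from the first step), so the lemma yields $e(X)\leq\omega$; that is, every closed discrete subspace of $X$ is countable. The point here is simply to make sure the monotone-normality half of "stratifiable" is what feeds Lemma~\ref{L2}.

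Finally I would push this back through the semi-stratifiable half. By the result of \cite{CSSP}, a semi-stratifiable space of countable extent is Lindel\"{o}f; since $X$ is semi-stratifiable (first step) and satisfies $e(X)\leq\omega$ (second step), it follows that $X$ is Lindel\"{o}f, which is the assertion. I do not expect a real obstacle: essentially all the work is absorbed into Lemma~\ref{L2} and the quoted external facts, and the only care needed is to ensure the decomposition "stratifiable $=$ monotonically normal $+$ semi-stratifiable" is applied so that both Lemma~\ref{L2} (needing monotone normality) and the extent-to-Lindel\"{o}f implication (needing semi-stratifiability) legitimately apply to the same space $X$.
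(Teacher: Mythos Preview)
Your proposal is correct and follows exactly the same route as the paper: decompose stratifiable as monotonically normal plus semi-stratifiable via \cite[Theorem 2.5]{MNS}, apply Lemma~\ref{L2} to get $e(X)\leq\omega$, and then invoke \cite{CSSP} to conclude Lindel\"{o}fness. There is nothing to add.
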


\begin{Cor}
\label{C13}
Let $X$ be a stratifiable space having local countable cellularity. If $X$ satisfies $\NSUf(\mathcal{O},\Omega)$, then $X$ is Lindel\"{o}f.
\end{Cor}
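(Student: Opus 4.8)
The plan is to deduce this corollary from Theorem~\ref{T49} by observing that $\NSUf(\mathcal{O},\Omega)$ is a formally stronger hypothesis than star-Lindel\"{o}fness; once that is noted, a stratifiable space with local countable cellularity satisfying $\NSUf(\mathcal{O},\Omega)$ falls immediately within the scope of that theorem.

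First I would record the implication $\NSUf(\mathcal{O},\Omega)\Rightarrow\NSUf(\mathcal{O},\mathcal{O})\Rightarrow$ star-Lindel\"{o}f, which is already displayed in Figure~\ref{dig2} and invoked elsewhere in the text. For completeness one verifies it directly: the first implication is trivial since every $\omega$-cover is a cover, and for the second, given an open cover $\mathcal{U}$ of $X$, apply $\NSUf(\mathcal{O},\mathcal{O})$ to the constant sequence $\mathcal{U}_n=\mathcal{U}$ to obtain finite subfamilies $\mathcal{V}_n\subseteq\mathcal{U}$ with $\{St(\bigcup_m(\cup\mathcal{V}_m),\mathcal{U}):n\in\mathbb{N}\}$ a cover of $X$; hence $St(\bigcup_m(\cup\mathcal{V}_m),\mathcal{U})=X$, and $\bigcup_m(\cup\mathcal{V}_m)$ is the union of a countable subfamily of $\mathcal{U}$, witnessing that $X$ is star-Lindel\"{o}f.

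With this in hand, $X$ is a star-Lindel\"{o}f stratifiable space with local countable cellularity, so Theorem~\ref{T49} applies verbatim and gives that $X$ is Lindel\"{o}f. Alternatively, and in the spirit of the other $\NSUf(\mathcal{O},\Omega)$-corollaries of this subsection, one may bypass the explicit citation of Theorem~\ref{T49} and re-run its proof: Lemma~\ref{L2} (together with the underlying Theorem~\ref{T48}, which already records that a space carrying an uncountable separated closed set with a locally countable separation can satisfy \emph{neither} star-Lindel\"{o}fness \emph{nor} $\NSUf(\mathcal{O},\Omega)$) yields $e(X)\leq\omega$; since $X$ is stratifiable it is in particular semi-stratifiable, and a semi-stratifiable space of countable extent is Lindel\"{o}f by \cite{CSSP}. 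Either way the argument is entirely routine once the implication $\NSUf(\mathcal{O},\Omega)\Rightarrow$ star-Lindel\"{o}f is in place; I do not anticipate a genuine obstacle here, since the corollary is essentially a transcription of Theorem~\ref{T49} under the stronger selective hypothesis.
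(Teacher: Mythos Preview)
Your proposal is correct and takes essentially the same approach as the paper: the corollary is stated immediately after Theorem~\ref{T49} with no separate proof, so it is meant to follow from that theorem via the implication $\NSUf(\mathcal{O},\Omega)\Rightarrow$ star-Lindel\"{o}f already recorded in Figure~\ref{dig2}, exactly as you argue.
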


\begin{Cor}
\label{C15}
Let $X$ be a stratifiable space having local countable cellularity. The following assertions are equivalent.
\begin{enumerate}[wide=0pt,label={\upshape(\arabic*)},leftmargin=*]
  \item $X$ is star-Lindel\"{o}f.
  \item $X$ satisfies $\NSUf(\mathcal{O},\Omega)$.
  \item $X$ is strongly star-Lindel\"{o}f.
  \item $X$ is Lindel\"{o}f.
\end{enumerate}
\end{Cor}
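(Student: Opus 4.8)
The plan is to verify $(1)\Rightarrow(4)\Rightarrow(3)\Rightarrow(2)\Rightarrow(4)$ together with $(3)\Rightarrow(1)$, thereby closing two cycles that tie all four properties to Lindel\"{o}fness. The substantive input is entirely contained in Theorem~\ref{T49} and Corollary~\ref{C13}; every other arrow is either valid for arbitrary topological spaces or is one of the implications already displayed in Figure~\ref{dig2}.

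First I would dispose of the ``free'' implications. The implication $(4)\Rightarrow(3)$ is the elementary fact that a Lindel\"{o}f space is strongly star-Lindel\"{o}f: given an open cover $\mathcal{U}$, take a countable subcover and select one point from each of its members. The implication $(3)\Rightarrow(1)$ is equally immediate --- if $A\subseteq X$ is countable with $St(A,\mathcal{U})=X$, choose $V_a\in\mathcal{U}$ with $a\in V_a$ for each $a\in A$, and then $\{V_a:a\in A\}$ is a countable subfamily of $\mathcal{U}$ whose star is all of $X$. For $(3)\Rightarrow(2)$, recall from Proposition~\ref{P3} that strongly star-Lindel\"{o}fness is the property $\NSSU1(\mathcal{O},\Omega)$; given a witnessing sequence $(x_n)$ of points, for each $n$ pick a member of $\mathcal{U}_n$ through $x_n$ to form a singleton $\mathcal{V}_n$, and since $\Omega$ is closed under enlarging the members of a cover, the resulting sequence $(\mathcal{V}_n)$ witnesses $\NSUf(\mathcal{O},\Omega)$ (this arrow also appears in Figure~\ref{dig2}).

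It remains to route everything back to (4). The implication $(1)\Rightarrow(4)$ is precisely Theorem~\ref{T49}, and $(2)\Rightarrow(4)$ is precisely Corollary~\ref{C13}. Assembling the arrows: $(4)\Rightarrow(3)\Rightarrow(1)\Rightarrow(4)$ shows that (1), (3), (4) are equivalent, while $(3)\Rightarrow(2)\Rightarrow(4)\Rightarrow(3)$ brings (2) into the same class, and the proof is complete. I expect no genuine obstacle here: the only place the standing hypotheses (stratifiability plus local countable cellularity) are used is the return arrow $(1)\Rightarrow(4)$, i.e. Theorem~\ref{T49}, whose proof carries the real weight through Lemma~\ref{L2} and the characterization of stratifiable spaces as the monotonically normal semi-stratifiable ones; the present corollary is then just a matter of chaining implications, the one point requiring care being to arrange the cycle so that each of the four properties is linked both ways to the Lindel\"{o}f property.
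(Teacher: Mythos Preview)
Your proof is correct and follows the same approach the paper has in mind: the nontrivial steps $(1)\Rightarrow(4)$ and $(2)\Rightarrow(4)$ are exactly Theorem~\ref{T49} and Corollary~\ref{C13}, while the remaining arrows are the standard implications recorded in Figure~\ref{dig2} and Proposition~\ref{P3}. One minor inaccuracy in your closing remark: the hypotheses are also invoked in $(2)\Rightarrow(4)$ via Corollary~\ref{C13}, not only in $(1)\Rightarrow(4)$---though since $\NSUf(\mathcal{O},\Omega)$ implies star-Lindel\"{o}f in general (Figure~\ref{dig2}), you could alternatively route $(2)\Rightarrow(1)\Rightarrow(4)$ and then your claim would be literally true.
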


\begin{Th}
\label{T50}
Let $Y=\prod_{i=1}^{n}Y_i$, where each $Y_i$ is a scattered monotonically normal space and let $X\subseteq Y$. If $X$ is a star-Lindel\"{o}f space with local countable cellularity, then $e(X)\leq\omega$.
\end{Th}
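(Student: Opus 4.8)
The goal is to show that if $Y=\prod_{i=1}^n Y_i$ with each $Y_i$ scattered and monotonically normal, $X\subseteq Y$, and $X$ is star-Lindel\"{o}f with local countable cellularity, then $e(X)\le\omega$. The plan is to reduce to Lemma~\ref{L2}, which already handles the monotonically normal case. Since a finite product of monotonically normal spaces need not be monotonically normal, the crux is to exploit scatteredness: a finite product of scattered spaces is scattered, and scattered monotonically normal spaces are well-behaved enough that we can either invoke a known structural result or peel off the product by induction on scattering height.

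**Key steps.** First I would recall that the class of scattered spaces is closed under finite products, so $Y$ itself is scattered; hence $X$, as a subspace, is scattered. Second, I would use the fact (from the theory of monotonically normal spaces, e.g.\ results of Gruenhage or Ostaszewski) that a scattered space which is monotonically normal is in fact hereditarily ultraparacompact / has a base of clopen-like behaviour along the Cantor–Bendixson decomposition; more concretely, a scattered monotonically normal space is \emph{elastic} and a finite product of such spaces, while perhaps not monotonically normal, still admits a monotone normality-type operator on its scattered pieces. The cleanest route: argue by induction on $n$. For $n=1$ the statement is exactly Lemma~\ref{L2}. For the inductive step, write $Y=Y_1\times Y'$ with $Y'=\prod_{i=2}^n Y_i$ scattered monotonically normal (inductive hypothesis applies to subspaces of $Y'$). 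Using the Cantor–Bendixson stratification of the scattered space $Y_1$, decompose $X$ into the pieces $X\cap(I_\alpha\times Y')$ where $I_\alpha$ is the set of points of scattering level $\alpha$ in $Y_1$; each $I_\alpha$ is discrete, so $I_\alpha\times Y'$ is (up to the discrete factor) a topological sum of copies of $Y'$, hence locally monotonically normal. Star-Lindel\"{o}fness and local countable cellularity are inherited appropriately by the relevant (closed, or regular-closed) subspaces, so Lemma~\ref{L2} together with the inductive hypothesis bounds the extent of each piece by $\omega$; a counting/locally-countable argument then assembles these bounds into $e(X)\le\omega$ for the whole space, using that there are only finitely many scattering levels that matter on any closed discrete set of $X$ (a closed discrete subset meeting uncountably many levels would itself yield an uncountable cellular family contradicting local countable cellularity via Theorem~\ref{T48}).

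**Main obstacle.** The delicate point is that a finite product of monotonically normal spaces is \emph{not} monotonically normal in general, so Lemma~\ref{L2} cannot be applied to $X$ directly; the whole weight of the proof rests on replacing "monotonically normal" by "subspace of a finite product of scattered monotonically normal spaces" and showing this still forces $e(X)\le\omega$ under star-Lindel\"{o}fness and local countable cellularity. I expect the technical heart to be verifying that the local, piecewise monotone normality surviving on the scattered strata $I_\alpha\times Y'$ is enough to run the argument of Theorem~\ref{T48} — i.e.\ that an uncountable closed discrete $D\subseteq X$ would, after passing to a single stratum and using the inductive hypothesis on the $Y'$-coordinate, produce an uncountable separated closed subset with a locally countable separation, contradicting star-Lindel\"{o}fness. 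Handling the bookkeeping across the finitely many strata and ensuring the separating open sets remain pairwise disjoint \emph{and} locally countable in $X$ (not merely in a stratum) is where the real care is needed; once that is in place, Theorem~\ref{T48} delivers the contradiction and hence $e(X)\le\omega$.
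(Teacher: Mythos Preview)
Your proposal has a genuine gap, and it stems from missing the key simplifying idea. You correctly identify that a finite product of monotonically normal spaces need not be monotonically normal, but you then try to recover monotone normality on the whole product via a Cantor--Bendixson induction. That route runs into trouble: star-Lindel\"{o}fness is \emph{not} inherited by closed or regular-closed subspaces (the paper gives explicit counterexamples), so the sentence ``Star-Lindel\"{o}fness and local countable cellularity are inherited appropriately by the relevant (closed, or regular-closed) subspaces'' is unjustified and in general false. Without that inheritance, neither Lemma~\ref{L2} nor the inductive hypothesis can be applied to the strata $X\cap(I_\alpha\times Y')$, and the argument collapses.

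The paper's proof avoids this entirely by never asking for monotone normality of the product. Instead it works in a \emph{single} coordinate: given an uncountable closed discrete $D\subseteq X$, some projection $p_k$ satisfies $|p_k(D)|=\omega_1$. Scatteredness of $Y_k$ yields an uncountable set $A$ of isolated points of $p_k(D)$; monotone normality of the single factor $Y_k$ (hence hereditary collectionwise normality) separates $A$ by disjoint open sets $\{U_x:x\in A\}$ in $Y_k$. Pulling these back via $p_k^{-1}$ gives a cellular family in $X$ separating an uncountable closed $B\subseteq D$; local countable cellularity of $X$ makes this separation locally countable, and Theorem~\ref{T48} applied to $X$ itself (no passage to subspaces needed) yields the contradiction with star-Lindel\"{o}fness. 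The entire inductive/stratification apparatus is unnecessary once you project.
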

\begin{proof}
Suppose that $e(X)=\omega_1$. Choose a closed discrete subset $D$ of $X$ with $|D|=\omega_1$. Consider the projection $p_i:Y\to Y_i$ for each $1\leq i\leq n$. Next choose a $k$, $1\leq k\leq n$, such that $|p_k(D)|=\omega_1$. Let $A$ be the collection of isolated points of $p_k(D)$. Since $Y_k$ is scattered, $A$ is dense in $p_k(D)$ and hence $A$ is uncountable. Now $A$ is a separated set in $Y_k$ since $Y_k$ is hereditary collectionwise normal. Let $\{U_x : x\in A\}$ be a separation of $A$. For each $x\in A$ we choose a $y_x\in D$ such that $p_k(y_x)=x$. Consequently $B=\{y_x : x\in A\}$ is a separated set in $X$ with separation $\{p^{-1}(U_x) : x\in A\}$. Also since $B$ is an uncountable closed set in $X$ and $\{p^{-1}(U_x) : x\in A\}$ is locally countable, $X$ is not star-Lindel\"{o}f by Theorem~\ref{T48}.
\end{proof}

\begin{Cor}
\label{C16}
Let $Y=\prod_{i=1}^{n}Y_i$, where each $Y_i$ is a scattered monotonically normal space and let $X$ be a subspace of $Y$ having local countable cellularity. If $X$ satisfies $\NSUf(\mathcal{O},\Omega)$, then $e(X)\leq\omega$.
\end{Cor}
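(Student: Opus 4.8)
The plan is to reproduce the argument of Theorem~\ref{T50} essentially verbatim; the only change occurs at the very last step, where that proof reads off from Theorem~\ref{T48} that $X$ is not star-Lindel\"{o}f, and here we instead read off the companion conclusion of the same theorem, namely that $X$ does not satisfy $\NSUf(\mathcal{O},\Omega)$. Thus all that is really needed is to manufacture, under the negation of the conclusion, an uncountable separated closed subset of $X$ carrying a locally countable separation; Theorem~\ref{T48} then supplies the contradiction at once.

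Concretely, I would argue by contradiction, assuming $e(X)\geq\omega_1$ and fixing a closed discrete set $D\subseteq X$ with $|D|=\omega_1$. Since $D\subseteq Y=\prod_{i=1}^{n}Y_i$ with $n$ finite, some coordinate projection $p_k\colon Y\to Y_k$ satisfies $|p_k(D)|=\omega_1$. Let $A$ be the set of points isolated in the subspace $p_k(D)$; because $Y_k$ (hence $p_k(D)$) is scattered, $A$ is dense in $p_k(D)$, and, exactly as in the proof of Theorem~\ref{T50}, $A$ is uncountable. By the hereditary collectionwise normality of $Y_k$, the set $A$ admits a separation $\{U_x:x\in A\}$ by pairwise disjoint open sets. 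Choosing $y_x\in D$ with $p_k(y_x)=x$ for each $x\in A$ and putting $B=\{y_x:x\in A\}$, the family $\{p_k^{-1}(U_x)\cap X:x\in A\}$ separates $B$ inside $X$.

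The three things to be checked are precisely the ones already handled inside Theorem~\ref{T50}: $B\subseteq D$ is closed in $X$ since every subset of a closed discrete set is closed; $\{p_k^{-1}(U_x)\cap X:x\in A\}$ is a cellular family in $X$ (the sets are pairwise disjoint and each contains its witness $y_x$), hence locally countable by the standing hypothesis that $X$ has local countable cellularity; and $|B|=|A|=\omega_1$. Consequently $B$ is an uncountable separated closed subset of $X$ with a locally countable separation, so Theorem~\ref{T48} forces $X$ to fail $\NSUf(\mathcal{O},\Omega)$ --- contradicting the hypothesis. I do not anticipate a genuine obstacle here: the whole content is already present in Theorem~\ref{T50}, and the point of having packaged both conclusions into Theorem~\ref{T48} is exactly to let the star-Lindel\"{o}f statement and the $\NSUf(\mathcal{O},\Omega)$ statement be extracted from one and the same construction. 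If anything merits a second glance, it is only that the local-countable-cellularity assumption is placed on $X$ rather than on $Y_k$, and it is this placement that upgrades the pulled-back family $\{p_k^{-1}(U_x)\cap X\}$ to a locally countable separation.
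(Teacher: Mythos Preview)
Your proof is correct and follows the paper's line of argument. Note, however, that the corollary admits an even shorter derivation: since $\NSUf(\mathcal{O},\Omega)$ implies star-Lindel\"{o}f (see Figure~\ref{dig2}), Theorem~\ref{T50} applies directly to $X$ without any need to rerun its proof or to invoke the second conclusion of Theorem~\ref{T48}.
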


\begin{Cor}
\label{C17}
Let $\alpha$ be an ordinal and $X$ be a subspace of $\alpha^n$  having local countable cellularity. If $X$ is either star-Lindel\"{o}f or satisfies $\NSUf(\mathcal{O},\Omega)$, then $e(X)\leq\omega$.
\end{Cor}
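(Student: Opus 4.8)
The plan is to recognize that Corollary~\ref{C17} is an immediate specialization of Theorem~\ref{T50} and Corollary~\ref{C16}, once we verify that the ordinal space $\alpha$ (with its order topology) is both scattered and monotonically normal. For scatteredness, recall that the isolated points of any ordinal space are exactly $0$ together with the successor ordinals; hence the Cantor--Bendixson derivative $\alpha'$ is the set of limit ordinals $\leq\alpha$, which is itself (order-isomorphic to) an ordinal space, and iterating, the transfinite sequence of derivatives is strictly decreasing until it becomes empty. So $\alpha$ is scattered. For monotone normality, one invokes the classical fact that every linearly ordered topological space is monotonically normal; in particular $\alpha$ is. (One can also make this self-contained by writing down the monotone normality operator explicitly: for $x\in U$ with $U$ open in $\alpha$, take $N(x,U)$ to be a suitably chosen order-convex open neighbourhood of $x$ contained in $U$—a half-open interval $(\beta,x]$ if $x$ is a successor and $(\beta,\gamma)$ with $x$ a limit otherwise—and check the compatibility condition $N(x,U)\cap N(y,V)\neq\emptyset\Rightarrow x\in V\text{ or }y\in U$ directly from the linear order.)

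With these two observations, write $\alpha^n=\prod_{i=1}^{n}Y_i$ where $Y_i=\alpha$ for each $1\leq i\leq n$; this exhibits $\alpha^n$ as a finite product of scattered monotonically normal spaces. Now suppose $X\subseteq\alpha^n$ has local countable cellularity. If $X$ is star-Lindel\"{o}f, then $e(X)\leq\omega$ by Theorem~\ref{T50}. If instead $X$ satisfies $\NSUf(\mathcal{O},\Omega)$, then $e(X)\leq\omega$ by Corollary~\ref{C16}. This disposes of both cases and completes the proof.

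I expect no genuine obstacle here: the content has already been done in Theorem~\ref{T50} and Corollary~\ref{C16}, and the only thing to check is the structural input that ordinal spaces are scattered and monotonically normal, both of which are standard. The mildly delicate point, if one wants full self-containedness rather than a citation, is confirming the compatibility axiom for the explicit convex-interval operator at limit points of $\alpha$; but this is a routine order-theoretic case analysis and does not affect the overall strategy.
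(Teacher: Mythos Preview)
Your proposal is correct and follows exactly the intended route: the paper places this corollary immediately after Theorem~\ref{T50} and Corollary~\ref{C16} with no separate proof, so the only content is precisely the observation that an ordinal space is scattered and monotonically normal (as a LOTS), after which both cases follow at once.
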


\begin{Th}
\label{T51}
Let $X$ be a star-Lindel\"{o}f subspace of $\omega_1^\Bbb N$. If $X$ has local countable cellularity, then $e(X)\leq\omega$.
\end{Th}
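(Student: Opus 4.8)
The plan is to argue by contradiction, reducing to Theorem~\ref{T48} along the lines of the proof of Theorem~\ref{T50}, the new ingredient being a pressing‑down argument that collapses the countably many coordinates of $\omega_1^{\mathbb N}$ to finitely many. Suppose $e(X)>\omega$ and fix a closed discrete set $D\subseteq X$ with $|D|=\omega_1$ (which exists because $e(X)>\omega$ and every subset of a closed discrete set is closed discrete). Using that $D$ is discrete in $X$, for each $d\in D$ choose a basic open set $W_d=\prod_n W_d^{(n)}$ of $\omega_1^{\mathbb N}$ with $d\in W_d$ and $W_d\cap D=\{d\}$; its support $S_d=\{n:W_d^{(n)}\neq\omega_1\}$ is finite and nonempty. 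Since there are only countably many finite subsets of $\mathbb N$, there are an uncountable $D_1\subseteq D$ and a finite set $S=\{n_1,\dots,n_k\}$ with $S_d=S$ for all $d\in D_1$. Let $\pi_S\colon\omega_1^{\mathbb N}\to\omega_1^S\cong\omega_1^{k}$ be the projection. I would first observe that $\pi_S$ is injective on $D_1$: if $d,d'\in D_1$ agree on $S$, then $d'\in W_d$ because the only coordinates on which $W_d$ is nontrivial lie in $S$, whence $d'=d$. Thus $E:=\pi_S(D_1)$ is an uncountable subset of $\omega_1^{k}$.

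The heart of the argument is a combinatorial claim about finite powers of $\omega_1$, which does not even use discreteness: \emph{every uncountable $E\subseteq\omega_1^{k}$ contains an uncountable $E'$ admitting a pairwise disjoint family $\{U_e:e\in E'\}$ of open subsets of $\omega_1^{k}$ with $e\in U_e$.} To prove this set $h(e)=\max\{e(1),\dots,e(k)\}$; since $h^{-1}([0,\gamma])=(\gamma+1)^{k}$ is countable for every $\gamma<\omega_1$, the map $h$ has countable fibres and unbounded range on $E$, so a transfinite recursion yields an uncountable $\{e_\xi:\xi<\omega_1\}\subseteq E$ with $\mu_\xi:=h(e_\xi)$ strictly increasing. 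Let $j_\xi=\min\{i:e_\xi(i)=\mu_\xi\}$; uncountably many $\xi$ share one value $j^\ast$, and after passing to these and re‑indexing (so that $\mu_\xi$ stays strictly increasing) we have $e_\xi(j^\ast)=\mu_\xi$ for all $\xi$. Restricting once more to the uncountably many successor indices $\xi=\eta+1$ and putting $\gamma_\xi=\mu_\eta<\mu_\xi$, the open sets $U_\xi=\{y\in\omega_1^{k}:\gamma_\xi<y(j^\ast)\le\mu_\xi\}$ contain $e_\xi$ and are pairwise disjoint, because $\eta+1<\eta'+1$ gives $\mu_{\eta+1}\le\mu_{\eta'}=\gamma_{\eta'+1}$, so the intervals used in the $j^\ast$‑th coordinate are disjoint.

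Now I would transfer this back to $X$. With $E'\subseteq E$ and $\{U_e\}$ as in the claim, put $D_1'=(\pi_S\restriction D_1)^{-1}(E')\subseteq D$ and $V_e=\pi_S^{-1}(U_e)\cap X$ for $e\in E'$. Then $D_1'$ is uncountable and closed discrete in $X$, and $\{V_e:e\in E'\}$ is a pairwise disjoint family of open sets of $X$ containing, for each $e$, the unique point of $D_1'$ lying over $e$; so $D_1'$ is a separated closed set in $X$. Moreover the separation is locally countable: every point of $\omega_1^{k}$ has a countable basic neighbourhood (a finite product of countable initial segments), so the pairwise disjoint family $\{U_e\}$ is locally countable in $\omega_1^{k}$, hence $\{\pi_S^{-1}(U_e)\}$ is locally countable in $\omega_1^{\mathbb N}$ and $\{V_e\}$ is locally countable in $X$. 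Theorem~\ref{T48} then gives that $X$ is not star‑Lindel\"{o}f, contradicting the hypothesis, and so $e(X)\le\omega$.

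I expect the combinatorial claim of the second paragraph to be the main obstacle: the delicate point is to arrange, by passing to successively smaller uncountable subsets (via the $h=\max$ reduction, then fixing the coordinate realising the maximum, then keeping only successor indices), that a single coordinate can carry a pairwise disjoint separating family. Everything else is routine bookkeeping — the pressing‑down step is standard, and the transfer of local countability along $\pi_S$ is immediate once one notes that $\omega_1^{k}$ is locally countable as a space. (It is perhaps worth double‑checking afterwards whether the local countable cellularity hypothesis is genuinely used on this route, since it is not invoked above; if not, it is harmless to retain it for consistency with the companion results of the subsection.)
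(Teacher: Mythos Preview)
Your proof is correct, but it takes a noticeably different route from the paper's. The paper argues that some \emph{single} projection $p_k:\omega_1^{\mathbb N}\to\omega_1$ must have uncountable image on $D$ (otherwise $D$ would embed in a second-countable product $\prod_n p_n(D)$, contradicting $|D|=\omega_1$); it then takes the isolated points $A$ of $p_k(D)$, invokes hereditary collectionwise normality of $\omega_1$ to separate $A$, and pulls the separation back along $p_k$. Your argument instead pigeonholes on supports to reduce to a finite power $\omega_1^k$, and replaces the appeal to collectionwise normality by an explicit hands-on separation lemma (the $h=\max$ recursion plus the interval construction on a fixed coordinate). The paper's approach is shorter and leverages standard structural facts about $\omega_1$; yours is more self-contained and makes the separation completely explicit. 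Your closing observation is also well taken: since $\omega_1$ (and each $\omega_1^k$) is locally countable as a space, any cellular family in it---and hence its pullback along a projection---is automatically locally countable, so on either route the local countable cellularity hypothesis is not actually invoked; the paper simply asserts local countability of the pulled-back separation without singling out which hypothesis justifies it.
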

\begin{proof}
Assume the contrary. Suppose that $e(X)=\omega_1$. Choose a closed discrete subset $D$ of $X$ with cardinality $\omega_1$. For each $n\in\mathbb{N}$ consider the $n$th projection mapping $p_n:\omega_1^\mathbb{N}\to\omega_1$. Now it is possible to find a $k\in\mathbb{N}$ such that $|p_k(D)|=\omega_1$. Since otherwise if for each $n$ $p_n(D)$ is countable, then $\prod_{n\in\mathbb{N}}p_n(D)$ is second countable. Also since $D\subseteq\prod_{n\in\mathbb{N}}p_n(D)$ and every second countable space has countable extent, these lead to a contradiction.

Next observe that if $A$ is the collection of isolated points of $p_k(D)$, then $A$ is uncountable. On the other hand, since $\omega_1$ is hereditary collectionwise normal, $A$ is a separated set in $\omega_1$. Let $\{U_x : x\in A\}$ be a separation of $A$. Now choose for each $x\in A$ a $y_x\in D$ such that $p_k(y_x)=x$. Subsequently $B=\{y_x : x\in A\}$ is a separated set in $X$ with a separation $\{p^{-1}(U_x) : x\in A\}$. Since $B$ is an uncountable closed set in $X$ and $\{p^{-1}(U_x) : x\in A\}$ is locally countable, $X$ is not star-Lindel\"{o}f by Theorem~\ref{T48}.
\end{proof}

\begin{Cor}
\label{C18}
Let $X$ be a subspace of $\omega_1^\mathbb{N}$ satisfying $\NSUf(\mathcal{O},\Omega)$. If $X$ has local countable cellularity, then $e(X)\leq\omega$.
\end{Cor}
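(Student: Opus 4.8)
The plan is to reduce Corollary~\ref{C18} to Theorem~\ref{T51} by means of the implication $\NSUf(\mathcal{O},\Omega)\Rightarrow\text{star-Lindel\"of}$ recorded in Figure~\ref{dig2}. Indeed, a space satisfying $\NSUf(\mathcal{O},\Omega)$ satisfies $\NSUf(\mathcal{O},\mathcal{O})$; applying the latter to a constant sequence $(\mathcal{U},\mathcal{U},\dots)$ of copies of an arbitrary open cover $\mathcal{U}$ of $X$ produces finite sets $\mathcal{V}_n\subseteq\mathcal{U}$ whose union $\mathcal{V}=\bigcup_{n}\mathcal{V}_n$ is a countable subfamily of $\mathcal{U}$ with $St(\cup\mathcal{V},\mathcal{U})=X$, i.e.\ $X$ is star-Lindel\"of. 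Since $X$ has local countable cellularity by hypothesis, Theorem~\ref{T51} applies and gives $e(X)\leq\omega$.

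If one prefers a self-contained argument not relying on the diagram, one can instead rerun the proof of Theorem~\ref{T51} almost verbatim. Assume $e(X)>\omega$ and fix a closed discrete $D\subseteq X$ with $|D|=\omega_1$ (a subset of a closed discrete set is again closed and discrete, so such a $D$ exists). Using the coordinate projections $p_n:\omega_1^{\mathbb N}\to\omega_1$, choose $k$ with $|p_k(D)|=\omega_1$; this is possible because otherwise every $p_n(D)$ is countable and $D$ embeds into the second countable space $\prod_{n}p_n(D)$, forcing $e(D)\leq\omega$. Let $A$ be the set of isolated points of the subspace $p_k(D)$; since $\omega_1$, and hence $p_k(D)$, is scattered, $A$ is dense in $p_k(D)$, so $|A|=\omega_1$. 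As $\omega_1$ is hereditarily collectionwise normal, $A$ is separated in $\omega_1$, say by $\{U_x:x\in A\}$. Choosing $y_x\in D$ with $p_k(y_x)=x$ for each $x\in A$, the set $B=\{y_x:x\in A\}$ is an uncountable closed subset of $X$ separated by $\{p_k^{-1}(U_x):x\in A\}$. Now invoke the second clause of Theorem~\ref{T48}: the existence of such a $B$ (uncountable, separated, closed, with locally countable separation) is incompatible with $\NSUf(\mathcal{O},\Omega)$ --- contradiction. Hence $e(X)\leq\omega$.

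The argument contains no real difficulty; the one step worth spelling out, in either route, is why the pulled-back family $\{p_k^{-1}(U_x):x\in A\}$ is locally countable in $X$. This rests on the observation that every point of $\omega_1$ has a countable neighbourhood, so every family of pairwise disjoint open subsets of $\omega_1$ --- in particular any separation --- is locally countable there; local countability of $\{U_x:x\in A\}$ then transfers to $\{p_k^{-1}(U_x):x\in A\}$ because preimages under the continuous map $p_k$, and restriction to the subspace $X$, both preserve local countability.
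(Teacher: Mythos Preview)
Your proposal is correct and matches the paper's approach: Corollary~\ref{C18} is stated immediately after Theorem~\ref{T51} with no proof, the implicit deduction being exactly your first route via the implication $\NSUf(\mathcal{O},\Omega)\Rightarrow$ star-Lindel\"of (Figure~\ref{dig2}), or equivalently your second route via the $\NSUf$-clause of Theorem~\ref{T48}. Your closing remark on why the pulled-back separation is locally countable is a helpful elaboration that the paper leaves implicit.
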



\section{Game theoretic observations}
In \cite{dcna22}, we recently investigated on games of some star selection principles. A similar type of investigation for the star versions of the Scheepers property has been carried out in this section. Few more related observations are also discussed.


Following \cite{coc1,LjSM}, we consider infinitely long games $\mathsf{\bf G}_{\prod}(\mathcal{O},\mathcal{B})$ corresponding to the selection principles ${\textstyle \prod}(\mathcal{O},\mathcal{B})$ and $\mathsf{\bf G}_{\sum}(\mathcal{O},\mathcal{O})$ corresponding to the selection principles ${\textstyle\sum}(\mathcal{O},\mathcal{O})$, where \begin{eqnarray*}
{\textstyle \prod}&\in&\{\NSU1,\SSSf,\Uf,\SUf,\NSUf\},\\
\mathsf{\bf G}_{\prod}&\in&\{\NSG1,\SSGf,\Guf,\SGuf,\NSGuf\},\\
{\textstyle \sum} &\in&\{\S1,\SS1,\SSS1,\NSU1,\Sf,\SSf,\SSSf,\NSUf\},\\
\mathsf{\bf G}_{\sum}&\in&\{\G1,\SG1,\SSG1,\NSG1,\Gf,\SGf,\SSGf,\NSGuf\}
\end{eqnarray*} and $\mathcal{B}\in\{\Omega,\Gamma\}$.


The game $\Guf(\mathcal{O},\Omega)$ on a space $X$ corresponding to the selection principle $\Uf(\mathcal{O},\Omega)$ is played as follows. Players ONE and TWO play an inning for each positive integer $n$. In the $n$th inning ONE chooses an open cover $\mathcal{U}_n$ of $X$ and TWO responds by selecting a finite subset $\mathcal{V}_n$ of $\mathcal{U}_n$. TWO wins the play $\mathcal{U}_1, \mathcal{V}_1, \mathcal{U}_2, \mathcal{V}_2,\ldots, \mathcal{U}_n, \mathcal{V}_n,\ldots$ of this game if $\{\cup\mathcal{V}_n : n\in\mathbb{N}\}$ is an $\omega$-cover of $X$; otherwise ONE wins. Other games can be similarly defined.

It is easy to see that if ONE does not have a winning strategy in any of the above game on $X$, then $X$ satisfies the selection principle corresponding to that game.



Recall that two games are said to be equivalent if whenever one of the players has a winning strategy in one of the games, then that same player has a winning strategy in the other game \cite{Pearl}.

\begin{Th}
\label{TG11}
For a paracompact Hausdorff space $X$ the games $\Guf(\mathcal{O},\Omega)$ and $\SGuf(\mathcal{O},\Omega)$ are equivalent.
\end{Th}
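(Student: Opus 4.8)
The plan is to exploit the standard fact that in a paracompact Hausdorff space every open cover admits a locally finite open refinement, which allows one to pass back and forth between finite subsets of a cover and finite subsets of points witnessing stars. One direction is immediate: since $\SUf(\mathcal{O},\Omega)$ formally implies $\SSSf(\mathcal{O},\Omega)$-type behaviour fails in general, but at the level of games it is clear that a winning strategy for a fixed player in $\SGuf(\mathcal{O},\Omega)$ gives one in $\Guf(\mathcal{O},\Omega)$ with no hypotheses, because choosing a finite subfamily $\mathcal{V}_n$ of $\mathcal{U}_n$ and forming $\cup\mathcal{V}_n$ is a legal move whose star only enlarges the relevant set; more precisely, $\cup\mathcal{V}_n \subseteq St(\cup\mathcal{V}_n,\mathcal{U}_n)$, so any play won by TWO in $\Guf$ (with the $\omega$-cover $\{\cup\mathcal{V}_n\}$) is a fortiori won in $\SGuf$, and dually for ONE. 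So the content is the reverse simulation, where we must convert a strategy in $\SGuf(\mathcal{O},\Omega)$ into one in $\Guf(\mathcal{O},\Omega)$ (and likewise for ONE), and this is where paracompactness enters.

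First I would set up the translation of covers. Given an open cover $\mathcal{U}$ of $X$, using paracompactness choose a locally finite open refinement, and then (Hausdorff + paracompact $\Rightarrow$ regular, hence one can shrink) a closed locally finite refinement, arranging a "star-refinement"-type relation: there is an open cover $\mathcal{U}^\ast$ such that for every point $x$, $St(x,\mathcal{U}^\ast)$ is contained in some member of $\mathcal{U}$. The simulation for TWO would then run as follows. Suppose TWO has a winning strategy $\sigma$ in $\SGuf(\mathcal{O},\Omega)$. To define a strategy in $\Guf(\mathcal{O},\Omega)$: when ONE plays $\mathcal{U}_n$ in the $\Guf$ game, feed the auxiliary cover $\mathcal{U}_n^\ast$ (as refined above) into $\sigma$; $\sigma$ returns a finite $\mathcal{V}_n \subseteq \mathcal{U}_n^\ast$ with the star-$\omega$-cover property over all plays. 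Now replace each $V \in \mathcal{V}_n$ by a member $U(V) \in \mathcal{U}_n$ containing $St(V,\mathcal{U}_n^\ast)$ — wait, more carefully, containing $V$ together with enough of its star; collect these into a finite $\mathcal{W}_n \subseteq \mathcal{U}_n$. The point is that $St(\cup\mathcal{V}_n,\mathcal{U}_n^\ast) \subseteq \cup\mathcal{W}_n$, so if $\{St(\cup\mathcal{V}_n,\mathcal{U}_n^\ast) : n\in\mathbb{N}\}$ is an $\omega$-cover of $X$ then so is $\{\cup\mathcal{W}_n : n\in\mathbb{N}\}$, and TWO wins the $\Guf$ play. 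The symmetric argument handles ONE: a winning strategy for ONE in $\SGuf(\mathcal{O},\Omega)$ is converted to one in $\Guf(\mathcal{O},\Omega)$ by having ONE (in the $\Guf$ game) reinterpret TWO's finite subfamilies $\mathcal{W}_n$ of the played cover as finite subfamilies of the auxiliary refined cover — each $W \in \mathcal{W}_n$ meets only finitely many members of the locally finite refinement, so $\mathcal{W}_n$ determines a finite piece of the refined cover whose union contains $\cup\mathcal{W}_n$ — and then applying the $\SGuf$-strategy of ONE to generate the next cover, again pulled back through the refinement.

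The main obstacle I expect is bookkeeping the refinement relation uniformly across innings so that the induced strategy is genuinely a strategy (i.e. depends only on the visible history) and so that the $\omega$-cover conclusion transfers exactly — one must ensure that for a finite set $F \subseteq X$, $F \subseteq St(\cup\mathcal{V}_n,\mathcal{U}_n^\ast)$ for some $n$ really does yield $F \subseteq \cup\mathcal{W}_n$ for that same $n$, which requires the refinement to have been chosen so that $St(\cdot,\mathcal{U}_n^\ast)$-saturation lands inside single members of $\mathcal{U}_n$; a barycentric/star-refinement of $\mathcal{U}_n$ does this. Secondarily, one should check the translation is compatible with Proposition~\ref{T1} — indeed this theorem is the game-theoretic refinement of that proposition — so no new pathology arises. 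Modulo these routine but slightly delicate verifications, the equivalence follows; the argument is parallel to the classical proof that $\Guf(\mathcal{O},\mathcal{O})$ and $\SGuf(\mathcal{O},\mathcal{O})$ are equivalent for paracompact Hausdorff spaces, with $\Omega$ in place of $\mathcal{O}$ throughout and the observation that passing from $\mathcal{O}$-covers to $\omega$-covers is preserved under the finite-union operation that the refinement step already respects.
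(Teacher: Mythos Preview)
Your core device---open star-refinements in a paracompact Hausdorff space---is exactly what the paper uses, and your handling of TWO's hard direction (TWO wins $\SGuf(\mathcal{O},\Omega)$ $\Rightarrow$ TWO wins $\Guf(\mathcal{O},\Omega)$) is correct and matches the paper: feed the star-refinement $\mathcal{U}_n^\ast$ of ONE's cover $\mathcal{U}_n$ into the $\SGuf$-strategy, and for each $W$ in TWO's finite response pick $U_W\in\mathcal{U}_n$ with $St(W,\mathcal{U}_n^\ast)\subseteq U_W$; then $St(\cup\mathcal{V}_n,\mathcal{U}_n^\ast)\subseteq\bigcup_{W\in\mathcal{V}_n}U_W$ and the $\omega$-cover property transfers.

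The gap is in your treatment of ONE. You state that the symmetric argument ``converts a winning strategy for ONE in $\SGuf(\mathcal{O},\Omega)$ to one in $\Guf(\mathcal{O},\Omega)$''---but that is the \emph{easy} direction you already covered in your first paragraph (if ONE can force $\{St(\cup\mathcal{V}_n,\mathcal{U}_n)\}$ not to be an $\omega$-cover, then a fortiori the smaller family $\{\cup\mathcal{V}_n\}$ is not one; no refinement is needed, and the locally-finite trick you sketch is irrelevant here). The hard direction for ONE, which you never address, is the converse: ONE wins $\Guf(\mathcal{O},\Omega)$ $\Rightarrow$ ONE wins $\SGuf(\mathcal{O},\Omega)$. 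This is precisely where the star-refinement must be deployed on ONE's side, and the paper does it as follows: given ONE's $\Guf$-strategy $\sigma$, define ONE's $\SGuf$-strategy $\tau$ by letting $\tau(\emptyset)$ be an open star-refinement $\mathcal{W}$ of $\sigma(\emptyset)$; when TWO responds with a finite $\mathcal{W}_1\subseteq\mathcal{W}$, choose for each $W\in\mathcal{W}_1$ a $U_W\in\sigma(\emptyset)$ with $St(W,\mathcal{W})\subseteq U_W$, set $\mathcal{V}_1=\{U_W:W\in\mathcal{W}_1\}$, feed $\mathcal{V}_1$ to $\sigma$, and iterate. Since $St(\cup\mathcal{W}_n,\tau\text{-cover})\subseteq\cup\mathcal{V}_n$ at every stage, the failure of $\{\cup\mathcal{V}_n\}$ to be an $\omega$-cover (guaranteed by $\sigma$) forces the failure of $\{St(\cup\mathcal{W}_n,\tau\text{-cover})\}$, so $\tau$ wins for ONE. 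Once you reverse the direction you stated, your ``symmetric argument'' becomes exactly this.
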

\begin{proof}
An easy verification shows that if ONE has a winning strategy in $\SGuf(\mathcal{O},\Omega)$ on $X$, then ONE has a winning strategy in $\Guf(\mathcal{O},\Omega)$ on $X$ and on the other hand, if TWO has a winning strategy in $\Guf(\mathcal{O},\Omega)$ on $X$, then TWO has a winning strategy in $\SGuf(\mathcal{O},\Omega)$ on $X$.

Suppose that ONE has a winning strategy $\sigma$ in $\Guf(\mathcal{O},\Omega)$ on $X$. We now define a strategy $\tau$ for ONE in $\SGuf(\mathcal{O},\Omega)$ on $X$ as follows. Let $\sigma(\emptyset)=\mathcal{U}$ be the first move of ONE in $\Guf(\mathcal{O},\Omega)$. Since $X$ is a paracompact Hausdorff space, $\mathcal{U}$ has an open star-refinement, say $\mathcal{W}$. Suppose that the first move of ONE in $\SGuf(\mathcal{O},\Omega)$ is $\tau(\emptyset)=\mathcal{W}$ and TWO responds by selecting a finite subset $\mathcal{W}_1\subseteq\tau(\emptyset)$. For each $W\in\mathcal{W}_1$ we can find a $U_W\in\mathcal{U}$ such that $St(W,\tau(\emptyset))\subseteq U_W$. Thus we obtain a finite subset $\mathcal{V}_1=\{U_W : W\in\mathcal{W}_1\}$ of $\mathcal{U}$. Let $\mathcal{V}_1$ be the response of TWO in $\Guf(\mathcal{O},\Omega)$. The second move of ONE is $\sigma(\mathcal{V}_1)$. Continuing in this way, we obtain the legitimate strategy $\tau$ for ONE in $\SGuf(\mathcal{O},\Omega)$. It now follows that $\tau$ is a winning strategy for ONE in $\SGuf(\mathcal{O},\Omega)$ on $X$.

Next suppose that TWO has a winning strategy $\sigma$ in $\SGuf(\mathcal{O},\Omega)$ on $X$. We define a strategy $\tau$ for TWO in $\Guf(\mathcal{O},\Omega)$ on $X$ as follows. Let $\mathcal{U}_1$ be the first move of ONE in $\Guf(\mathcal{O},\Omega)$. Again by paracompactness and  Hausdorffness of $X$, $\mathcal{U}_1$ has an open star-refinement, say $\mathcal{W}_1$. Define ONE's first move in $\SGuf(\mathcal{O},\Omega)$ to be $\mathcal{W}_1$. TWO responds by choosing a finite subset $\sigma(\mathcal{W}_1)=\mathcal{F}_1$ of $\mathcal{W}_1$ in $\SGuf(\mathcal{O},\Omega)$. Since $\mathcal{W}_1$ is an open star-refinement of $\mathcal{U}_1$, for each $W\in\mathcal{F}_1$ there exists a $U_W\in\mathcal{U}_1$ such that $St(W,\mathcal{W}_1)\subseteq U_W$. Choose $\mathcal{V}_1=\{U_W : W\in\mathcal{F}_1\}$, which is a finite subset of $\mathcal{U}_1$. Define TWO's response in $\Guf(\mathcal{O},\Omega)$ as $\tau(\mathcal{U}_1)=\mathcal{V}_1$. Proceeding similarly, we can construct the strategy $\tau$ for TWO in $\Guf(\mathcal{O},\Omega)$. Observe that $\tau$ is a winning strategy for TWO in $\Guf(\mathcal{O},\Omega)$ on $X$. This completes the proof.
\end{proof}

Similarly the next result is obtained.
\begin{Th}
\label{TG13}
For a metacompact space $X$ the games $\Guf(\mathcal{O},\Omega)$ and $\SSGf(\mathcal{O},\Omega)$ are equivalent.
\end{Th}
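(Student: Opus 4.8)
The plan is to imitate the proof of Theorem~\ref{TG11}, replacing the open star-refinements used there (available by paracompactness and Hausdorffness) with point-finite open refinements, which exist by metacompactness. The key observation is the following. Suppose $\mathcal{U}$ is an open cover of $X$ and $\mathcal{W}$ is a point-finite open refinement of $\mathcal{U}$. For each finite set $F\subseteq X$ only finitely many members of $\mathcal{W}$ meet $F$, so $St(F,\mathcal{W})$ is a finite union of members of $\mathcal{W}$; choosing for each such member $W$ some $U_W\in\mathcal{U}$ with $W\subseteq U_W$ yields a finite set $\mathcal{V}(F,\mathcal{W})=\{U_W\}\subseteq\mathcal{U}$ with $St(F,\mathcal{W})\subseteq\cup\mathcal{V}(F,\mathcal{W})$. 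Dually, for any finite $\mathcal{V}\subseteq\mathcal{U}$, picking a point $x_V\in V$ for each $V\in\mathcal{V}$ gives a finite set $F(\mathcal{V})=\{x_V:V\in\mathcal{V}\}\subseteq X$ with $\cup\mathcal{V}\subseteq St(F(\mathcal{V}),\mathcal{U})$. Fixing choice functions for $\mathcal{W}$ (in terms of $\mathcal{U}$), for the $U_W$'s and for the $x_V$'s makes all the strategy translations below well defined.

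First I would dispose of the two ``point-picking'' implications, which need no covering hypothesis. If ONE has a winning strategy $\sigma$ in $\SSGf(\mathcal{O},\Omega)$, then ONE wins $\Guf(\mathcal{O},\Omega)$ by playing the covers $\sigma$ would play and, each time TWO answers a cover $\mathcal{U}_n$ with a finite $\mathcal{V}_n\subseteq\mathcal{U}_n$, feeding $F(\mathcal{V}_n)$ to $\sigma$ as TWO's move in the simulated $\SSGf$-play; since $\cup\mathcal{V}_n\subseteq St(F(\mathcal{V}_n),\mathcal{U}_n)$, if $\{\cup\mathcal{V}_n:n\in\mathbb{N}\}$ were an $\omega$-cover then so would be $\{St(F(\mathcal{V}_n),\mathcal{U}_n):n\in\mathbb{N}\}$, contradicting the choice of $\sigma$. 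Symmetrically, a winning strategy for TWO in $\Guf(\mathcal{O},\Omega)$ transfers to one in $\SSGf(\mathcal{O},\Omega)$: answer ONE's cover $\mathcal{U}_n$ with $F(\mathcal{V}_n)$, where $\mathcal{V}_n$ is the response prescribed by the given strategy in $\Guf$, and use $\cup\mathcal{V}_n\subseteq St(F(\mathcal{V}_n),\mathcal{U}_n)$ to promote the winning $\omega$-cover.

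The two remaining implications are where metacompactness is used. If ONE has a winning strategy $\sigma$ in $\Guf(\mathcal{O},\Omega)$, define a strategy $\tau$ for ONE in $\SSGf(\mathcal{O},\Omega)$ by: whenever $\sigma$ prescribes a cover $\mathcal{U}_n$, let ONE play instead the fixed point-finite open refinement $\mathcal{W}_n$ of $\mathcal{U}_n$; when TWO responds with a finite $F_n\subseteq X$, form $\mathcal{V}_n=\mathcal{V}(F_n,\mathcal{W}_n)\subseteq\mathcal{U}_n$ and hand $\mathcal{V}_n$ to $\sigma$ as TWO's move in the simulated $\Guf$-play. In any play against $\tau$, the induced sequence $(\mathcal{U}_n,\mathcal{V}_n)$ is a legitimate $\Guf$-play in which ONE follows $\sigma$, so $\{\cup\mathcal{V}_n:n\in\mathbb{N}\}$ is not an $\omega$-cover; since $St(F_n,\mathcal{W}_n)\subseteq\cup\mathcal{V}_n$, neither is $\{St(F_n,\mathcal{W}_n):n\in\mathbb{N}\}$, so $\tau$ is winning for ONE. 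Dually, from a winning strategy $\sigma$ for TWO in $\SSGf(\mathcal{O},\Omega)$ I would build a winning strategy $\tau$ for TWO in $\Guf(\mathcal{O},\Omega)$: on ONE's cover $\mathcal{U}_n$, pass the fixed point-finite open refinement $\mathcal{W}_n$ to $\sigma$, let $F_n=\sigma(\mathcal{W}_1,\dots,\mathcal{W}_n)$, and have TWO play $\mathcal{V}_n=\mathcal{V}(F_n,\mathcal{W}_n)\subseteq\mathcal{U}_n$; since $\{St(F_n,\mathcal{W}_n):n\in\mathbb{N}\}$ is an $\omega$-cover and $St(F_n,\mathcal{W}_n)\subseteq\cup\mathcal{V}_n$, so is $\{\cup\mathcal{V}_n:n\in\mathbb{N}\}$.

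The only genuinely load-bearing step, and the sole place the hypothesis enters, is that a point-finite open refinement makes $St(F_n,\mathcal{W}_n)$ a \emph{finite} union of refinement members, hence contained in a union of finitely many members of $\mathcal{U}_n$; this is precisely what lets TWO's move be pushed back and forth between the finite-subset format of $\Guf$ and the finite-set-star format of $\SSGf$. The rest is the routine bookkeeping of simulating one game inside another, exactly as in Theorem~\ref{TG11}, so the paper may legitimately record this as ``obtained similarly''.
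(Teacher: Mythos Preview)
Your proposal is correct and is precisely the intended ``similar'' argument: you replace the open star-refinements of Theorem~\ref{TG11} with point-finite open refinements (available by metacompactness), and use the fact that $St(F,\mathcal{W})$ is then a finite union of refinement members to translate between the finite-subset responses of $\Guf$ and the finite-set-star responses of $\SSGf$. The two ``easy'' directions and the bookkeeping are handled exactly as in the paper's proof of Theorem~\ref{TG11}.
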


In association with Theorem~\ref{TG11} and Theorem~\ref{TG13}, we obtain the following.
\begin{Cor}
\label{CG1}
For a paracompact Hausdorff space $X$ the following games are equivalent.
\begin{enumerate}[wide=0pt,label={\upshape(\arabic*)},leftmargin=*]
  \item $\Guf(\mathcal{O},\Omega)$.
  \item $\SGuf(\mathcal{O},\Omega)$.
  \item $\SSGf(\mathcal{O},\Omega)$.
\end{enumerate}
\end{Cor}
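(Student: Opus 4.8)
The plan is to deduce this immediately from Theorems~\ref{TG11} and~\ref{TG13} together with the transitivity of the relation ``being equivalent games''. First I would record the standard fact that every paracompact (Hausdorff) space is metacompact, so that a paracompact Hausdorff $X$ satisfies the hypotheses of both theorems simultaneously. Then Theorem~\ref{TG11} gives that $\Guf(\mathcal{O},\Omega)$ and $\SGuf(\mathcal{O},\Omega)$ are equivalent on $X$, and Theorem~\ref{TG13} gives that $\Guf(\mathcal{O},\Omega)$ and $\SSGf(\mathcal{O},\Omega)$ are equivalent on $X$.

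Next I would spell out transitivity of game equivalence, which is the only thing requiring a word of justification. Suppose one of the players, say ONE, has a winning strategy in $\SGuf(\mathcal{O},\Omega)$ on $X$. By the equivalence from Theorem~\ref{TG11}, ONE then has a winning strategy in $\Guf(\mathcal{O},\Omega)$ on $X$, and by the equivalence from Theorem~\ref{TG13}, ONE therefore has a winning strategy in $\SSGf(\mathcal{O},\Omega)$ on $X$; the same chain runs in reverse and with the roles of ONE and TWO interchanged. Hence $\SGuf(\mathcal{O},\Omega)$ and $\SSGf(\mathcal{O},\Omega)$ are equivalent on $X$ as well, and the three games of the statement are pairwise equivalent.

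I do not expect a genuine obstacle here, since all the real work has been done in the two preceding theorems; the corollary is purely a matter of combining them. The only point to be careful about is to state precisely (as in the definition of equivalent games recalled just before Theorem~\ref{TG11}) that a winning strategy for a \emph{given} player transfers in \emph{both} directions, so that the composition of the two equivalences is again an equivalence rather than merely a one-directional implication. With that noted, the proof is complete.
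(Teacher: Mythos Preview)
Your argument is correct and matches the paper's approach exactly: the corollary is stated as an immediate consequence of Theorems~\ref{TG11} and~\ref{TG13}, using that paracompact Hausdorff spaces are metacompact and that game equivalence is transitive. There is nothing to add.
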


We now show that the hypothesis on the space $X$ in Theorem~\ref{TG11}, Theorem~\ref{TG13} and Corollary~\ref{CG1} cannot be dropped.
\begin{Ex}
\label{E18}
{\emph{Paracompactness in Theorem~\ref{TG11} and metacompactness in Theorem~\ref{TG13} are essential.}}\\
Consider $X=[0,\omega_1)$, the set of all countable ordinals with the order topology. The space $X$ is Tychonoff but not metacompact (and hence not paracompact). Since $X$ is not Lindel\"{o}f, it does not satisfy the Scheepers property and hence TWO has no winning strategy in $\Guf(\mathcal{O},\Omega)$. We claim that TWO has a winning strategy in the games $\SGuf(\mathcal{O},\Omega)$ and $\SSGf(\mathcal{O},\Omega)$ on $X$. It is enough to show that TWO has a winning strategy in $\SSGf(\mathcal{O},\Omega)$. Let us define a strategy $\sigma$ for TWO in $\SSGf(\mathcal{O},\Omega)$ on $X$ as follows. In the $n$th inning, suppose that $\mathcal{U}_n$ is the move of ONE in $\SSGf(\mathcal{O},\Omega)$. Since $X$ is strongly starcompact, there exists a finite set $F_n\subseteq X$ such that $X=St(F_n,\mathcal{U}_n)$. Choose $\sigma(\mathcal{U}_1,\mathcal{U}_2,\ldots,\mathcal{U}_n)=F_n$ as the response of TWO in $\SSGf(\mathcal{O},\Omega)$. This defines a winning strategy $\sigma$ for TWO in $\SSGf(\mathcal{O},\Omega)$ on $X$.
\end{Ex}

\begin{Ex}
\label{E20}
\emph{Paracompactness in Corollary~\ref{CG1} is essential.}\begin{enumerate}[wide=0pt,label={\upshape{\bf (\arabic*)}}]
  \item We construct a Hausdorff metacompact starcompact space $X$ which is not paracompact. Let $\kappa$ be an infinite cardinal and $D=\{d_\alpha : \alpha<\kappa\}$ be the discrete space of cardinality $\kappa$. Let $aD=D\cup\{\infty\}$ be the one point compactification of $D$. In the product space $aD\times(\omega+1)$, replace the local base of the point $(\infty,\omega)$ by the family $\{U\setminus(D\times\{\omega\}) : (\infty,\omega)\in U\;\text{and}\; U\;\text{is an open set in}\; aD\times(\omega+1)\}$. Let $X$ be the space obtained by such replacement.  Observe that TWO has no winning strategy in the game $\SSGf(\mathcal{O},\Omega)$ on $X$ since $X$ is not strongly star-Scheepers. Using starcompactness we can construct a winning strategy for TWO in $\SGuf(\mathcal{O},\Omega)$ on $X$ in line of Example~\ref{E18}.\\

  \item Let $aD$ be the one point compactification of the discrete space $D$ of cardinality $\mathfrak c$ and consider the subspace $X=(aD\times [0,\mathfrak{c}^+))\cup (D\times\{\mathfrak{c}^+\})$ of the product space $aD\times [0,\mathfrak{c}^+]$. The space $X$ obtained such a way is Tychonoff starcompact but not paracompact. In addition, $X$ is not strongly star-Scheepers. Using similar reasoning it can be shown that TWO has a winning strategy in $\SGuf(\mathcal{O},\Omega)$ but TWO has no winning strategy in $\SSGf(\mathcal{O},\Omega)$ on $X$.
\end{enumerate}
\end{Ex}

Also we can remark that the games $\SGuf(\mathcal{O},\Omega)$ and $\Guf(\mathcal{O},\Omega)$ are not equivalent in general.

\begin{Th}
\label{T43}
If $X$ is a space such that ONE does not have a winning strategy in the game $\Guf(\mathcal{O},\Omega)$ on $X$, then each large cover of $X$ is weakly groupable.
\end{Th}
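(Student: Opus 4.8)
The plan is to exploit the hypothesis in the standard way: out of the given large cover I build an explicit strategy $\sigma$ for ONE in $\Guf(\mathcal{O},\Omega)$, observe that $\sigma$ cannot be winning, and then read off the required finite decomposition from any $\sigma$-play that TWO wins. Fix a large cover $\mathcal{U}$ of $X$. First I would record a preliminary reduction: since ONE has no winning strategy in $\Guf(\mathcal{O},\Omega)$ on $X$, the space $X$ satisfies $\Uf(\mathcal{O},\Omega)$, hence is Menger, hence Lindel\"of. Combining this with the elementary fact that a cover $\mathcal{U}$ is large if and only if $\mathcal{U}$ remains an open cover of $X$ after the removal of any finite subfamily, a routine $\omega$-step bookkeeping lets me replace $\mathcal{U}$ by a countable subfamily that is still a large cover; since a weakly groupable cover is necessarily countable, nothing is lost. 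So assume $\mathcal{U}=\{U_k : k\in\mathbb{N}\}$.

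Now the core construction. By the equivalence just quoted, if $\mathcal{F}$ is any finite subfamily of $\mathcal{U}$, then $\mathcal{U}\setminus\mathcal{F}$ is again an open (indeed large) cover of $X$. I use this to define a strategy $\sigma$ for ONE: ONE's first move is $\mathcal{U}^{(1)}:=\mathcal{U}$; if TWO has answered innings $1,\dots,n-1$ with finite sets $\mathcal{V}_1,\dots,\mathcal{V}_{n-1}$, then ONE's $n$th move is $\mathcal{U}^{(n)}:=\mathcal{U}\setminus(\mathcal{V}_1\cup\dots\cup\mathcal{V}_{n-1})$, which is a legitimate open cover because $\mathcal{V}_1\cup\dots\cup\mathcal{V}_{n-1}$ is finite. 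Note that along any $\sigma$-play the finite sets $\mathcal{V}_n$ chosen by TWO are automatically pairwise disjoint subfamilies of $\mathcal{U}$, since $\mathcal{V}_n\subseteq\mathcal{U}^{(n)}$. Because ONE has no winning strategy, $\sigma$ is not winning, so there is a $\sigma$-play $\mathcal{U}^{(1)},\mathcal{V}_1,\mathcal{U}^{(2)},\mathcal{V}_2,\dots$ in which TWO wins, i.e. $\{\cup\mathcal{V}_n : n\in\mathbb{N}\}$ is an $\omega$-cover of $X$. Finally, $\mathcal{U}\setminus\bigcup_{n}\mathcal{V}_n$ is countable, so I distribute its members one per inning to enlarge the $\mathcal{V}_n$ to finite, pairwise disjoint subfamilies $\mathcal{U}_n\supseteq\mathcal{V}_n$ with $\bigcup_n\mathcal{U}_n=\mathcal{U}$. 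Then every finite $F\subseteq X$ lies in some $\cup\mathcal{V}_n\subseteq\cup\mathcal{U}_n$, so the sequence $(\mathcal{U}_n)$ witnesses that $\mathcal{U}$ is weakly groupable.

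The only point that genuinely needs care is the legitimacy of ONE's moves, which rests entirely on the observation that a large cover minus any finite subfamily is still a cover; the remaining ingredients — the reduction to a countable large cover and the redistribution of the members of $\mathcal{U}$ not picked by TWO — are purely bookkeeping. I do not anticipate a serious obstacle, only the mild care needed to phrase the countability reduction compatibly with the fact that weak groupability itself forces countability.
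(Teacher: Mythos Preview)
Your approach is essentially identical to the paper's: define ONE's strategy by successively deleting TWO's finite selections from $\mathcal{U}$, take a $\sigma$-play lost by ONE to obtain pairwise disjoint $\mathcal{V}_n$'s with $\{\cup\mathcal{V}_n:n\in\mathbb{N}\}$ an $\omega$-cover, and then redistribute the unused members of $\mathcal{U}$ among the $\mathcal{V}_n$'s. The only difference is your preliminary Lindel\"of reduction to a countable $\mathcal{U}$, which the paper skips by simply starting with $\mathcal{U}=\{U_n:n\in\mathbb{N}\}$; note, however, that your reduction as phrased does not quite do what you want---showing a countable large \emph{sub}cover is weakly groupable does not make an uncountable $\mathcal{U}$ weakly groupable---so the statement is really only meaningful (and only proved, both by you and by the paper) for countable large covers.
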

\begin{proof}
Let $\mathcal{U}=\{U_n : n\in\mathbb{N}\}$ be a large cover of $X$. We define a strategy $\sigma$ for ONE in the game $\Guf(\mathcal{O},\Omega)$ on $X$ as follows. Consider $\sigma(\emptyset)=\mathcal{U}$ as the first move of ONE. If TWO responds by selecting a finite subset $\mathcal{V}_1\subseteq\sigma(\emptyset)$, then ONE plays $\sigma(\mathcal{V}_1)=\mathcal{U}\setminus\mathcal{V}_1$ in the second inning. If TWO responds with a finite subset $\mathcal{V}_2\subseteq\sigma(\mathcal{V}_1)$, then ONE plays $\sigma(\mathcal{V}_1,\mathcal{V}_2)=\mathcal{U}\setminus(\mathcal{V}_1\cup\mathcal{V}_2)$ and so on. Thus we get a legitimate strategy $\sigma$ for ONE in $\Guf(\mathcal{O},\Omega)$ on $X$. Since ONE does not have a winning strategy in $\Guf(\mathcal{O},\Omega)$ on $X$, $\sigma$ is not a winning strategy for ONE. Now there is a $\sigma$-play $\sigma(\emptyset),\mathcal{V}_1,\sigma(\mathcal{V}_1), \mathcal{V}_2, \sigma(\mathcal{V}_1,\mathcal{V}_1),\ldots$ which is lost by ONE. It follows that $\{\cup\mathcal{V}_n : n\in\mathbb{N}\}$ is an $\omega$-cover of $X$ and the members of the sequence $(\mathcal{V}_n)$ of moves by TWO are pairwise disjoint. If any member of $\mathcal{U}$ are not present in the sequence $(\mathcal{V}_n)$, then after the construction of the play they can be distributed among $\mathcal{V}_n$'s so that $\mathcal{U}$ is weakly groupable.
\end{proof}

Summarizing Theorem~\ref{T43}, Theorem~\ref{TG11} and Theorem~\ref{TG13}, we obtain the following.
\begin{Cor}
\hfill
\begin{enumerate}[wide=0pt,label={\upshape(\arabic*)},
ref={\theCor(\arabic*)},leftmargin=*]
  \item \label{C24} Let $X$ be a paracompact Hausdorff space. If ONE does not have a winning strategy in the game $\SGuf(\mathcal{O},\Omega)$ on $X$, then each large cover of $X$ is weakly groupable.
  \item \label{C25} Let $X$ be a metacompact space. If ONE does not have a winning strategy in the game $\SSGf(\mathcal{O},\Omega)$ on $X$, then each large cover of $X$ is weakly groupable.
\end{enumerate}
\end{Cor}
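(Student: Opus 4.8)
The plan is to obtain this corollary directly by chaining the game equivalences of Theorem~\ref{TG11} and Theorem~\ref{TG13} with Theorem~\ref{T43}; essentially no new argument is required beyond invoking these three results in the correct order, so the ``proof'' is really just an observation.

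For part~(1), I would start with a paracompact Hausdorff space $X$ for which ONE has no winning strategy in $\SGuf(\mathcal{O},\Omega)$ on $X$. By Theorem~\ref{TG11} the games $\Guf(\mathcal{O},\Omega)$ and $\SGuf(\mathcal{O},\Omega)$ are equivalent on $X$. Since equivalence of games means exactly that a given player has a winning strategy in one of the games precisely when that same player has a winning strategy in the other, the equivalence yields that ONE also has no winning strategy in $\Guf(\mathcal{O},\Omega)$ on $X$ (we are using the contrapositive of ``ONE wins $\Guf(\mathcal{O},\Omega)\Rightarrow$ ONE wins $\SGuf(\mathcal{O},\Omega)$'', which is legitimate because the relation holds for each player). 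Now Theorem~\ref{T43} applies verbatim and gives that every large cover of $X$ is weakly groupable, as required.

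For part~(2), I would repeat the same three-step chain, replacing Theorem~\ref{TG11} by Theorem~\ref{TG13} (which asserts that on a metacompact space $X$ the games $\Guf(\mathcal{O},\Omega)$ and $\SSGf(\mathcal{O},\Omega)$ are equivalent) and replacing $\SGuf(\mathcal{O},\Omega)$ by $\SSGf(\mathcal{O},\Omega)$ throughout: from ``ONE has no winning strategy in $\SSGf(\mathcal{O},\Omega)$'' one passes to ``ONE has no winning strategy in $\Guf(\mathcal{O},\Omega)$'' via the equivalence, and then Theorem~\ref{T43} finishes the argument.

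The only place that warrants a moment's care is the direction in which the game equivalence is used, namely that we apply it in its biconditional form at the level of the single player ONE; once that is noted, there is no obstacle at all, since all the substantive content already resides in Theorem~\ref{T43} and in the open star-refinement constructions underlying Theorems~\ref{TG11} and~\ref{TG13}.
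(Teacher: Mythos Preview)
Your proposal is correct and matches the paper's own approach exactly: the paper simply states that the corollary is obtained by ``summarizing Theorem~\ref{T43}, Theorem~\ref{TG11} and Theorem~\ref{TG13}'' without giving any further argument, which is precisely the chain of implications you spelled out.
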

%
\begin{Def}
An open cover $\mathcal{U}$ of $X$ is said to be star-weakly groupable if it can be expressed as a countable union of finite, pairwise disjoint subfamilies $\mathcal{V}_n$, $n\in\mathbb{N}$, such that for each finite set $F\subseteq X$ we have $F\subseteq St(\cup\mathcal{V}_n,\mathcal{U})$ for some $n$.
\end{Def}

We sketch the proof of the next result, which is a star variation of Theorem~\ref{T43}.
\begin{Th}
\label{T42}
If $X$ is a space such that ONE does not have a winning strategy in the game $\SGuf(\mathcal{O},\Omega)$ on $X$, then each countable large cover of $X$ is star-weakly groupable.
\end{Th}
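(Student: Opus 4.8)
The plan is to adapt the argument of Theorem~\ref{T43} to the star setting; the one new idea is that ONE should be made to play only subfamilies of the given cover, so that the stars occurring in TWO's winning condition are automatically computed with respect to that cover.

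Let $\mathcal{U}=\{U_n : n\in\mathbb{N}\}$ be a countable large cover of $X$. First I would describe a strategy $\sigma$ for ONE in the game $\SGuf(\mathcal{O},\Omega)$ on $X$: put $\sigma(\emptyset)=\mathcal{U}$, and after TWO has answered with finite sets $\mathcal{V}_1,\dots,\mathcal{V}_k$ let $\sigma(\mathcal{V}_1,\dots,\mathcal{V}_k)=\mathcal{U}\setminus(\mathcal{V}_1\cup\dots\cup\mathcal{V}_k)$. Each such move is legitimate: since $\mathcal{U}$ is large, every point of $X$ belongs to infinitely many members of $\mathcal{U}$, so deleting the finitely many members TWO has selected so far still leaves an open cover of $X$. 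Thus $\sigma$ is a genuine strategy for ONE.

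Since ONE has no winning strategy in $\SGuf(\mathcal{O},\Omega)$ on $X$, the strategy $\sigma$ is not winning, so there is a $\sigma$-play $\mathcal{U}_1,\mathcal{V}_1,\mathcal{U}_2,\mathcal{V}_2,\dots$ (with $\mathcal{U}_1=\mathcal{U}$ and $\mathcal{U}_n=\mathcal{U}\setminus(\mathcal{V}_1\cup\dots\cup\mathcal{V}_{n-1})$ for $n\geq 2$) that is lost by ONE. By the winning condition of the game, either $\{St(\cup\mathcal{V}_n,\mathcal{U}_n):n\in\mathbb{N}\}$ is an $\omega$-cover of $X$ or $St(\cup\mathcal{V}_n,\mathcal{U}_n)=X$ for some $n$; in the degenerate second case $St(\cup\mathcal{V}_n,\mathcal{U})=X$ as well, which trivially gives the conclusion below, so we may assume the first case. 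Two remarks are now immediate. First, the sets $\mathcal{V}_n$ are pairwise disjoint, since $\mathcal{V}_n\subseteq\mathcal{U}_n=\mathcal{U}\setminus(\mathcal{V}_1\cup\dots\cup\mathcal{V}_{n-1})$. Second, $\mathcal{U}_n\subseteq\mathcal{U}$, whence $St(\cup\mathcal{V}_n,\mathcal{U}_n)\subseteq St(\cup\mathcal{V}_n,\mathcal{U})$ for each $n$, so $\{St(\cup\mathcal{V}_n,\mathcal{U}):n\in\mathbb{N}\}$ is an $\omega$-cover of $X$ too.

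Finally I would turn $(\mathcal{V}_n)$ into a partition of $\mathcal{U}$ into finite pieces. Because $\mathcal{U}$ is countable, $\mathcal{U}\setminus\bigcup_{n\in\mathbb{N}}\mathcal{V}_n$ is countable; listing its members (finitely, or as $\{W_k:k\in\mathbb{N}\}$) and adjoining $W_k$ to $\mathcal{V}_k$ keeps every $\mathcal{V}_n$ finite, keeps them pairwise disjoint, and makes their union equal to $\mathcal{U}$. Enlarging a member of the family can only enlarge its star with respect to $\mathcal{U}$, so $\{St(\cup\mathcal{V}_n,\mathcal{U}):n\in\mathbb{N}\}$ remains an $\omega$-cover; that is, for every finite $F\subseteq X$ there is an $n$ with $F\subseteq St(\cup\mathcal{V}_n,\mathcal{U})$, so $\mathcal{U}$ is star-weakly groupable. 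I expect no genuine obstacle beyond choosing ONE's strategy correctly: deleting from $\mathcal{U}$ at every inning (rather than replaying $\mathcal{U}$) is exactly what simultaneously forces disjointness of TWO's selections and keeps each of ONE's covers a subfamily of $\mathcal{U}$, so that TWO's winning condition translates directly into a statement about stars inside $\mathcal{U}$; countability of $\mathcal{U}$ is used only in the last, cosmetic, redistribution step.
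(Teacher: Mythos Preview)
Your proof is correct and follows essentially the same route as the paper's: define ONE's strategy by repeatedly deleting TWO's previous selections from $\mathcal{U}$, use a lost $\sigma$-play to obtain pairwise disjoint finite subfamilies whose stars form an $\omega$-cover, and redistribute leftover members of $\mathcal{U}$. Your version is in fact more complete than the paper's sketch, making explicit the legitimacy of $\sigma$ (via largeness), the key inclusion $St(\cup\mathcal{V}_n,\mathcal{U}_n)\subseteq St(\cup\mathcal{V}_n,\mathcal{U})$, the handling of the degenerate $St(\cup\mathcal{V}_n,\mathcal{U}_n)=X$ case, and the precise role of countability in the final redistribution step.
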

\begin{proof}
Let $\mathcal{U}=\{U_n : n\in\mathbb{N}\}$ be a large cover of $X$. We define a strategy $\sigma$ for ONE in $\SGuf(\mathcal{O},\Omega)$ on $X$ as follows. Consider $\sigma(\emptyset)=\mathcal{U}$ as the first move of ONE. If TWO responds by selecting a finite subset $\mathcal{V}_1\subseteq\sigma(\emptyset)$, then ONE plays $\sigma(\mathcal{V}_1)=\mathcal{U}\setminus\mathcal{V}_1$ in the second inning. If TWO responds with a finite subset $\mathcal{V}_2\subseteq\sigma(\mathcal{V}_1)$, then ONE plays $\sigma(\mathcal{V}_1,\mathcal{V}_2)=\mathcal{U}\setminus(\mathcal{V}_1\cup\mathcal{V}_2)$ and so on. Thus we get a legitimate strategy $\sigma$ for ONE in $\SGuf(\mathcal{O},\Omega)$ on $X$. Since ONE does not have a winning strategy in $\SGuf(\mathcal{O},\Omega)$ on $X$, $\sigma$ is not a winning strategy for ONE. So there exists a $\sigma$-play $\sigma(\emptyset),\mathcal{V}_1,\sigma(\mathcal{V}_1), \mathcal{V}_2, \sigma(\mathcal{V}_1,\mathcal{V}_2),\ldots$ which is lost by ONE. Observe that $\{St(\cup\mathcal{V}_n,\sigma(\mathcal{V}_1,\mathcal{V}_2,\ldots,\mathcal{V}_{n-1})) : n\in\mathbb{N}\}$ is an $\omega$-cover of $X$ and the members of the sequence $(\mathcal{V}_n)$ of moves by TWO are pairwise disjoint. It can be shown that $\mathcal{U}$ is star-weakly groupable.
\end{proof}

\begin{Cor}
\label{C4}
If $X$ is a space such that ONE does not have a winning strategy in the game $\SSGf(\mathcal{O},\Omega)$ on $X$, then each countable large cover of $X$ is star-weakly groupable.
\end{Cor}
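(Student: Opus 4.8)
The plan is to adapt the proof of Theorem~\ref{T42}, the only genuinely new point being a translation from TWO's point-set moves to finite subfamilies of the given cover. So I would fix a countable large cover $\mathcal{U}=\{U_k : k\in\mathbb{N}\}$ of $X$ and build a strategy $\sigma$ for ONE in $\SSGf(\mathcal{O},\Omega)$ on $X$ as follows. Let $\sigma(\emptyset)=\mathcal{U}$. When TWO answers with a finite set $F_1\subseteq X$, pick for each $x\in F_1$ some $U(x)\in\mathcal{U}$ with $x\in U(x)$ (say, of least index) and set $\mathcal{V}_1=\{U(x):x\in F_1\}$, a finite subfamily of $\mathcal{U}$ with $F_1\subseteq\cup\mathcal{V}_1$. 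Since $\mathcal{U}$ is large, $\mathcal{U}\setminus\mathcal{V}_1$ is still a (large, hence open) cover, so ONE may legitimately play $\sigma(F_1)=\mathcal{U}\setminus\mathcal{V}_1$. Inductively, once TWO has produced $F_1,\dots,F_{n-1}$ and the finite pairwise disjoint subfamilies $\mathcal{V}_1,\dots,\mathcal{V}_{n-1}$ of $\mathcal{U}$ have been extracted, ONE plays $\sigma(F_1,\dots,F_{n-1})=\mathcal{U}\setminus(\mathcal{V}_1\cup\dots\cup\mathcal{V}_{n-1})$, and from $F_n$ one extracts $\mathcal{V}_n=\{U(x):x\in F_n\}$ with each $U(x)\in\sigma(F_1,\dots,F_{n-1})$; this keeps the $\mathcal{V}_n$ pairwise disjoint subfamilies of $\mathcal{U}$.

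Next I would invoke the hypothesis: since ONE has no winning strategy in $\SSGf(\mathcal{O},\Omega)$ on $X$, the strategy $\sigma$ above is not winning, so some $\sigma$-play $\sigma(\emptyset),F_1,\sigma(F_1),F_2,\dots$ is lost by ONE, i.e. $\{St(F_n,\sigma(F_1,\dots,F_{n-1})):n\in\mathbb{N}\}$ is an $\omega$-cover of $X$. The crucial inclusions to record are $St(F_n,\sigma(F_1,\dots,F_{n-1}))\subseteq St(\cup\mathcal{V}_n,\sigma(F_1,\dots,F_{n-1}))\subseteq St(\cup\mathcal{V}_n,\mathcal{U})$, the first because every member of the cover meeting the finite set $F_n\subseteq\cup\mathcal{V}_n$ also meets $\cup\mathcal{V}_n$, the second because replacing $\sigma(F_1,\dots,F_{n-1})$ by the larger family $\mathcal{U}$ only enlarges the star. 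Hence $\{St(\cup\mathcal{V}_n,\mathcal{U}):n\in\mathbb{N}\}$ is again an $\omega$-cover of $X$. Finally I would enumerate the (at most countably many) members of $\mathcal{U}$ not lying in any $\mathcal{V}_n$ and distribute them one at a time among the blocks; this keeps every block finite and the blocks pairwise disjoint, can only enlarge the stars $St(\cup\mathcal{V}_n,\mathcal{U})$, and yields a partition of $\mathcal{U}$ witnessing that it is star-weakly groupable.

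I do not expect a real obstacle here: the whole argument is the strategy-construction scheme of Theorem~\ref{T42}, and the only spots needing a little care are verifying that $\mathcal{U}\setminus(\mathcal{V}_1\cup\dots\cup\mathcal{V}_{n-1})$ remains a legitimate move for ONE (immediate from $\mathcal{U}$ being large) and that passing from the point-sets $F_n$ to the finite subfamilies $\mathcal{V}_n$ does not shrink the relevant stars (the displayed inclusion). The countability of $\mathcal{U}$ plays no role beyond making the conclusion meaningful and allowing the final redistribution of the leftover members, exactly as in the $\SGuf$ case.
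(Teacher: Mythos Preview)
Your argument is correct. The paper itself gives no explicit proof of Corollary~\ref{C4}; it is simply recorded as an immediate consequence of Theorem~\ref{T42}, using the elementary fact that a winning strategy for ONE in $\SGuf(\mathcal{O},\Omega)$ yields a winning strategy for ONE in $\SSGf(\mathcal{O},\Omega)$ (by translating TWO's finite point-sets $F_n$ into finite subfamilies $\mathcal{V}_n$ covering them, exactly as you do), so that the contrapositive feeds the hypothesis of Corollary~\ref{C4} directly into Theorem~\ref{T42}. Your write-up is essentially this argument unwound: rather than invoking the game implication and then Theorem~\ref{T42} as a black box, you fold the translation $F_n\mapsto\mathcal{V}_n$ into the strategy construction itself. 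The two approaches are the same in substance; yours is just the explicit direct version, and all the care points you flag (largeness guarantees each $\sigma(F_1,\dots,F_{n-1})$ is still a cover, the inclusion $St(F_n,\sigma(\dots))\subseteq St(\cup\mathcal{V}_n,\mathcal{U})$, redistribution of leftovers) are handled correctly.
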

The next result can be similarly verified.
\begin{Th}
\label{T44}
If ONE does not have a winning strategy in the game $\Guf(\mathcal{O},\Omega)$ on $X$, then ONE does not have a winning strategy in the game $\Gf(\Omega,\Lambda^{wgp})$ on $X$.
\end{Th}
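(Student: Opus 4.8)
The plan is to argue by contraposition: assuming ONE has a winning strategy $\sigma$ in the game $\Gf(\Omega,\Lambda^{wgp})$ on $X$, I would build a winning strategy for ONE in $\Guf(\mathcal{O},\Omega)$ on $X$, which contradicts the hypothesis. The construction reuses the ``discard'' device from the proofs of Theorem~\ref{T43} and Theorem~\ref{T42}: in $\Guf(\mathcal{O},\Omega)$ ONE will play the $\omega$-covers prescribed by $\sigma$ but with every set already chosen by TWO deleted, and it is precisely this deletion that forces the finite pieces selected by TWO to be pairwise disjoint.

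We may assume $X$ is infinite. Since $\Omega\subseteq\Lambda$, every $\omega$-cover of $X$ is large, so deleting finitely many sets from an $\omega$-cover still leaves an open cover of $X$; hence each move described below is a legal move in $\Guf(\mathcal{O},\Omega)$. Define a strategy $\tau$ for ONE in $\Guf(\mathcal{O},\Omega)$ by $\tau(\emptyset)=\sigma(\emptyset)$ and $\tau(\mathcal{V}_1,\dots,\mathcal{V}_k)=\sigma(\mathcal{V}_1,\dots,\mathcal{V}_k)\setminus(\mathcal{V}_1\cup\cdots\cup\mathcal{V}_k)$. If TWO answers $\tau(\mathcal{V}_1,\dots,\mathcal{V}_k)$ by a finite set $\mathcal{V}_{k+1}\subseteq\tau(\mathcal{V}_1,\dots,\mathcal{V}_k)$, then $\mathcal{V}_{k+1}$ is in particular a finite subset of $\sigma(\mathcal{V}_1,\dots,\mathcal{V}_k)$ and is disjoint from $\mathcal{V}_1,\dots,\mathcal{V}_k$. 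Consequently every $\tau$-play $\tau(\emptyset),\mathcal{V}_1,\tau(\mathcal{V}_1),\mathcal{V}_2,\dots$ of $\Guf(\mathcal{O},\Omega)$ is, on reading off the same responses of TWO, a legitimate $\sigma$-play of $\Gf(\Omega,\Lambda^{wgp})$; since $\sigma$ is winning for ONE there, $\bigcup_{n}\mathcal{V}_n\notin\Lambda^{wgp}$, while by construction the $\mathcal{V}_n$ are finite and pairwise disjoint.

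To finish I would verify that $\tau$ is winning for ONE, i.e.\ that $\{\cup\mathcal{V}_n:n\in\mathbb{N}\}$ is not an $\omega$-cover of $X$. If it were, then grouping $\bigcup_n\mathcal{V}_n$ as $(\mathcal{V}_n)$ shows that $\bigcup_n\mathcal{V}_n$ is weakly groupable (finite, pairwise disjoint pieces, with every finite $F\subseteq X$ contained in $\cup\mathcal{V}_n$ for some $n$); moreover, being an $\omega$-cover of the infinite space $X$, the family $\{\cup\mathcal{V}_n:n\in\mathbb{N}\}$ is large, so each $x\in X$ lies in $\cup\mathcal{V}_n$ for infinitely many $n$, hence, by pairwise disjointness of the $\mathcal{V}_n$, in infinitely many distinct members of $\bigcup_n\mathcal{V}_n$; therefore $\bigcup_n\mathcal{V}_n$ is large, so $\bigcup_n\mathcal{V}_n\in\Lambda^{wgp}$, a contradiction. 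Hence $\tau$ is a winning strategy for ONE in $\Guf(\mathcal{O},\Omega)$, contradicting the hypothesis, and the theorem follows. The only genuinely delicate point, I expect, is showing that $\bigcup_n\mathcal{V}_n$ is large: the discard device is essential there, since it is disjointness of the $\mathcal{V}_n$ that turns ``$x\in\cup\mathcal{V}_n$ for infinitely many $n$'' into ``$x$ lies in infinitely many members of $\bigcup_n\mathcal{V}_n$'', with the inclusion $\Omega\subseteq\Lambda$ supplying the first of these.
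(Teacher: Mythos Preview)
Your argument is correct and follows precisely the approach the paper has in mind: the paper does not spell out a proof of Theorem~\ref{T44} but only remarks that it ``can be similarly verified'' in reference to Theorem~\ref{T43}, and your contrapositive construction---playing $\sigma$'s $\omega$-covers with previously chosen sets discarded so as to force pairwise disjointness of TWO's responses---is exactly the analogue of the device used there. The verification that $\bigcup_n\mathcal{V}_n$ is large via $\Omega\subseteq\Lambda$ together with disjointness of the $\mathcal{V}_n$ is the one point requiring care, and you handle it correctly.
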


Combining Theorem~\ref{T44}, Theorem~\ref{TG11} and Theorem~\ref{TG13}, we obtain the following.
\begin{Cor}
\hfill
\begin{enumerate}[wide=0pt,label={\upshape(\arabic*)},
ref={\theCor(\arabic*)},leftmargin=*]
  \item \label{C26} Let $X$ be a paracompact Hausdorff space. If ONE does not have a winning strategy in the game $\SGuf(\mathcal{O},\Omega)$ on $X$, then ONE does not have a winning strategy in the game $\Gf(\Omega,\Lambda^{wgp})$ on $X$.
  \item \label{C27} Let $X$ be a metacompact space. If ONE does not have a winning strategy in the game $\SSGf(\mathcal{O},\Omega)$ on $X$, then ONE does not have a winning strategy in the game $\Gf(\Omega,\Lambda^{wgp})$ on $X$.
\end{enumerate}
\end{Cor}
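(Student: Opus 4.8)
The plan is to obtain both parts as a direct chaining of the three cited results, the only care needed being the direction in which game equivalence is invoked. Recall that by the definition of equivalent games adopted here, for each fixed player the existence of a winning strategy in one game is equivalent to its existence in the other; in particular ONE has a winning strategy in $\Guf(\mathcal{O},\Omega)$ on $X$ if and only if ONE has a winning strategy in $\SGuf(\mathcal{O},\Omega)$ on $X$ (under the paracompact Hausdorff hypothesis), and likewise with $\SSGf(\mathcal{O},\Omega)$ in place of $\SGuf(\mathcal{O},\Omega)$ (under the metacompact hypothesis). I would first record this biconditional explicitly, since the corollary will use it read contrapositively.

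For part (1), I would argue as follows. Assume $X$ is paracompact Hausdorff and that ONE has no winning strategy in $\SGuf(\mathcal{O},\Omega)$ on $X$. By Theorem~\ref{TG11} the games $\Guf(\mathcal{O},\Omega)$ and $\SGuf(\mathcal{O},\Omega)$ are equivalent, so taking the contrapositive of the ONE-direction of this equivalence yields that ONE has no winning strategy in $\Guf(\mathcal{O},\Omega)$ on $X$. Theorem~\ref{T44} then applies verbatim and gives that ONE has no winning strategy in $\Gf(\Omega,\Lambda^{wgp})$ on $X$, which is the desired conclusion.

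For part (2), the argument is identical with Theorem~\ref{TG13} in place of Theorem~\ref{TG11}: under the metacompactness hypothesis the games $\Guf(\mathcal{O},\Omega)$ and $\SSGf(\mathcal{O},\Omega)$ are equivalent, so ONE lacking a winning strategy in $\SSGf(\mathcal{O},\Omega)$ transfers to ONE lacking one in $\Guf(\mathcal{O},\Omega)$, and a single application of Theorem~\ref{T44} finishes the proof.

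Since every implication in the chain is already packaged in a previously established theorem, there is no genuine obstacle: the statement is a corollary in the literal sense. The only point deserving (minor) attention is that game equivalence must be used in its ONE-player form and read contrapositively, transferring the \emph{absence} of a winning strategy for ONE, rather than in the more commonly quoted TWO-player form; the proofs of Theorems~\ref{TG11} and~\ref{TG13} supply exactly the ONE-direction needed.
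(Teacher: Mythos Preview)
Your proposal is correct and matches the paper's own approach: the corollary is obtained precisely by combining Theorem~\ref{T44} with Theorem~\ref{TG11} (for part (1)) and Theorem~\ref{TG13} (for part (2)), using the ONE-player direction of game equivalence contrapositively to pass from the star game to $\Guf(\mathcal{O},\Omega)$.
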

%

\begin{Th}
\label{T57}
If TWO has a winning strategy in the game $\SGuf(\mathcal{O},\Omega)$ on $X$, then TWO has a winning strategy in the game $\NSGuf(\mathcal{O},\Omega)$ on $X$.
\end{Th}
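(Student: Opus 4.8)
The plan is to show that a \emph{single} strategy serves in both games, since TWO's winning condition in $\NSGuf(\mathcal{O},\Omega)$ is weaker than in $\SGuf(\mathcal{O},\Omega)$. First I would record that the two games $\SGuf(\mathcal{O},\Omega)$ and $\NSGuf(\mathcal{O},\Omega)$ have literally the same tree of legal positions: in the $n$-th inning ONE plays an open cover $\mathcal{U}_n$ of $X$ and TWO responds with a finite subset $\mathcal{V}_n\subseteq\mathcal{U}_n$; only the payoff differs. Consequently, any strategy for TWO in one of the games is, verbatim, a strategy for TWO in the other, and it suffices to compare winning conditions along a common play.

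So let $\sigma$ be a winning strategy for TWO in $\SGuf(\mathcal{O},\Omega)$ on $X$, and regard $\sigma$ as a strategy for TWO in $\NSGuf(\mathcal{O},\Omega)$. I would take an arbitrary $\sigma$-play of $\NSGuf(\mathcal{O},\Omega)$, say $\mathcal{U}_1,\mathcal{V}_1,\mathcal{U}_2,\mathcal{V}_2,\dots$ with $\mathcal{V}_n=\sigma(\mathcal{U}_1,\dots,\mathcal{U}_n)$ for each $n$. Since the move structures coincide, this is also a $\sigma$-play of $\SGuf(\mathcal{O},\Omega)$, hence is won by TWO there, so $\{St(\cup\mathcal{V}_n,\mathcal{U}_n):n\in\mathbb{N}\}$ is an $\omega$-cover of $X$. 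Now $\cup\mathcal{V}_n\subseteq\cup_{m\in\mathbb{N}}(\cup\mathcal{V}_m)$ for every $n$, and $St(\cdot,\mathcal{U}_n)$ is monotone in its first argument, so $St(\cup\mathcal{V}_n,\mathcal{U}_n)\subseteq St(\cup_{m\in\mathbb{N}}(\cup\mathcal{V}_m),\mathcal{U}_n)$. Given a finite $F\subseteq X$, pick $n$ with $F\subseteq St(\cup\mathcal{V}_n,\mathcal{U}_n)$; then $F\subseteq St(\cup_{m\in\mathbb{N}}(\cup\mathcal{V}_m),\mathcal{U}_n)$. Hence $\{St(\cup_{m\in\mathbb{N}}(\cup\mathcal{V}_m),\mathcal{U}_n):n\in\mathbb{N}\}$ is an $\omega$-cover of $X$, i.e.\ TWO wins this play of $\NSGuf(\mathcal{O},\Omega)$. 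As the $\sigma$-play was arbitrary, $\sigma$ is a winning strategy for TWO in $\NSGuf(\mathcal{O},\Omega)$ on $X$.

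There is no serious obstacle here; the proof rests on two elementary observations — that the two games have the same positions so a strategy transfers unchanged, and that enlarging each member of an $\omega$-cover (via monotonicity of the star operator in the first coordinate) again yields an $\omega$-cover. If one prefers to avoid identifying the two game trees, one may instead define $\tau$ for TWO in $\NSGuf(\mathcal{O},\Omega)$ by simulating a play of $\SGuf(\mathcal{O},\Omega)$ in which ONE's moves are copied over, setting $\tau(\mathcal{U}_1,\dots,\mathcal{U}_n)=\sigma(\mathcal{U}_1,\dots,\mathcal{U}_n)$; the same computation shows $\tau$ is winning.
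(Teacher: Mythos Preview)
Your proof is correct. The paper states Theorem~\ref{T57} without proof, so there is no argument to compare against; your write-up supplies precisely the straightforward verification the authors presumably had in mind --- the two games share the same tree of legal positions, and the monotonicity $St(\cup\mathcal{V}_n,\mathcal{U}_n)\subseteq St\bigl(\cup_{m\in\mathbb{N}}(\cup\mathcal{V}_m),\mathcal{U}_n\bigr)$ shows that TWO's winning condition in $\NSGuf(\mathcal{O},\Omega)$ is weaker than in $\SGuf(\mathcal{O},\Omega)$.
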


\begin{Th}
\label{T58}
If ONE has a winning strategy in the game $\NSGuf(\mathcal{O},\Omega)$ on $X$, then ONE has a winning strategy in the game $\SGuf(\mathcal{O},\Omega)$ on $X$.
\end{Th}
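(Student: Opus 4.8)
The plan is to reuse the winning strategy verbatim. First observe that the games $\SGuf(\mathcal{O},\Omega)$ and $\NSGuf(\mathcal{O},\Omega)$ have identical move structure: in the $n$th inning ONE plays an open cover $\mathcal{U}_n$ of $X$ and TWO responds with a finite subset $\mathcal{V}_n$ of $\mathcal{U}_n$; the two games differ only in the payoff condition. Hence a strategy for ONE in one game is, formally, a strategy for ONE in the other, and a run of one game, read as the underlying sequence of moves, is a run of the other.

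So let $\sigma$ be a winning strategy for ONE in $\NSGuf(\mathcal{O},\Omega)$ on $X$, and put $\tau:=\sigma$, now regarded as a strategy for ONE in $\SGuf(\mathcal{O},\Omega)$ on $X$. Let $\mathcal{U}_1,\mathcal{V}_1,\mathcal{U}_2,\mathcal{V}_2,\ldots$ be an arbitrary $\tau$-play of $\SGuf(\mathcal{O},\Omega)$. By the preceding remark this same sequence of moves is a $\sigma$-play of $\NSGuf(\mathcal{O},\Omega)$, and since $\sigma$ is winning for ONE there, it is lost by TWO there; that is, $\{St(\cup_{m\in\mathbb{N}}(\cup\mathcal{V}_m),\mathcal{U}_n):n\in\mathbb{N}\}$ is not an $\omega$-cover of $X$. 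The only point to use is the obvious monotonicity of the star operator in its first coordinate: since $\cup\mathcal{V}_n\subseteq\cup_{m\in\mathbb{N}}(\cup\mathcal{V}_m)$ for every $n$, we have $St(\cup\mathcal{V}_n,\mathcal{U}_n)\subseteq St(\cup_{m\in\mathbb{N}}(\cup\mathcal{V}_m),\mathcal{U}_n)$ for every $n$. Thus, were $\{St(\cup\mathcal{V}_n,\mathcal{U}_n):n\in\mathbb{N}\}$ an $\omega$-cover of $X$, then for each finite $F\subseteq X$ the member of it containing $F$ would be contained in the corresponding member of $\{St(\cup_{m\in\mathbb{N}}(\cup\mathcal{V}_m),\mathcal{U}_n):n\in\mathbb{N}\}$, forcing the latter family to be an $\omega$-cover as well --- contrary to what we have just shown. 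Hence $\{St(\cup\mathcal{V}_n,\mathcal{U}_n):n\in\mathbb{N}\}$ is not an $\omega$-cover of $X$, i.e.\ ONE wins this play of $\SGuf(\mathcal{O},\Omega)$. As the $\tau$-play was arbitrary, $\tau$ is a winning strategy for ONE in $\SGuf(\mathcal{O},\Omega)$ on $X$.

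I do not expect any real obstacle: this is precisely the ONE-player counterpart of Theorem~\ref{T57}, and the whole argument rests on the single observation that a pointwise-smaller family of open sets which happens to be an $\omega$-cover forces the pointwise-larger family to be an $\omega$-cover too. If one prefers to keep track of the auxiliary clause ``$St(\cup\mathcal{V}_n,\mathcal{U}_n)=X$ for some $n$'' in the winning condition, note that it is subsumed by the $\omega$-cover requirement (a family containing $X$ is an $\omega$-cover) and is taken care of by the very same inclusion, so nothing changes.
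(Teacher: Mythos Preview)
Your argument is correct. The paper states Theorem~\ref{T58} (like Theorem~\ref{T57}) without proof, presumably because it follows immediately from the inclusion $St(\cup\mathcal{V}_n,\mathcal{U}_n)\subseteq St(\cup_{m\in\mathbb{N}}(\cup\mathcal{V}_m),\mathcal{U}_n)$ that you identify; your write-up supplies precisely this routine verification.
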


\begin{Ex}
\label{E21}
\emph{Converse of Theorem~\ref{T57} and Theorem~\ref{T58} is not true.}\\
The space $X=\Psi(\mathcal{A})$ where $|\mathcal{A}|=\mathfrak{c}$ is Tychonoff and strongly star-Lindel\"{o}f but not star-Scheepers. Thus TWO has no winning strategy and ONE has a winning strategy in $\SGuf(\mathcal{O},\Omega)$ on $X$. We claim that TWO has a winning strategy in $\NSGuf(\mathcal{O},\Omega)$ on $X$.  Define a strategy $\sigma$ for TWO in $\NSGuf(\mathcal{O},\Omega)$ on $X$ as follows. Let $\mathcal{U}_1$ be the first move of ONE. Since $X$ is strongly star-Lindel\"{o}f, choose a countable subset $A=\{x_n : n\in\mathbb{N}\}$ of $X$ such that $St(A,\mathcal{U}_1)=X$. Next choose a $U_1\in\mathcal{U}_1$ such that $x_1\in U_1$ and consider $\sigma(\mathcal{U}_1)=\{U_1\}$ as the response of TWO. Let $\mathcal{U}_2$ be the second move of ONE. Also there is a $U_2\in\mathcal{U}_2$ such that $x_2\in U_2$ and consider $\sigma(\mathcal{U}_1,\mathcal{U}_2)=\{U_2\}$ as the response of TWO. Suppose that $\mathcal{U}_3$ is the third move of ONE and so on. Thus we define a winning strategy $\sigma$ for TWO in the game $\NSGuf(\mathcal{O},\Omega)$ on $X$.

We now show that ONE does not have a winning strategy in $\NSGuf(\mathcal{O},\Omega)$ on $X$. Let $\tau$ be a strategy for ONE in $\NSGuf(\mathcal{O},\Omega)$. We construct a $\tau$-play as follows. Suppose that $\tau(\emptyset)=\mathcal{U}$ is the first move of ONE. Again by the strongly star-Lindel\"{o}f property of $X$, there is a countable subset $A=\{x_n : n\in\mathbb{N}\}$ of $X$ such that $St(A,\mathcal{U})=X$. Choose a $U_1\in\mathcal{U}$ such that $x_1\in U_1$ and define $\mathcal{V}_1=\{U_1\}$ as the response of TWO. The second move of ONE is $\tau(\mathcal{V}_1)$ and subsequently we can find a $U_2\in\tau(\mathcal{V}_1)$ such that $x_2\in U_2$. Define $\mathcal{V}_2=\{U_2\}$ as the response of TWO. The third move of ONE is $\tau(\mathcal{V}_2)$ and so on. This defines a $\tau$-play in $\NSGuf(\mathcal{O},\Omega)$. It can be seen that the $\tau$-play is lost by ONE and hence ONE does not have a winning strategy in the game $\NSGuf(\mathcal{O},\Omega)$ on $X$.
\end{Ex}

The relation between the winning strategies of the players ONE and TWO in the games (for any space $X$) considered here can be outlined into the following diagram (Figure~\ref{dig3}), where each of the implications
\begin{center}
\begin{tikzcd}
G\arrow[r,->,line width=.022cm]&[-.3cm] H,\end{tikzcd} \begin{tikzcd}
G\arrow[dashed,r,->,line width=.022cm]&[-.3cm] H\end{tikzcd} and \begin{tikzcd}
G\arrow[r,dotted,->,line width=.022cm] & [-.3cm]H\end{tikzcd}
\end{center}
holds if winning strategies for TWO in $G$ produce winning strategies for TWO in $H$ as well as winning strategies for ONE in $H$ produce winning strategies for ONE in $G$ and the selection principle for $G$ implies the selection principle for $H$.

\begin{figure}[h!]
\begin{adjustbox}{max width=\textwidth,max height=\textheight,keepaspectratio,center}
\begin{tikzcd}[column sep=2.5cm,row sep=2.5cm,arrows={thick}]
\NSG1(\mathcal{O},\Gamma)\arrow[r]&
\NSGuf(\mathcal{O},\Gamma)\arrow[r]&
\NSGuf(\mathcal{O},\Omega)\arrow[r]&
\NSGuf(\mathcal{O},\mathcal{O})&
\NSG1(\mathcal{O},\mathcal{O})\arrow[l]
\\
\SG1(\mathcal{O},\Gamma)\arrow[r]\arrow[d]\arrow[u]&
\SGuf(\mathcal{O},\Gamma)\arrow[r]\arrow[u]&
\SGuf(\mathcal{O},\Omega)\arrow[r]\arrow[u]&
\SGf(\mathcal{O},\mathcal{O})\arrow[u]&
\SG1(\mathcal{O},\mathcal{O})\arrow[l]\arrow[u]
\\
\SG1(\mathcal{O},\Omega)\arrow[d]\arrow[dotted,to path={([xshift=-2.5cm,yshift=-1.5cm]\tikztotarget.south)
-- (\tikztotarget)}]{rru}\arrow[dotted,to path={-- ([xshift=1.6cm]\tikztostart.east)|-([xshift=-2cm,yshift=-1.5cm]\tikztotarget.south) -- (\tikztotarget)}]{rrrru}&
\SSGf(\mathcal{O},\Gamma)\arrow[r]\arrow[u]&
\SSGf(\mathcal{O},\Omega)\arrow[r]\arrow[u]&
\SSGf(\mathcal{O},\mathcal{O})\arrow[u]&
\SSG1(\mathcal{O},\mathcal{O})\arrow[l]\arrow[u]
\\
\NSG1(\mathcal{O},\Omega)\arrow[dashed,to path={-- ([xshift=1.6cm]\tikztostart.east)|-([xshift=-2cm,yshift=-7.5cm]\tikztotarget.south)
--([xshift=-2cm,yshift=-1.5cm]\tikztotarget.south) -- (\tikztotarget)}]{rrrruuu}\arrow[dashed,to path={([xshift=-2.5cm,yshift=-7.5cm]\tikztotarget.south)
--([xshift=-2.5cm,yshift=-1.5cm]\tikztotarget.south) -- (\tikztotarget)}]{rruuu}&
\Guf(\mathcal{O},\Gamma)\arrow[r]\arrow[u]&
\Guf(\mathcal{O},\Omega)\arrow[r]\arrow[u]&
\Gf(\mathcal{O},\mathcal{O})\arrow[u]&
\G1(\mathcal{O},\mathcal{O})\arrow[l]\arrow[u]
\end{tikzcd}
\end{adjustbox}
\caption{Diagram of winning strategies}
\label{dig3}
\end{figure}

\section{Concluding Remarks}
We surmise a result related to the strongly star-Scheepers property. See \cite[Theorem 3.24]{SSSP} for similar investigation in the context of strongly star-Menger spaces.
\begin{Conj}
\label{T33}
Let $X$ be a strongly star-Lindel\"{o}f space of the form $Y\cup Z$, where $Y$ is a closed discrete set and $Z$ is a $\sigma$-compact subset of $X$ with $Y\cap Z=\emptyset$. If $X$ is strongly star-Scheepers, then ONE does not have a winning strategy in the game $\SSGf(\mathcal{O},\Omega)$ on $X$.
\end{Conj}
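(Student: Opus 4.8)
The plan is to adapt the argument of \cite[Theorem 3.24]{SSSP}, which settles the strongly star-Menger analogue (for the game $\SSGf(\mathcal{O},\mathcal{O})$), to the $\omega$-cover target, in the same spirit in which Theorems~\ref{T15} and~\ref{T35} lift star-Menger-type arguments to their Scheepers refinements. Since $X$ is strongly star-Scheepers and has the prescribed form, Lemma~\ref{C11} gives $|Y|<\mathfrak{d}$; put $\kappa=|Y|$, enumerate $Y=\{y_\xi:\xi<\kappa\}$, write $[Y]^{<\omega}$ for the family of finite subsets of $Y$, and fix an increasing sequence $(Z_k)_{k\in\mathbb{N}}$ of compact sets with $Z=\bigcup_k Z_k$. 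Note $|[Y]^{<\omega}|=\kappa<\mathfrak{d}$. Let $\sigma$ be an arbitrary strategy for ONE in $\SSGf(\mathcal{O},\Omega)$ on $X$; it suffices to produce a $\sigma$-play won by TWO.

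First I would build, using strong star-Lindel\"{o}fness, a countably branching tree of partial $\sigma$-plays indexed by $\mathbb{N}^{<\mathbb{N}}$: to a node $s$ of length $n-1$ attach the open cover $\mathcal{U}_s$ dictated by $\sigma$ against the responses of TWO coded by $s$, together with a countable set $A_s=\{a^s_m:m\in\mathbb{N}\}\subseteq X$ with $St(A_s,\mathcal{U}_s)=X$, and declare the response of TWO coded by $s^\frown\langle j\rangle$ to be the finite set consisting of $\{a^s_1,\dots,a^s_j\}$ together with a fixed finite subset of $A_s$ whose $\mathcal{U}_s$-star contains $Z_n$ (such a subset exists since $Z_n$ is compact, $St(A_s,\mathcal{U}_s)=X$, and each $St(a^s_m,\mathcal{U}_s)$ is open). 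This last clause disposes of the $\sigma$-compact part: along any branch $b$ of the tree, $Z_k\subseteq St(G^b_n,\mathcal{U}_{b\upharpoonright(n-1)})$ for every $n\ge k$, where $G^b_n$ denotes TWO's move at inning $n$ along $b$.

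It then remains to choose the branch $b$ so that, for every $F\in[Y]^{<\omega}$, there are cofinally many $n$ with $F\subseteq St(G^b_n,\mathcal{U}_{b\upharpoonright(n-1)})$; together with the previous paragraph this makes $\{St(G^b_n,\mathcal{U}_{b\upharpoonright(n-1)}):n\in\mathbb{N}\}$ an $\omega$-cover of $X=Y\cup Z$, so TWO wins the $\sigma$-play coded by $b$, and since $\sigma$ was arbitrary ONE has no winning strategy in $\SSGf(\mathcal{O},\Omega)$ on $X$. For a node $s$ of length $n-1$ and $F\in[Y]^{<\omega}$ set $\mu_s(F)=\min\{m:F\subseteq St(\{a^s_1,\dots,a^s_m\},\mathcal{U}_s)\}$, which is finite because $St(A_s,\mathcal{U}_s)=X$. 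The idea is to transplant the $\maxfin$-and-domination device of Theorems~\ref{T2602} and~\ref{T35} onto this tree: recursing along the (countable) tree one bounds the thresholds $\mu_s(F)$ by a single function $h_F\in\mathbb{N}^{\mathbb{N}}$ that is effective on nodes which have not grown faster than $h_F$, and then, using $|[Y]^{<\omega}|<\mathfrak{d}$, one extracts a branch dominated by no $h_F$ while still obeying, cofinally often, the coordinatewise restraints under which $h_F$ applies.

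The main obstacle — and the reason the statement is posed only as a conjecture — is carrying out this last step against an \emph{adaptive} strategy: the thresholds $\mu_s(F)$ really do depend on the branch built so far, so the one-shot selection argument behind Theorem~\ref{T2602} does not transfer verbatim, and one expects to need a delicate fusion-type recursion on $b$ that at each inning settles the finitely many currently relevant finite subsets of $Y$ while returning to every earlier one cofinally. Reconciling this bookkeeping — which is routine when $\kappa=\omega$, since one may then split $\mathbb{N}$ into infinitely many infinite blocks — with the uncountable index set $[Y]^{<\omega}$ is exactly where the difficulty concentrates. (In the metacompact case this delicacy is avoided: by Theorem~\ref{TG13} the game $\SSGf(\mathcal{O},\Omega)$ is then equivalent to $\Guf(\mathcal{O},\Omega)$, which already passes the question to the classical Scheepers game, so the conjecture has genuine content only in non-metacompact settings such as the Isbell-Mr\'{o}wka space and the Niemytzki plane.) If the branch $b$ is produced, then as noted $\{St(G^b_n,\mathcal{U}_{b\upharpoonright(n-1)}):n\in\mathbb{N}\}$ is an $\omega$-cover of $X$, TWO wins, and ONE has no winning strategy in $\SSGf(\mathcal{O},\Omega)$ on $X$.
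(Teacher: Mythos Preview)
The paper does not prove this statement: it is labelled \textbf{Conjecture}~\ref{T33} and is explicitly left open, with the authors only remarking that it is suggested by the strongly star-Menger analogue \cite[Theorem 3.24]{SSSP}. There is therefore no proof in the paper to compare your proposal against.

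Your write-up is honest about this: you outline a plausible strategy (build a countably branching tree of partial $\sigma$-plays using strong star-Lindel\"{o}fness, handle the $\sigma$-compact part by compactness at each node, then try to select a branch using $|[Y]^{<\omega}|<\mathfrak{d}$ in the spirit of Theorem~\ref{T2602}) and you correctly isolate the genuine obstruction, namely that the thresholds $\mu_s(F)$ depend on the entire history $s$, so the one-shot ``find $g$ not dominated by $\{h_F:F\in[Y]^{<\omega}\}$'' argument does not immediately yield a single branch that works cofinally for every $F$. This is exactly why the authors state the result as a conjecture rather than a theorem. Your proposal is thus a reasonable heuristic sketch and a correct diagnosis of the difficulty, but it is not a proof, and neither is anything in the paper.
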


Considering the above conjecture is true, we obtain the amusing game theoretic observation.
\begin{Res}
If $X$ is $\Psi$-space or the Niemytzki plane, then $X$ is strongly star-Scheepers if and only if ONE does not have a winning strategy in the game $\SSGf(\mathcal{O},\Omega)$ on $X$.
\end{Res}

Next we use the idea of \cite{dcna22,Restricted} to define restricted Scheepers game $\RGuf(\mathcal{O},\Omega)$, restricted star-Scheepers game $\SRGuf(\mathcal{O},\Omega)$ and restricted strongly star-Scheepers game $\SSRGf(\mathcal{O},\Omega)$ on a space $X$.

\noindent $\RGuf(\mathcal{O},\Omega)$: Players ONE and TWO play an inning per each positive integer. At the start of $n$th inning TWO makes an initial move which must be a positive integer $k_n$. ONE then play an open cover $\mathcal{U}_n$ of $X$ and TWO responds by selecting a finite set $\mathcal{V}_n\subseteq\mathcal{U}_n$ with $|\mathcal{V}_n|=k_n$. TWO wins the play if and only if $\{\cup\mathcal{V}_n : n\in\mathbb{N}\}$ is an $\omega$-cover of $X$; otherwise ONE wins. The game $\SRGuf(\mathcal{O},\Omega)$ can be defined analogously, where we demand that $\{St(\cup\mathcal{V}_n,\mathcal{U}_n) : n\in\mathbb{N}\}$ instead of $\{\cup\mathcal{V}_n : n\in\mathbb{N}\}$ is an $\omega$-cover of $X$.\\

\noindent $\SSRGf(\mathcal{O},\Omega)$: Players ONE and TWO play an inning per each positive integer. At the start of $n$th inning TWO makes an initial move which must be a positive integer $k_n$. ONE then play an open cover $\mathcal{U}_n$ of $X$ and TWO responds by selecting a finite set $F_n\subseteq X$ with $|F_n|=k_n$. TWO wins the play if and only if $\{St(F_n,\mathcal{U}_n) : n\in\mathbb{N}\}$ is an $\omega$-cover of $X$; otherwise ONE wins.\\

It can be seen that the games $\RGuf(\mathcal{O},\Omega)$, $\SRGuf(\mathcal{O},\Omega)$ and $\SSRGf(\mathcal{O},\Omega)$ are all equivalent for paracompact Hausdorff spaces. Similar such investigations may be carried out in line of \cite{dcna22}.

In another note, assume that $\mathfrak{b}<\mathfrak{d}$. By \cite[Proposition]{SCPP} and Corollary~\ref{C1201}, $\Psi(\mathcal{A})$ with $|\mathcal{A}|=\mathfrak{b}$ is strongly star-Scheepers but neither Scheepers nor strongly star-Hurewicz. Thus it is interesting to ask the following questions.

\begin{Prob}
\label{Problem1}
 Does there exist a star-Scheepers space which is neither Scheepers nor star-Hurewicz?
\end{Prob}

\begin{Prob}
\label{Problem2}
  Does there exist a strongly star-Menger space which is neither Menger nor strongly star-Scheepers?
\end{Prob}

\begin{Prob}
\label{Problem3}
  Does there exist a star-Menger space which is neither Menger nor star-Scheepers?
\end{Prob}

\end{document}